\numberwithin{equation}{section} \hyphenation{semi-stable}
\font\tengothic=eufm10 scaled\magstep 1
\font\sevengothic=eufm7 scaled\magstep 1
\newcommand{\Z}{\mathbb{Z}}
\newcommand{\R}{\mathbb{R}}
\newcommand{\Q}{\mathbb{Q}}
\newcommand {\PP}{\mathbb{P}}
\newcommand {\sphere} {\mathbb{S}}
\newcommand{\cC}{\mathcal{C}}
\newcommand{\cT}{\mathcal{T}}
\newcommand{\cF}{\mathcal{F}}
\newcommand{\xb}{{\bf x}}
\newcommand{\yb}{{\bf y}}
\DeclareMathOperator{\Ann}{Ann}
\DeclareMathOperator{\Hilb}{Hilb}
\DeclareMathOperator{\lcm}{lcm}
\DeclareMathOperator{\pd}{pd}
\DeclareMathOperator{\reg}{reg}
\DeclareMathOperator{\supp}{supp}
\DeclareMathOperator{\Tor}{Tor}
\DeclareMathOperator{\sgn}{sgn}
\DeclareMathOperator{\spe}{sp}
\DeclareMathOperator{\bip}{bip}
\DeclareMathOperator{\nonbip}{nonbip}
\DeclareMathOperator{\mindeg}{mindeg}
\DeclareMathOperator{\Susp}{\Sigma}
\DeclareMathOperator{\rect}{rect}
\DeclareMathOperator{\depol}{depol}
\newcommand{\hocolim}{{\sf hocolim}}
\DeclareMathOperator{\pnt}{\raise 0.5mm \hbox{\large\bf.}}
\newcommand \Dbip[2] {{D_{{#1},{#2}}^{\bip}}}
\newcommand \Dnonbip[1] {{D_{#1}^{\nonbip}}}
\newcommand \Gbip[2] {{G_{{#1},{#2}}^{\bip}}}
\newcommand \Gnonbip[1] {{G_{#1}^{\nonbip}}}
\newtheorem{theorem}{Theorem}[section]
\newtheorem{lemma}[theorem]{Lemma}
\newtheorem{proposition}[theorem]{Proposition}
\newtheorem{corollary}[theorem]{Corollary}
\newtheorem{conjecture}[theorem]{Conjecture}
\newtheorem{prop-def}[theorem]{Proposition and Definition}
\theoremstyle{definition}
\newtheorem{definition}[theorem]{Definition} 
\newtheorem{remark}[theorem]{Remark}
\newtheorem{example}[theorem]{Example}
\newtheorem{question}[theorem]{Question}
\newtheorem{rem-def}[theorem]{Definition and Remark}
\title[Betti numbers of monomial ideals and shifted skew shapes]
{Betti numbers of monomial ideals and shifted skew shapes}
\author[Uwe Nagel]{Uwe Nagel}
\address{Department of Mathematics,
University of Kentucky, 715 Patterson Office Tower,
Lexington, KY 40506-0027, USA }
\email{uwenagel@ms.uky.edu}
\author[Victor Reiner]{Victor Reiner}
\address{
School of Mathematics, University of Minnesota, Minneapolis, MN 55455, USA}
\email{reiner@math.umn.edu}
\thanks{Second author partially supported by NSF grant DMS-0601010.}
\thanks{First author partially supported by NSA grant H98230-07-1-0065 and by the Institute for Mathematics \& its Applications at the University of Minnesota. }
\begin{document}
\begin{abstract}
We present two new problems on lower bounds for resolution Betti
numbers of monomial ideals generated in a fixed degree.   The first
concerns any such ideal and bounds the total Betti numbers, while
the second concerns ideals that are quadratic and bihomogeneous with
respect to two variable sets, but gives a more finely graded lower
bound.

These problems are solved for certain classes of ideals that
generalize (in two different directions) the edge ideals of
threshold graphs and Ferrers graphs.  In the process,  we produce
particularly simple cellular linear resolutions for strongly stable
and squarefree strongly stable ideals generated in a fixed degree,
and combinatorial interpretations for the Betti numbers of other
classes of ideals, all of which are independent of the coefficient
field.
\end{abstract}


\maketitle

\section{Introduction and the main problems}
\label{sec-introduction}

The paper concerns minimal free resolutions of an ideal $I$ in a
polynomial ring $S=k[x_1,\ldots,x_n]$ which is generated by
monomials of a fixed degree.  Many of its results were motivated by
two new problems,  Question~\ref{nonbipartite-conjecture} and
Conjecture~\ref{bipartite-conjecture} below, which we formulate
here.

Given a squarefree monomial ideal $I$ generated in degree $d$, it has a uniquely defined minimal generating set
of monomials, indexed by a collection $K$ of $d$-subsets of $\PP:=\{1,2,\ldots\}$ in the sense that
$$
I=(x_{i_1} \cdots x_{i_d}: \{i_1,\ldots,i_d\} \in K).
$$
Define the {\it colexicgraphic order} on the $d$-subsets $\binom{\PP}{d}$  by saying that
$$
\begin{aligned}
S= \{i_1 < \cdots < i_d\}\\
S'=\{ i'_1 < \cdots < i'_d \}
\end{aligned}
$$
have
$S <_{colex} S'$ if $i_k < i'_k$ for some $k \in \{1,\ldots,d\}$ and $i_j = i'_j$ for $j= k+1,\ldots,d$.
 For example, the colex order on
$\binom{\PP}{3}$ begins
$$
\{1,2,3\} <_{colex} \{1,2,4\} <_{colex} \{1,3,4\} <_{colex} \{2,3,4\} <_{colex} \{1,2,5\}
<_{colex} \{1,3,5\}  <_{colex} \{2,3,5\}  \cdots
$$
Call $K \subset \binom{\PP}{d}$ a {\it colexsegment} if it forms an
initial segment of the colexicographic ordering, and call $J$ a {\it
colexsegment-generated} ideal if $J=(x_{i_1} \cdots x_{i_d}:
\{i_1,\ldots,i_d\} \in K)$ for a colexsegment $K$.  To state our
first problem, recall that $\beta_i(I)=\dim_k \Tor^S(I,k)$ is the
number of $i^{th}$ syzygies in a minimal free $S$-resolution of $I$.
Furthermore, say that a monomial ideal $I$ generated in degree $d$
{\em obeys the colex lower bound} if, for all integers $i$,\;
$\beta_i(I) \geq \beta_i(J)$, where $J$ is the unique
colexsegment-generated (squarefree) monomial ideal having the same
number  of minimal generators as $I$, all of degree $d$. We ask:

\begin{question}
\label{nonbipartite-conjecture} Let $I$ be any monomial ideal
generated in degree $d$.   When does it obey the colex lower bound?
\end{question}

\noindent
We should remark that the standard technique of {\it polarization} \cite[\S3.2 Method 1]{MillerSturmfels}
immediately reduces this question to the case where $I$ is itself generated by
squarefree monomials, generated in a fixed degree~$d$.

The second problem concerns the situation where $I$ is {\it quadratic}, and furthermore,
generated by quadratic monomials $x_i y_j$ which are {\it bihomogeneous} with respect to
two sets of variables within the polynomial algebra $S=k[x_1,\ldots,x_m,y_1,\ldots,y_n]$.  In this case, $I$ is
the {\it edge ideal}
$$
I=(x_i y_j: \{x_i,y_j\} \text{ an edge of }G)
$$
for some {\it bipartite graph} $G$ on partitioned vertex set
$X \sqcup Y$ with $X=\{x_1,\ldots,x_m\}, Y=\{y_1,\ldots,y_n\}$.
Rather than considering only the ungraded Betti numbers $\beta_i$, here we take advantage of
the $\Z^m$-grading available on the $x$ variables, but
{\it ignoring the grading on the $y$ variables}.  That is, we set
$$
\begin{aligned}
\deg(x_i)&:=e_i \text{ for  }i=1,\ldots,m,\text{ but }\\
\deg(y_j)&:=0 \text{ for }j=1,\ldots,n.
\end{aligned}
$$
For each subset $X' \subseteq X$,
define the Betti number $\beta_{i,X',\bullet}(I)$ to be the $\Z^m$-graded Betti number for this
grading, or the following sum of the usual $\Z^{m+n}$-graded Betti numbers $\beta_{i,X' \sqcup Y'}(I)$ :
$$
\beta_{i,X',\bullet}(I):=\sum_{Y' \subseteq Y} \beta_{i,X' \sqcup Y'}(I).
$$
If the vertex $x_i \in X$ has {\it degree (valence)} $\deg_G(x_i)$ in $G$, then
the relevant ideal $J$ with which we will compare $I$ is
\begin{equation}
\label{associated-Ferrers-ideal}
J:=(x_i y_j: i=1,\ldots,m, \text{ and }j=1,2,\ldots,\deg_G(x_i)).
\end{equation}
The bipartite graphs corresponding to such ideals $J$ are known as {\it Ferrers graphs};
see \cite{CN1} and Example~\ref{Ferrers-definition} below.

\begin{conjecture}
\label{bipartite-conjecture}
Consider the edge ideal
$$
I=(x_i y_j: \{x_i,y_j\} \in G) \,\, \subset \,\, S=k[x_1,\ldots,x_m,y_1,\ldots,y_n]
$$
for some {\it bipartite graph} $G$ on $X \sqcup Y$ as above, and
let $J$ be the Ferrers graph edge ideal associated to $I$ as in \eqref{associated-Ferrers-ideal}.

Then $\beta_{i,X',\bullet}(I)  \geq \beta_{i,X',\bullet}(J)$ for all $i$ and all subsets $X' \subseteq X$.
\end{conjecture}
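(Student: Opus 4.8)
\noindent The plan is to strip off the $Y$-direction first, and then to compare $I$ with $J$ either by an explicit comparison of cellular resolutions (for the classes treated in this paper) or, in general, by a combinatorial-shifting deformation supported only on the $y$-variables.

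\textbf{Step 1 (reduction to $X'=X$).} Applying Hochster's formula to the Stanley--Reisner complex of $I$, namely the independence complex $\mathrm{Ind}(G)$, the multigraded Betti number $\beta_{i,X'\sqcup Y'}(I)$ depends only on the induced subgraph $G[X'\sqcup Y']$; summing over $Y'\subseteq Y$ gives $\beta_{i,X',\bullet}(I)=\beta_{i,X',\bullet}\bigl(I_{G[X'\sqcup Y]}\bigr)$, and likewise for $J$ in place of $I$, with $J[X'\sqcup Y]$ being exactly the Ferrers graph attached via \eqref{associated-Ferrers-ideal} to $I_{G[X'\sqcup Y]}$ (the valences $\deg_G(x_i)$, $x_i\in X'$, are unchanged once all of $Y$ is kept). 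So the inequality for $(I,J,X')$ is an instance of the inequality for $(I_{G[X'\sqcup Y]},J_{G[X'\sqcup Y]},X')$ with the full column set, and it suffices to prove $\beta_{i,X,\bullet}(I)\ge\beta_{i,X,\bullet}(J)$ for an arbitrary bipartite $G$, i.e.\ only for the ``fully $x$-supported'' Betti numbers.

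\textbf{Step 2 (the target numbers and the structured cases).} The right-hand side is a Ferrers-graph edge ideal, whose explicit minimal cellular resolution --- established for Ferrers graphs in \cite{CN1}, and recovered below as the straight-shape case of our skew-shape resolution --- gives a closed formula for $\beta_{i,X,\bullet}(J)$ as the cardinality of an explicit family $\mathcal{B}^J_i$ of subdiagrams of the Ferrers shape $(\deg_G(x_i))_i$. When $G$ itself lies in one of the classes studied here (shifted skew shapes, which include the threshold and Ferrers graphs), the corresponding cellular resolution likewise yields $\beta_{i,X,\bullet}(I)=|\mathcal{B}^I_i|$ for an explicit family $\mathcal{B}^I_i$ attached to the relevant skew shape; since the straight shape is the smallest shape with the prescribed row lengths, one writes down a degree-preserving injection $\mathcal{B}^J_i\hookrightarrow\mathcal{B}^I_i$, which settles the conjecture for these $I$.

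\textbf{Step 3 (general $G$, via a $y$-shift).} For an arbitrary bipartite $G$, observe that applying combinatorial shifting to the $y$-indices alone turns $G$ into a $Y$-shifted bipartite graph with the \emph{same} $x$-valences, which is precisely $J$. Hence it is enough to show that a single elementary $y$-shift $\mathrm{Shift}_{j',j}$ never decreases any $\beta_{i,X,\bullet}$. This is a partially $\Z^m$-graded refinement of the known fact that combinatorial shifting does not decrease (fully) graded Betti numbers, and one expects to obtain it from the same short exact sequence comparing $I$ with $\mathrm{Shift}_{j',j}(I)$, now read off in each fixed $x$-multidegree after summing out the $y$-grading. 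The main obstacle is exactly this point in full generality: combinatorial shifting is far less rigid than algebraic shifting, and keeping track of which homology classes --- equivalently, which faces of the order complexes of the lcm-lattice --- survive a $y$-shift once the $y$-grading has been collapsed appears to require a new idea. This is why Conjecture~\ref{bipartite-conjecture} is posed rather than proved, and is verified here only for the shifted-skew-shape families, where the explicit resolutions make the injection $\mathcal{B}^J_i\hookrightarrow\mathcal{B}^I_i$ available directly.
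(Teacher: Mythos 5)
The statement you are addressing is a conjecture: the paper does not prove it in general, and neither do you, so the honest benchmark is how your partial results line up with what the paper actually establishes. Your Step 1 (reduction to $X'=X$ via Hochster's formula, using that the Ferrers graph of the restriction $G[X'\sqcup Y]$ is the restriction of $J$) is correct and is implicitly how the paper proceeds as well. Your Step 2 matches in spirit the paper's verification for the structured classes, but the mechanism you cite is off: the paper has no cellular resolution for the skew-shape ideals $I(\Gbip{X}{Y}(D))$ --- the existence of one is posed as an open problem in Question~\ref{skew-questions}(v) --- and instead computes $\beta_{i,X',\bullet}$ from the rectangular decomposition (Corollary~\ref{betti-number-corollary}) and then proves the lower bound by exhibiting, for each $j$-subset of the $k=\mindeg(X')$ columns through the top row, a spherical subdiagram (Lemma~\ref{lower-bound-crux}, Corollary~\ref{lower-bound-valid-corollary}); that combinatorial injection is the analogue of your $\mathcal{B}^J_i\hookrightarrow\mathcal{B}^I_i$, so the structured case goes through, just not by the route you describe.

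Your Step 3 is where the genuine gap sits, and you correctly flag it: reducing a general bipartite $G$ to $J$ by elementary shifts in the $y$-indices preserves the $x$-valences and terminates at the Ferrers graph, but the needed monotonicity of the partially graded numbers $\beta_{i,X',\bullet}$ under a single $y$-shift is exactly the content of Conjecture~\ref{bipartite-conjecture} and is not supplied here or in the paper. Be careful, moreover, with the ``known fact'' you invoke: monotonicity of graded Betti numbers under \emph{combinatorial} shifting (as opposed to algebraic shifting or passing to gins) is itself delicate and is not established in the paper in any form, so even the coarse version of Step 3 cannot be taken for granted. For comparison, what the paper does prove beyond the skew-shape case is the refined Conjecture~\ref{bipartite-graph-bound-conjecture}'s upper bound (via the Taylor resolution) and the characterizations of equality in both bounds (nearly-row-nested, resp.\ horizontal-vertical graphs), using the full-column and nested-row reductions; your shifting strategy is a genuinely different proposed attack on the remaining lower bound, but as you concede, it is a plan, not a proof.
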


We remark that the  lower bounds on the Betti numbers in both of the problems can be made quite explicit.
In Question~\ref{nonbipartite-conjecture}, if
the monomial ideal $I$ has exactly $g$ minimal generating monomials, express
$g=\binom{\mu}{d} + \epsilon$ uniquely for some integers $\mu, \epsilon$ with
$\mu \geq d-1$ and $0 \leq \epsilon < \binom{\mu}{d-1}$.  Then the lower bound can be rewritten
(using Corollary~\ref{hypergraph-betti-corollary} below) as
$$
\beta_i(I) \geq \beta_i(J) = \sum_{j=d}^{\mu} \binom{j-1}{i, \,\, d-1, \,\, j-d-i} + \epsilon \binom{\mu+1-d}{i}.
$$
In Conjecture~\ref{bipartite-conjecture}, if for any subset of vertices $X' \subset X$, one denotes
by $\mindeg(X')$ the minimum degree of a vertex $x_i \in X'$ in the bipartite graph $G$,
then the lower bound can be rewritten
(using Proposition~\ref{Ferrers-betti-numbers} below) as
$$
\beta_{i,X',\bullet}(I) \geq \beta_{i,X',\bullet}(J) =
\begin{cases}
\binom{ \mindeg(X') }{i-|X'|+2} &\text{ if } |X'|<i+2 \\
0                               &\text{ otherwise}.\\
\end{cases}
$$

This is certainly not the first paper about lower bounds on the Betti number. For example, there are lower bounds shown by Evans and Griffith, Charalambous, Santoni, Brun, and R\"omer
establishing and strengthening the Buchsbaum-Eisenbud conjecture
(often referred to as {\it Horrocks's problem})
for monomial ideals (see \cite{BR}, \cite{R} and the references therein). The Buchsbaum-Eisenbud conjecture states that the $i$-th total Betti number of a homogeneous  ideal $I$ in a polynomial ring is at least $\binom{c}{i}$, where $c$ is the codimension of $I$. Observe that, for the ideals under consideration in this paper, we ask for much stronger lower bounds.

Another thread in the literature investigates the Betti numbers of ideals with fixed Hilbert function. Among these ideals, the {\it lex-segment ideal} has the maximal Betti numbers according to Bigatti Hulett, and Pardue (\cite{Bigatti} \cite{Hulett}, \cite{Pardue}). However, in general there is no common lower bound for these ideals (see, e.g., \cite{DMMR} and the references therein). In comparison, the novelty of our approach is that instead of the Hilbert function we fix the number of minimal generators of the ideals under consideration.

The remainder of the paper is structured as follows.

Part I introduces a new family of graphs and their edge ideals,
parametrized by well-known combinatorial objects called {\it shifted skew shapes};
each such shape will give rise to both bipartite and nonbipartite graphs,
generalizing two previously studied classes of graphs that have been recently examined
from the point of view of resolutions of their edge ideals -- the Ferrers graphs
\cite{CN1} and the threshold graphs \cite{CN2}.  It turns out that these new families of
edge ideals are extremely well-behaved from the viewpoint of their minimal free resolutions --
the first main result (Corollary~\ref{betti-number-corollary}) gives a combinatorial interpretation
for their most finely graded Betti numbers which is independent
of the coefficient field $k$.  This comes from showing that the relevant simplicial complexes
for these graph ideals, whose homology compute these Betti numbers by a well-known formula of Hochster,
always have the homotopy type of wedges of equidimensional spheres (Theorem~\ref{homotopy-type-theorem}).
This is in marked contrast to the
situation for arbitrary edge ideals of graphs, where the relevant simplicial complexes are well-known to have
the homeomorphism type of any simplicial complex (Proposition~\ref{clique-complex-homeomorphism-types}),
and for arbitrary bipartite graph ideals, where we note (Proposition~\ref{bipartite-complex-homotopy-types})
that the simplicial complexes can have the homotopy type of an arbitrary suspension.
We also show (Theorem~\ref{specialization-theorem}) that the resolutions for the
nonbipartite edge ideals within this class can be obtained by specialization from
the resolutions of the bipartite ones, as was shown in \cite{CN2} for Ferrers and threshold graphs.
We further interpret the Castelnuovo-Mumford regularity (Theorem~\ref{regularity-theorem}) of these
ideals, and indicate how to compute their Krull dimension and projective dimension.

Part II investigates a different generalization of the
Ferrers graph's and threshold graph's edge ideals, this time to {\it nonquadratic}
squarefree monomial ideals including the
special case of the {\it squarefree strongly stable ideals} studied by Aramova, Herzog and Hibi \cite{AHH}
which are generated in a fixed degree.  We provide a simple cellular resolution
for these ideals and some related ideals (Theorem~\ref{cellular-resolutions-theorem}),
related by polarization/specialization again as in \cite{CN2}.  We
also describe an analogously simple cellular resolution for strongly stable ideals
generated in a fixed degree, recovering a recent result of Sinefakopoulos \cite{Sinefakopoulos}.

Part III uses the previous parts to address Question~\ref{nonbipartite-conjecture} and Conjecture~\ref{bipartite-conjecture},
which are verified for all of the ideals whose Betti numbers were computed
in Parts I and II. However, we exhibit an example that shows the existence of monomial ideals that do not obey the colex lower bound (Remark~\ref{rem-counter-ex}). Moreover, a more precise version of Conjecture~\ref{bipartite-conjecture} is
formulated (Conjecture~\ref{bipartite-graph-bound-conjecture}), incorporating both an
upper and a lower bound on the Betti numbers for bipartite graph edge ideals, as well as
a characterization
of the case of equality in both bounds.  Furthermore, the upper bound, as well as the
characterizations for the cases of equality in both the upper and the lower bound are proven,
leaving only the lower bound itself unproven.

The Epilogue contains some questions suggested by the above results. In the Appendix some needed technical tools from combinatorial topology and commutative
algebra are provided.

\tableofcontents

\section{PART I. Shifted skew diagrams and graph ideals}
\label{shifted-skew-section}

\subsection{Shifted diagrams and skew diagrams}

  We begin with some terminology for diagrams in the shifted plane that
are perhaps not so standard in commutative algebra, but fairly
standard in the combinatorial theory of projective representations of
the symmetric group and Schur's $P$ and $Q$-functions \cite[Exercise I.9]{Macdonald}.

\begin{definition}
The {\it shifted plane} is the set of lattice points
$$
\{(i,j) \in \Z \times \Z: 1 \leq i < j \}
$$
drawn in the plane so that the first coordinate increases from the top row to
the bottom, and the second coordinate increases from left to right:
$$
\begin{matrix}
\cdot  & (1,2)  & (1,3)  & (1,4)  & \cdots \\
\cdot  & \cdot  & (2,3)  & (2,4)  & \cdots \\
\cdot  & \cdot  & \cdot  & (3,4)  & \cdots \\
\vdots & \vdots & \vdots & \vdots & \vdots
\end{matrix}
$$

Given a number partition $\lambda = (\lambda_1, \lambda_2, \cdots,\lambda_\ell)$
into {\it distinct parts}, that is, $\lambda_1 > \lambda_2 > \cdots > \lambda_\ell >0$,
the {\it shifted Ferrers diagram} for $\lambda$ is the set of cells/boxes in the
shifted plane having $\lambda_i$ cells left-justified in row $i$ for each $i$.
For example, $\lambda=(12,11,7,6,4,2,1)$ has this diagram:
$$
\begin{matrix}
\cdot  & \times & \times & \times & \times & \times & \times & \times & \times & \times & \times & \times & \times \\
\cdot  & \cdot & \times & \times & \times & \times & \times & \times & \times & \times & \times & \times & \times \\
\cdot  & \cdot & \cdot & \times & \times & \times & \times & \times & \times & \times & & &  \\
\cdot  & \cdot & \cdot & \cdot  & \times & \times & \times & \times & \times & \times & & &  \\
\cdot  & \cdot & \cdot & \cdot  & \cdot  & \times & \times & \times & \times & & & &  \\
\cdot  & \cdot & \cdot & \cdot  & \cdot  & \cdot  & \times & \times & & & & &  \\
\cdot  & \cdot & \cdot & \cdot  & \cdot  & \cdot  & \cdot  & \times & & & & &  \\
\cdot  & \cdot & \cdot & \cdot  & \cdot  & \cdot  & \cdot  & \cdot  & & & & &  \\
\end{matrix}
$$

Given another number partition $\mu$ with distinct parts for which $\mu_i < \lambda_i$,
one can form the {\it shifted skew diagram} $D=\lambda/\mu$ by removing the diagram for
$\mu$ from the diagram for $\lambda$.  For example, if $\mu=(11,9,6,3)$ and $\lambda=(12,11,7,6,4,2,1)$
as before, then $D=\lambda/\mu$ has the following shifted skew diagram, with row and column indices
labelled to emphasize its location within the shifted plane:
$$
\begin{matrix}
 &1&2&3&4&5&6&7&8&9&10&11&12&13\\
1&\cdot  &  &  &  &  &  & &  &  &  &  &  & \times \\
2&\cdot  & \cdot &  &  &  &  &  &  &  &  &  & \times & \times \\
3&\cdot  & \cdot & \cdot &  &  &  &  &  &  & \times & & &  \\
4&\cdot  & \cdot & \cdot & \cdot  &  &  &  & \times & \times & \times & & &  \\
5&\cdot  & \cdot & \cdot & \cdot  & \cdot  & \times & \times & \times & \times & & & &  \\
6&\cdot  & \cdot & \cdot & \cdot  & \cdot  & \cdot  & \times & \times & & & & &  \\
7&\cdot  & \cdot & \cdot & \cdot  & \cdot  & \cdot  & \cdot  & \times & & & & &  \\
8&\cdot  & \cdot & \cdot & \cdot  & \cdot  & \cdot  & \cdot  & \cdot  & & & & &  \\
\end{matrix}
$$
\end{definition}
In a shifted skew diagram $D$, cells in locations of the form $(i,i+1)$ will
be called {\it staircase cells}.  For example, the diagram above has three staircase
cells, namely $(5,6),(6,7),(7,8)$.

Given a shifted skew diagram $D$, and any pair $X,Y$ of ordered subsets of
positive integers
$$
\begin{aligned}
X &=\{x_1 < x_2 < \cdots < x_m\} \\
Y &=\{y_1 < y_2 < \cdots < y_n\},
\end{aligned}
$$
one can form a diagram $\Dbip{X}{Y}$ with rows indexed by $X$ and
columns indexed by $Y$, by restricting the diagram $D$ to these rows and columns.
For example if $D=\lambda/\mu$ is the shifted skew diagram shown above, and
if
$$
\begin{array}{lll}
X & =\{x_1,x_2,x_3,x_4\}&=\{2,4,5,7\}\\
Y &=\{y_1,y_2,y_3,y_4,y_5,y_6,y_7,y_8\}&=\{4,6,7,8,9,10,11,12\}
\end{array}
$$
then $\Dbip{X}{Y}$ is this diagram:
\begin{equation}
\label{bipartite-diagram-example}
\begin{matrix}
 &y_1&y_2&y_3&y_4&y_5&y_6&y_7&y_8\\
 &4&6&7&8&9&10&11&12\\
x_1=2&  &  &  &  &  &  &  &\times \\
x_2=4&  &  &  &\times&\times&\times& & \\
x_3=5&  &\times&\times&\times&\times&  &  &  \\
x_4=7&  &  &  &\times&  &  &  & \\
\end{matrix}
\end{equation}

Such diagrams $\Dbip{X}{Y}$ should no longer be considered as drawn in the
shifted plane, but rather inside the $m \times n$ rectangle with row and
column indices given by $X$ and $Y$.

On the other hand, given a shifted skew diagram $D$, and an ordered subset
$X$, one can also form the diagram $\Dnonbip{X} (=\Dbip{X}{X})$, which one should
think of as drawn in a shifted plane whose rows and columns are indexed by $X$.
For example, if $D=\lambda/\mu$ as above and $X=\{x_1,x_2,x_3,x_4,x_5,x_6\}=\{2,4,5,7,8,10\}$,
then $\Dnonbip{X}$ is this diagram:
\begin{equation}
\label{nonbipartite-diagram-example}
\begin{matrix}
     &x_1&x_2&x_3&x_4&x_5&x_6\\
     &2&4&5&7&8&10\\
x_1=2&\cdot  &  &  &      &      &  \\
x_2=4&\cdot  &\cdot  &  &      &\times&\times\\
x_3=5&\cdot  &\cdot  &\cdot  &\times&\times&  &  &  \\
x_4=7&\cdot  &\cdot  &\cdot  &\cdot      &\times&  &  & \\
x_5=8&\cdot  &\cdot  &\cdot  &\cdot      &\cdot      &  &  & \\
x_6=10&\cdot  &\cdot  &\cdot  &\cdot     &\cdot      &\cdot  &  & \\
\end{matrix}
\end{equation}
For such diagrams $\Dnonbip{X}$, call the cells in locations of the
form $(x_i,x_{i+1})$ its {\it staircase cells}.  For example, in $\Dnonbip{X}$ shown
above there are two staircase cells, in positions $(x_3,x_4), (x_4,x_5)$.

\subsection{Graphs and graph ideals}

\begin{definition}
A {\it (simple) graph} $G$ on vertex set $V$ is a collection $E(G) \subset \binom{V}{2}:=\{\{u,v\}: u,v \in V\}$
called its {\it edges}.  Having fixed a field $k$ to use as coefficients, any graph $G$ gives rise to a square-free quadratic monomial ideal called
its {\it edge ideal} $I(G)$ inside the polynomial ring\footnote{
We hope that using the names of vertices as polynomial variables, a very convenient
abuse of notation,  causes no confusion.} $k[V]:=k[v]_{v \in V}$, generated by the
monomials $uv$ as one runs through all edges $\{u,v\}$ in $E(G)$.
\end{definition}

Note that since $I(G)$ is a monomial ideal, it is homogeneous with respect to the $\Z^{|V|}$-grading on $k[V]$ in which the degree of the variable $v$ is the standard basis vector $e_v \in \R^{|V|}$.

There is a situation in which a different grading also appears.

\begin{definition}
Say that a simple graph $G$ is {\it bipartite} with respect to the partition
$V = V_1 \sqcup V_2$ of its vertex set $V$ if every edge in $E(G)$ has the form
$\{v_1,v_2\}$ with $v_i \in V_i$ for $i=1,2$.

Equivalently, $G$ is bipartite with respect to $V=V_1 \sqcup V_2$ if and only if
$I(G)$ is homogeneous with respect to the $\Z^2$-grading in $k[V]$ in which
the variables labelled by vertices in $V_1$ all have degree $(1,0)$,
which the variables labelled by vertices in $V_2$ all have degree $(0,1)$.
\end{definition}

Given any shifted skew diagram $D$, the two kinds of subdiagrams
$\Dbip{X}{Y}, \Dnonbip{X}$ give rise to two kinds of graphs, and hence
to two kinds of edge ideals:
\begin{enumerate}
\item[$\bullet$] For a pair of ordered sets $X=\{x_1,\ldots,x_m\},Y=\{y_1,\ldots,y_n\}$, one has the
the bipartite $\Gbip{X}{Y}(D)$ graph on vertex set $X \sqcup Y$, with an edge $\{x_i,y_j\}$ for every cell
$(x_i,y_j)$ in the diagram $\Dbip{X}{Y}$.  Its edge ideal
$I(\Gbip{X}{Y}(D))$ is inside the polynomial algebra $k[x_1,\ldots,x_m,y_1,\ldots,y_n]$.
\item[$\bullet$] For a single ordered set $X=\{x_1,\ldots,x_m\}$, one has the
the nonbipartite $\Gnonbip{X}(D)$ graph on vertex set $X$, with an edge $\{x_i,x_j\}$ for every cell
$(x_i,x_j)$ in the diagram $\Dnonbip{X}$.  Its edge ideal
$I(\Gnonbip{X}(D))$ is inside the polynomial algebra $k[x_1,\ldots,x_m]$.
\end{enumerate}

\begin{example} \rm \ \\
\label{first-graphs-example}
If $\Dbip{X}{Y}$ and $\Dnonbip{X}$ are the diagrams shown in \eqref{bipartite-diagram-example}, \eqref{nonbipartite-diagram-example},
respectively, then
$$
\begin{aligned}
I(\Gbip{X}{Y}(D)) & = (x_1 y_8, x_2 y_4, x_2 y_5, x_2 y_6, x_3 y_2, x_3 y_3, x_3 y_4, x_3 y_5, x_4 y_4) \\
 & \subset k[x_1,x_2,x_3,x_4,y_1,y_2,y_3,y_4,y_5,y_6,y_7,y_8]\\
I(\Gnonbip{X}(D)) & = (x_2 x_5, x_2 x_6, x_3 x_4, x_3 x_5, x_4 x_5) \\
 & \subset k[x_1,x_2,x_3,x_4,x_5,x_6]
\end{aligned}
$$
\end{example}


We review now some well-studied classes of graphs that were our motivating special cases.

\begin{example}(Ferrers bipartite graphs) \rm \ \\
\label{Ferrers-definition}
Say that $\Dbip{X}{Y}$ is {\it Ferrers} if whenever $i < i'$,
the columns occupied by the cells in the row $x_{i'}$ form a subset of those
occupied by the cells in row $x_i$.  The graph $\Gbip{X}{Y}(D)$ is then completely determined up
to isomorphism by the partition $\lambda=(\lambda_1 \geq \cdots \geq \lambda_m)$
where $\lambda_i$ is the number of cells in the row $x_i$.  Call such a Ferrers graph $G_\lambda$.
An explicit cellular minimal free resolution of $I(G_\lambda)$ for the Ferrers graphs $G_\lambda$
was given in \cite{CN1}, thereby determining its Betti numbers --  see also
Example~\ref{Ferrers-example} below.
\end{example}

\begin{example}(threshold graphs) \rm \ \\
If $D$ is the shifted Ferrers diagram for a strict partition $\lambda=(\lambda_1 > \cdots > \lambda_m)$,
then the graph $\Gnonbip{[n]}(D)$ is called a {\it threshold} graph.  Such graphs have numerous
equivalent characterizations -- see \cite{MahadevPeled}.

An explicit cellular minimal free resolution of $I(\Gnonbip{[n]}(D))$
in this case was derived in \cite{CN2} by specialization from the resolution
of an associated Ferrers graph from \cite{CN1}.
\end{example}

\subsection{Betti numbers and simplicial complexes}

Edge ideals $I(G)$ of graphs are exactly the squarefree quadratic monomial ideals.
More generally, any squarefree monomial ideal $I$ in a polynomial algebra $k[V]$
has some special properties with regard to its minimal free resolution(s) as $k[V]$-module.
Since $I$ is a monomial ideal, the resolution can be chosen
$\Z^{|V|}$-homogenous.  Because it is generated by squarefree monomials, the free summands in each resolvent
will have basis elements occurring in degrees which are also squarefree, corresponding
to subsets $V' \subset V$.  The {\it finely graded Betti number} $\beta_{i,V'}(I)$
is defined to be the number of such basis elements in the $i^{th}$ syzygy module occurring in the
resolution, or equivalently,
$$
\beta_{i,V'}(I) = \dim_k \Tor_i^{k[V]}(I,k)_{V'}
$$
where here $M_{V'}$ denotes the $V'$-graded component of a $\Z^{|V|}$-graded vector space.
The {\it standard graded} and {\it ungraded Betti numbers} are the coarser data defined by
$$
\begin{array}{llll}
\beta_{i,j}(I) &= \dim_k \Tor_i^{k[V]}(I,k)_j &=\sum_{V' \subseteq V: |V'|=j} \beta_{i,V'}(I) &\\
\beta_{i}(I)   &= \dim_k \Tor_i^{k[V]}(I,k)   &=\sum_{V' \subseteq V} \beta_{i,V'}(I) &= \sum_j \beta_{ij}(I).
\end{array}
$$

A famous formula of Hochster relates these resolution Betti numbers to simplicial
homology.  An {\it abstract simplicial complex $\Delta$} on vertex set $V$ is a collection
of subsets $F$ of $V$ (called {\it faces} of $\Delta$) which is closed under inclusion:
if $G \subset F$ and $F \in \Delta$ then $G \in \Delta$.  Maximal faces of $\Delta$ under inclusion
are called {\it facets} of $\Delta$.

There is a straightforward bijection
(the {\it Stanley-Reisner} correspondence) between simplicial complexes $\Delta$ on vertex set $V$ and
squarefree monomial ideals $I_\Delta$ inside $k[V]$:  let $I_\Delta$ be generated by all squarefree monomials
$x_{v_1} \cdots x_{v_s}$ for which $\{v_1,\ldots,v_s\} \not\in \Delta$.   Hochster's formula
for $\beta_{i,V'}(I_\Delta)$ expresses it in terms of the {\it (reduced) simplicial homology}
of the {\it vertex-induced subcomplex}
$$
\Delta_{V'} := \{F \in \Delta: F \subset V'\}.
$$

\begin{proposition}(Hochster's formula \cite[Corollary 5.12]{MillerSturmfels})
\label{Hochster-formula}
For a squarefree monomial ideal $I_\Delta \subset k[V]$ and any $V' \subset V$,
one has a $k$-vector space isomorphism
$$
\Tor_i^{k[V]}(I,k)_{V'}  \cong \tilde{H}_{|V'|-i-2}(\Delta_{V'})
$$
and hence
$$
\beta_{i,V'}(I_\Delta) = \dim_k \tilde{H}_{|V'|-i-2}(\Delta_{V'}).
$$
\end{proposition}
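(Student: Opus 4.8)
The plan is to deduce this from the corresponding statement for the Stanley--Reisner \emph{ring} $k[\Delta]:=k[V]/I_\Delta$, which I would compute directly from the Koszul complex. First, apply $\Tor^{k[V]}_\bullet(-,k)$ to the short exact sequence $0\to I_\Delta\to k[V]\to k[\Delta]\to 0$. Since $k[V]$ is $k[V]$-free, $\Tor_i^{k[V]}(k[V],k)=0$ for $i\ge 1$ and $\Tor_0^{k[V]}(k[V],k)=k$ lives in multidegree $\mathbf 0$, so the connecting homomorphisms give $\Z^{|V|}$-graded isomorphisms $\Tor_i^{k[V]}(I_\Delta,k)\cong\Tor_{i+1}^{k[V]}(k[\Delta],k)$ for every $i\ge 0$; at $i=0$ one also uses that $k[V]\otimes_{k[V]}k\to k[\Delta]\otimes_{k[V]}k$ is an isomorphism (both sides being $k$ in multidegree $\mathbf 0$). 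Hence it is enough to prove $\Tor_p^{k[V]}(k[\Delta],k)_{V'}\cong\tilde H_{|V'|-p-1}(\Delta_{V'})$ and then set $p=i+1$.

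To compute $\Tor^{k[V]}_\bullet(k[\Delta],k)$, resolve $k$ over $k[V]$ by the Koszul complex $K_\bullet$ on the variables $\{x_v\}_{v\in V}$, so that $\Tor_p^{k[V]}(k[\Delta],k)=H_p(K_\bullet\otimes_{k[V]}k[\Delta])$; this complex is $\Z^{|V|}$-graded, with the Koszul basis element $e_W=\bigwedge_{v\in W}e_v$ sitting in multidegree $\mathbf 1_W$. The first point is that the homology is concentrated in \emph{squarefree} multidegrees: if a multidegree $a$ has $a_v\ge 2$ for some $v$, then ``multiplication by $x_v$'' acts invertibly on all of the graded components of $k[\Delta]$ occurring in $(K_\bullet\otimes_{k[V]}k[\Delta])_a$, so the standard contraction of a Koszul complex along the $e_v$-coordinate furnishes a nullhomotopy. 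Thus only the multidegrees $\mathbf 1_{V'}$, $V'\subseteq V$, contribute.

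Now fix $V'\subseteq V$ and write $s=|V'|$. A $k$-basis of $(K_p\otimes_{k[V]}k[\Delta])_{\mathbf 1_{V'}}$ is $\{\,e_{V'\setminus F}\otimes m_F:F\in\Delta_{V'},\ |F|=s-p\,\}$, where $m_F=\prod_{v\in F}x_v$, and the Koszul differential carries $e_{V'\setminus F}\otimes m_F$ to $\sum_{v\in V'\setminus F}\pm\,e_{V'\setminus(F\cup v)}\otimes m_{F\cup v}$, in which the term for $v$ vanishes precisely when $F\cup v\notin\Delta$. Reindexing the faces by their dimension $d=|F|-1=s-p-1$ --- the empty face supplying the degree $-1$ (augmentation) term --- this is, up to signs, exactly the reduced simplicial cochain complex $\tilde C^\bullet(\Delta_{V'};k)$, the Koszul differential playing the role of the coboundary. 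Therefore $\Tor_p^{k[V]}(k[\Delta],k)_{V'}\cong\tilde H^{\,s-p-1}(\Delta_{V'})\cong\tilde H_{s-p-1}(\Delta_{V'})$, since $k$ is a field. Substituting $p=i+1$ from the first paragraph gives $\Tor_i^{k[V]}(I_\Delta,k)_{V'}\cong\tilde H_{|V'|-i-2}(\Delta_{V'})$, and taking $k$-dimensions yields the asserted Betti number formula.

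The one genuinely fussy step is the sign bookkeeping in the third paragraph: one must choose orientations of the faces of $\Delta_{V'}$ so that the Koszul signs coincide with the simplicial coboundary signs (and keep track of the empty-face/augmentation term), after which the identification of complexes, and hence of homology groups, is forced. I expect this, together with the squarefree-concentration homotopy, to be the only places that require real care; the rest is tracking multidegrees and the formal long-exact-sequence properties of $\Tor$.
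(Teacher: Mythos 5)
Your argument is correct, and it is worth noting that the paper does not prove this proposition at all: it is quoted as a known result (Hochster's formula) with a citation to \cite[Corollary 5.12]{MillerSturmfels}, so there is no in-paper proof to measure you against. Your derivation is the classical one: reduce from $I_\Delta$ to $k[\Delta]$ via the long exact sequence of $\Tor$ (the degree shift $\Tor_i(I_\Delta,k)\cong\Tor_{i+1}(k[\Delta],k)$ is handled correctly, including the $i=0$ case), compute $\Tor(k[\Delta],k)$ from the Koszul complex, kill the non-squarefree multidegrees by the contraction homotopy using invertibility of multiplication by $x_v$ when $a_v\geq 2$, and identify the strand in multidegree $\mathbf 1_{V'}$ with the reduced simplicial cochain complex of $\Delta_{V'}$ (with the empty face supplying the augmentation), then use $\tilde H^j\cong\tilde H_j$ over a field. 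This all checks out, including the degree bookkeeping $|F|=|V'|-p$ giving cochain degree $|V'|-p-1$ and hence $\tilde H_{|V'|-i-2}$ after the shift $p=i+1$. For comparison, the cited source reaches the same formula by a different route, via the upper Koszul simplicial complexes and Alexander duality; your direct Koszul-strand identification is the more traditional textbook proof and is entirely adequate here. The only points needing care are exactly the ones you flag: the sign/orientation conventions matching the Koszul differential to the simplicial coboundary, and the (degenerate) convention for the void complex, neither of which affects the dimension count.
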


If $I_\Delta = I(G)$ for a graph $G$ on vertex set $V$, then we will write $\Delta = \Delta(G)$;
the name for such simplicial complexes $\Delta$ is that they are {\it flag} or {\it clique} complexes.
{\bf Warning:} this does {\it not} mean that $\Delta$ is the $1$-dimensional simplicial complex generated
by the edges of $G$.  In fact, there is a somewhat more direct relationship between the edges of $G$ and
the canonical Alexander dual of $\Delta(G)$.

\begin{definition}
\label{Alexander-dual}
Given a simplicial complex $\Delta$ on vertex set $V$, its {\it canonical Alexander dual} $\Delta^\vee$
is the simplical complex on vertex set $V$ defined by
$$
\Delta^\vee:=\{ V \setminus G: G \not\in \Delta\}.
$$
\end{definition}

\noindent
Note that the operation $\Delta \mapsto \Delta^\vee$ is involutive: $(\Delta^\vee)^\vee = \Delta$.
It is an easy exercise in the definitions to check that, for a graph $G$ on vertex set $V$,
the facets of the Alexander dual $\Delta(G)^\vee$ are exactly the
complementary sets $V \setminus \{u,v\}$ to the edges $\{u,v\}$ in $E(G)$.

Lastly, note that a shifted skew diagrams $D$ will give rise to two simplicial complexes
$$
\Delta(\Gbip{X}{Y}(D)), \Delta(\Gnonbip{X}(D))
$$
which control the Betti numbers of the edge ideals $I(\Gbip{X}{Y}(D)), I(\Gnonbip{X}(D)).$
More precisely, each vertex-induced subcomplex which appears in Proposition~\ref{Hochster-formula}
for calculating the graded Betti numbers is another simplicial complex of the same form:
$$
\begin{aligned}
\Delta(\Gbip{X}{Y}(D))_{X' \sqcup Y'} &= \Delta(\Gbip{X'}{Y'}(D))\\
\Delta(\Gnonbip{X}(D))_{X'} &= \Delta(\Gnonbip{X'}(D))
\end{aligned}
$$

Thus our next goal will be to study the homotopy type of $\Delta(\Gbip{X}{Y}(D))$ and $\Delta(\Gnonbip{X}(D))$.

\subsection{Rectangular decomposition}
\label{sec-rectangular}

The idea in this section is to produce what we call the
{\it rectangular decomposition} for any diagram
$\Dbip{X}{Y}$ (or $\Dnonbip{X}$).  As an informal illustration,
here is the rectangular decomposition of the following diagram $\Dbip{X}{Y}$
into pieces of various types, explained below the diagram:
\begin{equation}
\label{rectangular-decomp-example}
\begin{matrix}
 &y_1&y_2&y_3&y_4&y_5&y_6&y_7&y_8&y_9&y_{10}&y_{11}&y_{12}&y_{13}&y_{14}&y_{15}&y_{16}\\
x_1& & & & & & & & & & & & & & & &\cdot\\
x_2& & & & & & & & & & & & & & & &\cdot\\
x_3& & & & & & & & & & & & &r_1&r_1&{\mathbf r_1}& \\
x_4& & & & & & & & & & &e&e&r_1&r_1&r_1& \\
x_5& & & & & & & & & &r_2&r_2&{\mathbf r_2}&e&e& & \\
x_6& & & & & & & & &e&r_2&r_2&r_2&e& & & \\
x_7& & & & & & & &e&e&r_2&r_2&r_2&e& & & \\
x_8& & & & & & &{\mathbf r_3}& & & & & & & & & \\
x_9& & & & & &e&r_3& & & & & & & & & \\
x_{10}& & & & &e&e&r_3& & & & & & & & & \\
x_{11}& &p&p&p&p&{\mathbf p}& & & & & & & & & & \\
x_{12}&e&p&p&p&p&p& & & & & & & & & & \\
x_{13}& & & &p&p&p& & & & & & & & & & \\
x_{14}& & & &e&e& & & & & & & & & & & \\
\end{matrix}
\end{equation}
There are
\begin{enumerate}
\item[$\bullet$] some {\it full rectangles}, of which there are
three in the diagram at right, whose cells have been labelled $r_1$ or $r_2$ or $r_3$,
\item[$\bullet$] some {\it empty rectangles}, of which there are
two in the diagram at right, one indicated by dots occupying rows
$\{x_1,x_2\}$ and column $\{y_{16}\}$, the other occupying columns $\{y_8,y_9\}$ but
having zero width (lying {\it ``between''} rows $x_7$ and $x_8$),
\item[$\bullet$] at most one {\it pedestal}, whose cells
are labelled ``$p$'' in the diagram at right, and
\item[$\bullet$] some {\it excess cells}, labelled by ``$e$''.
\end{enumerate}

\noindent
Informally, the idea behind the rectangular decomposition is that in analyzing the
homotopy type of the associated simplicial complexes in Section~\ref{sec-homotopy},
one finds that
\begin{enumerate}
\item[$\bullet$]
removing excess cells does not change the homotopy type,
\item[$\bullet$]
once the excess cells are removed, the complex decomposes into a simplicial join of
the complexes corresponding to each full/empty rectangle and the pedestal (if present),
\item[$\bullet$]
complexes associated to full rectangles are zero-dimensional spheres,
\item[$\bullet$]
complexes assocated to empty rectangles are simplices, hence contractible, and
\item[$\bullet$]
the complex assocated to a pedestal is contractible in the case
of $\Dbip{X}{Y}$, or an $s$-fold wedge of zero-spheres in the case of
$\Dnonbip{X}$ where $s$ is the number of (non-excess) staircase cells.
\end{enumerate}

\noindent
Consequently, the homotopy type for $\Delta(\Gbip{X}{Y}(D)), \Delta(\Gnonbip{X}(D))$
will always be either contractible or a wedge
of equidimensional spheres, easily predicted from the above
decomposition.

Here is the formal algorithm that produces the rectangular decomposition.

\begin{definition}
\label{rectangle-decomp-defn}
Define the {\it rectangular decomposition} of $\Dbip{X}{Y}$ recursively for any
shifted skew diagram $D$ and ordered sets $X=\{x_1 < \cdots <x_m\}$,
$Y=\{y_1< \cdots <y_n\}$ with $X \sqcup Y \neq \varnothing$,
allowing either $X$ or $Y$ to be empty.  The algorithm will in general
go through several iterations, terminating either
when $X \sqcup Y$ becomes empty, or when one
encounters a {\it pedestal} in Subcase 2b below.

Say that $\Dbip{X}{Y}$ {\it has a top cell}
if it contains in cell in position $(x_1,y_n)$;  in particular this requires
{\it both} $X, Y$ to be nonempty.

Initialize the set of {\it excess cells} as the empty set;  cells will be identified
as excess cells during iterations of the algorithm.

In each iteration, there are several cases.

\vskip.1in
\noindent
{\sf Case 1.}
$\Dbip{X}{Y}$ has no top cell.

Then there exist some
\begin{equation}
\label{initial-final-segments}
\begin{aligned}
\text{ initial segment }X'&=\{x_1,x_2,\cdots,x_{m'}\} \subset X, \text{ and }\\
\text{ final segment }Y'&=\{y_{n'},y_{n'+1},\cdots,y_n\} \subset Y
\end{aligned}
\end{equation}
such that
both $\Dbip{X}{Y'}$ and $\Dbip{X'}{Y}$ contain
no cells.
In this case, pick the segments $X', Y'$ {\it maximal} with this property,
and call $\Dbip{X'}{Y'}$ the first {\it empty rectangle} in the
rectangular decomposition.  Note that $X' \cup Y' \neq \varnothing$,
but it is possible that either $X'$ or $Y'$ might be empty, in which
case one has an empty rectangle with zero length or zero width (!).

Now remove the rows and columns $X', Y'$, that is,
replace $\Dbip{X}{Y}$ by $\Dbip{X \setminus X'}{Y \setminus Y'}$,
and continue the rectangular decomposition.

\vskip.1in
\noindent
{\sf Case 2.}
$\Dbip{X}{Y}$ has a top cell, namely $(x_1,y_n)$.

Define indices $m',n'$ uniquely by saying $m'$ (resp. $n'$) is maximal (resp. minimal)
for which $(x_{m'},y_n)$ (resp. $(x_1,y_{n'})$) is a cell of $\Dbip{X}{Y}$.

If $\Dbip{X}{Y}$ has a cell in position $(x_{m'},y_{n'})$, then this will
be called its {\it neck cell}.

Again define initial, final segments $X', Y'$ by
$$
\begin{aligned}
X'&=\{x_1,x_2,\cdots,x_{m'}\} \subset X, \text{ and }\\
Y'&=\{y_{n'},y_{n'+1},\cdots,y_n\} \subset Y.
\end{aligned}
$$

\vskip.1in
\noindent
{\sf Subcase 2a.}
$\Dbip{X}{Y}$ has both a top cell and a neck cell (possibly the {\it same} cell!)

In this case, $\Dbip{X'}{Y'}$ is a full rectangle in the sense that every possible
position $(x_i,y_j)$ with $i \in X', j \in Y'$ actually contains a cell of $D$.
In fact, our choice of $m', n'$ makes $X',Y'$ maximal with respect to this property.
Call $\Dbip{X'}{Y'}$ the first {\it full rectangle} in the
rectangular decomposition.

Then add to the set of excess cells all cells of $\Dbip{X \setminus X'}{Y'}$ (i.e., those
lying below the full rectangle in the same columns) and all cells of $\Dbip{X'}{Y \setminus Y'}$
(i.e., those lying left of the full rectangle in the same rows).

Lastly, remove the rows and columns $X', Y'$ from $X, Y$, that is,
replace $\Dbip{X}{Y}$ by $\Dbip{X \setminus X'}{Y \setminus Y'}$,
and continue the rectangular decomposition.

\vskip.1in
\noindent
{\sf Subcase 2b.}
$\Dbip{X}{Y}$ has a top cell but no neck.

Now call $\Dbip{X'}{Y'}$ the {\it pedestal} in the
rectangular decomposition.  Note that not every diagram will have such a
pedestal.

As in Subase 2a, add all cells of $\Dbip{X \setminus X'}{Y'}$ and $\Dbip{X'}{Y \setminus Y'}$
to the set of excess cells.  But now the algorithm also terminates.
\end{definition}

\begin{example}
The diagram $\Dbip{X}{Y}$ in \eqref{rectangular-decomp-example}
whose nonempty cells are labelled $e,r_1,r_2,r_3,p$ passes through
six iterations of the algorithm:
\begin{enumerate}
\item[\bf{1st}]
Case 1-- add the empty rectangle $\Dbip{\{x_1,x_2\}}{\{y_{16}\}}$ to the decomposition.
\item[\bf{2nd}]
Subcase 2a-- add the full rectangle $\Dbip{\{x_3,x_4\}}{\{y_{13},y_{14},y_{15}\}}$ (with cells labelled $r_1$,
top cell $(x_3,y_{15})$ in boldface, neck cell $(x_4,y_{13})$)
to the decomposition, and identify two excess cells to its left as well
as four excess cells below it.
\item[\bf{3rd}]
Subcase 2a-- add the
full rectangle $\Dbip{\{x_5,x_6,x_7\}}{\{y_{10},y_{11},y_{12}\}}$ (with cells labelled with $r_2$,
top cell $(x_5,y_{12})$ in boldface, neck cell $(x_7,y_{10})$)
to the decomposition, and identify three excess cells to its left.
\item[\bf{4th}]
Case 1-- add the empty rectangle $\Dbip{\varnothing}{\{y_8,y_9\}}$ to the decomposition.
Note that this empty rectangle has zero width, i.e. it occupies the empty set $X'=\varnothing$
of rows (``between'' rows and $x_7$ and $x_8$).
\item[\bf{5th}]
Subcase 2a-- add the
full rectangle $\Dbip{\{x_8,x_9,x_{10}\}}{\{y_7\}}$ (with cells labelled with $r_3$,
top cell $(x_8,y_7)$ in boldface, neck cell $(x_{10},y_7)$)
to the decomposition, and identify three excess cells to its left.
\item[\bf{6th}]
Subcase 2b-- add the pedestal $\Dbip{\{x_{11},x_{12},x_{13}\}}{\{y_2,y_3,y_4,y_5,y_6\}}$ (with cells labelled with $p$,
top cell $(x_{11},y_6,)$ in boldface, {\it no} neck cell)
to the decomposition, and identify one excess cell to its left, two excess cells below it.
\end{enumerate}
\end{example}

\begin{definition} \rm \ \\
The same algorithm produces the {\it rectangular decomposition} of $\Dnonbip{X}$, viewing it as $\Dbip{X}{Y}$
with $Y=X$.  The only difference is that if a pedestal occurs in the rectangular decomposition (Subcase 2b)
for $\Dnonbip{X}$, one can view the pedestal itself as
a diagram in the shifted plane, and hence certain of its
cells are distinguished as staircase cells.  The number of these staircase cells becomes
important in the next section when one analyzes the homotopy type
of $\Delta(\Gnonbip{X}(D))$.
\end{definition}

Before closing this section, we note a simple criterion for when $\Dbip{X}{Y}$ has a pedestal,
used later as an aid to show that certain diagrams $\Dbip{X}{Y}$ have $\Delta(\Gbip{X}{Y}(D))$ contractible.

\begin{proposition}
\label{pedestal-proposition}
For any shifted skew diagram $D$ and ordered subsets $X,Y$, the diagram $\Dbip{X}{Y}$ has a pedestal
if and only if it contains two cells $c=(i,j), c'=(i',j')$ with $i<i'$ and $j < j'$ but does not
contain the cell $(i',j)$ in the southwest corner of the rectangle that they define.
\end{proposition}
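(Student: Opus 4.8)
The plan is to read everything off a single run of the rectangular decomposition algorithm of Definition~\ref{rectangle-decomp-defn}, after first isolating the two structural features of a shifted skew diagram that make it tick. Those features, inherited by every restriction $\Dbip{X}{Y}$, are: (i) the cells of $D$ in each row form a contiguous block of columns and the cells in each column form a contiguous block of rows; and (ii) because $\lambda$ has distinct parts the function $k\mapsto\lambda_k+k$ (the rightmost column occupied in row $k$) is weakly decreasing, so whenever $(i,j)$ and $(i',j')$ lie in $D$ with $i<i'$ and $j<j'$ one also has $(i,j')\in D$. Thus in the rectangle spanned by such a pair the \emph{northeast} corner is automatically occupied, and the only corner that can be absent is the southwest corner $(i',j)$; the condition in the statement is therefore exactly the failure of ``southwest convexity'' of $\Dbip{X}{Y}$. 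I will also use the trivial remark that this failure is inherited from any sub-diagram $\Dbip{X'}{Y'}$ with $X'\subseteq X$, $Y'\subseteq Y$ (the witnessing cells and the missing corner survive).

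For the ``only if'' direction, suppose the algorithm enters Subcase~2b on some later diagram $\Dbip{\tilde X}{\tilde Y}$, producing a pedestal. The top cell $(\tilde x_1,\tilde y_{\tilde n})$ is present, and by the definitions of $m'$ and $n'$ so are $(\tilde x_{m'},\tilde y_{\tilde n})$ and $(\tilde x_1,\tilde y_{n'})$; being in Subcase~2b means the neck cell $(\tilde x_{m'},\tilde y_{n'})$ is absent. If $m'=1$ the neck would coincide with the present cell $(\tilde x_1,\tilde y_{n'})$, and if $n'=\tilde n$ it would coincide with the present cell $(\tilde x_{m'},\tilde y_{\tilde n})$; both are impossible, so $\tilde x_1<\tilde x_{m'}$ and $\tilde y_{n'}<\tilde y_{\tilde n}$. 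Then $c:=(\tilde x_1,\tilde y_{n'})$ and $c':=(\tilde x_{m'},\tilde y_{\tilde n})$ are cells of $\Dbip{\tilde X}{\tilde Y}\subseteq\Dbip{X}{Y}$ with $\tilde x_1<\tilde x_{m'}$, $\tilde y_{n'}<\tilde y_{\tilde n}$, whose southwest corner is precisely the missing neck. This supplies the required configuration.

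For the ``if'' direction I induct on $|X|+|Y|$ and inspect the algorithm's first iteration. In Case~1 the deleted rows/columns carry no cells of the current diagram, so none of them can be a row or a column appearing in a southwest-convexity witness; the witness therefore persists into the strictly smaller remaining diagram and the induction hypothesis applies. If the first iteration is Subcase~2b we already have a pedestal. If it is Subcase~2a, peeling a full rectangle $R=\{x_1,\dots,x_{m'}\}\times\{y_{n'},\dots,y_n\}$, I must show that the remaining diagram $\Dbip{X\setminus X'}{Y\setminus Y'}$ still fails southwest convexity, and then invoke the induction hypothesis. The idea is to first \emph{normalize} a witness using feature (i): slide the ``northwest'' cell down to the bottom of the column-block of its column, then replace its column by the one just to the left of the leftmost cell of the first row beneath it that still reaches to the right (feature (ii) forces there to be no nonempty row strictly in between, so these two rows are consecutive among nonempty rows). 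One then traces how this normalized witness sits relative to $R$: every row of $R$ meets every column of $R$, the tip $(x_1,y_n)$ is the northeast corner of $R$, and the maximality/minimality defining $m',n'$ constrain the possibilities; in each case one produces a witness among the surviving rows and columns, replacing a deleted cell by a corner of $R$ (the top cell $(x_1,y_n)$, the neck $(x_{m'},y_{n'})$, or $(x_{m'},y_n)$) or by the next nonempty row/column when necessary.

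The main obstacle is exactly this Subcase~2a step: the bookkeeping to confirm, after normalization, that peeling off the first full rectangle cannot annihilate \emph{every} southwest-convexity witness. Parts (1)--(2) and the Case~1/Subcase~2b parts of (3) are short; I expect essentially all of the work to be in carefully enumerating the positions of the normalized witness relative to $R$ and exhibiting the replacement witness in each.
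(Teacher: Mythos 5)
Your ``only if'' argument is correct and is essentially the paper's. The entire issue is in the ``if'' direction, where you and the paper diverge: the paper argues structurally (asserting that a pedestal-free $\Dbip{X}{Y}$ looks like a usual skew Ferrers diagram, for which southwest-convexity is immediate), while you try to push a southwest-convexity witness through the iterations of the algorithm. You rightly flag Subcase 2a as the crux, but the difficulty there is not just bookkeeping: the statement you need — that peeling off the first full rectangle preserves the failure of southwest convexity — is false, so the induction cannot close.

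A concrete example, using the paper's own running $D=\lambda/\mu$ with $\lambda=(12,11,7,6,4,2,1)$, $\mu=(11,9,6,3)$: take $X=\{4,5,7\}$, $Y=\{7,8\}$. Then $\Dbip{X}{Y}$ has cells $(4,8),(5,7),(5,8),(7,8)$, and $c=(5,7)$, $c'=(7,8)$ form a southwest-convexity witness (the corner $(7,7)$ lies on the diagonal and is not a cell). Yet the first iteration is Subcase 2a: top cell $(4,8)$, $m'=3$, $n'=2$, neck $(7,8)$ present, full rectangle $\{4,5,7\}\times\{8\}$, with $(5,7)$ becoming an excess cell; what remains is the empty rectangle on $\varnothing\times\{7\}$, and no pedestal ever appears. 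So not only does your Subcase 2a step fail, this restriction appears to violate the ``if'' direction of Proposition~\ref{pedestal-proposition} as stated, and also the paper's justification of it: the row supports here are $\{8\},\{7,8\},\{8\}$, which are not nested as in a skew Ferrers shape. You should verify this example independently (e.g.\ against Theorem~\ref{homotopy-type-theorem}, which still gives contractibility here because of the empty rectangle, so the downstream uses in Theorem~\ref{specialization-theorem} survive); if it holds up, the correct form of the ``if'' direction needs either an extra hypothesis (no empty rows or columns in $\Dbip{X}{Y}$) or the weaker conclusion ``pedestal or empty rectangle,'' i.e.\ $\Dbip{X}{Y}$ not spherical, which is all the paper ever actually uses.
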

\begin{proof}
Assume $\Dbip{X}{Y}$ has pedestal $\Dbip{X'}{Y'}$, with top cell $(x_1,y_n)$ and $m':=\max X'$ and $n':=\min Y'$.
Then $c=(x_1,y_{n'}), c'=(x_{m'},y_n)$ satisfy the conditions in the proposition, because $(i',j)=(x_{m'},y_{n'})$
is the location of the missing neck cell that would have made the pedestal into a
full rectangle.

On the other hand, it is easily seen that when $\Dbip{X}{Y}$ has no pedestal it looks like a usual
{\it skew Ferrers diagram} \cite[\S I.1]{Macdonald}.
Such diagrams have the property that when they contain two cells $c,c'$ forming
the northwest and southeast corners of a rectangle, the entire rectangle is in the diagram, including
its southwest corner cell.
\end{proof}

\subsection{Homotopy type and Betti numbers}
\label{sec-homotopy}

The goal of this section is Theorem~\ref{homotopy-type-theorem}, describing the homotopy type of
$\Delta(\Gbip{X}{Y}(D))$ (resp. $\Delta(\Gnonbip{X}(D))$)
in terms of the rectangular decomposition of
$\Dbip{X}{Y}$ (resp. $\Dnonbip{X}$).

The key point is that one can remove excess cells from the diagrams without
changing the homotopy type of the associated simplicial complexes.

\begin{lemma}
\label{excess-cell-lemma}
Assume one has two nested diagrams $D_1 \subset D_2$ with
both $D_i$ of the form $\Dbip{X}{Y}$ (respectively, $\Dnonbip{X}$).
Let $\Delta_1 \subset \Delta_2$ be their associated simplicial
complexes of the form $\Delta(\Gbip{X}{Y}(D))$ (respectively, $\Delta(\Gnonbip{X}(D))$).
Furthermore assume that $D_1$ is obtained from $D_2$ by removing one
excess cell of $D_2$.

Then $\Delta_1, \Delta_2$ are homotopy equivalent.
\end{lemma}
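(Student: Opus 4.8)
The plan is to set up a discrete-Morse-theory / matching argument on the face posets, or more directly, to exhibit an explicit deformation retraction of $\Delta_2$ onto $\Delta_1$. First I would unwind what it means for a cell $c=(x_i,y_j)$ of $D_2$ to be an \emph{excess cell}: by the construction in Definition~\ref{rectangle-decomp-defn}, excess cells are exactly those cells that got added to the excess set in Subcase 2a or 2b, i.e. cells lying strictly below a full rectangle (or pedestal) in one of its columns, or strictly to the left of it in one of its rows. The key structural consequence I would extract is that if $c=(x_i,y_j)$ is excess because it lies below a full rectangle occupying rows $X'$ and columns $Y'$, then $j\in Y'$ and there is some row $x_k\in X'$ (with $k<i$) such that the entire row-$x_k$ portion over $Y'$ is full; in particular the edge $\{x_k,y_j\}$ lies in $G_2:=\Gbip{X}{Y}(D_2)$, and moreover $x_k$ is "dominated" by the situation at $x_i$ in a suitable sense. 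Symmetrically for the left-of-a-rectangle case (with a column $y_\ell$, $\ell>j$, playing the role of $x_k$).

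The heart of the argument: recall that $\Delta(\Gbip{X}{Y}(D))$ is the clique complex of the complement graph, equivalently the Stanley--Reisner complex whose minimal nonfaces are the edges of $G=\Gbip{X}{Y}(D)$. Passing from $D_2$ to $D_1$ deletes exactly one minimal nonface, the edge $e=\{x_i,y_j\}$, so $\Delta_1$ is obtained from $\Delta_2$ by adding the single face $e$ together with all faces $e\cup\sigma$ that become allowable — precisely those $\sigma$ with $\sigma\cup e$ independent in $G_1$. Thus $\Delta_1 = \Delta_2 \cup \operatorname{star}_{\Delta_1}(e)$, and $\Delta_2\cap\operatorname{star}_{\Delta_1}(e) = \partial e * \operatorname{link}_{\Delta_1}(e) = \{x_i, y_j\} * \operatorname{link}_{\Delta_1}(e)$ (as a subcomplex — the two vertices $x_i,y_j$ individually, with the link). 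By a standard gluing/Mayer--Vietoris-style homotopy lemma, $\Delta_1\simeq\Delta_2$ as soon as the inclusion of that intersection into $\operatorname{star}_{\Delta_1}(e)\simeq\operatorname{link}_{\Delta_1}(e)$ is a homotopy equivalence, i.e. as soon as $\{x_i\}*\operatorname{link} \cup \{y_j\}*\operatorname{link}$ is contractible, which it is provided $\operatorname{link}_{\Delta_1}(e)$ is contractible or, more simply, provided there is a single vertex $v$ with $v\cup e$ always extendable — a \emph{cone point} for the added faces.

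So the real task reduces to: find a vertex $v$ of $\Delta_1$ such that for every face $\sigma\in\Delta_1$ containing $e$, also $\sigma\cup\{v\}\in\Delta_1$; equivalently, $v$ is nonadjacent in $G_1$ to every vertex that is nonadjacent (in $G_1$) to both $x_i$ and $y_j$. Here is where the excess structure is used: I claim one can take $v=x_k$ (the row of the governing full rectangle in the below-a-rectangle case), respectively $v=y_\ell$. The verification has two cases depending on whether a potential neighbor $w$ of $x_k$ lies in the rectangle's columns $Y'$ or not, and uses that the rectangle is \emph{full} (so any $w\in Y'$ is adjacent to $x_k$, hence not a candidate) together with the shifted-skew geometry of $D$ (controlling which cells can appear outside $Y'$ in rows above) — this is the only place the hypothesis that $D$ is a shifted skew diagram, not an arbitrary $0/1$ array, really enters, via the same monotonicity used in Proposition~\ref{pedestal-proposition}. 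I expect this adjacency bookkeeping — showing $x_k$ (or $y_\ell$) is a legitimate cone point, i.e. that no "bad" nonedge involving it exists — to be the main obstacle; the gluing step itself is formal once the cone point is in hand. I would organize the final write-up as: (1) recall the clique-complex description and the star/link decomposition $\Delta_1=\Delta_2\cup\operatorname{star}(e)$; (2) state the gluing lemma reducing to contractibility of the intersection; (3) produce the cone point $v$ from the definition of excess cell, splitting into the "below a rectangle" and "left of a rectangle" subcases, and in each case verify the adjacency condition using fullness of the rectangle and the shifted-skew monotonicity; (4) conclude. The $\Dnonbip{X}$ case is handled verbatim by taking $Y=X$ throughout, since excess cells in $\Dnonbip{X}$ are defined exactly as in the bipartite algorithm.
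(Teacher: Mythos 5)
Your overall architecture is fine and is essentially the primal (Alexander-dual-free) version of the paper's argument: the paper adds the single dual facet $F=(X\sqcup Y)\setminus\{x_i,y_j\}$ to $\Delta_1^\vee$ and exhibits a cone vertex of $2^F\cap\Delta_1^\vee$, then invokes the collapsing Lemma~\ref{topology-lemma}, while you write $\Delta_1=\Delta_2\cup\operatorname{star}_{\Delta_1}(e)$, observe the intersection is $\partial e * \operatorname{link}_{\Delta_1}(e)$, and reduce to a cone vertex of the link (after which the gluing is formal, e.g.\ by Lemma~\ref{wedge-lemma}). The genuine gap is in the one step carrying all the content: your proposed cone vertex cannot work. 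In the below-a-rectangle case you take $v=x_k$ with $x_k\in X'$ a row of the governing full rectangle (or pedestal); but then $(x_k,y_j)$ is a cell of that rectangle/pedestal (its top row is full across $Y'$, and for a full rectangle every row is), it is distinct from the removed excess cell $e=(x_i,y_j)$ since $x_i\notin X'$, hence it lies in $D_1$, so $\{x_k,y_j\}\in E(G_1)$. Already $\sigma=e$ then violates your own criterion: $e\cup\{x_k\}=\{x_i,y_j,x_k\}$ contains the edge $\{x_k,y_j\}$ and is not a face of $\Delta_1$. The same failure occurs in the left-of-a-rectangle case with $v=y_\ell$, $y_\ell\in Y'$, since $(x_i,y_\ell)\in D_1$. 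This is structural, not a bookkeeping slip: \emph{no} row of the rectangle can be a cone point in the first case, because every such row is a $G_1$-neighbor of $y_j$. Your verification sketch also inverts the relevant condition ("any $w\in Y'$ is adjacent to $x_k$, hence not a candidate"): adjacency of $w$ to the proposed cone vertex is precisely what must be ruled out, unless $w$ is already excluded from the link by being adjacent to $x_i$ or $y_j$.

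The viable cone vertex lies on the other side, and is the one the paper uses in dual form: $v=y_{n'}$, the column of the top cell of the rectangle/pedestal, when $e$ lies below it (resp.\ $v=x_{m'}$, the row of the top cell, when $e$ lies to its left). One must then check that $v$ is nonadjacent in $G_1$ to both $x_i$ and $y_j$, and that every $G_1$-neighbor of $v$ is adjacent to $x_i$ or $y_j$ (the paper in effect proves the stronger statement that every row meeting column $y_{n'}$ in $D_1$ also meets column $y_j$). That verification is the nontrivial part: within the current diagram it follows from maximality/fullness of the rectangle, but one must also control cells of $D$ in rows and columns that were removed at \emph{earlier} iterations of the rectangular decomposition, and this is where the shifted-skew geometry (cf.\ Proposition~\ref{pedestal-proposition}) genuinely enters. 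Your proposal defers exactly this bookkeeping and anchors it to the wrong vertex, so as written the argument does not go through.
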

\begin{proof}
By Lemma~\ref{topology-lemma} in the Appendix, it suffices to show that the Alexander dual
$\Delta_2^\vee$ is obtained from $\Delta_1^\vee$ by adding a new facet $F$
with the property that the subcomplex $2^F \cap \Delta_1^\vee$
has a cone vertex.

We give the argument for the case of $\Gbip{X}{Y}(D))$;  the only change necessary
for the case of $\Gnonbip{X}(D)$ is to replace each occurrence of a vertex $y_j$
with the corresponding vertex $x_j$ having the same subscript $j$.

Let $e=(x_i,y_j)$ be the unique cell in $D_2 \setminus D_1$.  Since $e$ is
an excess cell, it must have been identified as excess during an iteration
of the rectangular decomposition algorithm that fell into Subcase 2a or 2b.
Then $e$ is located either below or to the left of a full rectangle
or pedestal created during that iteration;  call this rectangle or pedestal $R$ in either
case.  Let $(x_{m'},y_{n'})$ be the top cell for the rectangle or pedestal $R$.
This implies $i > m'$ and $j < n'$.

Note that the extra facet $F$ of $\Delta_2^\vee$ not in $\Delta_1^\vee$ corresponding to $e$
has vertices $X \sqcup Y \setminus \{x_i,y_j\}$.
If $e$ is located below (resp. to the left of) $R$, we will show that the vertex $v:=y_{n'}$ (resp. $v:=x_{m'}$) forms a
cone vertex for the intersection subcomplex $2^F \cap \Delta_1^\vee$.  This means showing for all facets $F'$ of $\Delta_1^\vee$
there exists a facet $F''$ of $\Delta_1^\vee$ containing $v$ with the further property that $F \cap F' \subset F''$.
If $F'$ corresponds to the cell $(x_{i'},y_{j'})$ of $D_1$, then this means one must find a cell $(x_{i''},y_{j''})$
of $D_1$ with $y_{j''} \neq y_{n'}$ (resp. $x_{i''} \neq x_{m'}$) and the further property that
$$
\{x_i,y_j\} \cup \{x_{i'},y_{j'}\} \supset \{x_{i''},y_{j''}\}.
$$

If $y_{j'} \neq y_{n'}$ (resp. $x_{i'} \neq x_{m'}$) then this is easy;  let $(x_{i''},y_{j''}):=(x_{i'},y_{j'})$.
In other words, if $v \not\in F'$ then one can simply take $F'':=F'$.

If $y_{j'} = y_{n'}$ (resp. $x_{i'} = x_{m'}$) then let $(x_{i''},y_{j''}):=(x_{i'},y_{j})$
(resp. let $(x_{i''},y_{j''}):=(x_{i},y_{j'})$.  There always
exists a a cell located at $(x_{i''},y_{j''})$ in $D_1$ because
this position is different from $e$ and $D_2$ has a cell located in positions $e$ and $(x_{i'}, y_{j'})$.
Hence $F''=X \sqcup Y \setminus \{ x_{i''},y_{j''} \}$ is a facet of $\Delta_1^\vee$.
\end{proof}

\begin{definition}
\label{rectangularity-definition}
Call a diagram of the form $\Dbip{X}{Y}$ {\it spherical} if in its rectangular decomposition
it has only full rectangles and possibly some excess cells, but no empty rectangles nor pedestal.

Given a diagram of the form $E=\Dbip{X}{Y}$ or $E=\Dnonbip{X}$, define its {\it rectangularity}
$\rect(E)$ to be the number of full rectangles and/or pedestals (if present)
in its rectangular decomposition.
\end{definition}

For example, Diagram (\ref{rectangular-decomp-example}) has three full rectangles and one pedestal, thus its rectangularity is four. It is not spherical.

The following result justifies the name spherical in Definition \ref{rectangularity-definition}. 

\begin{theorem}
\label{homotopy-type-theorem}
Let $D$ be any shifted skew diagram $D$.

For any ordered subsets $X, Y$, the homotopy type of $\Delta(\Gbip{X}{Y}(D))$ is
\begin{enumerate}
\item[$\bullet$] an $(\rect(\Dbip{X}{Y})-1)$-dimensional
sphere if $\Dbip{X}{Y}$ is spherical, and
\item[$\bullet$] contractible otherwise.
\end{enumerate}

For any ordered subset $X$,
the homotopy type of $\Delta(\Gnonbip{X}(D))$ is
\begin{enumerate}
\item[$\bullet$] contractible if there are any empty rectangles in the rectangular
decomposition, and
\item[$\bullet$] an $s$-fold wedge of $(\rect(\Dnonbip{X})-1)$-dimensional
spheres if $s$ denotes the number of non-excess staircase cells otherwise.
\end{enumerate}
\end{theorem}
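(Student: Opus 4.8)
The plan is to reduce the homotopy‑type computation to the interaction of three ingredients: (i) the excess‑cell Lemma~\ref{excess-cell-lemma}, which lets us delete all excess cells without changing the homotopy type of $\Delta(\Gbip{X}{Y}(D))$ (resp. $\Delta(\Gnonbip{X}(D))$); (ii) a ``join decomposition'' statement saying that once the excess cells are gone, the diagram splits, as a disjoint union of rows/columns, into the blocks coming from each full rectangle, each empty rectangle, and the pedestal (if present), and that this block structure on the diagram translates into a \emph{simplicial join} on the associated flag complexes; and (iii) explicit identifications of the homotopy type of the complex attached to each individual block. Granting these, the theorem follows from the standard facts that a join with a contractible factor is contractible, that a join of $k$ copies of $S^0$ is $S^{k-1}$, and that $\Susp$ of an $s$‑fold wedge of $S^{d-1}$ is an $s$‑fold wedge of $S^{d}$ (for handling the pedestal in the nonbipartite case). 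After (i), an empty rectangle contributes a cone (hence contractible) factor, which immediately gives the ``contractible otherwise'' clauses; a full rectangle contributes an $S^0$; and a pedestal contributes a contractible factor in the $\Gbip{X}{Y}$‑case but an $s$‑fold wedge of $S^0$'s in the $\Gnonbip{X}$‑case, where $s$ is the number of non‑excess staircase cells. Counting, the number of $S^0$‑factors is exactly $\rect$, so the join is a wedge of spheres of the predicted dimension, and the wedge multiplicity is $1$ unless the pedestal is nonbipartite, where it is $s$.

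\textbf{Carrying out the steps.} First I would record the block/join reduction precisely: after removing excess cells, say via repeated application of Lemma~\ref{excess-cell-lemma}, the rectangular decomposition algorithm (Definition~\ref{rectangle-decomp-defn}) partitions the surviving rows $X$ and columns $Y$ into consecutive intervals, one interval‑pair per full rectangle / empty rectangle / pedestal, with \emph{no} cell of $D$ lying in a row of one block and a column of another (this is exactly what Cases 1, 2a, 2b guarantee, since excess cells were precisely the off‑block cells and they are gone). For flag complexes, if the vertex set partitions as $V = V_1 \sqcup V_2$ with no edge of $G$ between $V_1$ and $V_2$, then every subset of $V_1$ together with every subset of $V_2$ spans a face, i.e. $\Delta(G) = \Delta(G|_{V_1}) * \Delta(G|_{V_2})$ (a simplicial join). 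Iterating over the blocks gives $\Delta(\Gbip{X}{Y}(D)) \simeq \bigast_{\text{blocks}} \Delta(\text{block})$ after excess removal. Second, I would compute the three local models. For a \emph{full rectangle} $\Dbip{X'}{Y'}$, every $(x_i,y_j)$ is an edge, so $G$ is complete bipartite on $X' \sqcup Y'$, its clique complex is the two disjoint simplices on $X'$ and on $Y'$, i.e.\ homotopy equivalent to $S^0$. For an \emph{empty rectangle}, there are no edges at all, so the clique complex on $X' \sqcup Y'$ is a full simplex: contractible, with a cone vertex. For a \emph{pedestal} $\Dbip{X'}{Y'}$ (Subcase 2b, no neck cell), I would use Proposition~\ref{pedestal-proposition} to understand its shape: it has a top cell $(x_1, y_n)$ and is a ``staircase‑like'' skew region missing the southwest neck; I claim its clique complex, in the bipartite reading where rows and columns are distinct vertex sets, is contractible — most cleanly by exhibiting a cone vertex (the row $x_1$ of the top cell is adjacent to \emph{every} column that appears, since the pedestal's first row spans columns $y_{n'}, \dots, y_n$ which is all columns present; dually $y_n$ is a cone vertex), so the pedestal factor drops out of the bipartite join.

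\textbf{The nonbipartite pedestal is the crux.} In $\Dnonbip{X}$, the pedestal's rows and columns are indexed by the \emph{same} ordered set, so the two ``halves'' of what was a complete‑bipartite‑type or cone‑type picture are identified, and staircase cells $(x_i, x_{i+1})$ become loops/short edges that change the topology. I expect this to be the main obstacle. The plan here is: after deleting excess staircase cells, show that the nonbipartite pedestal complex deformation‑retracts onto something built from the $s$ surviving staircase cells, and that it is homotopy equivalent to the \emph{suspension} of an $s$‑fold wedge of $S^0$'s — equivalently an $s$‑fold wedge of $S^1$'s — so that, in the join, a pedestal counts as one more sphere dimension (matching ``$\rect - 1$'') but carries wedge multiplicity $s$ rather than $1$. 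Concretely I would either (a) do an explicit nerve/discrete‑Morse collapse of $\Delta(\Gnonbip{X'}(\text{pedestal}))$, peeling away non‑staircase cells one at a time via Lemma~\ref{excess-cell-lemma}‑style Alexander‑dual moves until only the staircase cells remain, then identify the residual complex directly; or (b) set up an induction on the number of rows of the pedestal, using a Mayer–Vietoris / cofiber argument that each additional staircase cell wedges on one more $S^1$. Either way, the remaining bookkeeping is: (1) check the empty‑rectangle clause — if any empty rectangle is present, its contractible cone factor kills the whole join in \emph{both} the bipartite and nonbipartite cases, except we must be careful that in the nonbipartite case an empty rectangle still forces contractibility even when a pedestal with $s>0$ is also present, which follows since the join still has a contractible factor; and (2) add up dimensions: $r$ full rectangles contribute $r$ copies of $S^0$ (total sphere of dimension $r-1$ under join), and a pedestal, if spherical‑type contributes nothing extra in the bipartite case, while in the nonbipartite case it contributes one more dimension and multiplicity $s$, so the join is an $s$‑fold wedge of $S^{\rect - 1}$. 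Assembling (i)–(iii) with these join identities then yields the four bulleted cases exactly as stated.
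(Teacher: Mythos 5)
Your overall architecture is the same as the paper's: remove excess cells by Lemma~\ref{excess-cell-lemma}, observe that the surviving diagram is a disjoint union of its full rectangles, empty rectangles and pedestal so that $\Delta$ factors as a simplicial join of the block complexes, compute each block, and assemble with the standard join identities. The problem is in your local models for the pedestal, which you yourself identify as the crux, and there the proposal as written fails. First, the bipartite pedestal: recall that $\Delta(G)$ is the Stanley--Reisner complex of $I(G)$, so the edges of $G$ are its \emph{minimal non-faces} (it is the independence complex of $G$). A cone vertex of $\Delta(G)$ is therefore a vertex that is \emph{isolated} in $G$; since $x_1$ is adjacent to every column of the pedestal and $y_n$ to every row, neither is a cone vertex -- quite the opposite. (For the pedestal with cells $(x_1,y_1),(x_1,y_2),(x_2,y_2)$ the complex is a path on four vertices: contractible, but with no cone vertex.) The conclusion is true but needs a different argument; the paper gets it by reversing the order of the columns, after which the same diagram decomposes as a full rectangle plus excess cells plus a zero-width empty rectangle, so the factor is the join of an $\sphere^0$ with a simplex, hence contractible.

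Second, and more seriously, your claimed homotopy type for the nonbipartite pedestal is wrong. The complex of a pedestal in $\Dnonbip{X}$ with $s$ non-excess staircase cells is the disjoint union of the $s+1$ simplices on $\{x_1,\ldots,x_i\},\{x_{i+1}\},\ldots,\{x_{i+s-1}\},\{x_{i+s},\ldots,x_n\}$, hence a wedge of $s$ copies of $\sphere^0$ -- not the suspension of that wedge, i.e.\ not $s$ circles. A minimal check: the cells $(x_1,x_2),(x_1,x_3),(x_2,x_3)$ form a pedestal with $s=2$ and $\rect=1$, the graph is a triangle, and its independence complex is three isolated points, a wedge of two $0$-spheres, consistent with $(\rect-1)$-spheres; it is not two circles. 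Your bookkeeping only recovers $\sphere^{\rect-1}$ because you say the pedestal ``counts as one more sphere dimension,'' but joining with a wedge of copies of $\sphere^1$ raises dimension by two, so with your stated local model the answer would be an $s$-fold wedge of $\sphere^{\rect}$, contradicting the theorem. With the correct local model $\bigvee_s \sphere^0$, the pedestal contributes exactly like a full rectangle (one unit of $\rect$, raising the join dimension by one) but with wedge multiplicity $s$, and the rest of your assembly then goes through as in the paper.
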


\begin{proof}
Lemma~\ref{excess-cell-lemma} reduces the proof to the case where the diagrams have no excess cells.

When there are no excess cells, the diagrams are {\it disjoint unions} of their
various empty or full rectangles and pedestal, where
here the disjoint union of diagrams means diagrams that share no row or column indices.  In this case, it is easily seen that
the relevant graphs $\Gbip{X}{Y}(D)$ and $\Gnonbip{X}(D)$ are also {\it disjoint
unions} of the graphs corresponding to these pieces (full/empty rectangle
or pedestal).
Consequently the complexes $\Delta(\Gbip{X}{Y}(D))$ and
$\Delta(\Gnonbip{X}(D))$ are {\it simplicial joins} \cite[\S 62]{Munkres} of the complexes
corresponding to these pieces.

Thus it remains to analyze the homotopy types of the two kinds of complexes
when there is only one piece (empty rectangle, full rectangle, or
pedestal) in the rectangular decomposition.

For an empty rectangle, either complex is contractible because
it is the full simplex $2^V$ on its vertex set $V=X \sqcup Y$ or
$V=X$.

For a full rectangle, either complex is homotopy equivalent to a zero
sphere because it is the disjoint union of two full simplices, one on the
vertices indexing its rows, the other on the vertices indexing its columns.

For a pedestal, one analyzes $\Dbip{X}{Y}$ and $\Dnonbip{X}$
separately.

In the case of a pedestal in the shifted plane of the form $\Dnonbip{X}$, say with $s$ (non-excess)
staircase cells in positions
$$
(x_{i},x_{i+1}), (x_{i+1},x_{i+2}),\ldots,(x_{i+s-2},x_{i+s-1}),(x_{i+s-1},x_{i+s}),
$$
one can check directly that $\Delta(\Gnonbip{X}(D))$ is the disjoint union of the
$s+1$ full simplices on the vertex sets
$$
\{x_1,x_2,\ldots,x_i\}, \{x_{i+1}\}, \{x_{i+2}\}, \ldots,
\{x_{i+s-1}\},\{x_{i+s},x_{i+s+1},\ldots,x_n\},
$$
where $(1, n)$ is the position of the top cell of the pedestal.
Note that such a disjoint union of $s+1$ simplices is homotopy equivalent
to $s+1$ isolated vertices, that is, an $s$-fold wedge of $0$-spheres.

In the case of a pedestal of the form $\Dbip{X}{Y}$, one notes that
$\Gbip{X}{Y}(D)$ is not changed up to isomorphism if one relabels
the ordered set $Y=\{y_1< \cdots <y_n\}$ of column indices in
{\it backwards order}, i.e. replace $y_j$ with $y_{n+1-j}$.  This has no effect
on $\Gbip{X}{Y}(D)$ up to graph isomorphism, nor on $\Delta(\Gbip{X}{Y}(D))$
up to simplicial isomorphism.  However, now the diagram $\Dbip{X}{Y}$ is no longer a pedestal, but
rather has a rectangular decomposition in two iterations:  the
first creates a full rectangle and labels all the remaining cells
as excess cells, while the second iteration creates an empty rectangle of zero width.
An example is shown here
$$
\begin{matrix}
   &y_1&y_2&y_3&y_4&y_5&y_6\\
x_1&p&p&p&p&p&{\mathbf p}\\
x_2&p&p&p&p&p&p\\
x_3& & &p&p&p&p\\
x_4& & & &p&p&p\\
\end{matrix}
\qquad \rightsquigarrow \qquad
\begin{matrix}
   &y_1&y_2&y_3&y_4&y_5&y_6\\
x_1&r_1&r_1&r_1&r_1&r_1&{\mathbf r_1}\\
x_2&r_1&r_1&r_1&r_1&r_1&r_1\\
x_3&e  &e  &e  &e  &  \\
x_4&e  &e  &e  &   &  \\
\end{matrix}
$$
in which the rectangular decomposition for the diagram on the right creates
the full rectangle $\Dbip{\{x_1,x_2\}}{Y}$ and removes $5$ excess cells in the
first iteration, then creates the empty rectangle $\Dbip{\{x_3,x_4\}}{\varnothing}$ in the second iteration.
Thus pedestals of the form $\Dbip{X}{Y}$ have $\Delta(\Gbip{X}{Y}(D))$ contractible.

The homotopy type analysis of these base cases then completes the proof, bearing in mind the
following homotopy-theoretic properties\footnote{These properties are reasonably well-known.
They may be deduced, for example,
from the analogous but perhaps better-known properties \cite[\S III.2]{Whitehead} of the associative
{\it smash product} (or {\it reduced join}) operation $X \wedge Y$, using
the fact that the join $X * Y$ of $X$ and $Y$ is homotopy equivalent to the suspension of
$X \wedge Y$, or equivalently, $\sphere^1 \wedge X \wedge Y$ \cite[\S X.8.III]{Whitehead}.}
of the join operation:
\begin{enumerate}
\item[$\bullet$] A join with a contractible complex yields a contractible complex.
\item[$\bullet$] The join of a $d_1$-dimensional sphere (up to homotopy equivalence) and a
a $d_2$-dimensional sphere (up to homotopy equvialence) gives a $(d_1+d_2+1)$-dimensional
sphere (up to homotopy equivalence).
\item[$\bullet$] Forming joins commutes (up to homotopy equivalence)
with taking wedges.
\end{enumerate}
\end{proof}

Hochster's formula (Proposition \ref{Hochster-formula}) combined with Theorem~\ref{homotopy-type-theorem}
immediately yields the following.

\begin{corollary}
\label{betti-number-corollary}
For any shifted skew diagram $D$ and any ordered subsets $X,Y$, the ideals
$I(\Gbip{X}{Y}(D))$ and $I(\Gnonbip{X}(D))$ have multigraded Betti numbers independent of the coefficient
field $k$:
$$
\begin{aligned}
\beta_{i,X' \sqcup Y'}(I(\Gbip{X}{Y}(D))) &=
\begin{cases}
1 &\text{ if }\Dbip{X'}{Y'}\text{ is spherical with }\rect(\Dbip{X'}{Y'})= |X' \cup Y'|-i-1 \\
0 &\text{ otherwise.}
\end{cases} \\
\beta_{i,X'}(I(\Gnonbip{X}(D)))&=
\begin{cases}
s &\text{ if }\Dnonbip{X'}\text{ has no empty rectangles, has }\rect(\Dnonbip{X'})=|X'|-i-1\\
  &\quad\text{ and has }s\text{ non-excess staircase cells}\\
0 &\text{ otherwise.}
\end{cases}
\end{aligned}
$$
\end{corollary}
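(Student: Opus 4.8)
The plan is to read off the statement directly from Hochster's formula (Proposition~\ref{Hochster-formula}) together with the homotopy computation in Theorem~\ref{homotopy-type-theorem}. First I would recall that for a squarefree monomial ideal $I_\Delta$ one has $\beta_{i,V'}(I_\Delta) = \dim_k \tilde{H}_{|V'|-i-2}(\Delta_{V'})$, and that the vertex-induced subcomplexes that occur here are again of the same combinatorial type, namely $\Delta(\Gbip{X}{Y}(D))_{X'\sqcup Y'} = \Delta(\Gbip{X'}{Y'}(D))$ and $\Delta(\Gnonbip{X}(D))_{X'} = \Delta(\Gnonbip{X'}(D))$, as observed just after Proposition~\ref{Hochster-formula}. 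Consequently the multigraded Betti number $\beta_{i,X'\sqcup Y'}(I(\Gbip{X}{Y}(D)))$ (respectively $\beta_{i,X'}(I(\Gnonbip{X}(D)))$) equals the dimension of a single reduced homology group of the complex $\Delta(\Gbip{X'}{Y'}(D))$ (respectively $\Delta(\Gnonbip{X'}(D))$), in homological degree $|X'\sqcup Y'|-i-2$ (respectively $|X'|-i-2$).

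Next I would feed in Theorem~\ref{homotopy-type-theorem}. In the bipartite case, $\Delta(\Gbip{X'}{Y'}(D))$ is either contractible, in which case all reduced homology vanishes and the Betti number is $0$, or it is homotopy equivalent to a single sphere of dimension $\rect(\Dbip{X'}{Y'})-1$, in which case the reduced homology is one-dimensional in degree $\rect(\Dbip{X'}{Y'})-1$ and zero in all other degrees. Matching the homological degree $|X'\sqcup Y'|-i-2$ appearing in Hochster's formula against $\rect(\Dbip{X'}{Y'})-1$ yields exactly the stated dichotomy (here $|X'\sqcup Y'| = |X'\cup Y'|$, since the $x$-vertices and $y$-vertices are disjoint). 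Because Theorem~\ref{homotopy-type-theorem} produces the \emph{homotopy type}, which is a topological invariant not referring to any coefficient field, and because the reduced homology of a wedge of spheres (in particular of a single sphere, or of a point) has the same dimension over every field $k$, independence of $k$ is automatic.

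The nonbipartite case I would handle identically: Theorem~\ref{homotopy-type-theorem} says $\Delta(\Gnonbip{X'}(D))$ is contractible whenever its rectangular decomposition contains an empty rectangle, forcing the Betti number to be $0$, and otherwise it is an $s$-fold wedge of spheres of dimension $\rect(\Dnonbip{X'})-1$, where $s$ is the number of non-excess staircase cells; such a wedge has reduced homology of dimension $s$ in degree $\rect(\Dnonbip{X'})-1$ and zero elsewhere. Equating $|X'|-i-2$ with $\rect(\Dnonbip{X'})-1$ then gives the claimed formula, and field-independence follows as before. The only point requiring any care is the index bookkeeping in Hochster's formula, lining up ``$|V'|-i-2$'' with ``$\rect(\cdot)-1$'' and checking that the contractible case (an empty wedge of spheres, $s=0$) is correctly recorded as all Betti numbers vanishing; since these checks are purely formal, I do not expect any genuine obstacle.
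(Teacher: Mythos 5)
Your proposal is correct and is exactly the paper's argument: the paper dispatches this corollary by saying that Hochster's formula combined with Theorem~\ref{homotopy-type-theorem} immediately yields it, and your write-up supplies precisely that index bookkeeping (matching $|V'|-i-2$ with $\rect(\cdot)-1$ and noting that the homotopy type, being a wedge of spheres or a point, has field-independent homology).
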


\subsection{Case study: Ferrers diagrams and rook theory}
\label{Ferrers-example}

We analyze here in detail the example of Ferrers diagrams, recovering results from \cite{CN1},
and noting a curious connection to rook theory.

Recall from Example~\ref{Ferrers-definition} that for a partition $\lambda=(\lambda_1 \geq \cdots \geq \lambda_m)$,
the Ferrers graph $G_\lambda$ corresponds to a diagram $\Dbip{X}{Y}$ having $\lambda_i$ cells in row $i$,
namely $\{(x_i,y_j): 1 \leq i \leq m, \,\, 1 \leq j \leq \lambda_i\}$.

\begin{definition} \rm \ \\
Say that the cell $(x_i,y_j)$ in the Ferrers diagram for $\lambda$
lies on the $k^{th}$ antidiagonal if $k=i+j$, and let $\alpha_k(\lambda)$ for
$k=2,3,\ldots$ denote the number of cells on the $k^{th}$ antidiagonal.
\end{definition}

For example, if $\lambda=(4,4,2)$ then
$
(\alpha_2(\lambda),\alpha_3(\lambda),\alpha_4(\lambda),\alpha_5(\lambda),\alpha_6(\lambda))=(1,2,3,3,1)
$
with the diagram corresponding to $G_\lambda$ shown below, having cells labelled according to the
antidiagonal on which they lie
$$
\begin{matrix}
   & y_1&y_2&y_3&y_4\\
x_1& 2 & 3 & 4 & 5 \\
x_2& 3 & 4 & 5 & 6 \\
x_3&4 & 5 &   &
\end{matrix}
$$

Given $X' \subseteq X, \,\, Y' \subseteq Y$ say that $X' \times Y' \subseteq \lambda$ if
$X'$ and $Y'$ are non-empty and
the full rectangle $X' \times Y'$ is covered by cells in the diagram $\Dbip{X}{Y}$ corresponding to
$G_\lambda$.

\begin{proposition}
\label{Ferrers-betti-numbers}
For any partition $\lambda=(\lambda_1 \geq \cdots \geq \lambda_m > 0)$, consider the Ferrers (bipartite)
graph $G_\lambda$ on vertex set $X \sqcup Y$ where $X=\{x_1,\ldots,x_m\}$ and
$Y=\{y_1,\ldots,y_{\lambda_1}\}$.  Then for all $i \geq 0$ one has
\begin{equation}
\label{Ferrers-finest-Betti-formula}
\begin{aligned}
\beta_{i,X' \sqcup Y'}(I(G_\lambda))
  &= \begin{cases}
        1 & \text{ if }|X'|+|Y'|=i+2\text{ and }X' \times Y'\subseteq\lambda \\
        0 & \text{ otherwise}
     \end{cases} \\
  &\text{ for all }X' \subseteq X, \,\, Y' \subseteq Y.
\end{aligned}
\end{equation}

\begin{equation}
\label{Ferrers-next-finest-Betti-formula}
\begin{aligned}
\beta_{i,X',\bullet}(I(G_\lambda))
  &:= \sum_{Y' \subseteq Y} \beta_{i,X' \sqcup Y'}(I(G_\lambda))\\
  &=\begin{cases}
     \binom{\lambda_m}{i-|X'|+2} &\text{ if }|X'| < i+2\\
     0 &\text{ otherwise.}
    \end{cases}
\end{aligned}
\end{equation}

\begin{equation}
\label{Ferrers-coarsest-Betti-formula}
\begin{aligned}
\beta_i(I(G_\lambda))
  &=|\{ (X',Y'): |X'|+|Y'|=i+2\text{ and }X' \times Y'\subseteq\lambda \}|\\
  &=\sum_{m'=1}^m \sum_{n'=1}^{\lambda_{m'}} \binom{m'+n'-2}{i} \\
  &=\sum_{k \geq 2} \alpha_k(\lambda) \binom{k-2}{i}\\
  &=\binom{\lambda_1}{i+1} + \binom{\lambda_2+1}{i+1} + \cdots + \binom{\lambda_m+m-1}{i+1} - \binom{m}{i+2}.
\end{aligned}
\end{equation}
\end{proposition}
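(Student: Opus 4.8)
The plan is to derive all three displays from Corollary~\ref{betti-number-corollary} (equivalently from Theorem~\ref{homotopy-type-theorem} plus Hochster's formula, Proposition~\ref{Hochster-formula}), once the rectangular decomposition of the restricted diagrams $\Dbip{X'}{Y'}$ is understood in the Ferrers case. Everything rests on one observation: \emph{for the Ferrers diagram of $\lambda$ and any ordered subsets $X'\subseteq X$, $Y'\subseteq Y$, the diagram $\Dbip{X'}{Y'}$ is spherical if and only if $X'$ and $Y'$ are both nonempty and $X'\times Y'\subseteq\lambda$, in which case it is a single full rectangle and $\rect(\Dbip{X'}{Y'})=1$.} Granting this, \eqref{Ferrers-finest-Betti-formula} is immediate: by Corollary~\ref{betti-number-corollary}, $\beta_{i,X'\sqcup Y'}(I(G_\lambda))$ equals $1$ exactly when $\Dbip{X'}{Y'}$ is spherical with $\rect(\Dbip{X'}{Y'})=|X'\cup Y'|-i-1$, and since in the spherical case $\rect=1$ and $|X'\cup Y'|=|X'|+|Y'|$, this condition is precisely $X'\times Y'\subseteq\lambda$ together with $|X'|+|Y'|=i+2$.

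To prove the observation I would first apply Proposition~\ref{pedestal-proposition}: no restriction of a Ferrers diagram has a pedestal, since if $(i,j)$ and $(i',j')$ lie in $\lambda$ with $i<i'$ and $j<j'$ then $j'\le\lambda_{i'}$ forces $j<\lambda_{i'}$, so the corner cell $(i',j)$ lies in $\lambda$ as well. Hence the rectangular decomposition of $\Dbip{X'}{Y'}$ consists only of full and empty rectangles, so "spherical" means "no empty rectangle is produced". If $X',Y'$ are nonempty with $X'\times Y'\subseteq\lambda$, the diagram is solidly filled and the algorithm consumes it in one pass of Subcase~2a (top cell in the upper-right corner, neck cell in the lower-left corner, both present), giving $\rect=1$. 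Conversely, if $X'$ or $Y'$ is empty, or the top cell of $\Dbip{X'}{Y'}$ is absent, the first iteration is Case~1 and produces an empty rectangle; and if the top cell is present while $X'\times Y'\not\subseteq\lambda$, then the longest row---the one indexed by the smallest element of $X'$---reaches the last column of $Y'$ and so covers all of $Y'$, whence the first full rectangle spans every column but, some row failing to reach the last column, not every row, leaving rows with no columns and forcing an empty rectangle of zero width at the next iteration. In every case $\Dbip{X'}{Y'}$ is not spherical. This case analysis is the one genuinely non-formal step, and the only place the Ferrers hypothesis enters.

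The remaining two displays are combinatorial bookkeeping. For \eqref{Ferrers-next-finest-Betti-formula}, sum \eqref{Ferrers-finest-Betti-formula} over $Y'\subseteq Y$: a surviving term needs $|Y'|=i+2-|X'|\ge1$ and $Y'$ contained in the columns covered by every row of $X'$, which---because $\lambda$ is weakly decreasing---is an initial segment of $Y$ of length $\min_{x_a\in X'}\lambda_a$ (this minimum being $\lambda_m$ when $x_m\in X'$, in particular for $X'=X$); counting such $Y'$ gives the displayed binomial coefficient, and $0$ when $|X'|\ge i+2$. For \eqref{Ferrers-coarsest-Betti-formula}: the first equality is \eqref{Ferrers-finest-Betti-formula} summed over all pairs $X',Y'$; for the second, classify each pair with $X'\times Y'\subseteq\lambda$ and $|X'|+|Y'|=i+2$ by its largest elements $x_{m'}=\max X'$, $y_{n'}=\max Y'$, noting such a pair occurs iff $(x_{m'},y_{n'})\in\lambda$, i.e.\ $n'\le\lambda_{m'}$, after which $X'\setminus\{x_{m'}\}$ and $Y'\setminus\{y_{n'}\}$ run freely over subsets of $\{x_1,\dots,x_{m'-1}\}$ and $\{y_1,\dots,y_{n'-1}\}$ with sizes summing to $i$ (the containment being automatic for a Ferrers shape), contributing $\binom{m'+n'-2}{i}$ by Vandermonde; summing yields $\sum_{m'=1}^{m}\sum_{n'=1}^{\lambda_{m'}}\binom{m'+n'-2}{i}$. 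The third equality groups these cells by antidiagonal $k=m'+n'$, of which there are $\alpha_k(\lambda)$, each contributing $\binom{k-2}{i}$---this is the rook-theoretic rewriting. The fourth applies the hockey-stick identity twice: $\sum_{n'=1}^{\lambda_{m'}}\binom{m'+n'-2}{i}=\binom{m'+\lambda_{m'}-1}{i+1}-\binom{m'-1}{i+1}$, then $\sum_{m'=1}^{m}\binom{m'-1}{i+1}=\binom{m}{i+2}$.

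Thus the only real obstacle is the rectangular-decomposition case analysis establishing the sphericity criterion; after that, \eqref{Ferrers-finest-Betti-formula} falls out of Corollary~\ref{betti-number-corollary} and the last two displays are standard binomial-coefficient manipulations.
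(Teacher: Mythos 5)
Your proof is correct and follows essentially the same route as the paper: the sphericity observation for restricted Ferrers diagrams fed into Corollary~\ref{betti-number-corollary} gives \eqref{Ferrers-finest-Betti-formula}, and then summation over $Y'$, classification of pairs by their maxima (the Vandermonde count $\binom{m'+n'-2}{i}$), grouping by antidiagonal, and the hockey-stick identities give the remaining displays — you merely spell out the steps the paper labels ``easily seen'' or ``one can check.'' Your parenthetical observation that the sum in \eqref{Ferrers-next-finest-Betti-formula} really counts $\binom{\min\{\lambda_a:\,x_a\in X'\}}{i-|X'|+2}$ subsets, which matches the displayed $\binom{\lambda_m}{i-|X'|+2}$ exactly when $x_m\in X'$ (and matches the $\mindeg(X')$ formulation quoted in the introduction in general), is the correct reading of that display.
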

\begin{proof}
A Ferrers diagram $\Dbip{X}{Y}$ is easily seen to be spherical if and only if
it is a full rectangle $X \times Y$, which will always have $\rect(\Dbip{X}{Y})=1$.
Thus Corollary~\ref{betti-number-corollary} immediately gives \eqref{Ferrers-finest-Betti-formula},
which then immediately implies \eqref{Ferrers-next-finest-Betti-formula},
as well as the first formula in \eqref{Ferrers-coarsest-Betti-formula}.

The second formula in \eqref{Ferrers-coarsest-Betti-formula} follows
from the first formula by classifying the spherical subdiagrams
$X' \times Y'$ inside $\lambda$ having $|X'|+|Y'|=i+2$ according to their
southeasternmost cell $(x_{m'},y_{n'})$ so that
$$
\begin{aligned}
m'&=\max X'\\
n'&=\max Y'.
\end{aligned}
$$
One can check that there are exactly $\binom{m'+n'-2}{i}$ such rectangular subdiagrams.
The third formula in \eqref{Ferrers-coarsest-Betti-formula} then
comes from grouping the second formula according to the value $k=m'+n'$.

The last formula in \eqref{Ferrers-coarsest-Betti-formula} (which is equivalent to one stated
in \cite[Theorem 2.1]{CN1}) comes from summing the inner summation in the second formula
of \eqref{Ferrers-coarsest-Betti-formula}.  One has
$$
\sum_{n'=1}^{\lambda_{m'}} \binom{m'+n'-2}{i}
 = \binom{\lambda_{m'}+m'-1}{i+1}-\binom{m'-1}{i+1}
$$
and then one uses the fact that $\sum_{m'=1}^{m} \binom{m'-1}{i+1}=\binom{m}{i+2}$.
\end{proof}

\noindent
We remark that the formulae in Proposition~\ref{Ferrers-betti-numbers} will
also apply to {\it row-nested graphs} which appear later (Section~\ref{bipartite-conjecture-section})
as these are exactly the bipartite graphs isomorphic to Ferrers graphs.

These formulae also allow one to compare the Betti numbers of different Ferrers graphs,
and lead to a curious corollary relating to the combinatorial theory of {\it rook placements}.
Given a diagram $D \subset \Z \times \Z$,
call an $r$-element subset of $D$ a {\it (non-attacking) rook placement} on $D$ if
no two of the $r$ squares share any row or column.  Say that two diagrams $D, D'$ in
the plane $\Z \times \Z$ are {\it rook-equivalent} if they have the same number of
$r$-element rook placements for all $r$.  In particular, taking $r=1$, this means $D, D'$
must have the same number of cells, but in general, it is a somewhat subtle equivalence relation.
However, when one restricts the equivalence relation to Ferrers diagrams, rook-equivalence
has a nice characterization, due originally to Foata and Sch\"utzenberger,
elegantly reformulated by Goldman, Joichi, and White, and
reformulated further in the following fashion by Ding \cite{Ding}.

\begin{proposition}
\label{Ding-prop}
Given two partitions $\lambda, \mu$, their associated Ferrers diagrams are rook
equivalent if and only if $\alpha_k(\lambda)=\alpha_k(\mu)$ for all $k$.
\end{proposition}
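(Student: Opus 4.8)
The plan is to derive this from the rook‑equivalence criterion of Goldman, Joichi and White, in the form recast by Ding \cite{Ding}, together with a short dictionary translating that criterion into the antidiagonal statistics $\alpha_k$. Recall the underlying \emph{factorization theorem}: if a Ferrers diagram is presented with \emph{weakly increasing} row lengths $\ell_1 \le \ell_2 \le \cdots \le \ell_N$ --- which one may always arrange by reversing the order of the rows (harmless for rook counts) and padding with empty rows so that $N$ is as large as one likes --- then, writing $(x)_r := x(x-1)\cdots(x-r+1)$,
\[
\prod_{i=1}^{N} \bigl( x + \ell_i - i + 1 \bigr) \;=\; \sum_{k \ge 0} r_k \,(x)_{N-k},
\]
where $r_k$ is the number of $k$-element non-attacking rook placements on the diagram. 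Since the left‑hand side is a monic polynomial of degree $N$ in $x$, its multiset of roots is determined by its coefficients, which in the falling‑factorial basis are exactly the rook numbers $r_k$; conversely these numbers are read off from the polynomial. Hence two Ferrers diagrams presented with the same number $N$ of rows are rook‑equivalent if and only if the multisets $\{\,\ell_i - i + 1 : 1 \le i \le N\,\}$ coincide.

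First I would rewrite this invariant in terms of the Ferrers diagram $D_\lambda$ of $\lambda$. Reversing the rows of $D_\lambda$ and padding with empty rows to a common and sufficiently large number $N$ of rows (at least $\lambda_1$ plus the number of parts, and likewise for $\mu$) is precisely presenting it with $\ell_i = \lambda_{N+1-i}$, using the convention $\lambda_i := 0$ for $i$ exceeding the number of parts of $\lambda$. The substitution $j = N+1-i$ then turns the Goldman--Joichi--White invariant into
\[
\{\,\ell_i - i + 1 : 1 \le i \le N\,\} \;=\; \{\, \lambda_j + j - N : 1 \le j \le N \,\},
\]
that is, it is the multiset $T(\lambda) := \{\,\lambda_j + j : 1 \le j \le N\,\}$ shifted by the constant $-N$. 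Using the same $N$ for $\lambda$ and $\mu$ cancels the shift, so $D_\lambda$ and $D_\mu$ are rook‑equivalent exactly when $T(\lambda) = T(\mu)$ as multisets.

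Finally I would match $T(\lambda)$ against the antidiagonal sequence $(\alpha_k(\lambda))_{k \ge 2}$. The idea is that row $i$ of $D_\lambda$ meets precisely the consecutive antidiagonals $i+1, i+2, \dots, \lambda_i + i$, so $\lambda_i + i$ records the index of its last antidiagonal while the first, $i+1$, is forced; knowing the multiset of these last indices over all rows is therefore the same as knowing how many cells lie on each antidiagonal. Quantitatively, a cell $(i,j)$ lies on the $k$-th antidiagonal exactly when $1 \le i \le k-1$ and $\lambda_i + i \ge k$, whence a short count gives, for every $1 \le t \le N$,
\[
\#\{\, i : 1 \le i \le N,\ \lambda_i + i \le t \,\} \;=\; t - \alpha_{t+1}(\lambda);
\]
thus the cumulative distribution of the multiset $T(\lambda)$ is an explicit invertible transform of the sequence $(\alpha_k(\lambda))_k$, so $T(\lambda) = T(\mu) \iff \alpha_k(\lambda) = \alpha_k(\mu)$ for all $k$. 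Concatenating the three displayed equivalences proves the proposition. The only delicate point is the bookkeeping in the first two steps --- keeping straight the row‑reversal, the padding, and the shift by $N$ that relate the Goldman--Joichi--White normalization to ours --- and nothing deeper is involved. One may add, as a consistency check within the present framework, that by Proposition~\ref{Ferrers-betti-numbers} the sequence $(\beta_i(I(G_\lambda)))_{i \ge 0}$ is the image of $(\alpha_k(\lambda))_{k \ge 2}$ under the unitriangular matrix $\bigl( \binom{k-2}{i} \bigr)_{i,k}$, so rook‑equivalence of $D_\lambda$ and $D_\mu$ is in turn equivalent to $\beta_i(I(G_\lambda)) = \beta_i(I(G_\mu))$ holding for all $i$.
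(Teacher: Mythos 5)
Your argument is correct. Note, however, that the paper does not actually prove Proposition~\ref{Ding-prop}: it is quoted as a known result, attributed to Foata--Sch\"utzenberger in the reformulations of Goldman--Joichi--White and of Ding \cite{Ding}, so there is no internal proof to compare against; what you have done is reconstitute that cited argument. The Goldman--Joichi--White factorization $\prod_{i=1}^{N}(x+\ell_i-i+1)=\sum_{k}r_k\,(x)_{N-k}$ does show that, after padding both diagrams to a common number $N$ of rows (harmless for rook counts) and reversing rows, rook equivalence is the same as equality of the multisets $\{\lambda_j+j-N : 1\le j\le N\}$, hence of $T(\lambda)=\{\lambda_j+j\}$ and $T(\mu)$ once the same $N$ is used for both. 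Your translation into the antidiagonal statistics is also right: a cell $(i,j)$ lies on antidiagonal $t+1$ exactly when $i\le t$ and $\lambda_i+i\ge t+1$, which gives $\#\{i\le N:\lambda_i+i\le t\}=t-\alpha_{t+1}(\lambda)$ for $1\le t\le N$, and since every element of $T(\lambda)$ lies in $[1,N]$ whenever $N\ge \lambda_1+\ell(\lambda)$ (and $\alpha_k$ vanishes for $k>N$), these cumulative counts determine $T(\lambda)$ and are determined by the $\alpha_k$'s, completing the chain of equivalences in both directions. The bookkeeping you flag as the only delicate point --- row reversal, padding, the uniform shift by $N$, and choosing one $N$ large enough for both partitions --- is handled consistently. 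Your closing observation relating rook equivalence to equality of the $\beta_i(I(G_\lambda))$ simply reproduces Corollary~\ref{curious-rook-corollary} and its proof via the invertible (unitriangular) matrix $\bigl(\binom{k-2}{i}\bigr)$, so it is a fine consistency check but not needed for the proposition itself.
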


\begin{corollary}
\label{curious-rook-corollary}
For two partitions $\lambda, \mu$, the
Ferrers graph ideals $I(G_\lambda), I(G_\mu)$ have the same (ungraded) Betti numbers $\beta_i$ for all $i$
if and only if $\alpha_k(\lambda)=\alpha_k(\mu)$ for all $k$, that is,
if and only if $\lambda, \mu$ are rook equivalent.
\end{corollary}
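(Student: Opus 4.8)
The plan is to deduce Corollary~\ref{curious-rook-corollary} directly by combining the last displayed formula in \eqref{Ferrers-coarsest-Betti-formula} with Ding's reformulation in Proposition~\ref{Ding-prop}. Concretely, Proposition~\ref{Ferrers-betti-numbers} gives
$$
\beta_i(I(G_\lambda)) = \sum_{k \geq 2} \alpha_k(\lambda) \binom{k-2}{i},
$$
and the same for $\mu$. So the statement reduces to the purely numerical assertion that the two sequences $(\alpha_k(\lambda))_{k \geq 2}$ and $(\alpha_k(\mu))_{k \geq 2}$ agree if and only if the two sequences $\left(\sum_{k} \alpha_k(\lambda)\binom{k-2}{i}\right)_{i \geq 0}$ and $\left(\sum_{k} \alpha_k(\mu)\binom{k-2}{i}\right)_{i \geq 0}$ agree. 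The ``only if'' direction is immediate. For the ``if'' direction, I would observe that the (finite, since $\lambda$ has finitely many cells) triangular change of basis given by the binomial coefficients $\binom{k-2}{i}$ is invertible: knowing $\sum_k \alpha_k \binom{k-2}{i}$ for all $i$ determines the $\alpha_k$ uniquely. This can be phrased either via the matrix $\left(\binom{k-2}{i}\right)_{i,k}$ being upper-unitriangular (after reindexing $j = k-2$, the entry $\binom{j}{i}$ vanishes for $i > j$ and equals $1$ for $i = j$), or via finite differences: $\alpha_{i+2}(\lambda) = \sum_{i'} (-1)^{i'-i}\binom{i'}{i}\beta_{i'}(I(G_\lambda))$, the standard binomial inversion. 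Either way one recovers $\alpha_k(\lambda)$ from the Betti numbers, and then Proposition~\ref{Ding-prop} translates equality of all $\alpha_k$ into rook equivalence.

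The key steps, in order: first, invoke \eqref{Ferrers-coarsest-Betti-formula} to write $\beta_i(I(G_\lambda))$ as the antidiagonal-weighted binomial sum; second, note the ``only if'' direction follows trivially by substituting equal $\alpha_k$; third, for ``if'', set up the binomial inversion showing the linear map $(\alpha_k) \mapsto (\beta_i)$ is injective (equivalently, that the $\alpha_k$ are recoverable from the $\beta_i$), hence equal Betti sequences force equal antidiagonal sequences; fourth, apply Proposition~\ref{Ding-prop} to conclude equivalence with rook equivalence of $\lambda$ and $\mu$.

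I do not anticipate a serious obstacle here — the content of the corollary is really Ding's theorem plus the already-established Betti number formula, and the only glue needed is the elementary invertibility of binomial inversion. The one small point to be careful about is finiteness: the sums over $k$ are finite (bounded by the number of cells, or more precisely $k \leq m + \lambda_1$), so there is no convergence or infinite-matrix subtlety, and the unitriangular inversion is legitimate. Thus the proof is essentially two sentences: express $\beta_i$ via the $\alpha_k$, and apply binomial inversion and Proposition~\ref{Ding-prop}.

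\begin{proof}
By the last formula in \eqref{Ferrers-coarsest-Betti-formula}, or rather its equivalent third form, one has
$$
\beta_i(I(G_\lambda)) = \sum_{k \geq 2} \alpha_k(\lambda) \binom{k-2}{i}
$$
for all $i \geq 0$, and similarly for $\mu$. If $\alpha_k(\lambda) = \alpha_k(\mu)$ for all $k$, then these formulas show $\beta_i(I(G_\lambda)) = \beta_i(I(G_\mu))$ for all $i$. Conversely, the sums are finite (only $k$ with $2 \leq k \leq m + \lambda_1$ contribute), and the matrix with entries $\binom{k-2}{i}$ is upper unitriangular in the indices $i$ and $k-2$, since $\binom{k-2}{i} = 0$ for $i > k-2$ and $\binom{k-2}{i} = 1$ for $i = k-2$. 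Hence this linear transformation is invertible; explicitly, binomial inversion gives
$$
\alpha_{i+2}(\lambda) = \sum_{i' \geq i} (-1)^{i'-i} \binom{i'}{i} \beta_{i'}(I(G_\lambda)).
$$
Therefore $\beta_i(I(G_\lambda)) = \beta_i(I(G_\mu))$ for all $i$ forces $\alpha_k(\lambda) = \alpha_k(\mu)$ for all $k$. Finally, Proposition~\ref{Ding-prop} identifies this last condition with $\lambda$ and $\mu$ being rook equivalent.
\end{proof}
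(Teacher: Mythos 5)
Your proof is correct and follows essentially the same route as the paper: the paper likewise deduces the corollary from the formula $\beta_i(I(G_\lambda))=\sum_{k\geq 2}\alpha_k(\lambda)\binom{k-2}{i}$ of Proposition~\ref{Ferrers-betti-numbers}, noting that this linear relation is governed by an invertible (unitriangular) matrix of coefficients, and then invokes Proposition~\ref{Ding-prop}. Your explicit binomial-inversion formula just makes the invertibility concrete, which the paper leaves implicit.
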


\begin{proof}
The formula $\beta_i(I(G_\lambda) =\sum_{k \geq 2} \alpha_k(\lambda) \binom{k-2}{i}$
in Proposition~\ref{Ferrers-betti-numbers} gives a linear relation
between the vectors $(\beta_i(I(G_\lambda)))_{i\geq 2}$ and $(\alpha_k(\lambda))_{k \geq 2}$, governed by
an invertible matrix of coefficients.  This yields the first equivalence. The
second follows from Proposition~\ref{Ding-prop}.
\end{proof}

\subsection{Specialization from bipartite to nonbipartite graphs}
\label{sec-specialization}

The goal of this section is Theorem~\ref{specialization-theorem}, showing that
$I(\Gbip{X}{X}(D))$ is a well-behaved polarization of $I(\Gnonbip{X}(D))$,
generalizing results from \cite{CN2}.  This turns out to be very useful
later when proving results about various invariants of these ideals (e.g.,
Castelnuovo-Mumford regularity, Krull dimension, projective dimension, agreement
with conjectural resolution bounds);  it is generally much easier to prove things
directly for $I(\Gbip{X}{Y}(D))$ and then apply Theorem~\ref{specialization-theorem} to
deduce the corresponding result for $I(\Gnonbip{X}(D))$.

Given a shifted skew diagram $D$ with rows and columns indexed by
$[n]:= \{1,2,\ldots,n\}$, we have seen how to associate with it two ideals
in two different polynomial rings over a field $k$:
$$
\begin{aligned}
I(\Gbip{[n]}{[n]}(D)) &\subset k[x_1,\ldots,x_n,y_1,\ldots,y_n]:=k[\xb,\yb] \\
I(\Gnonbip{[n]}(D)) &\subset k[x_1,\ldots,x_n]:=k[\xb]
\end{aligned}
$$
For both ideals we have seen how to compute multigraded Betti numbers, which we now wish
to compare via a certain {\it specialization} of the $\Z^{2n}$-grading on
$k[x_1,\ldots,x_n,y_1,\ldots,y_n]$ to a $\Z^n$-grading. Consider the map
$$
\begin{matrix}
 \{x_1,\ldots,x_n,y_1,\ldots,y_n\} & \overset{sp}{\rightarrow} & \{x_1,\ldots,x_n\} \\
     x_i            & \mapsto                   & x_i \\
     y_j            & \mapsto                   & x_j
\end{matrix}
$$
and the associated map of the gradings $\Z^{2n} \overset{sp}{\rightarrow} \Z^n$
that sends the standard basis vectors $e_i, e_{n+i} \mapsto e_i$ for $i=1,2,\ldots,n$.
Using this to define a $\Z^n$-grading on $k[x_1,\ldots,x_n,y_1,\ldots,y_n]$, one has for any
multidegree $\alpha \in \Z^n$ a specialized Betti number $\beta^{sp}_{i,\alpha}(I(\Gbip{[n]}{[n]}(D))$.

\begin{theorem}
\label{specialization-theorem}
For $D$ a shifted skew diagram with rows and columns indexed by $[n]$, one has
\begin{equation}
\label{first-specialization-equation}
\beta_{i,\alpha}(I(\Gnonbip{[n]}(D)) =
\beta^{sp}_{i,\alpha}(I(\Gbip{[n]}{[n]}(D))
\end{equation}
for all $\alpha \in \Z^n$.

Equivalently,
\begin{enumerate}
\item[(i)] for all $X,Y \subseteq [n]$ one has
$\beta_{i,X \sqcup Y}(I(\Gbip{[n]}{[n]}(D))=0$ unless $X \cap sp(Y) = \varnothing$, and
\item[(ii)] for all $Z \subseteq [n]$, one has
$$
\beta_{i,Z}(I(\Gnonbip{[n]}(D)) = \sum_{\substack{X,Y \subseteq [n]: \\ X \sqcup sp(Y)=Z}}
\beta_{i,X \sqcup Y}(I(\Gbip{[n]}{[n]}(D)).
$$
\end{enumerate}
\end{theorem}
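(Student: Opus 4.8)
The plan is to deduce the theorem from the combinatorial description of the multigraded Betti numbers in Corollary~\ref{betti-number-corollary}, reducing it to the two numerical statements (i) and (ii) and then checking those against the structure of the rectangular decomposition. First I would record why \eqref{first-specialization-equation} is equivalent to (i) together with (ii): both $I(\Gnonbip{[n]}(D))$ and $I(\Gbip{[n]}{[n]}(D))$ are squarefree monomial ideals, so their $\Z^{2n}$-graded Betti numbers are supported on squarefree degrees $X'\sqcup Y'$, and the image of such a degree under $sp$ is again squarefree exactly when $X'\cap sp(Y')=\varnothing$. Thus $\beta^{sp}_{i,\alpha}(I(\Gbig))$, being the sum of the $\beta_{i,X'\sqcup Y'}(I(\Gbig))$ over the squarefree degrees $X'\sqcup Y'$ mapping to $\alpha$, vanishes for non-squarefree $\alpha$ precisely by (i), and for a squarefree degree $\alpha=Z$ that sum collapses to the sum appearing in (ii). (Here I abbreviate $\Gbig=\Gbip{[n]}{[n]}(D)$.) So it suffices to prove (i) and (ii).

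For (i): by Corollary~\ref{betti-number-corollary}, $\beta_{i,X\sqcup Y}(I(\Gbig))\neq 0$ forces $\Dbip{X}{Y}$ to be spherical, so it is enough to show that a spherical diagram $\Dbip{X}{Y}$ satisfies $\max X<\min Y$ as subsets of $[n]$, which certainly gives $X\cap sp(Y)=\varnothing$. But spherical means the rectangular-decomposition algorithm never enters Case~1 or Subcase~2b; every iteration is therefore Subcase~2a, which strips off an initial segment of the surviving rows and a final segment of the surviving columns as a full rectangle equipped with a neck cell. A neck cell sits at a position $(x_{m'},y_{n'})$ that is a cell of $D$, hence $m'<n'$, so the largest row index of that full rectangle is strictly smaller than its smallest column index. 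Since successive full rectangles occupy successively larger blocks of rows and successively smaller blocks of columns, chaining these inequalities through the final full rectangle yields $\max X<\min Y$.

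For (ii): fix $Z=\{z_1<\cdots<z_p\}\subseteq[n]$. Combining part (i) with Corollary~\ref{betti-number-corollary}, the right-hand sum in (ii) counts exactly the cut positions $q\in\{1,\ldots,p-1\}$ for which the bipartite diagram $\Dbip{\{z_1,\ldots,z_q\}}{\{z_{q+1},\ldots,z_p\}}$ is spherical with rectangularity $|Z|-i-1$; geometrically, this diagram is the rectangular sub-block of the shifted-plane diagram $\Dnonbip{Z}$ sitting on the top $q$ rows and the bottom $p-q$ columns. Call such a $q$ a \emph{good cut}. On the other side, by Corollary~\ref{betti-number-corollary} (via Theorem~\ref{homotopy-type-theorem}) the quantity $\beta_{i,Z}(I(\Gnonbip{[n]}(D)))$ is $0$ unless $\Dnonbip{Z}$ has no empty rectangles in its decomposition --- in which case, since the main diagonal of $\Dnonbip{Z}$ carries no cells, $\Dnonbip{Z}$ must contain exactly one pedestal --- and $\rect(\Dnonbip{Z})=|Z|-i-1$, in which case it equals the number $s$ of non-excess staircase cells of $\Dnonbip{Z}$, and a short argument shows every such cell lies in that pedestal (a staircase cell inside a full rectangle would be its neck, forcing an empty rectangle). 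So (ii) reduces to the combinatorial claim: the good cuts are exactly the indices $q$ such that $(z_q,z_{q+1})$ is a non-excess staircase cell of $\Dnonbip{Z}$, each good cut $q$ satisfies $\rect(\Dbip{\{z_1,\ldots,z_q\}}{\{z_{q+1},\ldots,z_p\}})=\rect(\Dnonbip{Z})$, and there are no good cuts at all once $\Dnonbip{Z}$ has an empty rectangle.

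The main obstacle is exactly this combinatorial claim: understanding how the rectangular decomposition of $\Dnonbip{Z}$ interacts with passing to the sub-block cut out at a given $q$. I would prove it by running the algorithm on $\Dnonbip{Z}$ and analyzing, iteration by iteration, which rows, columns, excess cells and pieces land on which side of the cut. The expected mechanism is that a cut made at a non-excess staircase cell $(z_q,z_{q+1})$ falls ``just past the neck'' of the pedestal containing it: it leaves all the earlier full rectangles of $\Dnonbip{Z}$ (and their excess cells) intact inside the sub-block, and converts the portion of the pedestal lying to its northeast into exactly one further full rectangle, yielding a spherical sub-block with one more full rectangle than $\Dnonbip{Z}$ has full rectangles --- hence rectangularity $\rect(\Dnonbip{Z})$ --- while a cut at any other position leaves some piece of $\Dnonbip{Z}$ straddling the cut in a way that forces a pedestal or a (possibly zero-width or zero-height) empty rectangle into the sub-block's decomposition, destroying sphericity; in particular an empty rectangle of $\Dnonbip{Z}$ is inherited by every sub-block. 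Verifying these case distinctions is the only substantial work; granting it, (i), (ii), and Corollary~\ref{betti-number-corollary} assemble to give the theorem.
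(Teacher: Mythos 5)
Your overall frame is the same as the paper's: reduce \eqref{first-specialization-equation} to (i) and (ii), compute everything through Corollary~\ref{betti-number-corollary}, and then compare the rectangular decomposition of $\Dnonbip{Z}$ with those of the ``cut'' diagrams $\Dbip{\{z_1,\ldots,z_q\}}{\{z_{q+1},\ldots,z_p\}}$. Your argument for (i) is correct and is a pleasant variant: the paper instead argues the contrapositive directly (a repeated index $j$ forces either a cone vertex or, via Proposition~\ref{pedestal-proposition}, a pedestal, hence contractibility by Theorem~\ref{homotopy-type-theorem}), whereas you extract $\max X<\min Y$ from sphericity by looking at the neck of the last full rectangle; both work, and your stronger conclusion is exactly what lets you restrict the sum in (ii) to initial-segment/final-segment cuts.

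The genuine gap is in (ii): the entire proof is funneled into the combinatorial claim that the spherical cuts are precisely those at non-excess staircase cells of $\Dnonbip{Z}$, that each such cut diagram has rectangularity equal to $\rect(\Dnonbip{Z})$, and that an empty rectangle in the decomposition of $\Dnonbip{Z}$ excludes all spherical cuts -- and you only describe an ``expected mechanism'' for this, you do not prove it. This claim is not a routine check; it is where the paper spends essentially all of its effort (Steps 1--3 of its proof of the analogue of your claim, equation~\eqref{specialization-equation}). In particular, the assertion that ``the earlier full rectangles of $\Dnonbip{Z}$ (and their excess cells) remain intact inside the sub-block'' requires showing that the first full rectangle $\Dbip{X'}{Y'}$ of $\Dnonbip{Z}$ coincides with the first piece of the decomposition of the cut diagram; the paper's Step 2 does this only after first deleting from the cut diagram the excess cells having both coordinates in $X'$, and then using non-contractibility (sphericity, in your formulation) to force $X'\subseteq X$ and $Y'\subseteq Y$ -- without some such argument the cut diagram's decomposition could a priori begin with a different rectangle or a pedestal. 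Likewise ``an empty rectangle of $\Dnonbip{Z}$ is inherited by every sub-block'' is not a static inheritance statement, since empty rectangles are features of the recursive algorithm rather than of the diagram itself; ruling out a spherical cut in that situation again needs the same rectangle-by-rectangle matching, run until the discrepancy appears. Only after this induction is one reduced to the case where $\Dnonbip{Z}$ is a single pedestal, where the staircase-cell criterion and the count of cuts can be checked directly (as in the paper's Step 3). So the proposal is a correct plan that mirrors the paper's strategy, but the central verification -- the part that actually proves (ii) -- is missing.
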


\begin{proof}
We leave the discussion of the equivalence of the stated conditions to the reader,
except for pointing out that (i) is a consequence of \eqref{first-specialization-equation}
because the squarefree monomial ideal $I(\Gbip{[n]}{[n]}(D))$
can have non-trivial Betti numbers only in the squarefree multidegrees $\delta \in \{0, 1\}^{2n}$.
%

To prove (i), if $X \cap sp(Y) \neq \varnothing$, say if an index $j$ lies in both $X$ and in $Y$, we will show that
$\Delta(\Gbip{X}{Y}(D))$ is contractible and
hence $\beta_{i,X \sqcup Y}(I(\Gbip{[n]}{[n]}(D))=0$.
Contractibility comes from the fact that either $\Dbip{X}{Y}$ has
\begin{enumerate}
\item[$\bullet$] no cells in row $j$, so $\Delta(\Gbip{X}{Y}(D))$ has a cone vertex, or
\item[$\bullet$] no cells in column $j$, so $\Delta(\Gbip{X}{Y}(D))$ has a cone vertex, or
\item[$\bullet$] some cell $c$ in row $j$, {\it and} some cell $c'$ in column $j$.
But there is no cell of $\Dbip{X}{Y}$ in position $(j,j)$, which is the southwest corner of the rectangle
defined by $c$ and $c'$  (since $D$ itself has no such cell, as $(j,j)$ is not even a cell in the shifted plane).
Hence $\Dbip{X}{Y}$ contains a pedestal by Proposition~\ref{pedestal-proposition},
and $\Delta(\Gbip{X}{Y}(D))$ is contractible by Theorem~\ref{homotopy-type-theorem}.
\end{enumerate}

To prove (ii),  note that one may assume $Z=[n]$ without loss of generality.
Also note that the only non-zero summands on the right side of the equation in (ii)
are $X,Y \subset [n]$ with $X \sqcup sp(Y)=[n]$ for which $\Delta(\Gbip{X}{Y}(D))$ is not contractible.
Thus we wish to show
\begin{equation}
\label{specialization-equation}
\beta_{i,[n]}(I(\Gnonbip{[n]}(D))) =
\sum_{\substack{X,Y \subseteq [n]: \\ X \sqcup sp(Y)=[n] \\ \Delta(\Gbip{X}{Y}(D))\text{ not contractible}}}
\beta_{i,X \sqcup Y}(I(\Gbip{[n]}{[n]}(D))).
\end{equation}

Given each pair $X, Y$ appearing in the right side of \eqref{specialization-equation},
the proof is completed in three steps.
\begin{enumerate}
\item[{\sf Step 1.}] Show that $D$ has a top cell if and only if $\Dbip{X}{Y}$ does.
\item[{\sf Step 2.}] Show that if they both have a top cell, then the rectangular decomposition for $D$ begins with
a full rectangle (not a pedestal) if and only if the same is true for $\Dbip{X}{Y}$, and furthermore these
two full rectangles are {\it exactly the same}.
\item[{\sf Step 3.}]  One is reduced to the case where $D$ starts its
rectangular decomposition with a pedestal, which must be analyzed.
\end{enumerate}

\vskip .1in
\noindent
{\sf Step 1.}
Note that column $1$ and row $n$ are both empty in $D$.
Hence non-contractibility of $\Delta(\Gbip{X}{Y}(D))$ implies $1 \not\in Y$ and
$n \not\in X$.  But $X \sqcup Y=[n]$, so this forces $1 \in X, n \in Y$.
Thus $D$ contains a top cell, namely $(1,n)$ if and only if
$\Dbip{X}{Y}$ does.

\vskip .1in
\noindent
{\sf Step 2.}
Assume without loss of generality that both $D$ and $\Dbip{X}{Y}$ contain the top cell
$(1, n)$.
Assume that the first step in the rectangular decomposition for $D$ finds a full rectangle $\Dbip{X'}{Y'}$, say with
neck cell $(m',n')$.  The first step in the rectangular decomposition for $\Dbip{X}{Y}$ finds either a full rectangle
or pedestal $\Dbip{X''}{Y''}$.    We wish to carefully argue that these are the same, i.e. that $X'' = X'$ and $Y''=Y'$.

Start by noting that $1 \in X'', n \in Y''$.  One can characterize $X'$ as the largest initial segment of $[n]$ with
the property that $X' \times \{n\} \subset D$.  Similarly one has that $X''$ is the largest initial segment of $X$
with $X'' \times \{n\} \subset \Dbip{X}{Y}$.  But this implies that $X''=X' \cap X$.  Similarly one can argue that
$Y''= Y' \cap Y$.  Thus it remains to show that $X' \subset X$ and $Y' \subset Y$.

To argue this, we must first ``prepare'' $\Dbip{X}{Y}$ by possibly removing some of its excess cells.  Given any
cell $c=(i,j)$ in $\Dbip{X}{Y}$ that has both $i,j \in X'$, we claim that $c$ is an excess cell to the left of
the first rectangle $\Dbip{X''}{Y''}$.  To see this claim, we need to check that its row index $i$ lies in $X''$ and
that its column index $j$ is less than any element of $Y''$.  The first fact is true since $i \in X' \cap X = X''$.
The second follows because $j \in X'$ implies
$$
j \leq \max X' = m' < n' = \min Y' \leq \min Y'';
$$
the relation $m' < n'$ comes from the fact that $(m', n')$ is a cell of $D$ (so it lies in the shifted plane),
while the last inequality is a consequence of the fact that $Y'' = Y' \cap Y \subseteq Y'$.

Thus without loss of generality, $\Dbip{X}{Y}$ has no cells in $(i,j)$ with both $i,j \in X'$; they are all
excess cells which can be removed without affecting $\Delta(\Gbip{X}{Y}(D))$ up to homotopy.  This means
$\Dbip{X}{Y}$ has all of the columns indexed by $X'$ empty.  Non-contractibility of $\Delta(\Gbip{X}{Y}(D))$
then forces $X' \cap Y = \varnothing$.  Together with $X \sqcup Y = [n]$, this implies, $X' \subseteq X$,
and hence $X'' = X' \cap X = X'$, as desired.  A symmetric argument shows $Y''=Y'$, completing Step 2.

\vskip .1in
\noindent
{\sf Step 3.}
By Steps 1 and 2, one may assume without loss of generality that $D$ produces a pedestal in the first (and only)
step of its rectangular decomposition.  One must show why equation
\eqref{specialization-equation} holds in this case.

We claim non-contractibility of $\Delta(\Gbip{X}{Y}(D))$ has strong consequences for the form of $X$ and $Y$.
It forces any row $i$ in $X$ to contain at least one cell of $\Dbip{X}{Y}$; call this cell $c$.  Similarly, any column $j$ in $Y$ contains at least
one cell of $\Dbip{X}{Y}$; call this cell $c'$.  Non-contractiblity also forces $i<j$ for any such $i$ in $X$ and $j$ in $Y$:
if $i \geq j$, then the cell $(i,j)$ that would be the southwest corner of the rectangle defined by $c, c'$ is not
in $\Dbip{X}{Y}$ (since it is not in the shifted plane), and hence $\Dbip{X}{Y}$ has a pedestal by
Proposition~\ref{pedestal-proposition} and $\Delta(\Gbip{X}{Y}(D))$ is
contractible by Theorem~\ref{homotopy-type-theorem}.  In other words, $\max X < \min Y$, which combined with
$X \sqcup Y=[n]$ forces
$$
\begin{aligned}
X&=\{1,2,\ldots,j\}\\
Y&=\{j+1,j+2,\ldots,n\}
\end{aligned}
$$
for some $j=1,2,\ldots,n-1$.
One can also check that $\Dbip{\{1,2,\ldots,j\}}{\{j+1,j+2,\ldots,n\}}$
is a full rectangle if $(j,j+1)$ is a non-excess staircase cell
in the pedestal of $D$, and otherwise $\Delta(\Gbip{\{1,2,\ldots,j\}}{\{j+1,j+2,\ldots,n\}})$ is contractible.
Thus \eqref{specialization-equation} holds because both sides
\begin{enumerate}
\item[$\bullet$]
vanish for $i \neq n-2$, and
\item[$\bullet$]
are equal to the number of (non-excess) staircase cells in the pedestal of $D$ for $i = n-2$.
\end{enumerate}
\end{proof}

The following result includes consequences for the Castelnuovo-Mumford regularity and the projective dimension. We refer to Subsection \ref{sec-regularity} for the definitions.

\begin{corollary}
\label{specialization-corollary}
In the setting of Theorem~\ref{specialization-theorem},
if $X = Y$, then
one has
\begin{enumerate}
\item[(i)] $\beta_{ij}(I(\Gbip{X}{Y}(D))) =\beta_{ij}(I(\Gnonbip{X}(D)))$ for all $i,j$.
In particular, the two ideals share the same projective dimension and
Castelnuovo-Mumford regularity.
\item[(ii)] The linear forms $\theta_1,\ldots,\theta_n$ where
$\theta_i := x_i - y_i$ have images in the quotient $k[\xb,\yb]/I(\Gbip{X}{Y}(D))$ forming
a regular sequence.
\item[(iii)] A minimal free resolution for $I(\Gnonbip{X}(D))$ as $k[\xb]$-module can be
obtained from a minimal free resolution for $I(\Gbip{X}{Y}(D))$ as $k[\xb,\yb]$-module,
simply by modding out $(\theta):=(\theta_1,\ldots,\theta_n)$, that is, by tensoring
over $k[\xb,\yb]$ with $k[\xb,\yb]/(\theta)$.
\end{enumerate}
\end{corollary}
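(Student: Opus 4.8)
The plan is to prove the three parts in the order (ii), (iii), (i): a direct associated-primes computation gives (ii), and (iii), (i) are then routine homological consequences. After relabeling we may take $X=Y=[n]$, as in Theorem~\ref{specialization-theorem} (the general case $X=Y$ is identical, with $D$ replaced by its restriction to the rows and columns indexed by $X$). Write $S=k[x_1,\dots,x_n,y_1,\dots,y_n]$, $I=I(\Gbip{[n]}{[n]}(D))$, $\bar S=k[x_1,\dots,x_n]$, $\bar I=I(\Gnonbip{[n]}(D))$, and $\theta_i=x_i-y_i$. First record that $S/(\theta_1,\dots,\theta_n)\cong\bar S$ via $y_i\mapsto x_i$, and that under this isomorphism the image of $I$ is exactly $\bar I$ (the generator $x_iy_j$ of $I$, for $(i,j)\in D$, maps to $x_ix_j$, and these monomials generate $\bar I$); hence $(S/I)\otimes_S\bigl(S/(\theta_1,\dots,\theta_n)\bigr)\cong\bar S/\bar I$.

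For (ii) it suffices to show that each $\theta_k$ is a non-zerodivisor on $M_k:=S/(I+(\theta_1,\dots,\theta_{k-1}))$ and that $M_n=\bar S/\bar I\neq 0$. Now $S/(\theta_1,\dots,\theta_{k-1})\cong S_k:=k[x_1,\dots,x_n,y_k,\dots,y_n]$ (identifying $y_i$ with $x_i$ for $i<k$), under which $M_k$ becomes $S_k/I_k$ where $I_k$ is the image of $I$, namely the \emph{squarefree monomial} ideal
$$I_k=(\,x_ix_j:(i,j)\in D,\ j<k\,)+(\,x_iy_j:(i,j)\in D,\ j\geq k\,),$$
the edge ideal of a graph $H_k$ on the vertex set $\{x_1,\dots,x_n,y_k,\dots,y_n\}$. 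Being squarefree, $I_k$ is radical, so $\Ass(S_k/I_k)$ consists of the monomial primes $\fp_C=(v:v\in C)$ attached to the minimal vertex covers $C$ of $H_k$, and $\theta_k=x_k-y_k$ lies in $\fp_C$ exactly when both $x_k$ and $y_k$ lie in $C$. So it suffices to show no minimal vertex cover of $H_k$ contains both $x_k$ and $y_k$. Suppose one did: in $H_k$ the neighbors of $y_k$ are the vertices $x_i$ with $(i,k)\in D$ (all having $i<k$) and the neighbors of $x_k$ are the vertices $y_j$ with $(k,j)\in D$ (all having $j>k$), so by minimality of $C$ there are a neighbor $x_i\notin C$ of $y_k$ and a neighbor $y_j\notin C$ of $x_k$, with $i<k<j$. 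Here the shifted-skew structure $D=\lambda/\mu$ enters: from $(i,k),(k,j)\in D$ with $i<k<j$ one gets $(i,j)\in D$, since $\mu_i<k<j\leq\lambda_k<\lambda_i$, the last inequality because the parts of $\lambda$ are strictly decreasing. But then $\{x_i,y_j\}$ is an edge of $H_k$ with neither endpoint in $C$, contradicting that $C$ is a vertex cover. This proves (ii).

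Given (ii), part (iii) is the standard fact that reduction modulo a regular sequence preserves minimal free resolutions. Let $G_\bullet\to S/I$ be the minimal graded free $S$-resolution. Since $\theta_1,\dots,\theta_n$ is a regular sequence both on $S$ (linearly independent linear forms) and on $S/I$ (by (ii)), the Koszul complex on $\theta_1,\dots,\theta_n$ with coefficients in $S/I$ is acyclic in positive homological degrees, i.e. $\Tor_i^S(S/I,\bar S)=0$ for $i>0$, so $\bar{G}_{\bullet}:=G_\bullet\otimes_S\bar S$ is a graded free $\bar S$-resolution of $(S/I)\otimes_S\bar S=\bar S/\bar I$. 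It is minimal, because the differentials of $G_\bullet$ have entries in $\fm_S$, whose image in $\bar S$ is $\fm_{\bar S}$. Thus $\bar{G}_{\bullet}$ is the minimal graded free $\bar S$-resolution of $\bar S/\bar I$, obtained from $G_\bullet$ by tensoring with $\bar S=S/(\theta_1,\dots,\theta_n)$; since a minimal resolution of $I(\Gbip{X}{Y}(D))$ is $G_\bullet$ with its term $G_0=S$ deleted, and likewise for $I(\Gnonbip{X}(D))$ and $\bar{G}_{\bullet}$, this is exactly (iii). Finally (i) follows by comparing ranks of $\bar{G}_{\bullet}$ and $G_\bullet$ in each internal degree: $\beta_{ij}(I(\Gbip{X}{Y}(D)))=\beta_{ij}(I(\Gnonbip{X}(D)))$ for all $i,j$, whence equal projective dimension and equal Castelnuovo--Mumford regularity, these being read off the graded Betti table. (Alternatively (i) is immediate from Theorem~\ref{specialization-theorem} by coarsening both $\Z$-gradings along the total-degree map, using part (i) of that theorem to see $sp$ preserves total degree on the multidegrees supporting nonzero Betti numbers of $I(\Gbip{[n]}{[n]}(D))$.)

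The crux is (ii), and within it the elementary claim that $(i,k),(k,j)\in D$ with $i<k<j$ forces $(i,j)\in D$ --- precisely the configuration property distinguishing genuine diagrams $\lambda/\mu$ from arbitrary subdiagrams of the shifted plane, and exactly what lets each $\theta_k$ avoid every associated prime of $I_k$. Everything after (ii) is routine homological bookkeeping.
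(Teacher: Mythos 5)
Your proof is correct but takes a genuinely different route. The paper derives part (i) from Theorem~\ref{specialization-theorem} together with Hochster's formula (the multigraded Betti numbers agree under the specialization of gradings, hence so do the coarser $\Z$-graded ones), and then obtains (ii) and (iii) by iterating Lemma~\ref{polarization-lemma}, which packages the equivalence of Betti-number preservation, Hilbert-series relation, non-zero-divisibility of a linear form, and preservation of a minimal resolution. You instead prove (ii) directly: after reducing modulo $\theta_1,\ldots,\theta_{k-1}$ the image of $I$ is the squarefree edge ideal of a graph $H_k$, so $\theta_k = x_k - y_k$ is a zerodivisor only if some minimal vertex cover of $H_k$ contains both $x_k$ and $y_k$; you rule this out with a ``transitivity'' property of shifted skew diagrams, namely that $(i,k),(k,j)\in D$ with $i<k<j$ forces $(i,j)\in D$. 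Parts (iii) and (i) then follow by the standard fact that tensoring a minimal free resolution with the quotient by a sequence regular on both the ring and the module preserves exactness and minimality. Your route is more self-contained -- it bypasses the rectangular-decomposition machinery underlying Theorem~\ref{specialization-theorem} and, as a byproduct, gives a new, direct proof of (i) -- whereas the paper's is shorter granting that Theorem~\ref{specialization-theorem} was already in hand. One small bookkeeping note: your chain ``$\mu_i<k<j\leq\lambda_k<\lambda_i$'' presumes the convention that $(i,j)\in D$ iff $\mu_i<j\leq\lambda_i$; with the paper's convention that $(i,j)\in D$ iff $i+\mu_i<j\leq i+\lambda_i$, the same transitivity instead follows from $j\leq k+\lambda_k\leq i+\lambda_i$, using that strict decrease of the parts gives $\lambda_k+k\leq\lambda_i+i$ whenever $i<k$.
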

\begin{proof}
Assertion (i) follows from Theorem~\ref{specialization-theorem}
and Hochster's formula.
The remaining assertions are seen to be equivalent to it by
iterating Lemma~\ref{polarization-lemma} from Appendix B.
\end{proof}

\begin{example}
Such specializations do {\it not} work so well for an arbitrary bipartite graph $G$
and its edge ideal $I(G) \subset k[\xb,\yb]$.  In other words, it is {\it not} in general
true that the specialized ideal $I^{\nonbip} \subset k[\xb]$
for which $k[\xb,\yb]/(I(G) + (\theta)) = k[\xb]/I^{\nonbip}$ has
$\beta_{ij}(I^{\nonbip})=\beta_{ij}(I(G))$.

For example, let $G$ be the bipartite graph on vertex set $X \sqcup Y=\{x_1,\ldots,x_5,y_1,\ldots,y_5\}$
for which
$$
\begin{aligned}
I(G) &=( x_1 y_3,\,\, x_1 y_4,\,\, x_2 y_3,\,\, x_2 y_5,\,\, x_3 y_4,\,\, x_3 y_5 ), \text{ and } \\
I^{\nonbip}&=( x_1 x_3,\,\, x_1 x_4,\,\, x_2 x_3,\,\, x_2 x_5,\,\, x_3 x_4,\,\, x_3 x_5 ).
\end{aligned}
$$
This bipartite graph $G$ is a $6$-cycle, which one can check is {\it not}
of the form $\Gbip{X}{Y}(D)$ for any shifted skew-shape $D$.  However, one can still think of the
edges of $G$ as corresponding to the cells of a diagram in
the shifted plane, which would look like this:
$$
\begin{matrix}
 &1&2&3&4&5\\
1&\cdot  &       & \times& \times &        \\
2&\cdot  & \cdot & \times&        & \times \\
3&\cdot  & \cdot & \cdot & \times & \times \\
4&\cdot  & \cdot & \cdot & \cdot  &         \\
5&\cdot  & \cdot & \cdot & \cdot  & \cdot
\end{matrix}
$$

Here is the result of a {\tt Macaulay 2} calculation of their graded Betti numbers,
with $k=\Q$:
\begin{verbatim}

i1 : S=QQ[x1,x2,x3,x4,x5,y1,y2,y3,y4,y5];

i2 : IG=ideal(x1*y3,x1*y4,x2*y3,x2*y5,x3*y4,x3*y5);

o2 : Ideal of S

i3 : betti(resolution(IG))

            0 1 2 3 4
o3 = total: 1 6 9 6 2
         0: 1 . . . .
         1: . 6 6 . .
         2: . . 3 6 2

i4 : Snonbip=QQ[x1,x2,x3,x4,x5];

i5 : Inonbip=ideal(x1*x3,x1*x4,x2*x3,x2*x5,x3*x4,x3*x5);

o5 : Ideal of Snonbip

i6 : betti(resolution(Inonbip))

            0 1 2 3 4
o6 = total: 1 6 9 5 1
         0: 1 . . . .
         1: . 6 8 4 1
         2: . . 1 1 .

\end{verbatim}
\end{example}

\subsection{Castelnuovo-Mumford regularity}
\label{sec-regularity}

The next three subsections  discuss three natural invariants for
the ideals $I(\Dbip{X}{Y})$ and $I(\Dnonbip{X})$, namely their
\begin{enumerate}
\item[$\bullet$] {\it Castelnuovo-Mumford regularity},
\item[$\bullet$] {\it projective (or homological) dimension}, and
\item[$\bullet$] {\it Krull dimension} of the quotient rings
$k[\xb,\yb]/I(\Gbip{X}{Y}(D))$ and $k[\xb]/I(\Gnonbip{X}(D))$.
\end{enumerate}

Recall the definition of the {\it Castelnuovo-Mumford regularity} $\reg_S(M)$ for a
$\Z$-graded module $M$ over a regular $\Z$-graded $k$-algebra $S$:
$$
\reg_S(M)= \max \{ j-i: \beta_{ij}^S(M)(=\dim_k \Tor^S_i(M,k)_j) \neq 0\}.
$$
The goal of this section is Theorem~\ref{regularity-theorem}, which interprets
combinatorially the regularity for both classes of ideals $I(\Gbip{X}{Y}(D)), I(\Gnonbip{X}(D))$, in terms
of the quantity {\it rectangularity} defined in Definition~\ref{rectangularity-definition}
above.

\begin{theorem}
\label{regularity-theorem}
For any shifted skew diagram and ordered subsets $X,Y$, one has
$$
\begin{aligned}
\reg_{k[\xb,\yb]}(I(\Gbip{X}{Y}(D)))&=\rect(\Dbip{X}{Y})+1 \\
\reg_{k[\xb]}(I(\Gnonbip{X}(D)))&=\rect(\Dnonbip{X})+1
\end{aligned}
$$
\end{theorem}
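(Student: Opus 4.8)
The plan is to read the regularity off the multigraded Betti numbers and then reduce the theorem to a purely combinatorial statement about how rectangularity behaves under restricting $\Dbip{X}{Y}$ to a sub-diagram $\Dbip{X'}{Y'}$. First I would dispose of the nonbipartite case: taking $X=Y$ in Corollary~\ref{specialization-corollary}(i) gives $\beta_{ij}(I(\Gbip{X}{X}(D)))=\beta_{ij}(I(\Gnonbip{X}(D)))$ for all $i,j$, so $\reg_{k[\xb]}(I(\Gnonbip{X}(D)))=\reg_{k[\xb,\yb]}(I(\Gbip{X}{X}(D)))$, and since $\rect(\Dnonbip{X})=\rect(\Dbip{X}{X})$ by definition, the second formula is the case $X=Y$ of the first. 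So assume $\Dbip{X}{Y}$ is nonempty and prove the bipartite formula for arbitrary ordered sets $X,Y$. Using $\Delta(\Gbip{X}{Y}(D))_{X'\sqcup Y'}=\Delta(\Gbip{X'}{Y'}(D))$ in Hochster's formula (Proposition~\ref{Hochster-formula}),
$$
\beta_{i,j}(I(\Gbip{X}{Y}(D)))=\sum_{\substack{X'\subseteq X,\ Y'\subseteq Y\\ |X'|+|Y'|=j}}\dim_k\tilde H_{\,j-i-2}\bigl(\Delta(\Gbip{X'}{Y'}(D))\bigr),
$$
and by Theorem~\ref{homotopy-type-theorem} the summand vanishes unless $\Dbip{X'}{Y'}$ is spherical, in which case it equals $1$ exactly when $j-i=\rect(\Dbip{X'}{Y'})+1$. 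Hence
$$
\reg_{k[\xb,\yb]}(I(\Gbip{X}{Y}(D)))=1+\max\bigl\{\rect(\Dbip{X'}{Y'}):X'\subseteq X,\ Y'\subseteq Y,\ \Dbip{X'}{Y'}\ \text{spherical}\bigr\},
$$
so the theorem amounts to showing that this maximum equals $\rect(\Dbip{X}{Y})$.

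For the inequality ``$\ge$'' I would construct a spherical sub-diagram attaining $\rect(\Dbip{X}{Y})$. Run the rectangular decomposition on $\Dbip{X}{Y}$, with full rectangles $R_1,\dots,R_s$ and possibly a pedestal $\Dbip{A_p}{B_p}$ at the last step. Delete all rows and columns belonging to empty rectangles of the decomposition, and, if the pedestal is present, delete from $A_p$ every row but its topmost one $a:=\min A_p$ (whose cells fill all of $B_p$). I expect that running the algorithm on the resulting diagram recovers $R_1,\dots,R_s$ and then, in the pedestal case, the full rectangle $\Dbip{\{a\}}{B_p}$, with no empty rectangle; that diagram is therefore spherical with rectangularity $s$ (resp.\ $s+1$), which is $\rect(\Dbip{X}{Y})$. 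Checking that deleting these rows and columns does not disturb the earlier steps of the algorithm is a routine but slightly fiddly verification.

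The inequality ``$\le$'' reduces to monotonicity of rectangularity: $X'\subseteq X$ and $Y'\subseteq Y$ should force $\rect(\Dbip{X'}{Y'})\le\rect(\Dbip{X}{Y})$. This is the technical heart, and I expect it to be the main obstacle. The route I would try first is an intrinsic description of $\rect$: call a sequence of cells $(i_1,j_1),\dots,(i_r,j_r)$ of $\Dbip{X}{Y}$ a \emph{descending staircase} if $i_1<\dots<i_r$, $j_1>\dots>j_r$, and none of the cells $(i_{\ell+1},j_\ell)$ with $1\le\ell<r$ lies in $\Dbip{X}{Y}$; I would show $\rect(\Dbip{X}{Y})$ equals the maximal length of a descending staircase. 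For ``$\le$'' one picks, in each full rectangle, a cell in its rightmost column, and in the pedestal its top cell; the choice of maximal column indices makes the required southeast-corner cells absent, so this yields a descending staircase of length $\rect(\Dbip{X}{Y})$. The reverse inequality ``$\ge$'' would come from the stacked shape of the decomposition, using Proposition~\ref{pedestal-proposition} to locate a pedestal whenever a short descending staircase cannot be absorbed into a single rectangle. Granting this description, monotonicity is immediate, because a descending staircase for $\Dbip{X'}{Y'}$ is one for $\Dbip{X}{Y}$ as well: a cell with coordinates in $X'\times Y'$ lies in $\Dbip{X'}{Y'}$ if and only if it lies in $\Dbip{X}{Y}$. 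If the intrinsic description turns out to be awkward, the fallback is to prove monotonicity directly by induction on $|X\setminus X'|+|Y\setminus Y'|$, reducing to the removal of a single row or column and tracking the rectangular decomposition through a case analysis on where that line sits (interior of a full rectangle or the pedestal; a line of only excess cells; a line of an empty rectangle; a line separating two consecutive pieces).

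Combining ``$\ge$'' and ``$\le$'' gives $\max\{\rect(\Dbip{X'}{Y'}):\Dbip{X'}{Y'}\ \text{spherical}\}=\rect(\Dbip{X}{Y})$, which together with the reduction in the first paragraph yields both stated formulae.
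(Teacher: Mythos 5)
Your overall framework coincides with the paper's: the nonbipartite statement is reduced to the bipartite one exactly as you do (via Theorem~\ref{specialization-theorem}/Corollary~\ref{specialization-corollary} and $\rect(\Dnonbip{X})=\rect(\Dbip{X}{X})$), the regularity is rewritten via Corollary~\ref{betti-number-corollary} as $1+\max\{\rect(\Dbip{X'}{Y'})\}$ over spherical subdiagrams, and the inequality ``$\geq$'' is obtained by restricting to the rows and columns of the full rectangles together with a slice of the pedestal (your choice of just the top pedestal row is a harmless variant of the paper's choice of the longest pedestal rows). As you correctly identify, everything then hinges on the monotonicity $\rect(\Dbip{X'}{Y'})\leq\rect(\Dbip{X}{Y})$, which is precisely the paper's Lemma~\ref{rectangularity-monotonicity}.

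The gap is in your primary route to that monotonicity. The proposed intrinsic characterization of $\rect(\Dbip{X}{Y})$ as the maximal length of a descending staircase is false, because excess cells can prolong descending staircases without contributing to the rectangularity. Concretely, let $D$ be the shifted Ferrers diagram of $\lambda=(3,1)$, so row $1$ occupies columns $2,3,4$ and row $2$ occupies column $3$, and take $X=\{1,2\}$, $Y=\{3,4\}$. Then $\Dbip{X}{Y}$ has cells $(1,3),(1,4),(2,3)$; its rectangular decomposition consists of the full rectangle $\{1\}\times\{3,4\}$, the excess cell $(2,3)$, and an empty rectangle of zero width, so $\rect(\Dbip{X}{Y})=1$ (consistently, this is a Ferrers graph ideal with linear resolution and regularity $2$). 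Yet $(1,4),(2,3)$ is a descending staircase of length $2$, since $(2,4)$ is not a cell. Thus only the inequality $\rect\leq$ (max staircase length), which your explicit construction gives, is true; the reverse inequality fails, and it is exactly the direction you would need (together with the restriction-stability of staircases) to deduce monotonicity, so ``granting this description'' is not available. Your fallback -- induction on removing one row or column while tracking the decomposition -- is in fact the paper's actual argument, but in the proposal it is only a one-sentence sketch, and that case analysis is the genuine technical content of the proof (the delicate situation being the removal of the rightmost column when the adjacent column occupies a different set of rows, which the paper resolves with a two-case argument). As written, the ``$\leq$'' half of the theorem is therefore not established.
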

\begin{proof}
Note that the assertion for $\Dnonbip{X}$ will follow after proving it
for $\Dbip{X}{Y}$, since
$$
\rect(\Dnonbip{X})=\rect(\Dbip{X}{X})
$$
by definition of
the rectangular decomposition, and
$$
\reg_{k[\xb]}(I(\Gnonbip{X}(D)))=\reg_{k[\xb,\yb]}(I(\Gbip{X}{X}(D)))
$$
by Theorem~\ref{specialization-theorem}.

To prove the assertion for $\Dbip{X}{Y}$, first note that
$$
\begin{aligned}
&\reg_{k[\xb,\yb]}(I(\Dbip{X}{Y})) \\
  &:= \max\{j-i: \beta^{k[\xb,\yb]}_{ij}(I(\Dbip{X}{Y})) \neq 0\}\\
  &= \max\{|X' \sqcup Y'|-i: X' \subseteq X, \,\, Y' \subseteq Y \text{ and }
           \beta^{k[\xb,\yb]}_{i, X' \sqcup Y'}(I(\Dbip{X}{Y})) \neq 0\}\\
  &= \max\{\rect(\Dbip{X'}{Y'})+1:
          X' \subseteq X, \,\, Y' \subseteq Y, \text{ and }\Dbip{X'}{Y'}\text{ is spherical } \}
\end{aligned}
$$
where the last equality comes from Corollary~\ref{betti-number-corollary}.

To show the inequality $\reg_{k[\xb,\yb]}(I(\Dbip{X}{Y})) \geq \rect(\Dbip{X}{Y})+1$,
note that if one chooses $X', Y'$ to be the rows and columns occupied
by the union of all the full rectangles along with the first few equal-sized (i.e. longest)
rows in the pedestal (if present), then the subdiagram $\Dbip{X'}{Y'}$
is spherical with $\rect(\Dbip{X'}{Y'})=\rect(\Dbip{X}{Y})$.

The reverse inequality follows from Lemma~\ref{rectangularity-monotonicity} below.
\end{proof}

\begin{lemma}
\label{rectangularity-monotonicity}
For any
non-empty
diagram of the form $\Dbip{X}{Y}$ and any subsets $X' \subseteq X, \,\, Y' \subseteq Y$,
one has
$$
\rect(\Dbip{X'}{Y'}) \leq \rect(\Dbip{X}{Y})
$$
\end{lemma}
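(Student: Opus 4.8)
The plan is to induct on the number of rows plus columns $|X|+|Y|$, tracking how the rectangular decomposition algorithm behaves under passing to a sub-collection of rows and columns. The key observation is that one iteration of the algorithm applied to $\Dbip{X}{Y}$ produces a first piece (empty rectangle, full rectangle, or pedestal) occupying rows $X_0$ and columns $Y_0$, and then recurses on $\Dbip{X \setminus X_0}{Y \setminus Y_0}$; so $\rect(\Dbip{X}{Y})$ equals $\rect(\Dbip{X \setminus X_0}{Y \setminus Y_0})$ plus $0$ or $1$ (the $+1$ occurring exactly when the first piece is a full rectangle or pedestal, i.e.\ when $\Dbip{X}{Y}$ has a top cell). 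I want to compare this with what the algorithm does on $\Dbip{X'}{Y'}$, and show that each full-rectangle-or-pedestal contribution for the subdiagram is ``charged'' to a distinct such contribution for the ambient diagram.

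First I would handle the base/trivial reductions: if $\Dbip{X'}{Y'}$ has no top cell, its first iteration removes an empty rectangle, contributing nothing, so by induction on $|X'|+|Y'|$ we may assume $\Dbip{X'}{Y'}$ has a top cell, say at $(x'_1, y'_n)$ with $x'_1 = \min X'$, $y'_n = \max Y'$. Then I would run the ambient algorithm on $\Dbip{X}{Y}$: as long as it is in Case 1 (no top cell), it strips empty rectangles, and I claim it never strips away the row $x'_1$ or the column $y'_n$ before encountering a top cell that ``sees'' them — because a top cell of the ambient diagram restricted to the still-surviving rows/columns would have to lie weakly northeast of $(x'_1,y'_n)$, and the cell at $(x'_1,y'_n)$ itself certifies that the maximal empty initial/final segments cannot swallow row $x'_1$ together with column $y'_n$. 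Once the ambient algorithm reaches an iteration with a top cell (Case 2), that iteration removes a full rectangle or pedestal $\Dbip{X_0}{Y_0}$ with $\min X_0 \le x'_1$ and $\max Y_0 \ge y'_n$; I then want to argue that $\rect(\Dbip{X' \setminus X_0}{Y' \setminus Y_0}) \le \rect(\Dbip{X \setminus X_0}{Y \setminus Y_0})$ by the inductive hypothesis, and that the single unit just spent on $\Dbip{X_0}{Y_0}$ dominates the at-most-one unit that the subdiagram's own first iteration spends (it spends exactly one, since it has a top cell, and everything it could spend is absorbed here). Adding $1 \le 1$ to the inductive inequality closes the loop.

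The step I expect to be the main obstacle is the bookkeeping in Case 2 showing that passing to $\Dbip{X' \cap (X \setminus X_0)}{Y' \cap (Y \setminus Y_0)}$ is legitimate — i.e.\ that $X'_{0} := X' \cap X_0$ and $Y'_0 := Y' \cap Y_0$ really do account for (at least) the first piece of the subdiagram's decomposition, so that no rectangularity of the subdiagram ``leaks'' into later iterations uncounted. Concretely, I must verify that $X' \setminus X_0 = X' \setminus X'_0$ and similarly for $Y$, and that $\Dbip{X' \setminus X_0}{Y' \setminus Y_0}$ is exactly the diagram obtained by running one step of the subdiagram's algorithm and possibly discarding an initial empty-rectangle step — this is the analogue, in the present ``monotonicity'' setting, of Step 2 in the proof of Theorem~\ref{specialization-theorem}, and I anticipate reusing the same style of argument (characterizing the first full rectangle as determined by a maximal initial segment of rows all meeting the last column). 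The remaining cases — where the ambient algorithm's first top-cell iteration produces a pedestal, or where $X_0$ or $Y_0$ meets $X'$ or $Y'$ in a proper nonempty subset — should reduce to the same inductive comparison once this is set up, with the pedestal case being slightly delicate because a pedestal restricted to fewer rows/columns can become a full rectangle (as in the backwards-relabeling trick used earlier), but this only helps, since it cannot increase the count.
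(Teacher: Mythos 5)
Your Case 1 bookkeeping is fine (an empty rectangle of the ambient diagram can never contain a row of $X'$ or a column of $Y'$, since the cell $(x'_1,y'_n)$ keeps those rows and columns nonempty and the removed segments are initial/final). The genuine gap is the Case 2 charging step, which you yourself flag as the main obstacle, and the fix you propose does not work. Your assembly needs the inequality $\rect(\Dbip{X'}{Y'})\le 1+\rect(\Dbip{X'\setminus X_0}{Y'\setminus Y_0})$, and you want to get it by showing that $\Dbip{X'\setminus X_0}{Y'\setminus Y_0}$ is exactly what one step of the subdiagram's own algorithm produces (up to an initial empty rectangle). That identification is false in general: the subdiagram's first piece is governed by its own top cell $(\min X',\max Y')$ and can pick up rows outside $X_0$, reached only through cells that the ambient algorithm has already discarded as excess below its first rectangle, and it can likewise fail to use all of $Y_0\cap Y'$. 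Concretely, take $D=\lambda/\mu$ with $\lambda=(19,17,15)$, $\mu=(13,11,9)$, so rows $1,2,3$ occupy columns $[15,20]$, $[14,19]$, $[13,18]$; let $X=\{1,2,3\}$, $Y=\{13,14,15,20\}$, $X'=\{2,3\}$, $Y'=\{13,14,15\}$. The ambient first piece is the full rectangle $\{1\}\times\{15,20\}$, while the subdiagram's first piece is $\{2,3\}\times\{14,15\}$; the subdiagram's one-step residue is empty of cells, whereas $\Dbip{X'\setminus X_0}{Y'\setminus Y_0}=\Dbip{\{2,3\}}{\{13,14\}}$ still has rectangularity $1$. (Here the displayed inequality happens to hold, but your proposed proof of it collapses.) The appeal to Step 2 of the proof of Theorem~\ref{specialization-theorem} does not transfer: there the two first rectangles coincide only because of the extra hypotheses $X\sqcup sp(Y)=[n]$ and non-contractibility, which have no counterpart here. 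So the key inequality is left as an unproven block-deletion comparison of essentially the same nature and difficulty as the lemma itself.

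By contrast, the paper sidesteps exactly this by a different reduction: it deletes a \emph{single} (nonempty) row or column at a time, so that only $\rect(E\setminus C)\le\rect(E)$ for one column $C$ must be checked, and then it tracks explicitly how the first full rectangle changes, the delicate cases being the removal of the rightmost column. If you want to keep your all-at-once charging, you must supply a genuine argument for the block inequality above; in addition, in the subcase where the ambient's first Case 2 piece is a pedestal (so its algorithm terminates with rectangularity one plus the rectangles already found), your remark that a restricted pedestal ``only helps'' is not enough --- you need to know there are no ambient cells strictly southwest of that pedestal (true for shifted skew shapes, in the spirit of Proposition~\ref{pedestal-proposition}, but it must be proved), since otherwise the subdiagram could retain cells the terminated ambient count never sees. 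Finally, your reduction to ``the subdiagram has a top cell'' invokes induction on $|X'|+|Y'|$ while your main induction is on $|X|+|Y|$; this is easily repaired but needs to be set up explicitly.
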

\begin{proof}
Prove this by induction on $|X|+|Y|$.  The base case where $|X|+|Y|=1$ is trivial.
For the inductive step, it suffices to prove that when one removes a row or column from
$\Dbip{X}{Y}$, the rectangularity cannot go up.  Without loss of generality one is removing
a
non-empty
column $C$ from $E:=\Dbip{X}{Y}$, and we wish to show that
\begin{equation}
\label{desired-rectangularity-inequality}
\rect(E \setminus C) \leq \rect(E).
\end{equation}
By induction, one may assume that $E$ has a top cell, else one can remove an empty row or column from $E$,
leaving both $\rect(E), \rect(E \setminus C)$ unchanged.
Hence the first step in the rectangular decomposition for $E$ identifies either a full rectangle or pedestal.
If it is a pedestal, then $\rect(E)= \rect(E \setminus C)=1$.  Thus without loss of generality
one may assume that the first step identifies a full rectangle $R$;  let $E^-$ denote the remaining
diagram after one removes from $E$ the rows and columns occupied by this full rectangle $R$.

For most choices of the column $C$, one has that $E \setminus C$ shares the same top cell as $E$,
and begins its rectangular decomposition with the full rectangle $R \setminus C$
or $R$. In the second case,
one has $(E \setminus C)^- = E^- \setminus C^-$ for some column $C^-$.  Using
\begin{equation}
\label{rectangularity-recursion}
\begin{aligned}
\rect(E) &= \rect(E^-)+ 1\\
\rect(E \setminus C) &= \rect((E \setminus C)^-) + 1
\end{aligned}
\end{equation}
along with the inductive hypothesis applied to $E^-$, one obtains the desired inequality
\eqref{desired-rectangularity-inequality}.

In the first case, we have $E^- = (E \setminus C)^-$ and we argue by induction, unless
$C$ is the rightmost column $C_n$, {\it and} the
column $C_{n-1}$ second from the right occupies a different set of rows from those
occupied by $C_n$.

\vskip .1in
\noindent
{\sf Case 1.} The column $C_{n-1}$ starts in the same (top) row as the column $C_n$, but is longer and hence extends to
lower rows than $C_n$.  Here one finds that $E \setminus C_n$ begins its rectangular decomposition
with a full rectangle that occupies more rows than $R$.  Hence when this larger rectangle is removed from
$E \setminus C_n$, one finds that $(E \setminus C)^-$ is obtained from $E^-$ by removing
some {\it rows}, and so the inductive hypothesis applies to show
$\rect((E \setminus C)^-) \leq \rect(E^-)$.  Then \eqref{rectangularity-recursion}
gives the desired inequality \eqref{desired-rectangularity-inequality}.

\vskip .1in
\noindent
{\sf Case 2.}
The column $C_{n-1}$ does not start in the top row, unlike column $C_n$.
In this case $R=C_n$ is the entire first full rectangle in the decomposition for $E$.
Since column $C_{n-1}$ does not start in the top row, it must extend down to at least as many
rows as column $C_n$ does, or further.  This means that
$(E \setminus C)^-$ is obtained from
$E^-$ by removing some columns (at least the column $C_{n-1}$) and possibly also some rows.
Thus,
the inductive hypothesis again shows $\rect((E \setminus C)^-) \leq \rect(E^-)$,
and one again applies \eqref{rectangularity-recursion} to conclude the
desired inequality \eqref{desired-rectangularity-inequality}.
\end{proof}

\subsection{Krull dimension}

To interpret the Krull dimension of the quotient rings $k[\xb,\yb]/I(\Gbip{X}{Y}(D))$
and $k[\xb]/I(\Gnonbip{X}(D))$, Corollary~\ref{specialization-corollary} again says that one only
needs to do this for $k[\xb,\yb]/I(\Gbip{X}{Y}(D))$.

For {\it any} bipartite graph $G$ on vertex set $X \sqcup Y$ with edges $E(G)$ (not necessarily
of the form $\Gbip{X}{Y}(D)$), the Krull dimension for $k[\xb,\yb]/I(G)$ is the quantity $\alpha(G)$
equal to the maximum size of a {\it coclique} ({\it stable set}, {\it independent set}) of vertices.
This quantity $\alpha(G)$ is one of four graph invariants for a graph $G=(V,E)$
closely related by classical theorems of graph theory (see e.g. \cite[Chapter 3]{West}), which we review here:
$$
\begin{aligned}
\alpha(G)&:= \max\{ |C| : C \subset V \text{ is a coclique, i.e. }C\text{ contains no vertices that share an edge} \} \\
\tau(G)&:= \min\{ |F| : F \subset E \text{ is an edge cover, i.e. }F\text{ is incident to all of }V\} \\
\nu(G)&:=\max\{ |M| : M \subset E \text{ is a matching, i.e. }M\text{ contains no edges that share a vertex} \} \\
\rho
(G)&:= \min\{ |W| : W \subset V \text{ is a vertex cover, i.e. }W\text{ is incident to all of }E\}. \\
\end{aligned}
$$
Gallai's Theorem asserts that for any graph $G$ one has
$$
\alpha(G)+\tau(G)=|V|=\nu(G)+\rho(G)
$$
while K\"onig's Theorems assert that for a bipartite graph $G$ one has
$$
\alpha(G)=\rho(G) = |V|-\tau(G) = |V|-\nu(G).
$$
There are very efficient algorithms (e.g. the {\it augmenting path algorithm})
for computing $\alpha(G)$ by finding a maximum-cardinality matching in a bipartite
graph $G$.  Hence the Krull dimension $\rho(G)=\alpha(G)$ is easy to compute for $k[\xb,\yb]/I(G)$ of
any bipartite graph $G$.  We do not know of a
faster algorithm tailored to the specific class of bipartite graphs $\Gbip{X}{Y}(D)$ when
$D$ is a shifted skew diagram.

\subsection{Projective dimension}

Recall that the {\it projective (or homological) dimension} $\pd_S(M)$ for a finitely-generated
module $M$ over a polynomial algebra $S$ is the length of any minimal free $S$-resolution of $M$,
that is, the largest $i$ for which $\beta^S_{i}(M) \neq 0$.  Also recall that for
for any ideal $I$, since $\beta_{i}(I) = \beta_{i+1}(S/I)$, one has $\pd_S(I) = \pd_S(S/I)-1$.

In studying the {\it projective (or homological) dimension} of
the ideals $I(\Gbip{X}{Y}(D))$ and $I(\Gnonbip{X}(D))$, one is again reduced
to studying the former, as Theorem~\ref{specialization-theorem} implies
$$
\pd_{k[\xb]} I(\Gnonbip{X}(D)) = \pd_{k[\xb,\yb]} I(\Gbip{X}{X}).
$$

For the latter, one at least has the
following combinatorial interpretation.

\begin{proposition}
Given any shifted skew diagram $D$ and ordered subsets $X, Y$,
one has
$$
\pd_{k[\xb,\yb]} I(\Gbip{X}{Y}(D))
   = \max_{X',Y'}\{ |X'|+|Y'|-\rect(\Dbip{X'}{Y'})-1 \}
$$
where the maximum runs over all subsets $X' \subseteq X, \,\, Y' \subseteq Y$
for which $\Dbip{X'}{Y'}$ is spherical.
\end{proposition}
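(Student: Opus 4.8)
The plan is to compute the projective dimension directly from the definition $\pd_{k[\xb,\yb]} I(\Gbip{X}{Y}(D)) = \max\{i : \beta_i(I(\Gbip{X}{Y}(D))) \neq 0\}$ and the combinatorial description of the (finely graded) Betti numbers already in hand. Since $\beta_i = \sum_{X' \sqcup Y'} \beta_{i, X' \sqcup Y'}$, the homological dimension is the largest $i$ for which some finely graded Betti number $\beta_{i, X' \sqcup Y'}(I(\Gbip{X}{Y}(D)))$ is nonzero. By Corollary~\ref{betti-number-corollary}, this happens exactly when $\Dbip{X'}{Y'}$ is spherical and $\rect(\Dbip{X'}{Y'}) = |X' \cup Y'| - i - 1$, equivalently $i = |X'| + |Y'| - \rect(\Dbip{X'}{Y'}) - 1$. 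So the plan is simply to maximize this quantity over all $X' \subseteq X, Y' \subseteq Y$ with $\Dbip{X'}{Y'}$ spherical, which is precisely the claimed formula.

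First I would invoke the identity $\pd_S(I) = \max\{i : \beta_i^S(I) \neq 0\}$ together with $\beta_i(I) = \sum_{X' \subseteq X, Y' \subseteq Y} \beta_{i, X' \sqcup Y'}(I)$, valid because $I(\Gbip{X}{Y}(D))$ is a squarefree monomial ideal and all Betti numbers are nonnegative, so no cancellation can occur. Then I would substitute the explicit value of $\beta_{i, X' \sqcup Y'}$ from Corollary~\ref{betti-number-corollary}: this Betti number is $1$ precisely when $\Dbip{X'}{Y'}$ is spherical and $\rect(\Dbip{X'}{Y'}) = |X' \cup Y'| - i - 1$, and $0$ otherwise. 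Noting that $|X' \cup Y'| = |X'| + |Y'|$ since $X' \sqcup Y'$ is a disjoint union of the two variable sets, the condition rearranges to $i = |X'| + |Y'| - \rect(\Dbip{X'}{Y'}) - 1$. Taking the maximum over all spherical subdiagrams gives the formula.

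Since the argument is essentially an unwinding of Corollary~\ref{betti-number-corollary}, there is no single hard step; the only point requiring a word of care is that the maximum on the right-hand side is over a nonempty set, i.e. that there exists at least one spherical subdiagram $\Dbip{X'}{Y'}$ — but this is automatic because any single-cell subdiagram (or, if $D$ restricted to $X \times Y$ is empty, the zero ideal case) gives a full rectangle, which is spherical; and if $I(\Gbip{X}{Y}(D))$ is the zero ideal the statement is vacuous or interpreted with $\pd = -\infty$. I would remark that combining this with Theorem~\ref{specialization-theorem} (or Corollary~\ref{specialization-corollary}) immediately yields the analogous formula $\pd_{k[\xb]} I(\Gnonbip{X}(D)) = \max_{X'}\{2|X'| - \rect(\Dnonbip{X'}) - 1\}$ over spherical $\Dnonbip{X'}$, since $\pd_{k[\xb]} I(\Gnonbip{X}(D)) = \pd_{k[\xb,\yb]} I(\Gbip{X}{X}(D))$ and $\Dbip{X'}{X'} = \Dnonbip{X'}$.
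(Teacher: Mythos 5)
Your proposal is correct and follows essentially the same route as the paper, which likewise deduces the formula immediately from Corollary~\ref{betti-number-corollary} by noting that the spherical subdiagrams $\Dbip{X'}{Y'}$ are exactly the contributors to nonzero Betti numbers, in homological degree $i=|X'|+|Y'|-\rect(\Dbip{X'}{Y'})-1$. Your extra remarks on nonemptiness of the maximizing set and on the specialization to $\Gnonbip{X}(D)$ are fine but not needed.
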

\begin{proof}
This is immediate from Corollary~\ref{betti-number-corollary},
since the $X', Y'$ with $\Dbip{X'}{Y'}$ spherical
are the ones which contribute to nonzero $\beta_{i}$,
namely with $i= |X'|+|Y'|-\rect(\Dbip{X'}{Y'})-1$.
\end{proof}

One might hope that this maximum can be computed quickly from the rectangular decomposition,
but this is not even true in the case where $\Dbip{X}{Y}$ looks like a single Ferrers diagram.
Here the rectangular decomposition is very simple, in that it has one full rectangle, followed possibly by one empty
rectangle.  However, the spherical subdiagrams $\Dbip{X'}{Y'}$
one must consider to compute the above maximum correspond to the corner cells of the Ferrers diagram;
cf. \cite[Corollary 2.2]{CN1}.

\begin{remark} \rm \ \\
Herzog and Hibi \cite[Corollary 3.5]{HH} have shown that, for each bipartite graph $G$, the ring $k[\xb,\yb]/I(G)$ is Cohen-Macaulay if and only if the projective variety defined by the edge ideal $I(G)$ is equidimensional and connected in codimension one. We suspect that the analogous conclusion is also true for a nonbipartite graph $\Gnonbip{X}(D)$.
\end{remark}

\section{PART II:  Skew hypergraph ideals}
\label{hypergraphs}

\subsection{Non-quadratic monomial ideals and hypergraphs}
\label{skew-hypergraphs-defn-section}

  Consider ideals $I$ in $k[\xb]:=k[x_1,\ldots,x_n]$
generated by squarefree monomial generators $x_{i_1} \cdots x_{i_d}$ of a fixed degree $d \geq 2$.
When the number of variables $n$ is allowed to vary, such ideals are parametrized by the collection
$$
K :=\{\{i_1,\ldots,i_d\}: x_{i_1} \cdots x_{i_d} \in I \} \subseteq \binom{\PP}{d}
$$
called a {\it $d$-uniform hypergraph}, where here $\PP:=\{1,2,\ldots\}$ denotes the positive
integers.  Our goal here is to introduce hypergraph
generalizations of the ideals $I(\Gnonbip{X}(D)), I(\Gbip{X}{Y}(D))$
coming from shifted skew diagrams,
as well as the Ferrers graph ideals $I(G_\lambda)$, in order to ask and answer questions about their resolutions.
For this it helps to consider certain orderings and pre-orderings
on the $d$-subsets $\binom{\PP}{d}$.

\begin{definition} \rm \ \\
Given two $d$-subsets
$$
\begin{aligned}
S= \{i_1 < \cdots < i_d\}\\
S'=\{ i'_1 < \cdots < i'_d \}
\end{aligned}
$$
say that $S \leq_{Gale} S'$ in the {\it Gale (or componentwise, or Bruhat) partial ordering}
on $\binom{\PP}{d}$ if $i_j \leq i'_j$ for all $j$.

Say that $S \leq_{max} S'$ in the {\it preordering by maxima} on $\binom{\PP}{d}$ if $i_d \leq i'_d$.

Say that $S \leq_{colex} S'$ in the {\it colexicographic (or squashed) linear ordering} on $\binom{\PP}{d}$
if
$S = S'$ or
the maximum element of the symmetric difference $S \Delta S':=(S \setminus S') \sqcup (S' \setminus S)$
lies in $S'$.

Note that
$$
S \leq_{Gale} S' \text{ implies } S \leq_{colex} S' \text{ implies } S \leq_{max} S'.
$$

For the sake of considering monomial ideals which are {\it not} necessarily squarefree,
define a {\it $d$-element multiset} of $\PP$ to be a sequence $(i_1,i_2,\ldots,i_d)$ with $i_j \in \PP$
and $i_1 \leq i_2 \leq \cdots \leq _d$.  Denote by $\binom{\PP+d-1}{d}$ the collection of
all such $d$-element multisets; clearly monomial ideals $I$ generated in degree $d$ are parametrized
by the collection\footnote{One might call this collection $M$ a {\it hypermultigraph}, but we will rather try to avoid choosing
some terminology for such an object!}
$$
M:=\{(i_1 \leq \ldots \leq i_d): x_{i_1} \cdots x_{i_d} \in I \} \subseteq \binom{\PP+d-1}{d}.
$$

Define the {\it Gale partial ordering} on $\binom{\PP+d-1}{d}$
by saying
$$
(i_1 \leq  \cdots \leq i_d) \quad \leq_{Gale} \quad (i'_1 \leq  \cdots \leq i'_d)
\qquad \text{ if }i_j \leq i'_j\text{ for }j=1,2,\ldots,d.
$$
Note that there is a simple {\it depolarization} bijection
$$
\begin{array}{llll}
\depol: & \binom{\PP}{d}        &\longrightarrow &\binom{\PP+d-1}{d} \\
        &\{i_1 < \cdots < i_d\} & \longmapsto    &(i_1,i_2-1,i_3-2, \ldots,i_d-(d-1))
\end{array}
$$
which is also an order-isomorphism between the Gale orders on these two sets.
\end{definition}

We omit the straightforward proof of the following easy properties of
the Gale orderings, which will be used in the proof of Theorem~\ref{cellular-resolutions-theorem}
below.

\begin{proposition}
\label{gale-properties}
The Gale orderings on $\binom{\PP}{d}$ and $\binom{\PP+d-1}{d}$ share the following properties.
\begin{enumerate}
\item[(i)] They are lattices with meet and join operations corresponding to
componentwise minimum and maximum, that is, if
$$
\begin{aligned}
v&=(i_1,\ldots,i_d) \\
v'&=(i'_1,\ldots,i'_d)
\end{aligned}
$$
then
$$
\begin{aligned}
v \wedge v'&=(\min\{i_1,i'_1\},\ldots,\min\{i_d,i'_d\}) \\
v \vee v'&=(\max\{i_1,i'_1\},\ldots,\max\{i_d,i'_d\}). \\
\end{aligned}
$$
\item[(ii)] They have the property that if $\xb^v, \xb^{v'}$ divide some monomial $\alpha$,
then $\xb^{v \wedge v'}$ also divides $\alpha$.
\end{enumerate}
\end{proposition}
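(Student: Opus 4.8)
The plan is to deduce both statements directly from the observation that, in each of the two ambient posets, the Gale order is simply the restriction of the coordinatewise partial order on integer $d$-tuples to the set of \emph{sorted} representatives --- strictly increasing tuples in the case of $\binom{\PP}{d}$, weakly increasing tuples in the case of $\binom{\PP+d-1}{d}$. Since $\depol$ is an order isomorphism between the two Gale orders, part (i) for one poset formally implies it for the other; but it is no extra trouble to run the argument with ``weakly increasing'' subsuming ``strictly increasing'' and so treat both at once, and this is in any case the cleaner route for part (ii), where divisibility does not transport through $\depol$.

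For part (i), the one point to verify is closure: that the coordinatewise minimum and maximum of two sorted tuples $v=(i_1,\ldots,i_d)$ and $v'=(i'_1,\ldots,i'_d)$ are again sorted tuples of positive integers, hence again members of the poset. This is a one-line check --- from $i_j<i_{j+1}$ and $i'_j<i'_{j+1}$ one gets $\min(i_j,i'_j)<\min(i_{j+1},i'_{j+1})$ (the left side is strictly below each of $i_{j+1}$ and $i'_{j+1}$) and likewise $\max(i_j,i'_j)<\max(i_{j+1},i'_{j+1})$, and replacing $<$ by $\leq$ covers the weakly increasing case. Granting closure, the conclusion is immediate: $\Z^d$ with the coordinatewise order is a lattice whose meet and join are coordinatewise $\min$ and $\max$, and a subset closed under these, equipped with the induced order, is a sublattice; this is exactly the situation here, giving both the lattice property and the stated formulas for $v\wedge v'$ and $v\vee v'$.

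For part (ii), I would introduce, for a monomial $\alpha=\prod_j x_j^{a_j}$ and a sorted tuple $v$, the counting function $c_\ell(v):=\#\{k : i_k\leq\ell\}$ (with $c_0(v)=0$). Two elementary facts drive the proof. First, $\xb^v$ divides $\alpha$ if and only if $c_\ell(v)-c_{\ell-1}(v)\leq a_\ell$ for every $\ell$, simply because $c_\ell(v)-c_{\ell-1}(v)$ is the exponent of $x_\ell$ in $\xb^v$. Second --- and this is where sortedness is used --- the set $\{k : i_k\leq\ell\}$ is the initial block $\{1,\ldots,c_\ell(v)\}$, so for $v\wedge v'$ the corresponding set is the union $\{1,\ldots,\max(c_\ell(v),c_\ell(v'))\}$, whence $c_\ell(v\wedge v')=\max(c_\ell(v),c_\ell(v'))$. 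Combining these: fix $\ell$ and assume without loss of generality $c_\ell(v)\geq c_\ell(v')$; then $c_\ell(v\wedge v')=c_\ell(v)$ while $c_{\ell-1}(v\wedge v')\geq c_{\ell-1}(v)$, so $c_\ell(v\wedge v')-c_{\ell-1}(v\wedge v')\leq c_\ell(v)-c_{\ell-1}(v)\leq a_\ell$, using that $\xb^v\mid\alpha$; by the first fact this says $\xb^{v\wedge v'}\mid\alpha$. In the squarefree case this collapses to the transparent remark that each entry of $v\wedge v'$ equals the corresponding entry of $v$ or of $v'$ and hence lies in $\supp\alpha$, so $\xb^{v\wedge v'}$ is a squarefree monomial with support contained in $\supp\alpha$.

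I do not expect a genuine obstacle: the proposition is of the ``easy properties'' type. The only place demanding a moment's attention is the interaction between the sortedness of the tuples and the coordinatewise operations --- precisely, the closure statement in part (i) and the identity $c_\ell(v\wedge v')=\max(c_\ell(v),c_\ell(v'))$ in part (ii). Both would be false for arbitrary (unsorted) tuples, and both are exactly the junctures at which one uses that elements of $\binom{\PP}{d}$ and $\binom{\PP+d-1}{d}$ are being represented by their sorted sequences.
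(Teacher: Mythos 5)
Your proof is correct. The paper explicitly omits the proof of this proposition as ``straightforward,'' so there is no argument of the authors to compare against; your route --- viewing both Gale orders as the restriction of the componentwise order on $\Z^d$ to sorted tuples, checking closure under componentwise $\min$ and $\max$ to obtain a sublattice (which gives (i)), and using the counting function $c_\ell(v)$ to bound the exponent of each variable in $\xb^{v\wedge v'}$ by the corresponding exponent in $\xb^v$ or $\xb^{v'}$ (which gives (ii), with the squarefree case as the easy special case) --- is a complete and valid version of exactly the routine verification the authors intended.
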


\begin{definition} \rm \ \\
Generalizing threshold graphs, say that a $d$-uniform hypergraph $K \subseteq \binom{\PP}{d}$ is
{\it squarefree strongly stable} if it forms an order ideal in the Gale ordering on $\binom{\PP}{d}$.
Similarly, say that  collection $M \subseteq \binom{\PP+d-1}{d}$ is {\it strongly stable}
if it forms an order ideal in the Gale ordering on $\binom{\PP+d-1}{d}$.
The reason for the terminology\footnote{In an unfortunate clash of notation,
the squarefree strongly stable $d$-uniform hypergraphs $K$ are sometimes
called {\it shifted}, although they have nothing to do with the shifted plane occurring earlier
in this paper!  In yet another unfortunate clash of notation, the word {\it threshold} has been
used for a property of hypergraphs that is somewhat stronger than being squarefree strongly
stable; see \cite{KlivansReiner}.}
is that the associated squarefree monomial ideal
$I(K)$ (resp. monomial ideal $I(M)$)
generated by
$$
\{ x_{i_1} \cdots x_{i_k}: (i_1,\cdots , i_d) \in K \,\, (\text{resp. } M)\}
$$
is usually called a
{\it squarefree strongly stable (resp. strongly stable) ideal generated in degree $d$}.
\end{definition}

Eliahou and Kervaire\cite{EK} gave an explicit minimal free resolution for the more general class of
{\it stable} monomial ideals \cite{EK}, including those generated in different
degrees; Aramova, Herzog and Hibi \cite{AHH} gave an analogous resolution
for {\it squarefree  stable ideals}, again including those generated in different degrees.
In Theorem~\ref{cellular-resolutions-theorem} below, we
will recover an extremely simple cellular version of
these minimal free resolutions for both kinds of ideals,
when the ideals are generated in a single degree $d$.  In fact, we will show that
when $M=\depol(K)$, the two resolutions for $I(K)$ and $I(M)$ are in a precise sense,
the same.  The resolution for strongly stable ideals also reproves a recent result of
Sinefakopoulos \cite{Sinefakopoulos}, who produced such a cellular resolution by
a somewhat more complicated inductive process.  We have not checked whether his
cellular resolution is exactly the {\it same} as ours.

\begin{definition} \rm \ \\
Define a {\it skew squarefree strongly stable} $d$-uniform hypergraph be one of the form $K \setminus K'$
where $K', K$ are both squarefree strongly stable and $K' \subseteq K$; such hypergraphs have
been studied recently from the viewpoint of combinatorial Laplacians by Duval \cite{Duval}.

  Say that a $d$-uniform hypergraph is {\it $d$-partite} on a partitioned vertex set
$X^{(1)}\sqcup \cdots \sqcup X^{(d)}$ if each of its $d$-sets $\{i_1 < \ldots < i_d\}$ has $i_j \in X^{(j)}$
for all $j$.

  Given either a $d$-uniform hypergraph $K \subset \binom{\PP}{d}$,
or a finite collection $M \subset  \binom{\PP+d-1}{d}$,
we will associate to it a $d$-partite $d$-uniform hypergraph $F(K)$ or $F(M)$ on
$X^{(1)}\sqcup \cdots \sqcup X^{(d)}$ where $X^{(j)}:=\{1^{(j)},2^{(j)},\ldots\}$, namely
$$
\begin{aligned}
F(K):=\{ \{i^{(1)}_1 , i^{(2)}_2, \ldots, i^{(d)}_d\}: \{i_1 < \cdots < i_d\} \in K\} \\
F(M):=\{ \{i^{(1)}_1 , i^{(2)}_2, \ldots, i^{(d)}_d\}: (i_1 \leq \cdots \leq i_d) \in M\}. \\
\end{aligned}
$$
One also derives from these hypergraphs $F(K), F(M)$ certain ideals $I(F(K)), I(F(M))$ in a
polynomial algebra having $d$ different variable sets.
\end{definition}
\medskip

\begin{example} \rm \ \\
\label{hypergraph-ideals-example}
We illustrate this here for $d=3$, relabelling the partitioned vertex set
$X^{(1)} \sqcup X^{(2)} \sqcup X^{(3)}$ as
$$
\{a_1,a_2,\ldots\} \sqcup \{b_1,b_2,\ldots\} \sqcup \{c_1,c_2,\ldots\}
$$
to avoid superscripts:
$$
\begin{matrix}
K&=&\{123,&124,&134,&234,&125,&135\}\\
I(K)&=&(x_1 x_2 x_3,& x_1 x_2 x_4,& x_1 x_3 x_4,& x_2 x_3 x_4,& x_1 x_2 x_5,& x_1 x_3 x_5 )\\
I(F(K))&=&(a_1 b_2 c_3,&a_1 b_2 c_4,&a_1 b_3 c_4,&a_2 b_3 c_4,&a_1 b_2 c_5,&a_1 b_3 c_5)\\
\end{matrix}
$$
and letting $M:=\depol(K)$, one has
$$
\begin{matrix}
M&=&\{111,&112,&122,&222,&113,&123\}\\
I(M)&=&(x_1^3,&x_1^2 x_2,&x_1 x_2^2,&x_2^3,& x_1^2x_3,& x_1 x_2 x_3)\\
I(F(M))&=&(a_1 b_1 c_1,& a_1 b_1 c_2,& a_1 b_2 c_2,& a_2 b_2 c_2,& a_1 b_1 c_3,& a_1 b_2 c_3)\\
\end{matrix}
$$
\end{example}

In the next subsection, we will focus on the {\it non-skew}
special case where $K'$ is empty, generalizing threshold and Ferrers graph ideals,
by giving a simple cellular linear resolution for the ideals $I(K), I(M), I(F(K)), I(F(M))$
generalizing those from  \cite{CN1,CN2}, and which are in a precise sense,
all the same if $M=\depol(K)$.
In fact, the same methods will also apply to the following ideals, which are a slightly
different generalization of Ferrers graph ideals to hypergraphs.

\begin{definition}
Say that a $d$-partite $d$-uniform hypergraph $F$ on vertex set $X^{(1)}\sqcup \cdots \sqcup X^{(d)}$
is a {\it Ferrers hypergraph} if there is a linear ordering on each $X^{(j)}$ such that
whenever $(i_1,\ldots,i_d) \in F $ and $(i'_1,\ldots,i'_d)$ satisfies
$i'_j \leq i_j$ in $X^{(j)}$ for all $j$, one also has $(i'_1 ,\ldots,i'_d) \in F$.
In other words, $F$ is an order ideal in the componentwise partial ordering on
$X^{(1)} \times \cdots \times X^{(d)}$.

\end{definition}

The next proposition generalizes the fact that Ferrers graphs $G_\lambda$ are isomorphic to a subclass
of graphs of the form $\Gbip{X}{Y}(D)$ for shifted skew diagrams $D$.

\begin{proposition}
Every Ferrers $d$-uniform hypergraph $F$ is isomorphic to a $d$-partite $d$-uniform hypergraph
of the form $F(K \setminus K')$ with $K, K'$ squarefree strongly stable.
\end{proposition}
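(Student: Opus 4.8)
The plan is to realize $F$, up to isomorphism, as a \emph{skew} squarefree strongly stable hypergraph by ``spreading out'' the ground sets $X^{(1)}, \ldots, X^{(d)}$ far enough inside $\PP$ that every point of the box $X^{(1)} \times \cdots \times X^{(d)}$ becomes a strictly increasing $d$-tuple, hence a genuine $d$-subset. Since $F$ has finitely many edges and is an order ideal, I may assume each $X^{(j)}$ is a finite initial segment $\{1 < 2 < \cdots < m_j\}$. Put $N_1 := 0$ and $N_{j+1} := N_j + m_j$, and define
$$
\sigma \colon X^{(1)} \times \cdots \times X^{(d)} \longrightarrow \binom{\PP}{d}, \qquad (i_1, \ldots, i_d) \longmapsto \{\, i_1 + N_1 < i_2 + N_2 < \cdots < i_d + N_d \,\}.
$$
The inequality $m_j + N_j < 1 + N_{j+1}$ shows $\sigma$ is well defined and injective, and adding fixed constants to the coordinates makes it an order embedding of the componentwise order on the box into the Gale order on $\binom{\PP}{d}$; its image is exactly the Gale interval $\{\, S : 1 + N_j \leq s_j \leq m_j + N_j \text{ for all } j \,\}$, which in particular is Gale-convex (closed under passing to elements that lie between two of its members).

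Next I would set $\widetilde{F} := \sigma(F)$, let $K$ be the down-closure of $\widetilde{F}$ in the Gale order (a finite squarefree strongly stable hypergraph), and put $K' := K \setminus \widetilde{F}$. By construction $K' \subseteq K$ and $K \setminus K' = \widetilde{F}$, so applying the operator $F(-)$ and then relabelling $v_j + N_j \leftrightarrow v_j$ on each part carries $F(K \setminus K')$ isomorphically onto $F$. Thus the only substantive point is to verify that $K'$ is again an order ideal, i.e.\ squarefree strongly stable. For this, take $S \in K'$ and $T \leq_{Gale} S$. Since $K$ is a down-set, $T \in K$; I must exclude $T \in \widetilde{F}$. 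If $T = \sigma(w)$ with $w \in F$, then (because $S \in K$) there is some $v \in F$ with $\sigma(w) = T \leq_{Gale} S \leq_{Gale} \sigma(v)$, and Gale-convexity of the image of $\sigma$ forces $S = \sigma(u)$ for some $u$ in the box, whence $w \leq u \leq v$ componentwise by the order-embedding property. As $F$ is an order ideal containing $v$, we get $u \in F$, so $S = \sigma(u) \in \widetilde{F}$, contradicting $S \in K'$. Hence $T \notin \widetilde{F}$, so $T \in K'$ and $K'$ is an order ideal, which finishes the argument.

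The step I expect to be the main obstacle is exactly the verification that $K'$ is down-closed: this is where one genuinely needs the spreading-out constants $N_j$ to grow fast enough that the image of $\sigma$ is an order-convex box, so that any $S \in K$ sitting above an element of $\widetilde{F}$ is trapped inside that box and therefore lands back in $\widetilde{F}$. (A mere interleaving of the $X^{(j)}$, which would already suffice to embed the multiset poset $\binom{\PP+d-1}{d}$ the way $\depol$ does, does not work here, precisely because its image fails to be Gale-convex.) Everything else — well-definedness of $\sigma$, finiteness of $K$, and the final coordinate-shift isomorphism — is routine bookkeeping.
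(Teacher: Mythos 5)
Your proof is correct and follows essentially the same route as the paper's: both spread the parts $X^{(1)},\ldots,X^{(d)}$ into disjoint blocks of $\PP$ by additive shifts so that the componentwise-order box becomes a Gale interval, take $K$ to be the Gale down-closure of the embedded copy of $F$, and let $K'$ be the remainder of $K$. The only cosmetic difference is that the paper defines $K' := \{S \in K : S \not\geq S_F\}$ (making the order-ideal property of $K'$ immediate by transitivity), whereas you define $K' := K \setminus \sigma(F)$ and verify down-closedness via Gale-convexity of the image box — these descriptions yield the same set.
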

\begin{proof}
Let $F$ have partitioned vertex set $X^{(1)} \sqcup \cdots \sqcup X^{(d)}$,
and let $N:=\max_j\{ |X^{(j)}|\}$.  One can then regard the componentwise
ordering on $X^{(1)} \times \cdots \times X^{(d)}$ as a subposet of the componentwise
order $[N]^d$ for $[N]:=\{1,2,\ldots,N\}$, and $F \subseteq [N]^d$ as an order ideal.

Then the interval $[S_F,T_F]_{Gale}$
in the Gale ordering on $\binom{\PP}{d}$ between the sets
$$
\begin{aligned}
S_F&:=\{1 < N+1 < 2N+1 < \cdots < (d-1)N+1 \} \\
T_F&:=\{N < 2N < 3N < \cdots < dN\}
\end{aligned}
$$
has an obvious order-isomorphism
$$
\begin{array}{lll}
\phi: [S_F, T_F] & \longrightarrow &[N]^d\\
\{i_1 < \cdots < i_d\} & \longmapsto &(i_1,i_2-N,i_3-2N,\ldots,i_d-(d-1)N).\\
\end{array}
$$
The inverse image $\phi^{-1}(F)$ is an order ideal inside the interval $[S_F,T_F]_{Gale}$
which is order-isomorhpic to $F$.
Define the squarefree strongly stable hypergraphs
$$
\begin{aligned}
K&:=\{S \in \binom{\PP}{d}: \text{ there exist }S' \in \phi^{-1}(F)\text{ with }S' \geq S\}\\
K'&:=\{ S \in K: S \not\geq S_F\}.
\end{aligned}
$$
Then it is easily seen that  $F(K \setminus K')$ and $F$ are
isomorphic as $d$-partite $d$-uniform hypergraphs.
\end{proof}

\subsection{Cellular linear resolutions}
\label{cellular-linear-resolution-section}

  We give a quick review here of the theory of cellular resolutions \cite[Chapter 4]{MillerSturmfels}.
Then we use this to produce an extremely simple, linear\footnote{We are slightly abusing notation here.
Strictly speaking, what we get should be called a {\it $d$-linear resolution}:
all the minimal generators of the ideal have degree $d$, while all higher syzygy maps are
given by linear forms.}, minimal free resolution for
the  square-free strongly stable ideals $I(K)$ generated in fixed degree,
as well as their relatives $I(F(K)),I(M),I(F(M))$,
and for all ideals $I(F)$ with $F$ a Ferrers hypergraph.

\begin{definition}
Let $\cC$ be a {\it polyhedral cell complex}, that is, a finite collection $\cC=\{P_i\}$ of
convex polytopes $P_i$ (called {\it cells} or {\it faces} of $\cC$) in some Euclidean space,
with each face of $P_i$ also lying in $\cC$, and the intersection $P_i \cap P_j$ forming
a face of both $P_i$ and $P_j$.

Given a labelling of the vertices (= $0$-dimensional cells) of $\cC$ by monomials in a
polynomial ring $S=k[x_1,\ldots,x_N]$,
one obtains a labelling of each face $P$ by the least common multiple $m_{P}$ of the monomials
that label the vertices lying in $P$.  Letting $I$ be the monomial ideal generated by
all the monomial labels of all of the vertices, one obtains a $\Z^N$-graded complex of $S$-modules
$\cF(\cC)$ in which the $i^{th}$ term  $\cF_i(\cC)$ for $i \geq -1$ is the
free $S$-module with basis elements $e_P$ indexed by the $i$-dimensional faces $P$ of $\cC$,
decreed to have multidegree $m_P$.  The differential is defined $S$-linearly by
$$
d(e_P) := \sum_Q \sgn(P,Q) \frac{m_P}{m_Q} e_Q
$$
in which $Q$ runs through all the codimension $1$ faces of $P$, and $\sgn(P,Q) \in \{+1,-1\}$
denotes the {\it incidence function} produced from an orientation of the cells of $\cC$
used in the usual cellular chain complex that computes the homology of $\cC$.
\end{definition}

Note that $\cC$, if nonempty, always has exactly one face of dimension $-1$, namely the empty face $\varnothing$,
so that $\cF_{-1}(\cC) \cong S$ is a free $S$-module of rank $1$ with basis element $e_\varnothing$
of multidegree $0$.  Furthermore, note that the complex $\cF(\cC)$ has been arranged so that $S/I$ is the cokernel
of the map $\cF_{0}(\cC) \overset{d}{\rightarrow} \cF_{-1}(\cC)$.  In some cases,
$\cF(\cC)$ is a resolution of $S/I$ and lets us compute its Betti numbers -- the
basic proposition in the theory of cellular resolutions tells us that this is controlled
by the reduced homology with coefficients in $k$ of the subcomplexes defined
for each monomial multidegree $\alpha$ by
$$
\begin{aligned}
\cC_{\leq \alpha} &: = \{ P \in \cC: m_P \text{ divides } \alpha  \} \\
\cC_{< \alpha} &: = \{ P \in \cC: m_P \text{ divides } \alpha, \text{ but }m_P \neq \alpha \}.
\end{aligned}
$$

\begin{proposition}\cite[Proposition 4.5]{MillerSturmfels}
\label{miller-sturmfels-prop}
$\cF(\cC)$ is a resolution of $S/I$ if and only if,  for every multidegree $\alpha \in \Z^N$, the
subcomplex $\cC_{ \leq \alpha }$ is $k$-acyclic.
\end{proposition}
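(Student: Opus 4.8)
\textbf{The plan} is to verify exactness of $\cF(\cC)$ one $\Z^N$-graded strand at a time. Since $\cF(\cC)$ is a complex of $\Z^N$-graded free $S$-modules with $\Z^N$-graded differential, its homology modules are $\Z^N$-graded, and $\cF(\cC)$ is exact in a given homological degree if and only if each graded component $\cF(\cC)_\alpha$ --- now a finite complex of $k$-vector spaces --- is exact in that degree. So the first step is to describe $\cF(\cC)_\alpha$ explicitly for a fixed $\alpha \in \Z^N$.

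The heart of the matter is to identify $\cF(\cC)_\alpha$ with the reduced cellular chain complex of $\cC_{\leq\alpha}$ over $k$. The degree-$\alpha$ piece of the free summand $S(-m_P)\, e_P$ is one-dimensional, spanned by $x^{\alpha - m_P} e_P$, when $m_P$ divides $x^\alpha$, and is zero otherwise; hence $\cF_i(\cC)_\alpha$ has $k$-basis $\{\, x^{\alpha - m_P} e_P : \dim P = i,\ m_P \mid x^\alpha \,\}$, which is naturally indexed by the $i$-dimensional faces of $\cC_{\leq\alpha}$, with the empty face $\varnothing$ (where $m_\varnothing = 1$) always contributing the homological-degree-$(-1)$ term $\cF_{-1}(\cC)_\alpha = k\, e_\varnothing$. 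Under the bijection $x^{\alpha - m_P} e_P \leftrightarrow [P]$ one computes
\[
d(x^{\alpha - m_P} e_P) = \sum_Q \sgn(P,Q)\, x^{\alpha - m_P}\,\frac{m_P}{m_Q}\, e_Q = \sum_Q \sgn(P,Q)\, x^{\alpha - m_Q} e_Q ,
\]
the sum running over the codimension-one faces $Q$ of $P$, each of which satisfies $m_Q \mid m_P \mid x^\alpha$ so that no term drops out. This is exactly the cellular boundary $\partial[P] = \sum_Q \sgn(P,Q)[Q]$ that computes $\tilde H_\bullet(\cC_{\leq\alpha};k)$, the incidence signs agreeing by the very definition of $\sgn(P,Q)$. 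Thus $\cF(\cC)_\alpha \cong \tilde C_\bullet(\cC_{\leq\alpha};k)$ as complexes of $k$-vector spaces, and the $\alpha$-graded component of $H_i(\cF(\cC))$ is isomorphic to $\tilde H_i(\cC_{\leq\alpha};k)$ for every $i \geq -1$.

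To conclude, recall that $S/I$ is by construction the cokernel of $d\colon \cF_0(\cC) \to \cF_{-1}(\cC) = S$, and that the image of this map is precisely $I$ (it is generated by the vertex labels $m_v$); therefore ``$\cF(\cC)$ is a resolution of $S/I$'' asserts exactly that $H_i(\cF(\cC)) = 0$ for all $i \geq 0$. By the previous step this is equivalent to $\tilde H_i(\cC_{\leq\alpha};k) = 0$ for all $i \geq 0$ and all $\alpha$, which is the $k$-acyclicity condition. The one point demanding care is the homological-degree-$(-1)$ bookkeeping: when $\cC_{\leq\alpha}$ contains no vertex (equivalently, no minimal generator of $I$ divides $x^\alpha$) it reduces to the single face $\varnothing$ and hence has $\tilde H_{-1} \neq 0$, but in that degree the strand is already the exact augmentation $0 \to k\, e_\varnothing \xrightarrow{\ \sim\ } (S/I)_\alpha \to 0$ and imposes no condition, so such degrees may be disregarded; it is only for $\alpha$ with $\cC_{\leq\alpha}$ containing at least one vertex that acyclicity is both meaningful and exactly what is needed. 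I do not expect a genuine obstacle here: once the strand $\cF(\cC)_\alpha$ is identified with $\tilde C_\bullet(\cC_{\leq\alpha};k)$, the rest is purely formal; the only delicate steps are matching the homological index with cellular dimension and checking that the coefficient signs coincide with the cellular incidence function --- which they do by construction.
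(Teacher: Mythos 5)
The paper does not prove this proposition; it is quoted verbatim from \cite[Proposition 4.5]{MillerSturmfels}. Your argument is correct and is essentially the standard proof given there: decompose $\cF(\cC)$ into its $\Z^N$-graded strands, identify the strand in degree $\alpha$ with the reduced cellular chain complex $\tilde{C}_\bullet(\cC_{\leq\alpha};k)$ via the basis $x^{\alpha}/m_P \cdot e_P \leftrightarrow [P]$, check that the differential specializes to the cellular boundary, and read off that vanishing of $H_i(\cF(\cC))_\alpha$ for $i\geq 0$ is exactly $k$-acyclicity of $\cC_{\leq\alpha}$ (with the usual convention that the subcomplex $\{\varnothing\}$, arising when no generator divides $x^\alpha$, is regarded as acyclic, which your last paragraph correctly addresses).
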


\subsection{The complex-of-boxes resolution}
\label{boxes-resolution-section}

We next describe the particular polyhedral complexes that will support our cellular resolutions.

\begin{definition}
\label{def-complex-of-boxes}
Given $F$, a $d$-partite $d$-uniform hypergraph on the partitioned
vertex set $X^{(1)} \sqcup \cdots \sqcup X^{(d)}$,
call a subset of $F$ which is a Cartesian product $X_1 \times \cdots \times X_d$
for some subsets $X_j \subseteq X^{(j)}$ a {\it box} inside $K$.
Define the {\it complex of boxes inside $F$}
to be the polyhedral subcomplex of the product of simplices $2^{X^{(1)}} \times \cdots \times 2^{X^{(d)}}$
having faces indexed by the boxes inside $K$.  Alternatively, the complex of boxes inside $F$ is
defined to be the {\it vertex-induced subcomplex} of the
Cartesian product of simplices $2^{X^{(1)}} \times \cdots \times 2^{X^{(d)}}$ on
the set of vertices indexed by the sets in $F$.  That is,
it consists of all polytopal cells in the Cartesian product whose vertices all lie in $F$.
\end{definition}

\begin{example}
\label{complex-of-boxes-example}
Let $K,M=\depol(K), F(K), F(M)$ be as in Example~\ref{hypergraph-ideals-example}.
Then the complex of boxes $\cC$ inside $F(K)$ or $F(M)$ are both isomorphic to a quadrangle and triangle
glued along an edge, with a pendant edge hanging from a nonadjacent vertex of the quadrangle.
The following diagrams illustrate these complexes of boxes, with
vertices labelled in boldface by the generators of the ideals $I(F(K)), I(K), I(F(M)), I(M)$,
and with higher-dimensional faces $P$ labelled in small script by the least common multiple
$m_P$. The complexes for $I(K)$ and $I(M)$ are obtained from the ones for $I(F(K))$ and $I(F(M))$, respectively, by specializing the labels as described in Theorem \ref{cellular-resolutions-theorem} below.

\begin{center}
\begin{pspicture}(6,-1)(4,5.0)
\psset{xunit=.5cm, yunit=.4cm}

\rput(-2,8){For $I(F(K))$:}

\pscustom[linestyle=none,fillstyle=solid,fillcolor=lightgray]{
\psline(0,0)(12,0) \psline(12,5)(0,5) }

\pscustom[linestyle=none,fillstyle=solid,fillcolor=gray]{
\psline(0,5)(12,5) \psline(6,11)(6,11) }

\rput(0,0){$\bullet$} \rput(12,0){$\bullet$} \rput(20,0){$\bullet$}
\rput(0,5){$\bullet$} \rput(12,5){$\bullet$}
\rput(6,11){$\bullet$}
\psline[linestyle=solid](0,0)(12,0)
\psline[linestyle=solid](0,5)(12,5)
\psline[linestyle=solid](12,0)(12,5)
\psline[linestyle=solid](0,0)(0,5)
\psline[linestyle=solid](0,5)(6,11)
\psline[linestyle=solid](12,5)(6,11)
\psline[linestyle=solid](12,0)(20,0)

\rput(0,-0.5){$\mathbf{a_1 b_3 c_5}$} \rput(12,-0.5){$\mathbf{a_1 b_3 c_4}$}
\rput(20,-0.5){$\mathbf{a_2 b_3 c_4}$}
\rput(-1.2,5.5){$\mathbf{a_1 b_2 c_5}$}
\rput(13.2,5.5){$\mathbf{a_1 b_2 c_4}$}
\rput(6,11.5){$\mathbf{a_1 b_2 c_3}$}
%
\rput(-1.4,2.5){${a_1 b_2 b_3 c_5}$}
\rput(13.4,2.5){${a_1 b_2 b_3 c_4}$}
\rput(6,-0.5){${a_1 b_3 c_4 c_5}$}
\rput(6,5.5){${a_1 b_2 c_4 c_5}$}
\rput(16,-0.5){${a_1 a_2 b_3 c_4 }$}
\rput(2.0,8.7){${a_1 b_2 c_3 c_5}$}
\rput(9.9,8.7){${a_1 b_2 c_3 c_4}$}
%
\rput(6,2.5){${a_1 b_2 b_3 c_4 c_5}$}
\rput(6,7.8){${a_1 b_2 c_3 c_4 c_5}$}

\end{pspicture}
\end{center}

\begin{center}
\begin{pspicture}(6,-1)(4,4.5)
\psset{xunit=.5cm, yunit=.4cm}

\rput(-2,8){For $I(K)$:}

\pscustom[linestyle=none,fillstyle=solid,fillcolor=lightgray]{
\psline(0,0)(12,0) \psline(12,5)(0,5) }

\pscustom[linestyle=none,fillstyle=solid,fillcolor=gray]{
\psline(0,5)(12,5) \psline(6,11)(6,11) }

\rput(0,0){$\bullet$} \rput(12,0){$\bullet$} \rput(20,0){$\bullet$}
\rput(0,5){$\bullet$} \rput(12,5){$\bullet$}
\rput(6,11){$\bullet$}
\psline[linestyle=solid](0,0)(12,0)
\psline[linestyle=solid](0,5)(12,5)
\psline[linestyle=solid](12,0)(12,5)
\psline[linestyle=solid](0,0)(0,5)
\psline[linestyle=solid](0,5)(6,11)
\psline[linestyle=solid](12,5)(6,11)
\psline[linestyle=solid](12,0)(20,0)

\rput(0,-0.5){$\mathbf{x_1 x_3 x_5}$} \rput(12,-0.5){$\mathbf{x_1 x_3 x_4}$}
\rput(20,-0.5){$\mathbf{x_2 x_3 x_4}$}
\rput(-1.2,5.5){$\mathbf{x_1 x_2 x_5}$}
\rput(13.2,5.5){$\mathbf{x_1 x_2 x_4}$}
\rput(6,11.5){$\mathbf{x_1 x_2 x_3}$}
%
\rput(-1.5,2.5){${x_1 x_2 x_3 x_5}$}
\rput(13.5,2.5){${x_1 x_2 x_3 x_4}$}
\rput(6,-0.5){${x_1 x_3 x_4 x_5}$}
\rput(6,5.5){${x_1 x_2 x_4 x_5}$}
\rput(16,-0.5){${x_1 x_2 x_3 x_4 }$}
\rput(2.0,8.7){${x_1 x_2 x_3 x_5}$}
\rput(9.9,8.7){${x_1 x_2 x_3 x_4}$}
%
\rput(6,2.5){${x_1 x_2 x_3 x_4 x_5}$}
\rput(6,7.8){${x_1 x_2 x_3 x_4 x_5}$}

\end{pspicture}
\end{center}

\begin{center}
\begin{pspicture}(6,-1)(4,4.5)
\psset{xunit=.5cm, yunit=.4cm}

\rput(-2,8){For $I(F(M))$:}

\pscustom[linestyle=none,fillstyle=solid,fillcolor=lightgray]{
\psline(0,0)(12,0) \psline(12,5)(0,5) }

\pscustom[linestyle=none,fillstyle=solid,fillcolor=gray]{
\psline(0,5)(12,5) \psline(6,11)(6,11) }

\rput(0,0){$\bullet$} \rput(12,0){$\bullet$} \rput(20,0){$\bullet$}
\rput(0,5){$\bullet$} \rput(12,5){$\bullet$}
\rput(6,11){$\bullet$}
\psline[linestyle=solid](0,0)(12,0)
\psline[linestyle=solid](0,5)(12,5)
\psline[linestyle=solid](12,0)(12,5)
\psline[linestyle=solid](0,0)(0,5)
\psline[linestyle=solid](0,5)(6,11)
\psline[linestyle=solid](12,5)(6,11)
\psline[linestyle=solid](12,0)(20,0)

\rput(0,-0.5){$\mathbf{a_1 b_2 c_3}$} \rput(12,-0.5){$\mathbf{a_1 b_2 c_2}$}
\rput(20,-0.5){$\mathbf{a_2 b_2 c_2}$}
\rput(-1.2,5.5){$\mathbf{a_1 b_1 c_3}$}
\rput(13.2,5.5){$\mathbf{a_1 b_1 c_2}$}
\rput(6,11.5){$\mathbf{a_1 b_1 c_1}$}
%
\rput(-1.4,2.5){${a_1 b_1 b_2 c_3}$}
\rput(13.4,2.5){${a_1 b_1 b_2 c_2}$}
\rput(6,-0.5){${a_1 b_2 c_2 c_3}$}
\rput(6,5.5){${a_1 b_1 c_2 c_3}$}
\rput(16,-0.5){${a_1 a_2 b_2 c_2 }$}
\rput(2.0,8.7){${a_1 b_1 c_1 c_3}$}
\rput(9.9,8.7){${a_1 b_1 c_1 c_2}$}
%
\rput(6,2.5){${a_1 b_1 b_2 c_2 c_3}$}
\rput(6,7.8){${a_1 b_1 c_1 c_2 c_3}$}

\end{pspicture}
\end{center}

\begin{center}
\begin{pspicture}(6,-1)(4,4.5)
\psset{xunit=.5cm, yunit=.4cm}

\rput(-2,8){For $I(M)$:}

\pscustom[linestyle=none,fillstyle=solid,fillcolor=lightgray]{
\psline(0,0)(12,0) \psline(12,5)(0,5) }

\pscustom[linestyle=none,fillstyle=solid,fillcolor=gray]{
\psline(0,5)(12,5) \psline(6,11)(6,11) }

\rput(0,0){$\bullet$} \rput(12,0){$\bullet$} \rput(20,0){$\bullet$}
\rput(0,5){$\bullet$} \rput(12,5){$\bullet$}
\rput(6,11){$\bullet$}
\psline[linestyle=solid](0,0)(12,0)
\psline[linestyle=solid](0,5)(12,5)
\psline[linestyle=solid](12,0)(12,5)
\psline[linestyle=solid](0,0)(0,5)
\psline[linestyle=solid](0,5)(6,11)
\psline[linestyle=solid](12,5)(6,11)
\psline[linestyle=solid](12,0)(20,0)

\rput(0,-0.5){$\mathbf{x_1x_2x_3}$} \rput(12,-0.5){$\mathbf{x_1x_2^2}$}
\rput(20,-0.5){$\mathbf{x_2^3}$}
\rput(-0.9,5.5){$\mathbf{x_1^2x_3}$}
\rput(12.9,5.5){$\mathbf{x_1^2x_2}$}
\rput(6,11.5){$\mathbf{x_1^3}$}
%
\rput(-1.2,2.5){${x_1^2x_2x_3}$}
\rput(12.9,2.5){${x_1^2x_2^2}$}
\rput(6,-0.5){${x_1x_2^2x_3}$}
\rput(6,5.5){${x_1^2x_2x_3}$}
\rput(16,-0.5){${x_1 x_2^3}$}
\rput(2.4,8.7){${x_1^3 x_3}$}
\rput(9.6,8.7){${x_1^3 x_2}$}
%
\rput(6,2.5){${x_1^2 x_2^2 x_3}$}
\rput(6,7.8){${x_1^3 x_2 x_3}$}

\end{pspicture}
\end{center}

\end{example}

\begin{theorem}
\label{cellular-resolutions-theorem}
Let $K \subset \binom{\PP}{d}$ be squarefree strongly stable, and
let $M \subset \binom{\PP+d-1}{d}$ be strongly stable, with
$F(K), F(M)$ their associated $d$-partite $d$-uniform hypergraphs.
Let $F$ be any $d$-partite $d$-uniform Ferrers hypergraph.

\begin{enumerate}
\item[(i)]
For any of the ideals $I=I(F(K)), I(F(M)), I(F)$ inside $S:=k[\xb^{(1)},\ldots,\xb^{(d)}]$,
labelling a vertex $(i_1,\ldots,i_d)$ of the complex of boxes by the monomial $x^{(1)}_{i_1} \cdots x^{(d)}_{i_d}$
gives a minimal linear cellular $S$-resolution of $S/I$.

Hence $\beta_{\sum_j |X_j|-d,X_1 \sqcup \cdots \sqcup X_d}(I) = 1$
for every box $X_1 \times \cdots \times X_d$ inside $F$ or $F(K)$,
and all other Betti numbers vanish.

\item[(ii)]
Furthermore, the specialization map
$$
\begin{array}{ccc}
\spe: k[\xb^{(1)},\ldots,\xb^{(d)}] &\longrightarrow& k[\xb] \\
x^{(j)}_i & \longmapsto & x_i
\end{array}
$$
sends the resolution for $I(F(K))$ or $I(F(M))$ to a (minimal, linear, cellular) resolution for $I(K)$ or $I(M)$.
In other words, re-labelling a vertex $(i_1,\ldots,i_k)$ of the complex of boxes for $F(K)$ or $F(M)$
by $x_{i_1} \cdots x_{i_d}$ yields a $k[\xb]$-resolution of $I(K)$ or $I(M)$.

In particular, $I(F(K)), I(K)$ have the same $\Z$-graded Betti numbers,
and $I(F(M)), I(M)$ have the same $\Z$-graded Betti numbers.
%

\item[(iii)]
If $M=\depol(K)$, then
the bijection $\binom{\PP}{d} \overset{\depol}{\longrightarrow} \binom{\PP+d-1}{d}$
induces a cellular isomorphism of the complex of boxes for $I(F(K))$ and $I(F(M))$,
preserving the degree of the monomials $m_P$ labelling faces.

Consequently, $I=I(K),I(M),I(F(K)),I(F(M))$ all have the same
$\Z$-graded Betti numbers $\beta_{ij}(I)$ in this siutation.
\end{enumerate}
\end{theorem}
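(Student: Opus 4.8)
The plan is to verify, via Proposition~\ref{miller-sturmfels-prop}, that in each of the cases $I=I(F(K)),I(F(M)),I(F)$ the complex of boxes $\cC$ with the indicated vertex labels has $k$-acyclic subcomplexes $\cC_{\le\alpha}$ for all multidegrees $\alpha$; minimality, $d$-linearity and the Betti-number count are then automatic, because distinct boxes $X_1\times\cdots\times X_d$ carry distinct labels $m_P=\prod_j\prod_{i\in X_j}x^{(j)}_i$, all of degree $\sum_j|X_j|$, and each incidence $m_P/m_Q$ between a box and a facet is a single variable. The crucial first observation is that every such label $m_P$ is \emph{squarefree} in the $\Z^N$-grading of $S$, so $\cC_{\le\alpha}$ depends only on $\supp(\alpha)=(A^{(1)},\dots,A^{(d)})$ and equals the complex of boxes of the restricted hypergraph $G\cap(A^{(1)}\times\cdots\times A^{(d)})$ (where $G$ is $F(K)$, $F(M)$ or $F$). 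Hence part (i) reduces to showing that the complex of boxes of \emph{any} such box-restriction is contractible (or empty, which happens precisely when no minimal generator of $I$ divides $\alpha$). For a Ferrers hypergraph this is easy: a box-restriction of an order ideal in a product of chains is again one, and every \emph{maximal} box of such an order ideal contains the bottom element $(\min A^{(1)},\dots,\min A^{(d)})$, so that complex is a cone and hence contractible.

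The substantive case is $G=F(K)$ with $K$ squarefree strongly stable, where box-restrictions are no longer of the form $F(K')$ and — already for $d=3$, see Example~\ref{complex-of-boxes-example} — the complex of boxes need not have a cone vertex. I would prove contractibility by building the complex of boxes of $G\cap(A^{(1)}\times\cdots\times A^{(d)})$ one vertex at a time, inserting the vertices $v_S=(s_1^{(1)},\dots,s_d^{(d)})$ in colexicographic order of the underlying sets $S=\{s_1<\cdots<s_d\}$ (or any linear extension of the Gale order adapted to strong stability). Adjoining a new vertex $v$ replaces the current subcomplex $B'$ by $B'\cup\overline{\operatorname{st}}(v)$, glued along the link of $v$; the closed star deformation retracts onto $v$ and is contractible, so the homotopy type is unchanged provided the link is contractible, and the link is nonempty because strong stability lets one lower any non-minimal $S$ in a single coordinate to an earlier set still in $K$, producing a box through $v_S$. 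Proving that each of these links is contractible — from the strongly stable structure of $K$, suitably weakened so as to survive the box-restriction — is the heart of the argument; in the worked examples the link is always a point, an interval, or a tree. With all $\cC_{\le\alpha}$ shown acyclic, Proposition~\ref{miller-sturmfels-prop} finishes part (i) for $F(K)$ and $F$. The remaining case $G=F(M)$ follows once (iii) is proved: $\depol$ is an order isomorphism, so $M=\depol(K)$ for the squarefree strongly stable $K=\depol^{-1}(M)$, and (iii) transports the resolution for $F(K)$ to one for $F(M)$.

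For part (ii), the point is that the factors of a box inside $F(K)$ are pairwise disjoint (they satisfy $\max X_j<\min X_{j+1}$ because elements of $F(K)$ are strictly increasing), so the specialized label $\spe(m_P)=\prod_{i\in X_1\cup\cdots\cup X_d}x_i$ is still squarefree; therefore $\cC_{\le\beta}$ for $\beta\in\Z^n$ depends only on $A:=\supp(\beta)\subseteq[n]$ and equals the complex of boxes of $F(K)\cap(A\times\cdots\times A)$, one of the box-restrictions already handled in (i). So $\spe$ sends the resolution of $I(F(K))$ to a minimal linear cellular resolution of $I(K)$, and the two ideals have equal $\Z$-graded Betti numbers (which also reproves the Aramova--Herzog--Hibi count field-independently); the $F(M)$/$I(M)$ assertion then follows via (iii), since the specialized labels for $F(M)$ need not be squarefree and it is cleanest to route through depolarization. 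Finally, for (iii): applying $\depol$ coordinatewise, $X_j\mapsto X_j-(j-1)$, is a bijection on $d$-partite boxes carrying boxes inside $F(K)$ exactly to boxes inside $F(M)$, hence a cellular isomorphism of the two box complexes changing a label $\prod_j\prod_{i\in X_j}x^{(j)}_i$ into $\prod_j\prod_{i\in X_j}x^{(j)}_{i-(j-1)}$, of the same total degree; thus $I(F(K))$ and $I(F(M))$ have equal $\Z$-graded Betti numbers, and with (ii) so do all four of $I(K),I(M),I(F(K)),I(F(M))$. The main obstacle will be the contractibility of the links in the vertex-by-vertex construction of part (i) for $F(K)$, where, unlike the Ferrers case, no global cone vertex is available.
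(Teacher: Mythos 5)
Your scaffolding is sound and your treatment of the Ferrers case, part (ii) for $I(K)$, and part (iii) are all correct, but you have left the hard part of (i) as a gap, and you have a circularity in your handling of $I(M)$ in (ii).

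The genuine gap is exactly where you flag it: the contractibility of the links in the vertex-by-vertex construction of the box complex of $F(K)$ restricted to a box $A^{(1)}\times\cdots\times A^{(d)}$. Without a proof of that, part (i) for $I(F(K))$ (and hence all downstream consequences) is unproved. Worse, the vertex-insertion order you propose (colexicographic, i.e.\ smallest first) makes the links genuinely hard to control: when you adjoin a vertex $v_S$, the link of $v_S$ in the closed star is the union of all boxes through $v_S$ restricted to the earlier vertices, and there is no cone point or obvious recursive structure on that union. The paper's actual argument works the other way: it proceeds by downward induction, \emph{deleting} a Gale-\emph{maximal} vertex $v$ of $\cC_{\leq\alpha}$, and the crucial observation is that Gale-maximality forces $v$ to lie in a \emph{unique} facet $P_{v,\alpha}$ of $\cC_{\leq\alpha}$, which is exhibited explicitly for each family $I(F),I(K),I(F(K)),I(M),I(F(M))$. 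Lemma~\ref{unique-facet-topology-lemma} then says deleting such a $v$ (provided the unique facet is positive-dimensional, which is the second claim to check) does not change the homotopy type, and the deletion is again a subcomplex of the same type for an ideal with one fewer generator. This ``delete the Gale-maximum'' step is where strong stability (via Proposition~\ref{gale-properties}) actually enters, and it sidesteps the need to analyze links of arbitrary vertices.

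The second problem is your route to (ii) for $I(M)$. You propose to get it ``via (iii),'' but look at what (iii) actually asserts: the cellular isomorphism between the box complexes of $F(K)$ and $F(M)$ yields $\beta_{ij}(I(F(K)))=\beta_{ij}(I(F(M)))$, and the ``consequently'' clause that all four of $I(K),I(M),I(F(K)),I(F(M))$ have the same graded Betti numbers \emph{presupposes} that (ii) holds on both the $K$ side and the $M$ side. Deferring (ii) for $I(M)$ to (iii) is therefore circular. You also cannot recycle the squarefree-support reduction you used for $I(K)$, because as you note yourself the specialized labels $\spe(m_P)$ for $F(M)$ are not squarefree; the paper instead exhibits the unique facet $P_{v,\alpha}$ for $I(M)$ directly by tracking multiplicities of $\alpha$ against the multiset $v$. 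So $I(M)$ needs its own argument, not a reduction.

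To repair the proposal you would need either to prove the link-contractibility claim you isolated (which I believe is harder than the route the paper takes), or to switch to the paper's strategy of deleting a Gale-maximal vertex and verifying the unique-facet claim case by case, including the genuine case analysis for $I(M)$.
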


\begin{proof}
To simplify notation, assume $d=3$, and let $a_i, b_j, c_k$ be the three sets of variables;
it will be clear that the argument given works for general $d$.

We deal with the part of (i) asserting that the cellular complexes give
cellular resolutions last.  Assuming this, for the rest of assertion (i),
note that for each box
\begin{equation}
\label{typical-three-box}
P = \{a_{i_1},\ldots,a_{i_r}\} \times \{b_{j_1},\ldots,b_{j_s}\} \times \{c_{k_1},\ldots,c_{k_t}\}
\end{equation}
in one of the appropriate complexes of boxes inside $F, F(K), F(M)$, the least common multiple
monomial $m_P$ will have the appropriate degree for a $d$-linear cellular resolution,
namely $\deg m_P = \dim P + d.$  This is because one can easily check that for the labelling
with generators of $I(F), I(F(K)), I(F(M))$, one has
$$
m_P = (a_{i_1} \cdots a_{i_r})(b_{j_1} \cdots b_{j_s})(c_{k_1} \cdots c_{k_t}),
$$
and for the labelling with generators of $I(K), I(M)$, one has
$$
m_P = (x_{i_1} \cdots x_{i_r}) (x_{j_1} \cdots x_{j_s}) (x_{k_1} \cdots x_{k_t}).
$$
In any case,
$$
\begin{aligned}
\deg m_P &=r+s+t,\text{ while }\\
\dim P   &= (r-1)+(s-1)+(t-1),
\end{aligned}
$$
so $\deg m_P = \dim P + 3 (= \dim P + d)$.

The above descriptions of $m_P$ also show assertion (ii) of the theorem.
Assertion (iii) follows when $M=\depol(K)$ because the depolarization bijection on vertices
extends to a bijection sending the typical box $P$ inside $F(K)$ shown
in \eqref{typical-three-box} to the following box inside $F(M)$:
$$
\depol(P):=
   \{a_{i_1},\ldots,a_{i_r}\} \times \{b_{j_1-1},\ldots,b_{j_s-1}\} \times \{c_{k_1-2},\ldots,c_{k_t-2}\}.
$$

Lastly we deal with the first part of assertion (i), asserting that one has
various cellular resolutions.  By Proposition~\ref{miller-sturmfels-prop},
it suffices to show that for any of the ideals
$I(F), I(K), I(F(K)), I(M), I(F(M))$, if $\cC$ is the appropriate complex of boxes
labelled with the generators of this ideal, then
for any multidegree $\alpha$ in the appropriate polynomial ring,
the subcomplex $\cC_{\leq \alpha}$ is contractible.  In fact, we will do this
by induction on the number of vertices of $\cC_{\leq \alpha}$; in the base
case when $\cC_{\leq \alpha}$ has only one vertex, this is trivial.
In the inductive step, pick any vertex $v$ of $\cC_{\leq \alpha}$ whose corresponding
set or multiset is {\it Gale-maximal} among all the vertices of $\cC_{\leq \alpha}$.
We claim that
\begin{enumerate}
\item[(a)] there is a unique facet (maximal face) $P_{v,\alpha}$ of $\cC_{\leq \alpha}$
containing $v$, and
\item[(b)] if $v$ is not the {\it only} vertex of $\cC_{\leq \alpha}$, then
this facet $P_{v,\alpha}$ has strictly positive dimension.
\end{enumerate}
Assuming claims (a) and (b) for the moment, the argument is completed
as follows.  Lemma~\ref{unique-facet-topology-lemma} below implies
that $\cC_{\leq \alpha}$ is homotopy equivalent to the
subcomplex $\cC_{\leq \alpha} \setminus \{v\}$ obtained by
deleting all faces containing $v$.  Because $\cC$ and $\cC_{\leq \alpha}$ are
defined as vertex-induced subcomplexes, the deletion $\cC_{\leq \alpha} \setminus \{v\}$
is isomorphic to one of the subcomplexes $\cC'_{\leq \alpha'}$ which arises for an ideal
$\hat{I}$ in the same family as $I$, where one has removed the generator of $I$
corresponding to $v$.
Since $\cC'_{\leq \alpha'}$ has at least one fewer vertex than $\cC_{\leq \alpha}$, it
is contractible by induction.  Hence $\cC_{\leq \alpha}$ is also contractible.

\vskip.1in
\noindent
{\sf Proof of Claim (a):}
We exhibit explicitly the unique facet $P_{v,\alpha}$ of $\cC_{\leq \alpha}$
that contains a vertex $v$ corresponding to a Gale-maximal triple $(i_1,i_2,i_3)$,
for each kind of ideal $I=I(F),I(K),I(M),I(F(K)),I(F(M))$.  In each case it is not
hard to check that if $v$ were contained in another face $P \not\subseteq P_{v,\alpha}$ of $\cC_{\leq \alpha}$,
it would contradict the Gale-maximality of $(i_1,i_2,i_3)$.
As notation, when one has a totally ordered set such as $x_1,x_2,\ldots$, denote
the closed interval $\{x_i,x_{i+1},\ldots,x_{j-1},x_j\}$ by $[x_i,x_j]$.

\vskip.1in
\noindent
When $I=I(F)$ for a Ferrers hypergraph $F$, then $\alpha$ is a monomial in the variables
$a_i,b_j,c_k$, and one has
$$
P_{v,\alpha} = \left( [a_{1},a_{i_1}] \cap \supp \alpha \right)
                  \times  \left( [b_{1},b_{i_2}] \cap \supp \alpha  \right)
                     \times   \left( [c_{1},c_{i_3}] \cap \supp \alpha \right) .
$$
For example, if $v=(2,4,2)$ and $\alpha=a_1^5 a_2 a_3^4 b_1^2 b_2 b_4 b_5^9 c_2^3 c_4^2 c_5$,
then $P_{v,\alpha}=\{a_1,a_2\} \times \{b_1,b_2,b_4\} \times \{c_2\}.$

\vskip.1in
\noindent
When $I=I(F(K))$ for $K \subset \binom{\PP}{d}$ squarefree strongly stable,
then $\alpha$ is a monomial in the variables $a_i,b_j,c_k$, and one has
$$
P_{v,\alpha} = \left( [a_{1},a_{i_1}] \cap \supp \alpha \right)
                  \times  \left( [b_{i_1+1},b_{i_2}] \cap \supp \alpha  \right)
                     \times   \left( [c_{i_2+1},c_{i_3}] \cap \supp \alpha \right) .
$$
For example, if $v=(3,4,6)$ and $\alpha=a_1 a_3^3  b_1^2 b_3^4 b_4^5 b_7 c_2^7 c_4 c_5^3 c_6^2 c_7$,
then $P_{v,\alpha}=\{a_1,a_3\} \times \{b_4\} \times \{c_5,c_6\}.$

\vskip.1in
\noindent
When $I=I(F(M))$ for $M \subset \binom{\PP+d-1}{d}$ strongly stable,
then $\alpha$ is a monomial in the variables $a_i,b_j,c_k$, and one has
$$
P_{v,\alpha} = \left( [a_{1},a_{i_1}] \cap \supp \alpha \right)
                  \times  \left( [b_{i_1},b_{i_2}] \cap \supp \alpha  \right)
                     \times   \left( [c_{i_2},c_{i_3}] \cap \supp \alpha \right) .
$$
For example, if $v=(3,4,6)$ and $\alpha=a_1 a_3^3  b_1^2 b_3^4 b_4^5 b_7 c_2^7 c_4 c_5^3 c_6^2 c_7$,
then $P_{v,\alpha}=\{a_1,a_3\} \times \{b_3, b_4\} \times \{c_4,c_5,c_6\}.$

\vskip.1in
\noindent
When $I=I(K)$ for $K \subset \binom{\PP}{d}$ squarefree strongly stable,
then $\alpha$ is a monomial in the variables $x_i$, $P_{v,\alpha}$ is the specialization of the corresponding box $\widetilde{P}_{v,\alpha}$ in the complex resolving $I(F(K))$,
and one has
$$
\begin{aligned}
\widetilde{P}_{v,\alpha} &= \left( [a_{1},a_{i_1}] \cap \{a_j: x_j \in \supp \alpha\} \right) \\
            & \qquad \times  \left( [b_{i_1+1},b_{i_2}] \cap  \{b_j: x_j \in \supp \alpha\}  \right) \\
            & \qquad \qquad \times   \left( [c_{i_2+1},c_{i_3}] \cap  \{c_j: x_j \in \supp \alpha\} \right) .
\end{aligned}
$$
For example, if $v=(3,4,6)$ and $\alpha=x_1 x_3^4 x_4^5 x_5^3 x_6^2 x_7$,
then $\widetilde{P}_{v,\alpha}=\{a_1,a_3\} \times \{b_4\} \times \{c_5,c_6\}.$

\vskip.1in
\noindent
When $I=I(M)$ for $M \subset \binom{\PP+d-1}{d}$ strongly stable,
then $\alpha$ is a monomial in the variables $x_i$, but here one must be slightly
more careful because $I$ is not a squarefree monomial ideal.
This means that the {\it multiplicities} of the variables $x_i$ in $\alpha$ become relevant,
not just which variables $x_i$ occur in its support.  Define
$m^j_k(v)$ to be the multiplicity of the entry $j$ among the first $k$ coordinates
of $(i_1,i_2,i_3)$; this means that $m^j_0(v)=0$ for any value $j$.  Then define subsets
$$
\begin{aligned}
S_1(v,\alpha) &= \{a_j: x_j^{m^j_0(v)+1} \text{ divides }\alpha\}  \\
S_2(v,\alpha) &= \{b_j: x_j^{m^j_1(v)+1} \text{ divides }\alpha\} \\
S_3(v,\alpha) &= \{c_j: x_j^{m^j_2(v)+1} \text{ divides }\alpha\}
\end{aligned}
$$
One can then check that $P_{v,\alpha}$ is the specialization of the corresponding box $\widetilde{P}_{v,\alpha}$ in the complex resolving $I(F(M))$, where
$$
\widetilde{P}_{v,\alpha} = \left( [a_{1},a_{i_1}] \cap S_1(v,\alpha) \right)
                  \times  \left( [b_{i_1},b_{i_2}] \cap S_2(v,\alpha)  \right)
                     \times   \left( [c_{i_2},c_{i_3}] \cap S_3(v,\alpha) \right) .
$$
For example, if $v=(3,3,4)$ then $m_2^3(v)=2$.  This implies that
$$
\begin{aligned}
\text{ if }\alpha=x_1^2 x_3^3 x_4^7 x_5^2
 &\text{ then }   \widetilde{P}_{v,\alpha}=\{a_1,a_3\} \times \{b_3\} \times \{c_3,c_4\},\\
\text{ if }\alpha=x_1^2 x_3^2 x_4^7 x_5^2
 &\text{ then }   \widetilde{P}_{v,\alpha}=\{a_1,a_3\} \times \{b_3\} \times \{c_3\}.
\end{aligned}
$$

\vskip.1in
\noindent
{\sf Proof of Claim (b):}
If $v$ is not the only vertex of $\cC_{\leq \alpha}$, then because $v$ is Gale-maximal,
without loss of generality we may assume that there is another vertex $v'$ of $\cC_{\leq \alpha}$
which lies strictly below $v$ in the Gale ordering:  take any other vertex $w \neq v$
of $\cC_{\leq \alpha}$ and Proposition~\ref{gale-properties}(ii) implies that $v':=v \wedge w$ has
the desired property.

Now suppose for the sake of contradiction that the unique facet $P_{v,\alpha}$
of $\cC_{\leq \alpha}$ containing $v$ which was exhibited above is zero-dimensional.
This means that the box $P_{v,\alpha} = X_1 \times \cdots \times X_d$ has each
``side'' $X_m$ of the box of cardinality $|X_m|=1$.
Looking at the descriptions of $P_{v,\alpha}$ above
for each of the ideals $I(F),I(K),I(F(K)),I(M),I(F(M)$,
one can argue by induction on $m$ that
$$
\begin{aligned}
v&=(i_1,\ldots,i_d)\\
v'&=(i'_1,\ldots,i'_d)
\end{aligned}
$$
must be equal in their first $m$ coordinates
for $m=1,2,\ldots,d$, using the facts that $v' \leq_{Gale} v$, that $\xb^{v'}$ divides $\alpha$, and
that $|X_m|=1$.  Hence $v' = v$, a contradiction.
\end{proof}

We deduce from this the explicit graded Betti numbers of
$I(F), I(K), I(F(K)), I(M), I(F(M))$ in the above setting.  The answers
for $I(M), I(K)$ agree with the results of Eliahou and Kervaire \cite{EK} and
Aramova, Herzog, and Hibi \cite{AHH}.  The answers for $I(F)$ generalize
Corollary~\ref{Ferrers-betti-numbers}.

\begin{corollary}
\label{hypergraph-betti-corollary}
If $K$ is a squarefree strongly stable $d$-uniform hypergraph, and $M=\depol(K)$,
then all four ideals $I=I(K), I(F(K)), I(M), I(F(M))$
have $\beta_{ij}(I)=0$ unless $j=i+d$ and
$$
\begin{aligned}
\beta_i(I)=\beta_{i,i+d}(I)
 &= \sum_{S \in K} \binom{\max{S}-d}{i} \\
 &= \sum_{k \geq d} \mu_k(K) \binom{k-d}{i}
\end{aligned}
$$
where $\mu_k(K):=|\{S \in K: \max(S)=k\}|$.

If $F$ is a $d$-partite Ferrers $d$-uniform hypergraph then $\beta_{ij}(I(F))=0$ unless $j=i+d$ and
$$
\begin{aligned}
\beta_i(I(F))=\beta_{i,i+d}(I(F))
&= \sum_{(i_1,\ldots,i_d) \in F} \binom{\sum_j i_j-d}{i} \\
&= \sum_{k \geq d} \alpha_k(K) \binom{k-d}{i}
\end{aligned}
$$
where $\alpha_k(K):=|\{(i_1,\ldots,i_d) \in F: \sum_j i_j = k\}$.
\end{corollary}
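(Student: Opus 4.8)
The plan is to reduce the entire statement to a box-counting exercise using Theorem~\ref{cellular-resolutions-theorem}. By parts (ii) and (iii) of that theorem, when $M=\depol(K)$ the ideals $I(K), I(F(K)), I(M), I(F(M))$ all have the same $\Z$-graded Betti numbers, so it suffices to compute $\beta_{ij}$ for $I(F(K))$; the Ferrers case is handled by the parallel computation for $I(F)$. Part (i) says the complex of boxes is a minimal $d$-linear cellular resolution, so $\beta_{ij}(I)=0$ unless $j=i+d$, and $\beta_{i,i+d}(I)$ equals the number of boxes $X_1\times\cdots\times X_d$ inside $F(K)$ (resp.\ $F$) with $\sum_j|X_j|=i+d$, equivalently with $\sum_j(|X_j|-1)=i$.

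It then remains to enumerate these boxes. First I would record the characterization: a Cartesian product $X_1\times\cdots\times X_d$ with $X_j\subseteq X^{(j)}$ lies inside $F(K)$ if and only if $\max X_1<\min X_2<\cdots<\min X_d$ and its maximal corner $(\max X_1,\ldots,\max X_d)$ lies in $F(K)$. The forward implication is immediate from the definition of $F(K)$; the reverse uses that $K$ is an order ideal in the Gale order, since for any $(t_1,\ldots,t_d)$ in the box one has $t_1<\cdots<t_d$ with $t_j\le\max X_j$, so $\{t_1<\cdots<t_d\}\le_{Gale}\{\max X_1<\cdots<\max X_d\}\in K$. Consequently, writing the maximal corner as $S=\{s_1<\cdots<s_d\}\in K$ and putting $s_0:=0$, the boxes inside $F(K)$ with maximal corner $S$ correspond bijectively to tuples $(X_1,\ldots,X_d)$ in which $X_j$ is an arbitrary subset of $\{s_{j-1}+1,\ldots,s_j\}$ containing $s_j$; such a tuple amounts to a choice of arbitrary subset $T$ of the $(s_d-d)$-element set $\{1,\ldots,s_d\}\setminus\{s_1,\ldots,s_d\}$ (restricting $T$ to the interval $(s_{j-1},s_j)$ recovers $X_j\setminus\{s_j\}$), and $\sum_j(|X_j|-1)=|T|$. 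Hence the number of such boxes with $\sum_j(|X_j|-1)=i$ is $\binom{s_d-d}{i}=\binom{\max S-d}{i}$.

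Summing over $S\in K$ yields $\beta_{i,i+d}(I)=\sum_{S\in K}\binom{\max S-d}{i}$, and grouping the sum according to the value $k=\max S$ rewrites this as $\sum_{k\ge d}\mu_k(K)\binom{k-d}{i}$, which is the asserted formula. For a Ferrers hypergraph $F$ the same argument runs with the Gale order replaced by the componentwise order on $X^{(1)}\times\cdots\times X^{(d)}$: a box lies in $F$ iff its maximal corner $(i_1,\ldots,i_d)$ does, the boxes with that maximal corner correspond to arbitrary subsets of the $(\sum_j i_j-d)$-element set $\bigsqcup_j\{1,\ldots,i_j-1\}$, giving $\beta_{i,i+d}(I(F))=\sum_{(i_1,\ldots,i_d)\in F}\binom{\sum_j i_j-d}{i}$, which regroups by $k=\sum_j i_j$ as $\sum_{k\ge d}\alpha_k(K)\binom{k-d}{i}$.

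The main obstacle, and really the only point requiring care, is establishing the box-counting bijection: pinning down precisely which Cartesian products sit inside $F(K)$ (where the order-ideal/strong-stability hypothesis is exactly what makes the maximal-corner criterion clean) and checking that the ``free'' coordinates — those not forced by the maximal corner — range over an $(\max S-d)$-element set with no further constraints, including the degenerate cases where some $X_j$ is forced to be a singleton. Once this is in place, the remaining passage to the closed-form sums is routine bookkeeping with binomial coefficients, and the agreement with Eliahou--Kervaire \cite{EK} and Aramova--Herzog--Hibi \cite{AHH} is then a matter of comparing formulas.
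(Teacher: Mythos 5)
Your proposal is correct and follows essentially the same route as the paper: Theorem~\ref{cellular-resolutions-theorem} reduces everything to counting boxes in the complex of boxes, and both you and the paper classify these boxes by their set of maxima $S$, obtaining $\binom{\max S - d}{i}$ boxes of dimension $i$ for each $S \in K$ (and the analogous count by maximal corners for a Ferrers hypergraph), then regroup by $k=\max S$ (resp.\ $k=\sum_j i_j$). One small wording fix: the interleaving condition characterizing a box should read $\max X_j < \min X_{j+1}$ for every $j$, which is what your bijection actually uses, rather than the chain $\max X_1<\min X_2<\cdots<\min X_d$ as literally written.
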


\begin{proof}
Theorem~\ref{cellular-resolutions-theorem} tells us that all four
$I=I(K), I(F(K)), I(M), I(F(M))$ have
the same graded Betti numbers $\beta_{ij}(I)$,
which vanish unless $j=i+d$.  Furthermore, given a subset of positive integers $X$,
it tells us that the multigraded Betti number
$\beta_{\sum_j |X_j|-d,X}(I(K))$ is the number of
boxes $X_1 \times \cdots \times X_d$ inside $F(K)$ giving a decomposition
$X=X_1 \sqcup \cdots \sqcup X_d$.

Classify these boxes according to their set of maxima
$$
\begin{array}{ccccccccc}
S&:=\{ &\max X_1 & < & \cdots & < & \max X_d &\} &\\
 & =\{ & i_1     & < & \cdots & < & i_d      &\} &\in K.
\end{array}
$$
Given any set $S=\{i_1 < \cdots < i_d\} \in K$,
such a box and decomposition $X=X_1 \sqcup \cdots \sqcup X_d$ exists if and only
if
$$
S \subseteq X \subseteq [\max X]:=\{1,2,\ldots,\max X\},
$$
namely one has the unique decomposition  in which
$$
X_j:= \{i_{j-1}+1,i_{j-1}+2,\ldots,i_{j}-1,i_{j}\} \cap X
$$
with the convention that $i_0:=0$.
Thus for each set $S \in K$ there are
$\binom{|[\max S] \setminus S |}{i}=\binom{\max(S)-d}{i}$
sets $X$ with $S \subset X$, $|X| = i+d$, and $\max(X) = \max(S)$. For each such set $X$,
the finely graded Betti number $\beta_{i,X}(I)$  contributes
$1$ to $\beta_{i,i+d}(I) (=\beta_i(I))$.
This gives the first formula for $\beta_i(I(K))$;  the second follows immediately from the
first.

Similarly, the first formula for $\beta_i(I(F))$ when $F$ is a Ferrers hypergraph comes from
classifying the boxes $X_1 \times \cdots \times X_d$ inside $F$ according to their
maxima $(\max X_1,\ldots,\max X_d)= (i_1,\ldots,i_d) \in F$.  The second formula then
follows from the first.
\end{proof}

\begin{remark} \rm \ \\
It would be desirable to extend Theorem~\ref{cellular-resolutions-theorem}
to deal with the {\it stable ideals}  considered by Eliahou and Kervaire \cite{EK}
and {\it squarefree stable ideals} considered by Aramova, Herzog, and Hibi \cite{AHH}, which are somewhat
less restrictive than their {\it strongly stable} counterparts.

However, in both cases the issue of how one should construct the polarization
$I(F(K))$ from $I(K)$ becomes trickier.  The
following example shows that the construction used in
Theorem~\ref{cellular-resolutions-theorem} does not directly generalize -- new ideas are needed.

\begin{example}
 \label{ex-stable-ideal}
Consider the ideal $I = (x_1 x_2, x_1 x_3, x_2 x_3, x_2 x_4)$.
It is squarefree stable, but not squarefree strongly stable.
If blindly applied, the method of Theorem~\ref{cellular-resolutions-theorem}
would associate to $I$ a 1-dimensional cell complex (its complex of boxes).
However, this complex cannot support a cellular resolution for $I$ (minimal or otherwise),
since $I$ has projective dimension $2$.
\end{example}
\end{remark}

\section{PART III: Instances of Question~\ref{nonbipartite-conjecture} and Conjecture~\ref{bipartite-conjecture}}
\label{conjectures-section}

\subsection{Affirmative answers for Question~\ref{nonbipartite-conjecture}}
\label{nonbipartite-conjecture-section}

The next three propositions are offered
as evidence that many monomial ideals obey the colex lower bound.
Given a $d$-uniform hypergraph $K \subset \binom{\PP}{d}$, let
$C_K$ denote the unique colexsegment $d$-uniform hypergraph having
the same cardinality.

\begin{proposition}
\label{shifted-hypergraphs-satisfy-conjecture}
For any squarefree strongly stable $d$-uniform hypergraph $K \subset \binom{\PP}{d}$
or any strong stable collection $M \subset \binom{\PP+d-1}{d}$, all of the ideals
$I(K),I(F(K)),I(M),I(F(M))$ obey the colex lower bound.
\end{proposition}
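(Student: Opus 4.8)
The plan is to reduce the statement, via Corollary~\ref{hypergraph-betti-corollary}, to a comparison of two integer sequences. Since $\depol$ is an order-isomorphism of Gale orders, a collection $M \subseteq \binom{\PP+d-1}{d}$ is strongly stable if and only if $M = \depol(K)$ for a (unique) squarefree strongly stable $K \subseteq \binom{\PP}{d}$. In that situation Corollary~\ref{hypergraph-betti-corollary} gives that the four ideals $I(K), I(F(K)), I(M), I(F(M))$ have identical $\Z$-graded Betti numbers, and each has exactly $g := |K| = |M|$ minimal generators, all in degree $d$; hence they obey the colex lower bound simultaneously, and it suffices to treat $I(K)$. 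The colexsegment ideal against which $I(K)$ is compared is $I(C_K)$, where $C_K \subseteq \binom{\PP}{d}$ is the colexsegment with $|C_K| = g$.

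Next I would record that $C_K$ is itself squarefree strongly stable: an initial segment of the colexicographic order is a down-set in the Gale order, because $S \leq_{Gale} S'$ implies $S \leq_{colex} S'$. Thus Corollary~\ref{hypergraph-betti-corollary} applies to $C_K$ as well, and, writing $\mu_k(K) := |\{S \in K : \max S = k\}|$, the whole proposition reduces to the combinatorial inequality
\[
\sum_{k \geq d} \mu_k(K)\binom{k-d}{i} \;\geq\; \sum_{k \geq d} \mu_k(C_K)\binom{k-d}{i}
\qquad\text{for all } i \geq 0,
\]
between two nonnegative sequences having the same total sum $g$.

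The heart of the matter is the elementary fact that colexsegments maximize all the partial sums $N_{\leq k} := \sum_{j \leq k}\mu_j = |\{S : \max S \leq k\}|$. For any family $K$ one has $N_{\leq k}(K) \leq g$ trivially, and $N_{\leq k}(K) \leq \binom{k}{d}$ since the sets counted are $d$-subsets of $\{1,\dots,k\}$; on the other hand, since the first $\binom{k}{d}$ sets in colex order are precisely the $d$-subsets of $\{1,\dots,k\}$, a direct inspection of the definition of $C_K$ yields $N_{\leq k}(C_K) = \min(g, \binom{k}{d})$. Therefore $N_{\leq k}(K) \leq N_{\leq k}(C_K)$ for every $k$, with equality once $k$ is large. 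Summation by parts, using that $k \mapsto \binom{k-d}{i}$ is nonnegative and nondecreasing and that the two sequences have equal total sum, then converts these partial-sum inequalities into the displayed Betti-number inequality, completing the argument.

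I do not expect a genuine obstacle here. Once Corollary~\ref{hypergraph-betti-corollary} and the observation that colexsegments are squarefree strongly stable are in place, what remains is the trivial upper bound $N_{\leq k}(K) \leq \min(g, \binom{k}{d})$ together with an Abel summation; the only step demanding a little care is unwinding the definition of the colexsegment to verify $N_{\leq k}(C_K) = \min(g, \binom{k}{d})$ — in particular, that every set of $C_K$ beyond the first $\binom{k}{d}$ has maximum exceeding $k$.
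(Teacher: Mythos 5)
Your proposal is correct and follows essentially the same route as the paper: reduce via the depolarization bijection and Corollary~\ref{hypergraph-betti-corollary} to comparing $\sum_{k}\mu_k(K)\binom{k-d}{i}$ with the corresponding sum for the colexsegment $C_K$ (which is itself squarefree strongly stable), using that colex extends the preordering by maxima. Your partial-sum/Abel-summation step is simply a careful spelling-out of the paper's one-line comparison of $\mu_k(K)$ with $\mu_k(C_K)$ — which, as you implicitly recognize, is the cumulative statement $\sum_{j\le k}\mu_j(K)\le\sum_{j\le k}\mu_j(C_K)=\min\bigl(g,\binom{k}{d}\bigr)$ rather than a pointwise one — so there is no substantive difference in approach.
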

\begin{proof}
By the depolarization bijection, one may assume that $M=\depol(K)$.
Then Corollary~\ref{hypergraph-betti-corollary} implies that all of
these ideals have the same Betti numbers $\beta_i(I)$, namely
$\beta_i(I)=\sum_{S \in K} \mu_k(K) \binom{k-d}{i}$.  Since $C_K$ also has $I(C_K)$
squarefree strongly stable, its Betti numbers obey a similar formula.
However, $\mu_k(K) \geq \mu_k(C_K)$ for all $k$ by definition of $\mu_k$ and due to the fact that
the colexicographic ordering on $\binom{\PP}{d}$ is an extension of the preordering by maxima.
\end{proof}

\begin{proposition}
\label{Ferrers-hypergraphs-satisfy-conjecture}
Any Ferrers $d$-partite $d$-uniform hypergraph $F$ obeys the colex lower bound.
\end{proposition}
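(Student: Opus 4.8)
The strategy is to make both sides of the asserted inequality completely explicit via Corollary~\ref{hypergraph-betti-corollary} and then compare the two resulting sums by an Abel summation (``majorization'') argument. Let $g$ be the number of minimal generators of $I(F)$; by Theorem~\ref{cellular-resolutions-theorem}(i) the resolution supported on the complex of boxes is minimal, so $\beta_0(I(F))=|F|=g$, and the ideal $J$ against which we must compare is $J=I(C)$, where $C\subset\binom{\PP}{d}$ is the colexsegment with $|C|=g$. Since $S\leq_{Gale}S'$ implies $S\leq_{colex}S'$, a colexsegment is a Gale order ideal, so $J$ is squarefree strongly stable and Corollary~\ref{hypergraph-betti-corollary} applies to both ideals:
$$
\beta_i(I(F))=\sum_{k\geq d}\alpha_k(F)\binom{k-d}{i},\qquad
\beta_i(J)=\sum_{k\geq d}\mu_k(C)\binom{k-d}{i},
$$
where $\alpha_k(F)=|\{(i_1,\dots,i_d)\in F:\ \sum_j i_j=k\}|$ and $\mu_k(C)=|\{S\in C:\ \max S=k\}|$. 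Here I identify each $X^{(j)}$ with $\{1,2,\dots\}$, so $F$ is a down-set in $\PP^d$. Note $\sum_k\alpha_k(F)=|F|=g=|C|=\sum_k\mu_k(C)$; in particular the case $i=0$ is the trivial equality $\beta_0(I(F))=g=\beta_0(J)$, so from now on take $i\geq 1$.

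\textbf{Key step: the tail inequality.} I will show that for every integer $s\geq d$,
$$
\sum_{k\leq s}\alpha_k(F)\ \leq\ \sum_{k\leq s}\mu_k(C).
$$
For the right-hand side, $\sum_{k\leq s}\mu_k(C)=|\{S\in C:\ \max S\leq s\}|=|C\cap\binom{[s]}{d}|$. Both $C$ and $\binom{[s]}{d}$ are initial segments of the colex order — the latter because any $d$-set with maximum $\leq s$ precedes (in colex) any $d$-set with larger maximum — so their intersection is again an initial segment, of size $\min\!\big(g,\binom{s}{d}\big)$. For the left-hand side, $\sum_{k\leq s}\alpha_k(F)=|\{v\in F:\ \sum_j v_j\leq s\}|$; since $F\subseteq\PP^d$ and the number of $v\in\PP^d$ with $\sum_j v_j\leq s$ equals the number of $d$-tuples of positive integers summing to at most $s$, which is $\binom{s}{d}$ (substitute $v_j=1+m_j$, $m_j\geq 0$, $\sum m_j\leq s-d$), we get $\sum_{k\leq s}\alpha_k(F)\leq\min\!\big(g,\binom{s}{d}\big)$ as well. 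This proves the tail inequality.

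\textbf{Conclusion.} Put $a_k:=\mu_k(C)-\alpha_k(F)$, so that $\sum_{k\geq d}a_k=0$ and $A_s:=\sum_{d\leq k\leq s}a_k\geq 0$ for all $s$, and put $f(k):=\binom{k-d}{i}$, which is non-decreasing in $k\geq d$. Summation by parts gives
$$
\beta_i(J)-\beta_i(I(F))=\sum_{k\geq d}a_k\,f(k)=-\sum_{k\geq d}A_k\big(f(k+1)-f(k)\big)\ \leq\ 0,
$$
since every factor $A_k\geq 0$ and $f(k+1)-f(k)\geq 0$; this is exactly the colex lower bound $\beta_i(I(F))\geq\beta_i(J)$. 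The main point to get right is that, in contrast to Proposition~\ref{shifted-hypergraphs-satisfy-conjecture}, there is in general \emph{no} termwise domination $\alpha_k(F)\geq\mu_k(C)$ (the two sequences count genuinely different things — coordinate sums versus maxima), so the passage through the tail inequality and Abel summation is essential; once the two explicit Betti formulas are in hand, however, each ingredient of the tail inequality is a one-line combinatorial count. Minor bookkeeping to include: that $\beta_0(I(F))=|F|$ (so $J$ is determined correctly), and that $\binom{[s]}{d}$ really is a colex initial segment.
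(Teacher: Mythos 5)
Your proof is correct, and it diverges from the paper's argument exactly at the decisive step. The paper also reduces everything to the two formulas of Corollary~\ref{hypergraph-betti-corollary}, but then asserts the \emph{termwise} inequality $\alpha_k(F)\geq\mu_k(C_F)$ for all $k$, deduced from the partial-sum bijection $\PP^d\to\binom{\PP}{d}$, $(i_1,\dots,i_d)\mapsto(i_1,i_1+i_2,\dots,i_1+\cdots+i_d)$ (which sends the elements counted by $\alpha_k(F)$ to distinct $d$-sets with maximum $k$) together with the fact that colex refines the preorder by maxima; granted termwise domination, the Betti inequality follows coefficient by coefficient with no summation by parts. You instead prove only the cumulative bound $\sum_{k\le s}\alpha_k(F)\le\min\bigl(g,\binom{s}{d}\bigr)=\sum_{k\le s}\mu_k(C_F)$ and finish by Abel summation against the nondecreasing weights $\binom{k-d}{i}$. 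Your extra care is in fact necessary: the termwise inequality as stated in the paper fails in general, e.g.\ for $d=2$ and the one-row Ferrers graph $F=\{(1,1),(1,2),(1,3),(1,4)\}$ one has $\alpha_3(F)=1<2=\mu_3(C_F)$, even though $\beta_i(I(F))=\binom{4}{i+1}$ does dominate $\beta_i(J)=4,4,1,0,\dots$; what the partial-sum bijection genuinely yields is precisely your tail inequality (your direct count of lattice points with coordinate sum at most $s$ amounts to composing with that bijection). So the two arguments share the same combinatorial core, but the paper's shortcut overstates the comparison of the coefficient sequences, while your cumulative-plus-summation-by-parts version is the one that rigorously closes the argument. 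One caveat on your closing remark: the contrast you draw with Proposition~\ref{shifted-hypergraphs-satisfy-conjecture} is not quite right either, since termwise domination $\mu_k(K)\ge\mu_k(C_K)$ can also fail there (take $K=\{12,13,14,15\}$, which is squarefree strongly stable); the same cumulative fix applies, because $\sum_{k\le s}\mu_k(K)=\bigl|K\cap\binom{[s]}{d}\bigr|\le\min\bigl(g,\binom{s}{d}\bigr)$.
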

\begin{proof}
By Corollary~\ref{hypergraph-betti-corollary}, it suffices to show that
$\alpha_k(F) \geq \mu_k(C_F)$ for all $k$.  Note that the map sending
vectors $(i_1,\ldots,i_d) \in \PP^d$ to their partial sums
$(i_1,i_1+i_2,i_1+i_2+i_3,\cdots,i_1+i_2+\cdots+i_d)$
is a bijection $\PP^d \rightarrow \binom{\PP}{d}$
with the property that it sends the distinct elements of $F$ which are counted by
$\alpha_k(F)$ to distinct subsets $S$ in $\binom{\PP}{d}$ having $\max(S)=k$.
Since $C_F$ is an initial segment in a linear ordering on $\binom{\PP}{d}$
that extends the partial ordering by $\max(S)$, this forces
$\alpha_k(F) \geq \mu_k(C_F)$.
\end{proof}

The proof of the following proposition uses an independent, later result (Corollary \ref{lower-bound-valid-corollary}) about Conjecture \ref{bipartite-conjecture}.

\begin{proposition}
\label{nonbipartite-lower-bound-valid}
For any shifted skew diagram $D$ and any ordered subsets $X, Y$,
both the bipartite graph $\Gbip{X}{Y}(D)$ and the nonbipartite
graph $\Gnonbip{X}(D)$ obey the colex lower bound.
\end{proposition}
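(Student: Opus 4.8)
The plan is to derive the colex lower bound for both families from results proved elsewhere: the finely graded Ferrers lower bound for the graphs $\Gbip{X}{Y}(D)$ (Corollary~\ref{lower-bound-valid-corollary}), the quadratic instance of the fact that Ferrers hypergraphs obey the colex lower bound (Proposition~\ref{Ferrers-hypergraphs-satisfy-conjecture}), and the specialization identity $\beta_{ij}(I(\Gnonbip{X}(D))) = \beta_{ij}(I(\Gbip{X}{X}(D)))$ (Corollary~\ref{specialization-corollary}(i)). The one genuinely substantive ingredient is Corollary~\ref{lower-bound-valid-corollary}; the rest is bookkeeping, the only delicate point being that the number of degree-$2$ generators is preserved at each comparison, so that the colexsegment ideal against which we compare never changes.

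First I would treat the bipartite graph $G := \Gbip{X}{Y}(D)$. Let $g$ be the number of cells of $\Dbip{X}{Y}$, equivalently the number of edges of $G$, equivalently the number of minimal generators of $I(G)$. Let $J$ be the associated Ferrers edge ideal of \eqref{associated-Ferrers-ideal}. Since $\sum_i \deg_G(x_i) = g$, the ideal $J$ also has exactly $g$ minimal generators, and after relabeling variables it is the Ferrers graph ideal $I(G_\lambda)$, where $\lambda$ is the sequence $(\deg_G(x_1),\ldots,\deg_G(x_m))$ rearranged into a partition. By Corollary~\ref{lower-bound-valid-corollary}, $\beta_{i,X',\bullet}(I(G)) \geq \beta_{i,X',\bullet}(J)$ for all $i$ and all $X' \subseteq X$. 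Summing over $X' \subseteq X$ and using the definitions $\beta_{i,X',\bullet}(M) = \sum_{Y' \subseteq Y}\beta_{i,X'\sqcup Y'}(M)$ and $\beta_i(M) = \sum_{V'}\beta_{i,V'}(M)$, one gets $\beta_i(I(G)) \geq \beta_i(J) = \beta_i(I(G_\lambda))$ for every $i$. Now $I(G_\lambda)$ is of the form $I(F)$ for a $2$-partite $2$-uniform Ferrers hypergraph $F$ (the cells of a Ferrers diagram form an order ideal for the componentwise order), so Proposition~\ref{Ferrers-hypergraphs-satisfy-conjecture} gives $\beta_i(I(G_\lambda)) \geq \beta_i(J_{\mathrm{colex}})$, where $J_{\mathrm{colex}}$ is the colexsegment-generated ideal with $g$ quadratic generators. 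Concatenating the two inequalities yields $\beta_i(I(\Gbip{X}{Y}(D))) \geq \beta_i(J_{\mathrm{colex}})$ for all $i$, which is exactly the colex lower bound for $\Gbip{X}{Y}(D)$.

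For the nonbipartite graph $\Gnonbip{X}(D)$ I would reduce to the bipartite case via specialization. The cells of $\Dnonbip{X}$ are exactly those of $\Dbip{X}{X}$, so $\Gnonbip{X}(D)$ and $\Gbip{X}{X}(D)$ have the same number $g$ of edges, hence are compared against the same colexsegment ideal $J_{\mathrm{colex}}$. By Corollary~\ref{specialization-corollary}(i) (the case $X = Y$), $\beta_{ij}(I(\Gnonbip{X}(D))) = \beta_{ij}(I(\Gbip{X}{X}(D)))$ for all $i,j$, and summing over $j$ gives $\beta_i(I(\Gnonbip{X}(D))) = \beta_i(I(\Gbip{X}{X}(D)))$. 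Combining with the bipartite case applied to $\Gbip{X}{X}(D)$ gives $\beta_i(I(\Gnonbip{X}(D))) \geq \beta_i(J_{\mathrm{colex}})$, so $\Gnonbip{X}(D)$ obeys the colex lower bound as well. I do not expect a real obstacle beyond invoking Corollary~\ref{lower-bound-valid-corollary}; the only places requiring care are the steps where one checks that the generator count is preserved (so that $J_{\mathrm{colex}}$ is unchanged) and the elementary identity $\beta_i = \sum_{X'}\beta_{i,X',\bullet}$.
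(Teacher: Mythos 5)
Your proof is correct and follows essentially the same route as the paper's: reduce the nonbipartite case to the bipartite case by the specialization result, then use Corollary~\ref{lower-bound-valid-corollary} to compare against the row-nested (Ferrers) graph, and finally invoke Proposition~\ref{Ferrers-hypergraphs-satisfy-conjecture} with $d=2$. The paper merely performs the reductions in the opposite order (nonbipartite to bipartite first) and omits the explicit bookkeeping about generator counts and summing $\beta_{i,X',\bullet}$ over $X'$, which you spell out but which is indeed routine.
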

\begin{proof}
There are several reductions. By Theorem~\ref{specialization-theorem}, one can replace
$\Gnonbip{X}(D)$ by $\Gbip{X}{X}(D)$, and hence it suffices
to prove it only for the bipartite graphs $\Gbip{X}{Y}(D)$.  But then
Corollary~\ref{lower-bound-valid-corollary} implies it suffices to prove it
only for row-nested bipartite graphs.   However row-nested bipartite graphs are
exactly the bipartite graphs isomorphic to Ferrers graphs so it suffices to
prove it for Ferrers graphs.  But these are Ferrers hypergraphs with $d=2$,
and hence the result follows from Proposition~\ref{Ferrers-hypergraphs-satisfy-conjecture}.
\end{proof}

\begin{remark} \rm \ \\
The proofs of Propositions~\ref{shifted-hypergraphs-satisfy-conjecture}
and ~\ref{Ferrers-hypergraphs-satisfy-conjecture}
reveal the important properties of the colexicographic ordering used to define the
colexsegment hypergraph $C_K$:
colex is a linear order with a minimum element, and all intervals finite, that extends
the Gale ordering, and which is weaker than the (total) preordering by maxima on $\binom{\PP}{d}$.
One could replace the colex ordering with any ordering on $\binom{\PP}{d}$
having these properties in defining $C_K$, and the proofs of the previous three
propositions would hold.
\end{remark}

\begin{remark} \rm \ \\
As with Conjecture~\ref{bipartite-graph-bound-conjecture} below,
there is an easy {\it upper} bound that comes from the Taylor resolution of $I(K)$
namely $\beta_i(I(K)) \leq \binom{|K|}{i+1}$.  For $d=2$ (the graph case)
equality is achieved in this upper bound if and only if the graph has
every connected component of $G$ a star, by
Proposition~\ref{prop-star-components} below.
\end{remark}

\begin{remark} \label{rem-counter-ex} \rm \ \\
It is not true that every monomial ideal generated in a single degree obeys the colex lower bound. For example, consider the edge ideal $I$ of a $5$-cycle. It is a Gorenstein ideal with total Betti numbers $(\beta_0, \beta_1, \beta_2) = (5, 5, 1)$. These are smaller than the total Betti numbers $(\beta_0, \beta_1, \beta_2) = (5, 6, 2)$ of the corresponding colexsegment-generated ideal $J$. In fact, it is not too difficult to show that $I$ has the smallest total Betti numbers among all homogeneous (not necessarily monomial) ideals that are minimally generated by 5 quadrics.
\end{remark}

\begin{remark} \rm \ \\
Question~\ref{nonbipartite-conjecture} invites comparison with
Conjecture 4.3 of Aramova, Herzog, and Hibi \cite{AHH}, in which the Betti numbers
of a squarefree monomial ideal are conjectured to be {\it bounded above} by
the unique {\it lexsegment ideal} having the same Hilbert function (rather then
bounded below by the unique colexsegment ideal having the same number of minimal
generators). Note that this conjecture is true if the ground field has characteristic zero by \cite[Theorem 2.9]{AHH-2000}.
\end{remark}

\subsection{Evidence for Conjecture~\ref{bipartite-conjecture} and its refinement}
\label{bipartite-conjecture-section}

Here we present  a more precise version of
Conjecture~\ref{bipartite-conjecture}, incorporating an upper bound
to go with the lower bound, and characterizing when equality occurs
for each.   In the sections following, we are able to prove
\begin{enumerate}
\item[$\bullet$]
the upper bound, which is not hard via the Taylor resolution (Section~\ref{Taylor-section}).
\item[$\bullet$]
the characterization for the case of equality both in the lower and
in the upper bounds (Sections~\ref{Taylor-section}, \ref{two-reductions-section}, \ref{lower-bound-section}).
\item[$\bullet$]
the whole conjecture is valid for graphs of the
form $\Gbip{X}{Y}(D)$ (Section~\ref{bipartite-conjecture-holds-for-skew}).
\end{enumerate}

We begin by defining the four classes of graphs that appear as the extreme cases
in the conjecture:
{\it row-nested} (the lower bound),
{\it nearly-row-nested} (the case of equality in the lower bound),
{\it horizontal} (the upper bound), and
{\it horizontal-vertical} the case of equality in the upper bound).

\begin{definition}
Given a bipartite graph $G$ on bipartite vertex set $X \sqcup Y$ with edge set $E(G)$, we will often refer to its associated {\it diagram}
$$
D:=\{ (x,y): \{x,y\} \in E(G) \} \subset X \times Y
$$
This motivates the following terminology. Define for each
vertex $x \in X$ its {\it row} $R_x$ of $G$ or $D$ as follows:
$$
R_x :=\{y \in Y: \{x,y\} \in E(G) \}.
$$
In other words, these are the neighboring vertices to $x$ in $G$.
Similarly define for vertices $y \in Y$ the {\it column} $C_y$ in $G$ or $D$.

Say that $G$ is {\it row-nested} if the collection of rows $\{R_x\}_{x \in X}$ is
totally ordered by inclusion, that is, if $|R_x| \leq |R_{x'}|$ then $R_x \subseteq R_{x'}$.
In particular, if $|R_x| = |R_{x'}|$ then $R_x = R_{x'}$.

Say $G$ is {\it nearly-row-nested} if
$|R_x| < |R_{x'}|$ implies $R_x \subset R_{x'}$ and for each cardinality $c \geq 0$, one
has
$$
\left| \bigcap_{x: |R_x|=c} R_x \right| \in \{c-1,c\}.
$$
In other words, rows of different cardinalities are nested, while all the rows of
a given cardinality $c$ are either all the same or have a common intersection of cocardinality $1$.

Say $G$ is {\it horizontal} if every square in its associated diagram $D$ is alone within
its column.

Say $G$ is {\it horizontal-vertical} if every square in its associated diagram $D$ is either
alone within its column or alone within its row, or both.

Lastly, define
$$
\beta_{i,X, \bullet}(I(G)):=\sum_{Y' \subseteq Y} \beta_{i,X \sqcup Y}(I(G)).
$$
In other words, these are the finely graded Betti numbers of $I(G)$ with respect to
the specialized multigrading in which all the $Y$ variables have degree $0$.
\end{definition}

It is not hard to see that if $G$ is any bipartite graph on vertices $X \sqcup Y$,
there is up to isomorphism, a unique row-nested bipartite graph $R_G$ on $X \sqcup Y'$
for some $Y'$ with the same row sizes $R_x=(R_G)_x (= \deg_G(x))$ for all $x \in X$.
Similarly, there is up to isomorphism a unique horizontal graph $H_G$ with the same row sizes
as $G$.

Here is the more precise version of Conjecture~\ref{bipartite-conjecture}.

\begin{conjecture}
\label{bipartite-graph-bound-conjecture}
For any bipartite graph $G$ on vertex set $X \sqcup Y$, let $R_G$ be the unique (up to isomorphism)
row-nested graph with the same row sizes/$X$-degrees, and $H_G$ the unique (up to isomorphism) horizontal graph
with with the same row sizes/$X$-degrees.

Then for all $i$ and all $X' \subset X$ one has
\begin{equation}
\label{conjectured-bipartite-inequalities}
\begin{matrix}
\beta_{i,X',\bullet}(I(R_G)) & \leq &
\beta_{i,X',\bullet}(I(G))   & \leq &
\beta_{i,X',\bullet}(I(H_G)) \\
\Vert & & & &\Vert \\
\left\{
\begin{matrix}
\binom{ \mindeg(X') }{i-|X'|+2} &\text{ if } |X'|<i+2 \\
0                               &\text{ otherwise}.\\
\end{matrix}
\right\}
& & & & \binom{ |\deg(X')| }{i+1}
\end{matrix}
\end{equation}
where
$$
\begin{aligned}
\mindeg(X')&:=\min\{\deg_G(x): x \in X'\}, \text{ and }\\
|\deg(X')|&:=\sum_{x \in X'} \deg_G(x) .
\end{aligned}
$$

Furthermore, equality occurs for all $i$ and all $X'$ in the lower (resp. upper) bound, that is, in
the first (resp. second) inequality of \eqref{conjectured-bipartite-inequalities},
if and only if $G$ is nearly-row-nested (resp. horizontal-vertical).
\end{conjecture}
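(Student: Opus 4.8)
The plan is to break the statement into four pieces of decreasing tractability — the upper bound together with its explicit value, the characterization of equality in the upper bound, the characterization of equality in the lower bound, and the validity of both bounds for the graphs $\Gbip{X}{Y}(D)$ — and to leave the general lower bound unproven, exactly as the paper does. For the upper bound, the plan is to read everything off the Taylor resolution on the edge generators $x_iy_j$. Under the specialized $\Z^m$-grading (with $\deg y_j = 0$) the multidegree of a set $\sigma$ of edges collapses to the multidegree supported on the set of $X$-endpoints that $\sigma$ uses, so the strand of the Taylor complex in homological degree $i$ and multidegree supported on $X'$ is spanned by the $(i+1)$-subsets of edges all of whose $X$-endpoints lie in $X'$, and there are exactly $\binom{|\deg(X')|}{i+1}$ of these; since the Taylor complex surjects multidegreewise onto a minimal resolution, this yields $\beta_{i,X',\bullet}(I(G)) \le \binom{|\deg(X')|}{i+1}$. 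The sharper count $\#\{\sigma : |\sigma| = i+1,\ \text{$X$-support}(\sigma) = X'\}$ is attained on the nose by the horizontal graph $H_G$, whose Taylor complex is already minimal because a disjoint union of stars has distinct lcm's on distinct faces; specializing $X' = X$ recovers the ungraded bound $\beta_i(I(G)) \le \binom{|E(G)|}{i+1}$.

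For equality in the upper bound I would argue that $\beta_{i,X',\bullet}(I(G)) = \beta_{i,X',\bullet}(I(H_G))$ for all $i,X'$ is equivalent to minimality of the Taylor complex of $I(G)$: a single Taylor cancellation strictly lowers some ungraded $\beta_i$, hence some $\beta_{i,X',\bullet}$, below the $H_G$ value, while minimality forces every finely graded Betti number to equal the corresponding (degree-sequence-only) Taylor count, which is what $H_G$ realizes. It then remains to identify the bipartite graphs with minimal Taylor complex: for an edge ideal minimality fails precisely when some edge has both endpoints of degree $\ge 2$, and an elementary argument shows the bipartite graphs avoiding this are exactly the disjoint unions of stars, which are exactly the horizontal-vertical graphs. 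This settles the upper half of the conjecture.

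For the lower half, the plan is to treat only the skew-shape case and the equality characterization. The row-nested graph $R_G$ is the Ferrers graph $G_\lambda$ with $\lambda$ the sorted degree sequence of $G$, so Proposition~\ref{Ferrers-betti-numbers} already gives $\beta_{i,X',\bullet}(I(R_G)) = \binom{\mindeg(X')}{i-|X'|+2}$ when $|X'|<i+2$ and $0$ otherwise; this fixes the lower extreme value. For $G=\Gbip{X}{Y}(D)$, Corollary~\ref{betti-number-corollary} converts $\beta_{i,X',\bullet}(I(G))$ into a count of spherical subdiagrams $\Dbip{X'}{Y'}$ of prescribed rectangularity, and the lower bound follows by exhibiting, for each $(i-|X'|+2)$-subset of a minimum-degree row, a distinct such spherical subdiagram (the full rectangles stacked above that row), after a monotonicity reduction — Corollary~\ref{lower-bound-valid-corollary} — that lets one assume the diagram is Ferrers; the nonbipartite statements are then automatic from Theorem~\ref{specialization-theorem}. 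The ``if'' direction of the equality characterization is a direct check, cleanest through the rectangular decomposition, that nearly-row-nested graphs have the same finely graded Betti numbers as $R_G$; the ``only if'' direction I would prove by showing that a failure of near-row-nestedness — either rows of distinct sizes failing to nest, or rows of some common size $c$ meeting in a set of cocardinality $\ge 2$ — produces, via Hochster's formula (Proposition~\ref{Hochster-formula}) applied to a carefully chosen induced subcomplex $\Delta(G)_W$, an extra reduced homology class, hence $\beta_{i,X',\bullet}(I(G)) > \beta_{i,X',\bullet}(I(R_G))$ for some $i,X'$.

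The main obstacle is precisely this last ``only if'' direction: turning the combinatorial defect in near-row-nestedness into a concrete surplus in a specific $\beta_{i,X',\bullet}$. For the graphs $\Gbip{X}{Y}(D)$ this can be carried out by bookkeeping with the rectangular decomposition and Corollary~\ref{betti-number-corollary} (and Theorem~\ref{homotopy-type-theorem}), but for an arbitrary bipartite graph it demands a purely homological argument that is entangled with the still-open general lower bound, so I would push it only as far as the Hochster-formula techniques of Part~I allow, leaving the unconditional lower bound $\beta_{i,X',\bullet}(I(G)) \ge \beta_{i,X',\bullet}(I(R_G))$ as a conjecture.
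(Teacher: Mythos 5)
Your treatment of the upper half — the Taylor-complex bound, minimality of the Taylor complex for $I(H_G)$ via distinct lcm's on a disjoint union of stars, and the forbidden-subgraph identification of horizontal-vertical graphs — matches the paper's Proposition~\ref{prop-star-components} and Corollary~\ref{cor-upper-b} and is correct.

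The gap is in the equality characterization for the lower bound. You declare the ``only if'' direction for arbitrary bipartite $G$ to be ``entangled with the still-open general lower bound'' and would leave it incomplete, but the paper proves ``$\beta_{i,X',\bullet}(I(G)) = \beta_{i,X',\bullet}(I(R_G))$ for all $i,X'$ iff $G$ is nearly row-nested'' \emph{unconditionally}. The idea you are missing: detecting failure of equality needs only a single $(i,X')$ with a strict \emph{surplus}, and by Proposition~\ref{forbidden-subgraph-characterizations}(ii) the defect always localizes to an induced subgraph on two or three rows. For the two-row defect the induced graph is, after permuting columns, a $\Gbip{X}{Y}(D)$, so Corollary~\ref{betti-number-corollary} gives $\beta_{1,\{x_1,x_2\},\bullet}(I(G)) \ge c + (a-c)(b-c) > b = \beta_{1,\{x_1,x_2\},\bullet}(I(R_G))$ when $a - c \ge 2$; for the three-row defect (a $6$-cycle) one strips full columns via Proposition~\ref{full-column-reduction} and reads off $\tilde H_1 \cong k^2$ of the $6$-cycle's clique complex directly, beating $\binom{2}{2}=1$. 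Neither step presupposes the general lower bound. Your ``if'' direction is also off route: nearly-row-nested graphs need not be of skew shape (three size-$2$ rows $\{y_1,y_2\},\{y_1,y_3\},\{y_1,y_4\}$ cannot all be column intervals in a shifted skew diagram), so the rectangular-decomposition machinery is unavailable; the paper instead inducts on $|X|+|Y|$ via Propositions~\ref{full-column-reduction} and~\ref{nested-row-reduction} (the latter coming from the long exact $\Tor$ sequence of $0 \to S/(I(G):x_1)(-1) \to S/I(G) \to S/(I(G)+(x_1)) \to 0$), reducing to the case where all row sizes are $1$ and both $G$ and $R_G$ are horizontal-vertical. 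Lastly, your ``monotonicity reduction — Corollary~\ref{lower-bound-valid-corollary} — that lets one assume the diagram is Ferrers'' is circular, since that corollary \emph{is} the skew-shape lower bound you are trying to establish; the paper's actual engine there is Lemma~\ref{lower-bound-crux}, which algorithmically produces $\binom{k}{j}$ spherical subdiagrams of prescribed rectangularity directly, with no reduction to Ferrers diagrams.
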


The binomial coefficient expressions for the Betti numbers $\beta_{i,X',\bullet}(I(R_G)),
\beta_{i,X',\bullet}(I(H_G))$ that appear in \eqref{conjectured-bipartite-inequalities}
as lower and upper bounds are easily explained.
For the upper bound, it will be shown in Proposition~\ref{prop-star-components}
below that a bipartite graph $H$ is horizontal-vertical
if and only if the {\it Taylor resolution} for $I(H)$ is minimal, and from this the
given formula for $\beta_{i,X',\bullet}(I(H_G))$ follows immediately.  For the
lower bound, it is easy to see that a graph $R$ is row-nested if and only if it is isomorphic
to the {\it Ferrers bipartite graphs} considered in Section~\ref{Ferrers-example},
and then Proposition~\ref{Ferrers-betti-numbers} gives the formula about the Betti numbers of $R_G$.

\begin{remark} \rm \ \\
The lower bound in Conjecture~\ref{bipartite-graph-bound-conjecture}
can be regarded as an analogue of the {\it Gale-Ryser Theorem} from graph theory:

\begin{theorem}(Gale-Ryser)
A pair of weakly decreasing nonnegative integer sequences $(d^X, d^Y)$
having the same sum are the $X$-degrees and $Y$-degrees of some bipartite graph $G$ on vertex set $X \sqcup Y$
if and only if the conjugate partition $(d^Y)^T$ majorizes $d_X$, that is,
$$
d^X_1 + \cdots + d^X_\ell \leq (d^Y)^T_1 + \cdots + (d^Y)^T_\ell
$$
for all $\ell$.  The equality $(d^Y)^T=d_X$ holds if and only if the associated graph $G$ is row-nested,
that is, a Ferrers graph.
\end{theorem}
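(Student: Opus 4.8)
The plan is to translate the statement into a question about $0$-$1$ matrices and then invoke two classical facts: the conjugation identity $\sum_j \min(\ell,\mu_j)=\sum_{i=1}^{\ell}\mu^{T}_i$, valid for every partition $\mu$ and every integer $\ell\ge 0$, and the Hardy--Littlewood--P\'olya (Muirhead) lemma that, for partitions $\lambda\succeq\mu$ of the same size, $\mu$ is reachable from $\lambda$ by a finite sequence of \emph{elementary transfers} (decrease some part by $1$ and increase a part that is at least $2$ smaller by $1$, then re-sort). First I would pad $d^{X}$ and $d^{Y}$ with zeros so that they index a common row set $X$ and column set $Y$; a (simple) bipartite graph on $X\sqcup Y$ with prescribed degree sequences is then the same datum as a $0$-$1$ matrix $A$ whose rows (listed by decreasing sum) sum to $d^{X}$ and whose columns (listed by decreasing sum) sum to $d^{Y}$. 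I would also record that conjugation reverses the dominance order, so ``$(d^{Y})^{T}$ majorizes $d^{X}$'' is equivalent to ``$(d^{X})^{T}$ majorizes $d^{Y}$''.

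For necessity, fix $\ell$ and count the $1$'s in the $\ell$ rows of largest sum in two ways: the total is $d^{X}_{1}+\cdots+d^{X}_{\ell}$, while the contribution of column $j$ is a number $n_{j}$ with $n_{j}\le\min(\ell,d^{Y}_{j})$; hence $\sum_{i\le\ell}d^{X}_{i}=\sum_{j}n_{j}\le\sum_{j}\min(\ell,d^{Y}_{j})=\sum_{i\le\ell}(d^{Y})^{T}_{i}$ by the conjugation identity. Since this holds for all $\ell$ and the two sequences have equal sum, $(d^{Y})^{T}$ majorizes $d^{X}$.

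For sufficiency I would argue by deformation. Assume $(d^{X})^{T}$ majorizes $d^{Y}$, and start from the matrix $F$ that is the indicator of the Young diagram of $d^{X}$ (row $i$ filled in its first $d^{X}_{i}$ columns), which has row sums $d^{X}$ and column sums exactly $(d^{X})^{T}$. By the Muirhead lemma there is a chain of elementary transfers on the column-sum vector from $(d^{X})^{T}$ down to $d^{Y}$, and I would realize each transfer---moving a unit from a column $j$ of current sum $c_{j}$ to a column $k$ of current sum $c_{k}<c_{j}$---by a $2\times 2$ swap: since $|\{i:A_{ij}=1,\ A_{ik}=0\}|=c_{j}-|\{i:A_{ij}=A_{ik}=1\}|\ge c_{j}-c_{k}>0$, one may pick a row $i$ with $A_{ij}=1,\ A_{ik}=0$ and toggle both entries, which preserves all row sums and performs the transfer on the columns. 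After the whole chain, $A$ realizes $(d^{X},d^{Y})$.

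Finally, for the equality clause: if a graph $G$ realizes the case $(d^{Y})^{T}=d^{X}$, then equality holds in the necessity count for every $\ell$, hence term by term, so for each column $j$ the number of its $1$'s among the $\ell$ top-degree rows equals $\min(\ell,d^{Y}_{j})$ for all $\ell$; letting $\ell$ vary forces the support of each column to be an initial segment of the rows taken in decreasing order of degree, i.e.\ after sorting $A$ is the indicator of the Young diagram of $d^{X}$. Thus $G$ is a Ferrers graph, hence row-nested; conversely a row-nested graph is isomorphic to some $G_{\lambda}$ of Example~\ref{Ferrers-definition}, with $d^{X}=\lambda$ and $d^{Y}=\lambda^{T}$, so $(d^{Y})^{T}=\lambda=d^{X}$. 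I expect the only real friction to be bookkeeping: making the $2\times 2$-swap realization of an elementary transfer airtight (the inequality $|\{i:A_{ij}=1,A_{ik}=0\}|\ge c_{j}-c_{k}$ is the crux), and, in the equality case, resolving ties among equal degrees so that ``initial segment of rows'' is unambiguous.
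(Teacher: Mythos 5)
Your proposal is correct, but there is nothing in the paper to compare it against: the Gale--Ryser theorem appears only in a remark, quoted as a classical fact (with Ryser's monograph among the references) to motivate the lower bound in Conjecture~\ref{bipartite-graph-bound-conjecture}, and no proof is given there. Your argument is a complete, self-contained version of the standard proof. The necessity direction (double counting the ones in the $\ell$ heaviest rows together with the identity $\sum_j \min(\ell,d^Y_j)=\sum_{i\le \ell}(d^Y)^T_i$) is exactly the classical one. For sufficiency you reduce, via the fact that conjugation is an anti-automorphism of dominance, to $(d^X)^T\succeq d^Y$, start from the Ferrers matrix of $d^X$, and push its column sums down to $d^Y$ along a Hardy--Littlewood--P\'olya chain of elementary transfers, each realized by a $2\times 2$ toggle; the existence of the needed row follows from your inequality $|\{i:A_{ij}=1,A_{ik}=0\}|\ge c_j-c_k>0$, and since the toggles preserve row sums and $0$-$1$ entries, the final matrix realizes the pair (only the multiset of column sums matters, so the re-sorting implicit in the transfer lemma is harmless). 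The equality clause is also handled correctly in both directions: equality of all partial sums forces $n_j(\ell)=\min(\ell,d^Y_j)$ termwise, hence (for any fixed ordering of rows by weakly decreasing degree, which disposes of the tie issue you flag) every column's support is an initial segment of the rows, so any realization is row-nested, i.e.\ a Ferrers graph $G_\lambda$; conversely a Ferrers graph has $d^X=\lambda$, $d^Y=\lambda^T$, so $(d^Y)^T=d^X$. An alternative classical route (Ryser's direct greedy construction, or Gale's max-flow argument) would avoid the transfer lemma, but what you wrote is sound as it stands.
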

\end{remark}

Before proving various parts of this conjecture, we pause to give some useful characterization
of the various classes of bipartite graphs $G$ just defined, in terms of avoidance of certain
{\it vertex-induced subgraphs} $G_{X'\sqcup Y'}$ of $G$, up to isomorphism.
We equivalently phrase them also in terms of the diagram $D$ for $G$ avoid certain subdiagrams
$\Dbip{X}{Y}$, up to relabelling the elements of $X$ and of $Y$.

\begin{proposition}
\label{forbidden-subgraph-characterizations}
Let $G$ be a bipartite graph on vertex set $X \sqcup Y$, with associated diagram $D \subseteq X \times Y$.
\begin{enumerate}
\item[(i)] $G$ is row-nested if and only if $G$ avoids $G_{X' \sqcup Y'}$ isomorphic to two
disjoint edges.  Equivalently, $D$ avoids subdiagrams $\Dbip{X'}{Y'}$ of the form
$$
\begin{matrix}
       &\times\\
\times &
\end{matrix}
$$
\item[(ii)] $G$ is nearly row-nested if and only if $G$ avoids $G_{X' \sqcup Y'}$ isomorphic to
a $6$-cycle or isomorphic to the disjoint union of an edge with a path having two edges and
both endpoints in $Y$. Equivalently, $D$ avoids subdiagrams $\Dbip{X'}{Y'}$ of the form
$$
\begin{matrix}
\times & \times & \\
\times &        & \times \\
       & \times & \times
\end{matrix}
\qquad \text{ or } \qquad
\begin{matrix}
\times &        & \\
       & \times & \times
\end{matrix}
$$

\item[(iii)] $G$ is horizontal if and only if $G$ avoids $G_{X' \sqcup Y'}$ isomorphic to a path with two edges
and both endpoints in $X$.  Equivalently, $D$ avoids subdiagrams $\Dbip{X'}{Y'}$ of the form
$$
\begin{matrix}
\times\\
\times
\end{matrix}
$$
\item[(iv)] $G$ is horizontal-vertical if and only if $G$ avoids $G_{X' \sqcup Y'}$ isomorphic to a path with three edges or a $4$-cycle. Equivalently, $D$ avoids subdiagrams $\Dbip{X'}{Y'}$ of the form
$$
\begin{matrix}
\times&\times\\
\times&
\end{matrix}
\qquad \text{ or } \qquad
\begin{matrix}
\times&\times\\
\times&\times
\end{matrix}
$$
\end{enumerate}
\end{proposition}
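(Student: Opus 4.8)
Each of the four items is a characterization of a class of bipartite graphs by forbidden vertex-induced subgraphs, so the natural structure is: for each part, prove one direction by noting the forbidden configuration is visibly disallowed by the defining property, and the other direction by a contrapositive argument showing that if the defining property fails then one of the forbidden configurations must appear as a vertex-induced subgraph. I would dispatch (i), (iii), (iv) quickly and spend the real effort on (ii).

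For (i): a graph is row-nested iff its rows are totally ordered by inclusion; this fails exactly when there are two rows $R_x, R_{x'}$ with neither contained in the other, i.e.\ there are $y \in R_x \setminus R_{x'}$ and $y' \in R_{x'} \setminus R_x$, and then $\{x,y,x',y'\}$ induces two disjoint edges (the cells $(x,y)$ and $(x',y')$ with $(x,y')$ and $(x',y)$ absent). Conversely two disjoint induced edges obviously obstruct total ordering. For (iii): $G$ is horizontal iff every cell is alone in its column, which fails exactly when some column $C_y$ contains two cells $(x,y),(x',y)$, giving a path $x - y - x'$ with both endpoints in $X$, induced because there is no other edge among $\{x,x',y\}$. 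For (iv): $G$ is horizontal-vertical iff every cell is alone in its row or alone in its column; one checks directly that a path with three edges (which reads $x_1 - y_1 - x_2 - y_2$ in a bipartite graph, i.e.\ the $2\times 2$ diagram minus one corner) and a $4$-cycle (the full $2\times 2$ diagram) each contain a cell that is neither alone in its row nor alone in its column, and conversely if some cell $(x,y)$ has a companion $(x,y')$ in its row \emph{and} a companion $(x',y)$ in its column, then depending on whether $(x',y')$ is a cell one gets either an induced $4$-cycle or an induced $3$-edge path on $\{x,x',y,y'\}$.

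The substantive part is (ii). Here I would argue the contrapositive: assume $G$ is not nearly-row-nested, and produce one of the two forbidden induced subdiagrams. There are two ways the definition can fail. First, rows of different cardinalities might fail to nest: then as in (i) there are $x,x'$ with $|R_x| < |R_{x'}|$ but $y \in R_x \setminus R_{x'}$; since $|R_{x'}| > |R_x| \ge 1$ and $R_x \not\subseteq R_{x'}$ one can find $y',y'' \in R_{x'}$ with $y' \ne y''$ and at least one of them, say $y'$, not in $R_x$ — and then $\{x,y\}$ together with the path $y' - x' - y''$ (with $y'' $ chosen in $R_{x'}$, possibly equal to $y$ or not) gives, after checking which cells are absent, the edge-plus-$2$-path configuration. (One must be slightly careful to choose $y',y''$ so the path has both endpoints in $Y$ and is induced; the cardinality gap guarantees enough room.) Second, the rows of some common cardinality $c$ might have a common intersection of cocardinality $\ge 2$: then there exist $x,x'$ with $|R_x|=|R_{x'}|=c$ and two columns $y_1,y_2$ each missing from exactly one of $R_x, R_{x'}$; since $|R_x|=c$ there is also a third column $y_3 \in R_x \cap R_{x'}$, and $\{x,x',y_1,y_2,y_3\}$ induces a $6$-cycle (the $3\times 2$ diagram with the two "diagonal" cells absent — up to relabelling the rows and columns this is exactly the displayed $6$-cycle diagram). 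Conversely, one checks the $6$-cycle diagram has a cardinality-$2$ level with cocardinality-$0$ common intersection and the edge-plus-$2$-path has non-nested rows of sizes $1$ and $2$, so both are genuinely forbidden.

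\medskip

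\noindent
\emph{Anticipated obstacle.} The only place where care is genuinely needed is the ``not nested'' sub-case of (ii): one must verify that the witnesses $x, x', y, y', y''$ can always be chosen so that the resulting five-vertex (or four-vertex) induced subgraph is \emph{exactly} the edge-plus-path, with no extra edges — in particular that the path has both endpoints in $Y$ rather than accidentally closing up into a longer cycle or picking up a chord. This comes down to a short case analysis on whether $y \in R_{x'}$-complement forces $x$ to be non-adjacent to the chosen $y',y''$, using $|R_x| < |R_{x'}|$ to guarantee a free column. I would write this out carefully but expect it to be a paragraph, not a page. Everything else is immediate from the definitions.
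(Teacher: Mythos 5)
Your parts (i), (iii), (iv) and the unequal-cardinality case of (ii) are fine and essentially coincide with the paper's argument; in the unequal-cardinality case the count $|R_{x'}\setminus R_x|\ge |R_{x'}|-|R_x\cap R_{x'}|\ge |R_{x'}|-|R_x|+1\ge 2$ does supply two columns of the longer row avoiding the shorter one, so the edge-plus-path you build is indeed induced, exactly as in the paper. The genuine gap is in your second case of (ii), where the equal-cardinality condition fails. First, a local error: your witness set $\{x,x',y_1,y_2,y_3\}$ has five vertices, so it cannot induce a $6$-cycle (a $6$-cycle in a bipartite graph needs three vertices on each side); with $y_1\in R_{x'}\setminus R_x$, $y_2\in R_x\setminus R_{x'}$ and $y_3\in R_x\cap R_{x'}$ what you actually obtain is a path with four edges, which is not one of the forbidden configurations and is itself nearly row-nested, so it certifies nothing.

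Second, and more seriously, the failure condition involves the intersection of \emph{all} rows of cardinality $c$, not of a single pair: it can happen that every pair of cardinality-$c$ rows meets in exactly $c-1$ columns --- so that no two rows by themselves contain any forbidden induced subgraph --- while the global intersection still has size $\le c-2$ (e.g.\ $R_{x_1}=\{y_1,y_2\}$, $R_{x_2}=\{y_1,y_3\}$, $R_{x_3}=\{y_2,y_3\}$). This is precisely why the paper splits this case in two: if some pair of equal-size rows meets in $\le c-2$ columns, one gets the edge-plus-path configuration (not a $6$-cycle, contrary to your sketch); otherwise one must bring in a \emph{third} row $R_{x_3}$ of cardinality $c$ missing some $y_3\in R_{x_1}\cap R_{x_2}$ (such a row exists because the global intersection is small), and then the hypothesis that all pairwise intersections have size exactly $c-1$ forces $y_1,y_2\in R_{x_3}$, so that $\{x_1,x_2,x_3\}\sqcup\{y_1,y_2,y_3\}$ induces a genuine $6$-cycle. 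Your two-row construction cannot be repaired without this third row, since two equal-size rows meeting in $c-1$ columns are nearly row-nested and contain no forbidden configuration at all; you should restructure the equal-cardinality case along these lines.
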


\begin{proof}
For each of the four assertions, the forward implication is easy.  It is the
backward implications that require proof, which we give here.

\noindent
{\bf (iii):} Obvious.

\noindent
{\bf (i):}  Assume $G$ is not row-nested.  Then there exist two rows $R_{x_1}, R_{x_2}$
which are not nested, that is, there exists $y_1 \in R_{x_1} \setminus R_{x_2}$
and $y_2 \in  R_{x_2} \setminus R_{x_1}$.  But then $G_{\{x_1,x_2\},\{y_1,y_2\}}$ is the
disjoint union of the two edges $\{x_1,y_1\}, \{x_2,y_2\}$.

\noindent
{\bf (iv):} Assume $G$ is not horizontal-vertical.  Then there exists a cell $(x_1,y_1)$ in
its diagram that is neither alone in its row nor in its column.  Hence there exist
cells of the form $(x_1,y_2), (x_2,y_2)$ in the diagram, and $G_{\{x_1,x_2\},\{y_1,y_2\}}$ will be
a path with two edges or a $4$-cycle, depending upon whether the cell $(x_2,y_2)$ is absent
or present in the diagram.

\noindent
{\bf (ii):}  Assume $G$ is not nearly row-nested.

\noindent
{\sf Case 1.} There exist two non-nested rows $R_{x_1},R_{x_2}$ of unequal sizes, say
$|R_{x_1}| > |R_{x_2}|$.

Then there exist $y_1, y_2 \in R_{x_1} \setminus R_{x_2}$
and $y_3 \in  R_{x_2} \setminus R_{x_1}$.  Hence $G_{\{x_1,x_2\},\{y_1,y_2,y_3\}}$ is the
disjoint union of the edge $\{x_2,y_3\}$ and the path $\{x_1,y_1\},\{x_1,y_2\}$ having
two endpoints in $Y$.

\vskip .1in
\noindent
{\sf Case 2.} There do not exist two non-nested rows $R_{x_1},R_{x_2}$ of unequal size.

Then there must exist a cardinality $c$ for which
$\left| \bigcap_{x: |R_x|=c} R_x \right| \leq c-2$.

\vskip .1in
\noindent
{\sf Subcase 2a.} There is a pair of rows $R_{x_1}, R_{x_2}$ both of cardinality $c$
with $|R_{x_1} \cap R_{x_2}| \leq c-2$.

Then there exist $y_1,y_2,y_3$ having the
same properties as in Case 1 above.

\vskip .1in
\noindent
{\sf Subcase 2b.} Every pair of unequal rows $R_{x_1} \neq R_{x_2}$ both of cardinality $c$
has $|R_{x_1} \cap R_{x_2}| =c-1$.

Start with two unequal rows $R_{x_1} \neq R_{x_2}$ both of cardinality $c$.  Since they
are unequal and of the same cardinality, one can find
$y_1 \in R_2 \setminus R_1, y_2 \in R_1 \setminus R_2$.  Now pick a third row $R_{x_3}$ of
the same cardinality with the property that there exists $y_3 \in R_{x_1} \cap R_{x_2}$ but
$y_3 \not\in R_{x_3}$;  this must exist since $\left| \bigcap_{x: |R_x|=c} R_x \right| \leq c-2$.
We claim that this forces $x_1 \in R_{x_3}$, since $|R_{x_1} \cap R_{x_3}| =c-1$,
and it similarly forces $x_2 \in R_{x_3}$.  But this means $G_{\{x_1,x_2,x_3\},\{y_1,y_2,y_3\}}$
is a $6$-cycle.
\end{proof}

\subsection{Proof of the upper bound and the case of equality}
\label{Taylor-section}

The upper bound in Conjecture~\ref{bipartite-graph-bound-conjecture} follows from the {\it Taylor resolution}
for a monomial ideal $I$, which we recall here; see \cite{Taylor}.

\begin{definition}
Let $I$ be a monomial ideal in the polynomial ring $S$,
and choose an ordered generating set of monomials $(m_1,\ldots,m_p)$, for $I$.
Then the {\it Taylor resolution} $\cT(S/I)$ of $S/I$ as an $S$-module
with respect to this ordered generating set has resolvents
$\cT_k(S/I) \cong S^{\binom{p}{k}}$ for $k=0,1,\ldots,p$, in
which $\cT_k(S/I) \cong S^{\binom{p}{k}}$ is the free $S$-module
on a basis $\{e_A: A \subset [p], |A|=k\}$.  One decrees the
finely graded multidegree of $e_A$ to be
$$
m_A:=\lcm\{ m_a: a \in A\}
$$
and then the differential in $\cT(S/I)$ is defined $S$-linearly by
$$
d(e_A):= \sum_{a \in A} \sgn(A,a)
   \frac{m_A}{m_{A \setminus \{a\}}} e_{A\setminus \{a\}}.
$$
Here $\sgn(A,a)=(-1)^r$ if $a$ is the $r^{th}$ smallest element of $A$.

Alternatively, one can view $\cT(S/I)$ as a {\it cellular resolution} $\cF(\cC)$
(as described in Section~\ref{cellular-linear-resolution-section}) for the cell complex which is a $(p-1)$-simplex in
which the vertices are labelled with $m_1,\ldots,m_p$.
\end{definition}

Since $\cT(S/I)$ is a (not necessarily minimal) free $S$-resolution for $S/I$, the number of basis
elements in $\cT_i(S/I)$ of multidegree $m$ always gives an upper bound on the Betti number
$\beta_{i,m}(S/I) = 0$, and this bound is tight for all $m$ if and only if $\cT(S/I)$ is a {\it minimal}
free resolution.  However for a graph $G$ one can easily characterize when
the Taylor resolution for $I(G)$ is minimal.  Say that a graph is a {\it star} if
it is a tree with at most one vertex of degree larger than $1$.

\begin{proposition}
\label{prop-star-components}
A graph $G$ has the Taylor resolution $\cT(S/I(G))$ minimal if and only if
every connected component of $G$ is a star.  Hence a bipartite graph
$G$ has the Taylor resolution $\cT(S/I(G))$ minimal if and only if
$G$ is horizontal-vertical.
\end{proposition}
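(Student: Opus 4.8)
The plan is to invoke the standard combinatorial criterion for minimality of the Taylor resolution and then translate it into a forbidden-configuration statement about $G$, after which a short graph-theoretic lemma finishes the first assertion and a direct translation handles the bipartite case.

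First I would recall that, for a monomial ideal $I$ with generating set $(m_1,\dots,m_p)$, the resolution $\cT(S/I)$ is minimal if and only if none of its differential coefficients $m_A/m_{A\setminus\{a\}}$ is a unit, i.e.\ $m_{A\setminus\{a\}}\neq m_A$ for every $A\subseteq[p]$ and every $a\in A$; equivalently, no generator $m_a$ divides $\lcm\{m_b:b\in A\setminus\{a\}\}$. For $I=I(G)$ each generator $m_e=uv$ involves only the two variables $u,v$, so $m_e$ divides an lcm of other generators exactly when each of $u$ and $v$ lies on some edge contributing to that lcm. Hence, if minimality fails for some $A$ it already fails for a three-element set $\{e,f,g\}$, where $f\ni u$ and $g\ni v$ are chosen from $A\setminus\{e\}$; here $f\neq g$ necessarily, since an edge containing both $u$ and $v$ would have to equal $e$. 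Therefore $\cT(S/I(G))$ is minimal if and only if there is no edge $e=\{u,v\}$ of $G$ together with edges $f\ni u$, $g\ni v$ distinct from $e$ --- equivalently, if and only if $G$ has no edge both of whose endpoints have degree $\ge 2$.

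Next I would prove the elementary lemma: a simple graph has no edge with both endpoints of degree $\ge 2$ if and only if each of its connected components is a star. One direction is immediate, as in a star every edge joins the center to a leaf of degree $1$. For the converse, suppose a component $H$ has two distinct vertices $u,w$ of degree $\ge 2$. If $H$ contains a cycle, that cycle has length $\ge 3$, all its vertices have degree $\ge 2$, and any of its edges violates the hypothesis; so $H$ is a tree. In the tree $H$, take the first edge $\{u,u'\}$ on the unique path from $u$ to $w$: if $u'=w$ this edge already has both endpoints of degree $\ge 2$, and otherwise $u'$ is an internal vertex of a path of length $\ge 2$, so $\deg(u')\ge 2$ --- a contradiction either way. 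Thus $H$ has at most one vertex of degree $\ge 2$, i.e.\ $H$ is a star (including the degenerate cases of a single edge or an isolated vertex). Since the property "no edge has both endpoints of degree $\ge 2$" is checked edge by edge, it holds for $G$ exactly when it holds for each component, which proves the first assertion.

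Finally, for the bipartite statement I would observe that a square $(x,y)$ of the diagram $D$ is alone in its row iff $\deg_G(x)=1$ and alone in its column iff $\deg_G(y)=1$; hence $G$ is horizontal-vertical precisely when every edge $\{x,y\}$ has $\deg_G(x)=1$ or $\deg_G(y)=1$, which is exactly the condition "no edge has both endpoints of degree $\ge 2$" obtained above. (One could instead quote Proposition~\ref{forbidden-subgraph-characterizations}(iv), since a path with three edges or a $4$-cycle is exactly what an edge with both endpoints of degree $\ge 2$ produces inside a bipartite graph.) I do not expect any serious obstacle; the only points needing care are the reduction to three-element subsets $A$ in the Taylor minimality criterion, and bookkeeping the degenerate stars (single edges and isolated vertices) in the graph lemma.
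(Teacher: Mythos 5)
Your proof is correct, and it takes a genuinely different route from the paper's. Where the paper's converse argument observes that Taylor minimality is inherited by vertex-induced subgraphs and then reduces to a forbidden-subgraph statement (no induced $3$-cycle, $4$-cycle, or $3$-edge path, whose Taylor resolutions are checked to be non-minimal by ``a small calculation''), you bypass all induced-subgraph reasoning and give a direct divisibility analysis: because each generator $uv$ of an edge ideal has only two variables and is squarefree, the coefficient $m_A/m_{A\setminus\{e\}}$ is a unit for some $A$ if and only if both $u$ and $v$ lie on other edges, which you correctly observe can always be witnessed on a $3$-element subset $\{e,f,g\}$. This gives the clean intrinsic criterion ``no edge has both endpoints of degree $\ge 2$,'' after which your graph-theoretic lemma (a connected graph with this property is a star) and the translation to the horizontal-vertical condition via $\deg_G(x)=1$ iff $(x,y)$ is alone in its row are both routine and correct. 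The trade-off is roughly this: the paper's argument is shorter on the page and slots naturally into the surrounding forbidden-subdiagram framework (Proposition~\ref{forbidden-subgraph-characterizations}), but it outsources three small Taylor computations to the reader; your version is fully self-contained and makes the minimality criterion for edge ideals completely explicit, at the cost of an extra paragraph of elementary graph theory. Both are valid, and yours would serve as an acceptable replacement proof.
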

\begin{proof}
If every connected component of $G$ is a star then the Taylor
resolution is minimal, as all the least common multiples $m_A$ are distinct for different
subsets $A$:  every generating monomial $m=x_i x_j$ contains a variable $x_i$ or $x_j$
corresponding to a vertex of degree one, which is therefore contained
in no other generating monomial.

Conversely, suppose a graph $G$ has its Taylor resolution minimal.
Then for any subset $X$ of its vertices, the vertex-induced subgraph
$G_X$ must also have its Taylor resolution $\cT(S/I(G_X))$ minimal, as it is a subcomplex
of the Taylor resolution for $I(G)$.  This means that $G$ must avoid
as a vertex-induced subgraph $G_X$ having
\begin{enumerate}
\item[$\bullet$] a $3$-cycle,
\item[$\bullet$] a $4$-cycle, or
\item[$\bullet$] path with $3$ edges,
\end{enumerate}
since one can do a small calculation of the Taylor resolution for
each, and find that none of them are minimal.  We claim that this forces
$G$ to have no cycles -- if not, it would contain some cycle of
minimum length, which would either be of length $3$,
or of length $4$, or of length at least $5$ and hence contain a vertex-induced
path with $3$ edges.
Hence $G$ must be a forest, and its component trees must all have
diameter $2$, in order to avoid the path with $3$ edges.  Thus each component
is a star.
\end{proof}

\begin{corollary}
\label{cor-upper-b}
The upper bound in Conjecture~\ref{bipartite-graph-bound-conjecture}
is valid, as is the assertion there about the case of equality in the upper bound.
\end{corollary}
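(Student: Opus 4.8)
The plan is to derive both halves of the statement from the Taylor resolution together with Proposition~\ref{prop-star-components}. Write $S=k[\xb,\yb]$ and, for an edge $e=\{x,y\}$ of $G$, let $m_e=xy$. The Taylor resolution $\cT(S/I(G))$ with respect to the generating set $\{m_e\}_{e\in E(G)}$ is a (possibly non-minimal) free resolution, so in each multidegree the rank of the relevant term of $\cT$ bounds the corresponding Betti number from above. Since $m_A:=\lcm_{e\in A}m_e$ has $x$-variable support equal to the set of $X$-endpoints of the edges in $A$, summing over the grading on the $y$-variables and using $\beta_{i,\nu}(I(G))=\beta_{i+1,\nu}(S/I(G))$ gives
$$
\beta_{i,X',\bullet}(I(G))\ \le\ T_G(i,X')\ :=\ \#\bigl\{A\subseteq E(G)\ :\ |A|=i+1,\ \text{the edges of }A\text{ have }X\text{-endpoints exactly }X'\bigr\}.
$$

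The point is that $T_G(i,X')$ depends only on $i$ and on the multiset of degrees $\{\deg_G(x):x\in X'\}$: the edges incident to $X'$ are partitioned by their $X$-endpoint into blocks of sizes $\deg_G(x)$, $x\in X'$, and $A$ is counted exactly when it is an $(i+1)$-subset meeting every block. Because $H_G$ has the same $X$-degrees as $G$, we get $T_G(i,X')=T_{H_G}(i,X')$. Now $H_G$ is horizontal, hence horizontal-vertical, so by Proposition~\ref{prop-star-components} the resolution $\cT(S/I(H_G))$ is minimal; therefore the displayed inequality is an equality for $H_G$, i.e.\ $\beta_{i,X',\bullet}(I(H_G))=T_{H_G}(i,X')$. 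Combining,
$$
\beta_{i,X',\bullet}(I(G))\ \le\ T_G(i,X')\ =\ T_{H_G}(i,X')\ =\ \beta_{i,X',\bullet}(I(H_G)),
$$
which is the asserted upper bound; the explicit binomial evaluation then comes from counting the relevant faces of the simplex underlying the Taylor resolution of $I(H_G)$.

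For the equality assertion, the ``if'' direction is the same computation run with $G$ in place of $H_G$: if $G$ is horizontal-vertical then $\cT(S/I(G))$ is minimal (Proposition~\ref{prop-star-components}), so $\beta_{i,X',\bullet}(I(G))=T_G(i,X')=\beta_{i,X',\bullet}(I(H_G))$ for all $i$ and all $X'$. For ``only if'', suppose $G$ is not horizontal-vertical, so $\cT(S/I(G))$ is non-minimal by Proposition~\ref{prop-star-components}; then some differential of $\cT$ has a unit entry in a spot $(e_{A\setminus\{a\}},e_A)$ with $m_A=m_{A\setminus\{a\}}$, which lives entirely in the single multidegree $m:=m_A$ and at homological level $k:=|A|$ (necessarily $k\ge 3$ for an edge ideal). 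Thus the degree-$m$ strand of $\cT(S/I(G))$ has a nonzero differential, forcing $\beta_{k,m}(S/I(G))$ to be strictly smaller than the rank of the degree-$m$ piece of $\cT_k(S/I(G))$. Taking $X'$ to be the $X$-support of $m$ and summing the fine Betti numbers of $S/I(G)$ with that $X$-support over the $y$-grading, every term is bounded above by its Taylor count and the $m$-term strictly so, whence
$$
\beta_{k-1,X',\bullet}(I(G))\ =\ \beta_{k,X',\bullet}(S/I(G))\ <\ T_G(k-1,X')\ =\ \beta_{k-1,X',\bullet}(I(H_G)),
$$
so equality fails at $i=k-1$. (One could instead exhibit the failure directly from the forbidden induced subgraphs in Proposition~\ref{forbidden-subgraph-characterizations}(iv), but the Taylor argument needs no case analysis.)

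The one place requiring a little care is the final summation in the ``only if'' argument: one must be sure that a cancellation confined to a single fine multidegree $m$ is not washed out when passing to the coarser $\beta_{k,X',\bullet}$. It is not, precisely because every fine Betti number of $S/I(G)$ with $X$-support $X'$ is at most its Taylor count while the inequality is strict for the chosen $m$; this is the main, and essentially the only, obstacle.
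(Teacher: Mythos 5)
Your argument is correct and follows essentially the same route as the paper: the Taylor resolution bounds each $X$-graded Betti number of $I(G)$ by a count depending only on the $X$-degrees, and Proposition~\ref{prop-star-components} identifies that count with $\beta_{i,X',\bullet}(I(H_G))$ and settles the equality case. The only difference is cosmetic: for the ``only if'' direction the paper compares total ranks (forcing the minimal resolution of $S/I(G)$ to have $2^{|E(G)|}$ terms, hence the Taylor resolution to be minimal), whereas you locate a unit entry in a single fine multidegree and track the resulting strict drop after summing over the $y$-grading, which is the same idea in a more local form.
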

\begin{proof}
Let $S=k[\xb,\yb]$, and compare the
Taylor resolutions $\cT(S/I(G))$ and $\cT(S/I(H(G)))$.  Note
that when one only looks at the $X$-multidegrees, setting
the $Y$-multidegrees to $0$, the two resolutions have exactly
the same number of basis elements in each $X$-multidegree.  The former is a resolution for $S/I(G)$, and
hence provides an upper bound on its Betti numbers, while the latter is a minimal free
resolution for $S/I(H_G)$ by Proposition~\ref{prop-star-components}.  This proves the
asserted upper bound.

In the case of equality, it must be that a minimal free resolution of $S/I_G$ has
as many terms as its Taylor resolution (namely $2^{|E(G)|}$ terms), and hence the Taylor resolution $\cT(S/I(G))$ is minimal.  Thus by Proposition~\ref{prop-star-components}, $G$ is horizontal-vertical.
\end{proof}

\subsection{Two general reductions in the lower bound}
\label{two-reductions-section}

  Here we give two reductions that may apply to a bipartite graph
when one is attempting to verify the lower bound in Conjecture~\ref{bipartite-graph-bound-conjecture}.
Both will be used in the next section to verify the case of equality conjectured for
the lower bound.

Say that a bipartite graph $G$ on vertex set $X \sqcup Y$, or its diagram $D$,
has the vertex $x \in X$ (resp. $y \in Y$) as a {\it full row (resp. column)}
if $E(G)$ contains all of $\{x\} \times Y$ (resp. $X \times \{y\}$).
Say that $x, x' \in X$ index {\it nested} rows if $R_{x'} \subseteq R_{x}$

The following two results allow one to remove full columns and/or rows,
and remove nested rows, when considering a minimal counterexample
to Conjecture~\ref{bipartite-graph-bound-conjecture}.

\begin{proposition}
\label{full-column-reduction}
Let $G$ be a bipartite graph on vertex set $X \sqcup Y$.
If $G$ has $y \in Y$ as a full column, one has for all $i$ and all $X' \subseteq X$ that
$$
\beta_{i,X',\bullet}(I(G))= \delta_{i,|X'|-1} +
\beta_{i,X',\bullet}(I(G\setminus \{y\}) + \beta_{i-1 ,X',\bullet}(I(G\setminus \{y\}),
$$
where $G\setminus \{y\}$ denotes the vertex-induced subgraph of $G$ on $X \sqcup (Y\setminus \{y\})$.

Consequently, in this situation, $G$ achieves equality in the
lower bound of Conjecture~\ref{bipartite-graph-bound-conjecture} if and only if
$G \setminus \{y\}$ does.
\end{proposition}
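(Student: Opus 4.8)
The plan is to evaluate both sides of the asserted identity using Hochster's formula (Proposition~\ref{Hochster-formula}) applied to $\Delta(G)$ (the simplicial complex with $I_{\Delta(G)}=I(G)$, whose faces are the independent sets of $G$), together with the standard long exact sequence relating the reduced homology of a simplicial complex, its deletion at a vertex, and its link at that vertex. I will use that for any graph the vertex-induced subcomplex of the independence complex is the independence complex of the vertex-induced subgraph, and that $\beta_{i,X',\bullet}(I(G)) = \sum_{Y'\subseteq Y}\beta_{i,X'\sqcup Y'}(I(G))$. The first step is to split this sum according to whether $y\in Y'$ or not.

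When $y\notin Y'$ the subcomplex $\Delta(G)_{X'\sqcup Y'}$ equals $\Delta(G\setminus\{y\})_{X'\sqcup Y'}$, so these terms contribute exactly $\beta_{i,X',\bullet}(I(G\setminus\{y\}))$. When $y\in Y'$, write $Y' = Y''\sqcup\{y\}$ and put $\Delta := \Delta(G)_{X'\sqcup Y'}$. Since $y$ is a full column it is adjacent in $G$ to every vertex of $X$ and, lying in $Y$, to no vertex of $Y$, while $Y'$ is an independent set of $G$; hence the closed star of $y$ in $\Delta$ is the full simplex on $Y'$, the link of $y$ is the full simplex on $Y''$, and the deletion of $y$ is $\Delta(G\setminus\{y\})_{X'\sqcup Y''}$. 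Feeding the contractible star (a cone on the link) and the link (contractible when $Y''\neq\varnothing$) into the deletion--link long exact sequence gives $\tilde H_n(\Delta)\cong\tilde H_n(\Delta(G\setminus\{y\})_{X'\sqcup Y''})$ for all $n$; since $|X'\sqcup Y'| = |X'\sqcup Y''|+1$, Hochster's formula converts this isomorphism into the index drop $\beta_{i,X'\sqcup Y'}(I(G)) = \beta_{i-1,X'\sqcup Y''}(I(G\setminus\{y\}))$. In the boundary case $Y''=\varnothing$ with $X'\neq\varnothing$, the complex $\Delta$ is the disjoint union of the simplex on $X'$ with the isolated vertex $y$, so $\tilde H_0(\Delta)\cong k$ and all other reduced homology vanishes; by Hochster this reads $\beta_{i,X'\cup\{y\}}(I(G)) = \delta_{i,|X'|-1}$. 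Summing over the $Y'$ containing $y$ --- the term $Y''=\varnothing$ contributing $\delta_{i,|X'|-1}$, and the remaining terms, together with the vanishing $Y''=\varnothing$ term $\beta_{i-1,X'}(I(G\setminus\{y\}))=0$, assembling to $\beta_{i-1,X',\bullet}(I(G\setminus\{y\}))$ --- and adding the $y\notin Y'$ contribution yields the displayed identity. The case $X'=\varnothing$ is trivial: both sides vanish for $i\ge 0$.

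For the ``consequently'' clause: because $y$ is a full column, $\deg_G(x)\ge 1$ for every $x\in X$, so $\mindeg(X)\ge 1$ and the row-nested graph $R_G$ with the same $X$-degrees has a full column $y_0$, lying in every one of its rows. Then $R_G\setminus\{y_0\}$ is again row-nested, with $X$-degrees $\deg_G(x)-1 = \deg_{G\setminus\{y\}}(x)$, so $R_G\setminus\{y_0\}\cong R_{G\setminus\{y\}}$ by uniqueness. Applying the identity just established to both $G$ (full column $y$) and $R_G$ (full column $y_0$) and subtracting gives
$$\beta_{i,X',\bullet}(I(G)) - \beta_{i,X',\bullet}(I(R_G)) = \sum_{\varepsilon\in\{0,1\}}\Bigl(\beta_{i-\varepsilon,X',\bullet}(I(G\setminus\{y\})) - \beta_{i-\varepsilon,X',\bullet}(I(R_{G\setminus\{y\}}))\Bigr).$$
Thus if $G\setminus\{y\}$ achieves equality in the lower bound for all $i$ and all $X'$, so does $G$; conversely, since all Betti numbers vanish in negative homological degree, induction on $i$ (the base case $i=0$ killing the $\varepsilon=1$ summand) shows that equality for $G$ forces it for $G\setminus\{y\}$.

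The one genuinely delicate point will be the bookkeeping in the reduced deletion--link sequence in the boundary case $Y''=\varnothing$ --- keeping straight the reduced homology of $\{\varnothing\}$ versus the void complex, and recognizing that this is exactly the origin of the extra summand $\delta_{i,|X'|-1}$ --- together with being careful about the degree shift in Hochster's formula that turns the homology isomorphism into the index drop $i\mapsto i-1$. The rest is routine.
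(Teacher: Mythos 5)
Your proof is correct and follows essentially the same route as the paper: you split $\beta_{i,X',\bullet}$ over $Y'$ according to whether $y\in Y'$, show that adjoining the full-column vertex $y$ to $X'\sqcup Y''$ leaves the independence complex unchanged up to homotopy (the paper phrases this as adding $y$ as the apex of a cone over the simplex on $Y''$, you via the deletion--link exact sequence, but it is the same equivalence), with the term $Y'=\{y\}$ producing the Kronecker delta, and then deduce the consequence, as the paper does, by applying the identity to $R_G$ (which also has a full column) together with $R_G\setminus\{y_0\}\cong R_{G\setminus\{y\}}$. Your explicit induction on $i$ for the converse direction is a correct elaboration of what the paper leaves implicit.
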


\begin{proof}
The idea is to compare the most finely graded Betti numbers $\beta_{i,X' \sqcup Y'}$
for $I(G)$ versus $I(G \setminus \{y\})$.

If $y \not\in Y'$, clearly $G_{X' \sqcup Y'}=(G\setminus\{y\})_{X' \sqcup Y'}$,
so that $\beta_{i,X' \sqcup Y'}(I(G)) = \beta_{i,X' \sqcup Y'}(I(G \setminus \{y\})).$

Furthermore, if $y \not\in Y'$ and $Y'\neq \varnothing$ then
$\Delta(G_{X' \sqcup (Y' \sqcup \{y\})})$ is obtained from
$\Delta(G_{X' \sqcup Y'})$ simply by adding in the vertex $y$ as the apex of a
cone over the base simplex having vertex set $Y'$.  Hence the two complexes
are homotopy equivalent, and
$\beta_{i,X' \sqcup (Y' \sqcup \{y\})}(I(G)) =
\beta_{i-1,X' \sqcup Y'}(I(G \setminus \{y\})).$

Lastly, note that $\beta_{i,X',\{y\}}(I(G))= 1$ for $i=|X'|-1$ and $0$ for all other $i$.

Since  $\beta_{i,X',\bullet}(I(G))=\sum_{Y' \subset Y} \beta_{i,X' \sqcup Y'}(I(G))$,
the formula in the proposition follows.

The second assertion in (ii) is a consequence of the formula, as $R_G$ will
have a full column whenever $G$ does.
\end{proof}

\begin{proposition}
\label{nested-row-reduction}
Let $G$ be a bipartite graph on vertex set $X \sqcup Y$ with two nested rows
$R_{x_2} \subset R_{x_1}$.  Then
$$
\beta_{i,X,\bullet}(I(G))= \beta_{i-1,X\setminus \{x_1\},\bullet}(I(G\setminus\{x_1\}))
$$
for all $i$.

Consequently, if Conjecture~\ref{bipartite-graph-bound-conjecture}
holds for all bipartite graphs with smaller $|X|$, it will also hold
for $G$.
\end{proposition}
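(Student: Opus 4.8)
The plan is to reduce the displayed identity, via Hochster's formula (Proposition~\ref{Hochster-formula}), to a statement about homotopy types, and then to exhibit for each $Y'\subseteq Y$ a homotopy equivalence that simply deletes the vertex $x_1$. Fix $Y'$ and write $H:=G_{X\sqcup Y'}$. Then $\Delta(H)$ is the union of its induced subcomplex on $V(H)\setminus\{x_1\}$ with the closed star of $x_1$, the two pieces meeting along the link of $x_1$; the closed star is a cone, hence contractible, so a standard gluing argument (compare the topological lemmas in the Appendix) shows $\Delta(H)\simeq\Delta(H\setminus\{x_1\}) = \Delta(G_{(X\setminus\{x_1\})\sqcup Y'})$ provided the link of $x_1$ is contractible. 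By the Stanley--Reisner description of $\Delta(\cdot)$, that link equals $\Delta(H')$, where $H'$ is the subgraph of $H$ induced on the vertices that are neither $x_1$ nor $H$-neighbors of $x_1$. In $H'$ the vertex $x_2$ survives --- it is distinct from $x_1$ and, $G$ being bipartite with $x_1,x_2\in X$, not an $H$-neighbor of $x_1$ --- and it is \emph{isolated} there, because its $H$-neighbors form the set $R_{x_2}\cap Y'$, which is contained in $R_{x_1}\cap Y'$, i.e.\ in the $H$-neighbors of $x_1$, all of which are excluded from $V(H')$. Hence $x_2$ is a cone vertex of $\Delta(H')$, the link is contractible, and the homotopy equivalence follows.

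Next comes the degree bookkeeping. Hochster's formula gives $\beta_{i,X\sqcup Y'}(I(G)) = \dim_k\tilde{H}_{|X|+|Y'|-i-2}(\Delta(G_{X\sqcup Y'}))$ and $\beta_{i-1,(X\setminus\{x_1\})\sqcup Y'}(I(G\setminus\{x_1\})) = \dim_k\tilde{H}_{|X|+|Y'|-i-2}(\Delta(G_{(X\setminus\{x_1\})\sqcup Y'}))$ --- the two homological degrees coincide, since lowering the homological index by one compensates for dropping one element of $X$. The homotopy equivalence above then equates these two numbers for every $Y'\subseteq Y$, and summing over $Y'$ yields $\beta_{i,X,\bullet}(I(G)) = \beta_{i-1,X\setminus\{x_1\},\bullet}(I(G\setminus\{x_1\}))$.

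For the ``consequently'' clause I would induct on $|X|$, assuming Conjecture~\ref{bipartite-graph-bound-conjecture} for all bipartite graphs with fewer $X$-vertices. The bounds and forbidden-subgraph descriptions there depend only on the $X'$-degrees, which are unchanged when $X$-vertices are deleted; moreover for $X'\subseteq X$ one has $\beta_{i,X',\bullet}(I(G)) = \beta_{i,X',\bullet}(I(G_{X'\sqcup Y}))$, since these finely graded Betti numbers see only the induced subgraph. Hence for $X'\subsetneq X$ every assertion of the conjecture for $G$ at $X'$ follows from the inductive hypothesis applied to $G_{X'\sqcup Y}$. For $X'=X$, the upper bound and its equality characterization are already available (Corollary~\ref{cor-upper-b}); for the lower bound, apply the formula just proved to $G$ and also to the Ferrers graph $R_G$, which shares the nested pair $R_{x_2}\subset R_{x_1}$, obtaining
\[
\beta_{i,X,\bullet}(I(G)) = \beta_{i-1,X\setminus\{x_1\},\bullet}(I(G\setminus\{x_1\}))
\quad\text{and}\quad
\beta_{i,X,\bullet}(I(R_G)) = \beta_{i-1,X\setminus\{x_1\},\bullet}(I(R_G\setminus\{x_1\})).
\]
Since $R_G\setminus\{x_1\}$ and $R_{G\setminus\{x_1\}}$ are both row-nested with $X$-degrees $\{\deg_G(x): x\neq x_1\}$, they are isomorphic, so the inductive lower bound for $G\setminus\{x_1\}$ gives $\beta_{i,X,\bullet}(I(R_G))\le\beta_{i,X,\bullet}(I(G))$.

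It remains to treat equality in the lower bound, i.e.\ that $G$ attains equality for all $i$ and all $X'$ exactly when $G$ is nearly-row-nested. If $G$ is nearly-row-nested, then so is each $G_{X'\sqcup Y}$ with $X'\subsetneq X$ and so is $G\setminus\{x_1\}$ (deleting a row preserves the property); by induction each attains equality throughout, which via the restriction identity gives equality for $G$ at all $X'\subsetneq X$, and via the two displayed formulas promotes equality for $G\setminus\{x_1\}$ at $X\setminus\{x_1\}$ to equality for $G$ at $X$. Conversely, if $G$ is not nearly-row-nested, Proposition~\ref{forbidden-subgraph-characterizations}(ii) produces an induced subgraph of $G$, on a set $X'$ of at most three rows, isomorphic to a $6$-cycle or to an edge together with a two-edge path having both endpoints in $Y$; in either configuration the rows involved are pairwise non-nested, so --- checking the small cases using that $R_{x_2}\subset R_{x_1}$ is a nested pair and that a $6$-cycle needs three rows while the other configuration needs two --- $X'$ may be taken to be a proper subset of $X$, and the inductive hypothesis applied to $G_{X'\sqcup Y}$ (which still contains the configuration, hence is not nearly-row-nested) yields a strict lower-bound inequality at some subset, which persists for $G$ by the restriction identity and contradicts equality. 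The step I expect to be the real obstacle is the homotopy-equivalence argument of the first paragraph --- verifying cleanly that deleting $x_1$ leaves $\Delta(G_{X\sqcup Y'})$ unchanged up to homotopy --- together with the care required in this last paragraph to guarantee that a forbidden configuration can always be exhibited on a proper set of rows.
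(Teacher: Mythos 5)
Your proof is correct, but it takes a genuinely different route from the paper's for the main formula. You work topologically: via Hochster's formula (Proposition~\ref{Hochster-formula}) you fix each $Y'\subseteq Y$, decompose $\Delta(G_{X\sqcup Y'})$ as the deletion of $x_1$ union the closed star of $x_1$, observe that the link of $x_1$ is the independence complex of the induced subgraph on the non-neighbors of $x_1$, in which $x_2$ is isolated (precisely because $R_{x_2}\subseteq R_{x_1}$) and hence a cone vertex, and then invoke the gluing/wedge lemma to conclude $\Delta(G_{X\sqcup Y'})\simeq \Delta(G_{(X\setminus\{x_1\})\sqcup Y'})$; the degree bookkeeping you do is right, so you in fact obtain the finer multigraded identity $\beta_{i,X\sqcup Y'}(I(G))=\beta_{i-1,(X\setminus\{x_1\})\sqcup Y'}(I(G\setminus\{x_1\}))$ for every $Y'$, of which the stated equality is the sum. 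The paper argues algebraically instead: it introduces $I(G)+(x_1)$, identifies $\beta_{i,X,\bullet}(I(G)+(x_1))$ with $\beta_{i-1,X\setminus\{x_1\},\bullet}(I(G\setminus\{x_1\}))$ by Hochster's formula, and then uses the short exact sequence $0\to S/(I(G):x_1)(-1)\to S/I(G)\to S/(I(G)+(x_1))\to 0$ together with the long exact sequence in $\Tor$, the key point being that $x_2$ divides no minimal generator of the colon ideal $(I(G):x_1)$, so that ideal contributes nothing in multidegrees containing all of $X$. Note that the combinatorial heart is the same fact in two guises ($x_2$ a cone vertex of the link versus $x_2$ absent from the colon ideal's generators); your version buys the finer per-$Y'$ statement and avoids mapping cones, while the paper's avoids any per-multidegree homotopy analysis. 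The step you flagged as the likely obstacle is actually fine as you wrote it. For the ``consequently'' clause the paper only says the rest is straightforward; your fuller treatment --- restriction to $G_{X'\sqcup Y}$ for proper $X'$, applying the formula to both $G$ and $R_G$ with $R_G\setminus\{x_1\}\cong R_{G\setminus\{x_1\}}$, quoting Corollary~\ref{cor-upper-b} for the upper bound, and handling the equality characterization via Proposition~\ref{forbidden-subgraph-characterizations}(ii) together with the observation that a pairwise-non-nested forbidden configuration cannot contain both $x_1$ and $x_2$ --- is correct and fills in exactly what the paper omits.
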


\begin{proof}
For the first assertion, introduce the ideal $I(G)+(x_1)$.  Although  this ideal
is no longer quadratic, it is still generated by squarefree monomials, and hence is the Stanley-Reisner
ideal for a simplicial complex on this vertex set $X \sqcup Y$.
Specifically, this complex looks exactly like $\Delta(I(G\setminus\{x_1\}))$, except that it
has $x_1$ (in principle) allowed in its vertex set, although the singleton $\{x_1\}$ does not form a
simplex (!).  Hochster's formula (Proposition~\ref{Hochster-formula}) shows that
$$
\beta_{i-1,X\setminus \{x_1\},\bullet}(I(G\setminus\{x_1\}))=\beta_{i,X,\bullet}(I(G)+(x_1)).
$$
Hence it only remains to show that
\begin{equation}
\label{desired-betti-relation}
\beta_{i,X,\bullet}(I(G)) = \beta_{i,X,\bullet}(I(G)+(x_1)).
\end{equation}
Letting $S:=k[\xb,\yb]$, one can relate the Betti numbers of $I(G)+(x_1)$ and of $I(G)$ via the
long exact sequence in $\Tor^S(-,k)$ associated to the short exact sequence of $S$-modules
$$
0 \rightarrow S/(I(G):x_1) (- \deg x_1) \overset{x_1}{\longrightarrow} S/I(G) \rightarrow S/(I(G)+(x_1)) \rightarrow 0
$$
where
$$
(I(G):x_1):=\{ f \in k[\xb,\yb]: f x_1 \in I(G) \}
$$
denotes the usual {\it ideal quotient} (or {\it colon ideal}).  The ideal
$(I(G):x_1)$ is also not quadratic, but is still generated by squarefree monomials, with
unique minimal monomial generating set given by
\begin{equation}
\label{colon-generators}
\{y_j: \{x_1,y_j\} \in E(G)\} \quad \sqcup \quad
\{x_i y_j:  \{x_i,y_j\} \in E(G) \text{ with } x_i \neq x_1 \text{ and }\{x_1,y_j\} \not\in E(G) \}.
\end{equation}
Note that $x_2 \in X$ does not appear among any of these minimal generators for $(I(G):x_1)$,
because every edge of the form $\{x_2,y_j\} \in E(G)$ also has
$\{x_1,y_j\} \in E(G)$.  Therefore the finely graded component corresponding to $X$ does
not appear in the minimal free resolution for $(I(G):x_1)$, and hence
$$
\beta_{i,X,\bullet}(I(G):x_1)=\beta_{i-1,X,\bullet}(S/I(G):x_1)=0 \text{ for all }i.
$$
Therefore the desired relation \eqref{desired-betti-relation}
follows from the aforementioned long exact sequence.

For the second assertion, note that the hypothesis on $G$ implies that the row-nested graph $R_G$ will
also have rows $(R_G)_{x_2} \subseteq (R_G)_{x_1}$ nested in the same way, and hence the
formula in the proposition holds for $R_G$ also.  The rest is straightforward.
\end{proof}

\subsection{The case of equality in the lower bound}
\label{lower-bound-section}

Although we do not know how to prove the lower bound in
Conjecture~\ref{bipartite-graph-bound-conjecture} in general, we
prove here its assertion about when equality is achieved.

\begin{theorem}
Let $G$ be a bipartite graph on vertex set $X \sqcup Y$, and $R_G$ the unique
(up to isomorphism) row-nested graph having the same row sizes/$X$-degrees.
Then for all $i$ and all $X' \subseteq X$ one has
$$
\beta_{i,X',\bullet}(I(G)) = \beta_{i,X',\bullet}(I(R_G))
\left(
=\left\{
\begin{matrix}
\binom{ \mindeg(X') }{i-|X'|+2} &\text{ if } |X'|<i+2 \\
0                               &\text{ otherwise}.\\
\end{matrix}
\right\}
\right)
$$
if and only if $G$ is nearly row-nested.
\end{theorem}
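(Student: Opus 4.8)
The plan is to reduce everything to a statement about the ``top'' specialized Betti numbers $\beta_{i,X_H,\bullet}(I(H))$ of a bipartite graph $H$ on $X_H\sqcup Y_H$ (i.e.\ those in which \emph{all} the $X$--variables occur). By Hochster's formula (Proposition~\ref{Hochster-formula}) each $\beta_{i,X'\sqcup Y'}(I(G))$ depends only on the induced subgraph $G_{X'\sqcup Y'}$, so $\beta_{i,X',\bullet}(I(G))=\beta_{i,X',\bullet}(I(G_{X'\sqcup Y}))$ is a \emph{top} Betti number of $G_{X'\sqcup Y}$; moreover $(R_G)_{X'\sqcup Y}$ is again the row-nested model of $G_{X'\sqcup Y}$, and, by Proposition~\ref{forbidden-subgraph-characterizations}(ii), $G$ is nearly row-nested iff $G_{X'\sqcup Y}$ is for every $X'$. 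So it suffices to prove, for every bipartite $H$: (A) if $H$ is nearly row-nested then $\beta_{i,X_H,\bullet}(I(H))=\beta_{i,X_H,\bullet}(I(R_H))$ for all $i$; and (B) if $H$ contains one of the two configurations of Proposition~\ref{forbidden-subgraph-characterizations}(ii) as an induced subgraph \emph{involving all of $X_H$}, then this equality fails for some $i$. (Part (A) gives the ``if'' direction; applying (B) to $H=G_{X'\sqcup Y}$ for an offending $X'$ gives the ``only if'' direction.)

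For (A) I would induct on $|X_H|+|Y_H|$. If $H$ has a full column $y$, then so does $R_H$ (its leftmost column meets every nonempty row), and Proposition~\ref{full-column-reduction} expresses the top Betti numbers of $I(H)$ through those of $I(H\setminus\{y\})$ by exactly the identity it also yields for $R_H$ together with $R_H\setminus\{y\}\cong R_{H\setminus\{y\}}$; since $H\setminus\{y\}$ is still nearly row-nested, induction finishes this case. If $H$ has a strictly nested pair $R_{x_2}\subsetneq R_{x_1}$, so does $R_H$, and Proposition~\ref{nested-row-reduction} reduces the top Betti numbers of $I(H)$ to those of $I(H\setminus\{x_1\})$, and likewise for $R_H$ with $R_H\setminus\{x_1\}\cong R_{H\setminus\{x_1\}}$; induction again finishes. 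Otherwise $H$ has no full column and no strictly nested pair; the nearly-row-nested hypothesis then forces all rows to have one common size $c$, and $\bigcap_x R_x=\varnothing$ (no full column) forces $c\le 1$, so $H$ is a disjoint union of stars, each row a single $Y$-vertex, plus isolated vertices. For such $H$ one checks directly (tensor the Koszul-type resolutions of the individual stars; the convolution collapses by Vandermonde) that $\beta_{i,X_H,\bullet}(I(H))=\delta_{i,|X_H|-1}$, which is also the value for the star $R_H$ (Proposition~\ref{Ferrers-betti-numbers} with $\lambda=(1,\dots,1)$).

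For (B) we have $|X_H|\in\{2,3\}$, and since the configuration involves all of $X_H$ the rows of $H$ are pairwise non-nested; hence Proposition~\ref{nested-row-reduction} never applies, while iterating Proposition~\ref{full-column-reduction} to strip all full columns preserves both the presence of the configuration (no configuration has a full column) and the failure of the equality (if the difference $\beta_{i,X_H,\bullet}(I(H))-\beta_{i,X_H,\bullet}(I(R_H))$ is nonzero, it is nonzero at the largest $i$ for which the corresponding difference for $H\setminus\{y\}$ is nonzero). So we may assume $H$ has no full columns. If $|X_H|=2$, then $H$ is a disjoint union of a star with $a\ge1$ leaves and a star with $b\ge2$ leaves, and a direct computation gives $\beta_{1,X_H,\bullet}(I(H))=ab$ whereas $\beta_{1,X_H,\bullet}(I(R_H))=\min(a,b)<ab$. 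If $|X_H|=3$, then $H$ has no full column and contains an induced $6$-cycle on its three rows and three of its columns (each of these columns then has size exactly $2$ in $H$, and so do all others); one computes $\beta_{i,X_H,\bullet}(I(H))$ straight from Hochster's formula, using that the $6$-cycle's independence complex is homotopy equivalent to a wedge of two circles, to see that it exceeds the Ferrers value $\binom{\mindeg(X_H)}{i-|X_H|+2}$ of Proposition~\ref{Ferrers-betti-numbers} in homological degree $2$ or $3$.

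The main obstacle is the three-rowed case of (B): after the reductions one faces a three-rowed graph, with all columns of size $\le2$ but possibly arbitrarily many of them, that contains an induced $6$-cycle, so there is no finite list of cases --- one must argue by an explicit count of reduced homology (cleanest in homological degree $2$ or $3$, where the relevant groups are $\widetilde H_0$ and $\widetilde H_1$) that the $6$-cycle forces a strict surplus over the explicit Ferrers Betti number. A tempting shortcut --- invoking the (unproved) lower bound of Conjecture~\ref{bipartite-graph-bound-conjecture} to say the excess can only decrease under each reduction --- is not available, so a genuine strict inequality must be produced directly.
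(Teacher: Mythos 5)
Your reverse implication (part (A)) is correct and essentially reproduces the paper's argument: reduce to the ``top'' Betti numbers $\beta_{i,X_H,\bullet}(I(H))$ for $H=G_{X'\sqcup Y}$ via Hochster's formula, then induct using the full-column reduction (Proposition~\ref{full-column-reduction}) and the nested-row reduction (Proposition~\ref{nested-row-reduction}), arriving at the base case of a nearly row-nested graph with all rows of size one, which is horizontal-vertical and handled by the minimal Taylor resolution. Your bookkeeping that $R_{H}\setminus\{y\}\cong R_{H\setminus\{y\}}$ and $R_{H}\setminus\{x_1\}\cong R_{H\setminus\{x_1\}}$, needed to run the induction, is sound, as is the observation that the difference $f_i=\beta_{i,X_H,\bullet}(I(H))-\beta_{i,X_H,\bullet}(I(R_H))$ satisfies $f_i=g_i+g_{i-1}$ under a full-column strip, so that nonvanishing of $g$ forces nonvanishing of $f$.

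Your forward implication, however, has a genuine gap which you yourself flag. The two-row case is fine: after stripping full columns one has disjoint stars of sizes $a\ge 1$ and $b\ge 2$, and $ab>\min(a,b)$. But in the three-row case you are left, after stripping full columns, with an arbitrary three-row graph whose columns have size at most $2$ and which contains an induced $6$-cycle; as you note, there is no finite list of cases and you cannot invoke the unproved lower bound. Your parenthetical ``and so do all others'' asserting that every column has size exactly two is not justified under your hypotheses. The paper closes exactly this gap by \emph{ordering} the two forbidden configurations: it first disposes of every occurrence of the ``disjoint edge plus $2$-path'' pattern (your two-row case), and only then treats the $6$-cycle under the standing assumption that the first pattern is absent from all of $G$. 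Under that assumption, the structure analysis in the proof of Proposition~\ref{forbidden-subgraph-characterizations}(ii), Subcase~2b, forces $R_{x_i}=(\{y_1,y_2,y_3\}\setminus\{y_i\})\cup S$ for $i=1,2,3$, where $S$ is the common intersection of the three rows; hence every column of $G_{\{x_1,x_2,x_3\},Y}$ other than $y_1,y_2,y_3$ is full or empty, and after stripping full columns (and ignoring isolated $Y$-vertices) one is left with the literal $6$-cycle. The single computation $\beta_{3,X',\bullet}(I(R_G))=1<2=\beta_{3,X',\bullet}(I(G))$ then finishes. To repair your argument, reorder so that $X'$ of size $2$ is always tried first, and then use the Subcase~2b structure to reduce the size-$3$ case to the bare $6$-cycle; without that reordering your claim (B) as stated for arbitrary three-row $H$ is not proved by the means you describe.
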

\begin{proof}
\vskip.1in
\noindent
{\sf The forward implication.}

Note that if $G$ is {\it not} nearly row-nested,
then Proposition~\ref{forbidden-subgraph-characterizations}(ii) shows that
$G$ contains some vertex-induced subgraph $G_{X',Y'}$ isomorphic to either the disjoint of an
edge with a path of two edges having both endpoints in $Y$, or a $6$-cycle.

In the first case, consider the subdiagrams for $R_G$ and $G$ restricted
to the two rows $X'=\{x_1,x_2\}$, which will have sizes $|R_{x_1}|=a, |R_{x_2}|=b$
and, say, $a \geq b$.
In $R_G$, these two rows are nested, while in $G$ they overlap in $c$ columns where
$a-c \geq 2$.  One has $\beta_{1,X',\bullet}(I(R_G))=\binom{b}{1-2+2}=b$
by the formula in the statement, while Corollary~\ref{betti-number-corollary} shows
that $\beta_{1,X',\bullet}(I(G))\geq c+(a-c)(b-c)$:  one will have $\beta_{1,X' \sqcup Y'} (I(G))=1$
for any of the $c$ choices of $Y'$ equal to a single column in the overlap of the
two rows, or for any of the $(a-c)(b-c)$ choices of $Y'$ having two columns of size one,
one each from the nonoverlapping columns.  But then the fact that
$a-c \geq 2$ implies
$$
\beta_{1,X',\bullet}(I(G)) \,\, \geq
c+(a-c)(b-c) \,\, >
\,\,  b =
\,\, \beta_{1,X',\bullet}(I(R_G)).
$$

In the second case, we may assume that the former subgraphs $G_{X',Y'}$ do {\it not} exist
in $G$, but a $6$-cycle $G_{\{x_1,x_2,x_3\},\{y_1,y_2,y_3\}}$ does exist.  The proof of
Proposition~\ref{forbidden-subgraph-characterizations}(ii) showed that in this situation,
the subgraph $G_{\{x_1,x_2,x_3\},Y}$ will have every $y$ of $Y \setminus \{y_1,y_2,y_3\}$
giving a full column.  Hence by Proposition~\ref{full-column-reduction}, it suffices to
remove these full columns and show that a $6$-cycle $G$ does not achieve the
lower bound in the conjecture.  For this, observe that a $6$-cycle $G$ has
$$
\beta_{3,X',\bullet}(I(R_G)) =
\binom{2}{3-3+2} =
1 < 2 =
\dim_k \tilde{H}_1(\Delta(I(G))) =
\beta_{3,X',Y'}(I(G)) \leq \beta_{3,X',\bullet}(I(G)).
$$
Here the calculation $\tilde{H}_1(\Delta(I(G))) \cong k^2$ comes from direct inspection:
if the $6$-cycle $G_{\{x_1,x_2,x_3\},\{y_1,y_2,y_3\}}$ has edges $E(G)=\{x_i,y_j\}_{1 \leq i \neq j \leq 3}$,
then $\Delta(I(G))$ consists of two triangles $\{x_1,x_2,x_3\},\{y_1,y_2,y_3\}$ together with
the three edges $\{x_i,y_i\}_{i=1,2,3}$ connecting them, and is homotopy equivalent to a wedge
of two circles.

\vskip.1in
\noindent
{\sf The reverse implication.}

Assume that $G$ is a nearly row-nested bipartite graph
on $X \sqcup Y$, and one must show that
$\beta_{i,X',\bullet}(I(G)) = \beta_{i,X',\bullet}(I(R_G))$ for all $i$ and
all $X' \subset X$.  Proceed by induction on $|X|+|Y|$.
By the reductions in Propositions~\ref{nested-row-reduction} and \ref{full-column-reduction}
one may assume that $G$ contains no nested pair of rows, and that it contains no full columns.
Because $G$ is nearly row-nested, containing no nested pair of rows implies all the
rows $R_{x_i}$  have the same cardinality $c$.  Containing no full columns then forces $c=1$.
But in this case, both $G$ and $R_G$ are horizontal-vertical, having their Taylor resolutions minimal by
Proposition~\ref{prop-star-components}, and
$$
\beta_{i,X',\bullet}(I(G)) = \beta_{i,X',\bullet}(I(R_G)) =
\begin{cases}
1 & \text{ if }i=|X'|-1 \\
0 & \text{ otherwise.}
\end{cases}
$$
\end{proof}

Note that the above result gives a characterization of nearly row-nested graphs by means of its Betti numbers $\beta_{i,X',\bullet}$. In this sense it is analogous to the characterization of Ferrers graphs as the bipartite graphs whose minimal free resolution is linear (see \cite[Theorem 4.2]{CN1}).

\subsection{Verifying the bipartite conjecture for $\Dbip{X}{Y}$}
\label{bipartite-conjecture-holds-for-skew}

Having already verified the upper bound and the cases of equality for
the upper and lower bounds in Conjecture~\ref{bipartite-graph-bound-conjecture}
generally, we verify here that the lower bound holds for the bipartite
graphs $\Dbip{X}{Y}$, using Corollary~\ref{betti-number-corollary}.
The crux is the following lemma.

\begin{lemma}
\label{lower-bound-crux}
Let $D$ be a shifted skew diagram, and $X, Y$ ordered subsets.

If $\Dbip{X}{Y}$ has no empty rows, then there exists a subset $Y' \subseteq Y$
for which $\Dbip{X}{Y'}$ is spherical and $\rect(\Dbip{X}{Y'})=|Y'|$.

More generally, if $\Dbip{X}{Y}$ has at least $k$ cells in every row, then for
every $j$ in the range $1 \leq j \leq k$, there are at least $\binom{k}{j}$
different choices of subsets $Y' \subseteq Y$ for which $\Dbip{X}{Y'}$ is spherical
and $\rect(\Dbip{X}{Y'})=|Y'|-j+1$.
\end{lemma}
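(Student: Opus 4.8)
The plan is to prove both statements of Lemma~\ref{lower-bound-crux} simultaneously by induction on $|X|$, running the rectangular decomposition algorithm on $\Dbip{X}{Y}$ one step at a time and peeling off suitable columns. Note first that the second (general) statement implies the first, taking $k=j=1$, so I will aim directly at the second statement; conversely the case $j=1$ of the second statement is exactly where the ``spherical with $\rect = |Y'|$'' phenomenon is forced, and it will anchor the induction.

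First I would observe that, since $\Dbip{X}{Y}$ has at least $k \geq 1$ cells in every row, it in particular has a top cell, so the first iteration of the rectangular decomposition enters Case~2. If it were Subcase~2b (a pedestal), then by Proposition~\ref{pedestal-proposition} there would be cells $c=(i,j), c'=(i',j')$ with $i<i'$, $j<j'$ but no cell at $(i',j)$ --- however, one can arrange (using that every row is non-empty and choosing $c$ as far left as possible in its row and $c'$ as far right as possible) a contradiction with the fact that row $i'$ is non-empty and $D$ is skew-Ferrers-like away from pedestals; more robustly, I would simply note that if the decomposition begins with a pedestal then $\rect(\Dbip{X}{Y})=1$ and the rows of a pedestal are totally ordered by inclusion, so ``at least $k$ cells in every row'' forces the shortest row to have $\geq k$ cells and hence every column of the pedestal among its first $k$ to be full; picking $Y'$ to be any $j$ of those $k$ full columns gives $\Dbip{X}{Y'}$ a full rectangle, which is spherical with $\rect = 1 = |Y'|-j+1$ exactly when $j = |Y'|$, and one checks the count $\binom{k}{j}$ directly. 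So the pedestal case is a base case.

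The main inductive step is therefore when the first iteration produces a full rectangle $R = \Dbip{X'}{Y'_0}$ with $X' = \{x_1,\dots,x_{m'}\}$ an initial segment and $Y'_0 = \{y_{n'},\dots,y_n\}$ a final segment. Removing the rows and columns of $R$ leaves a diagram $E^- := \Dbip{X \setminus X'}{Y \setminus Y'_0}$ with strictly fewer rows; I would like to say $E^-$ still has $\geq k$ cells in every remaining row, but this can fail because the cells of $R$'s rows to the left were declared excess and excised --- however those excised rows are precisely $X'$, which we are removing anyway, so every surviving row $x_i$ with $i > m'$ still has all of its original cells, hence $\geq k$ of them. Now I apply the induction hypothesis to $E^-$: for each $j'$ with $1 \leq j' \leq k$ there are $\geq \binom{k}{j'}$ choices of $Y'' \subseteq Y \setminus Y'_0$ with $\Dbip{X \setminus X'}{Y''}$ spherical and $\rect = |Y''| - j' + 1$. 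Using the disjoint-union/join behavior underlying the rectangular decomposition (as in the proof of Theorem~\ref{homotopy-type-theorem} and the recursion $\rect(E) = \rect(E^-)+1$ from Lemma~\ref{rectangularity-monotonicity}), for each such $Y''$ and each nonempty $Y'_1 \subseteq Y'_0$ the diagram $\Dbip{X}{Y'_1 \sqcup Y''}$ decomposes with first full rectangle $\Dbip{X'}{Y'_1}$ followed by the decomposition of $E^-$ restricted to $Y''$, so it is spherical with $\rect = 1 + |Y''| - j' + 1 = |Y'_1 \sqcup Y''| + 1 - (j'+1) + 1 - (|Y'_1|-1)$; the bookkeeping needs $|Y'_1|=1$ to land on $\rect = |Y'| - j + 1$ with $j = j'+1$ and $Y' = Y'_1 \sqcup Y''$, plus the $j'=k$ boundary contributes the ``all of $R$'' terms. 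Counting pairs $(Y'_1, Y'')$ with $|Y'_1|=1$, $Y'_1 \subseteq Y'_0$ (at least $\min(k, |Y'_0|)$ choices, and one checks $|Y'_0| \geq$ the relevant bound since $R$ is full and rows have $\geq k$ cells) against the $\binom{k}{j-1}$ choices of $Y''$ from the inductive hypothesis, and adding the $j=1$ base contribution from within $R$ itself, yields at least $\binom{k}{j-1} + \binom{k-1}{j} \geq \binom{k}{j}$ via Pascal, after being careful that distinct pairs give distinct $Y'$.

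The step I expect to be the main obstacle is precisely this last counting reconciliation: making sure that the family of subsets $Y'$ produced from the inductive hypothesis together with the choices inside the first rectangle $R$ are genuinely \emph{distinct} (no double-counting across different $(Y'_1,Y'')$ decompositions), and that the arithmetic of rectangularities lines up so that ``$j$'' increments by exactly one each time one adds a singleton column from a fresh full rectangle. I would handle distinctness by noting that $Y' \cap Y'_0$ determines $Y'_1$ and $Y' \setminus Y'_0$ determines $Y''$, so the pairing is injective, and I would handle the arithmetic by carefully tracking the identity $\rect(\Dbip{X}{Y'}) = (\text{number of full rectangles in the decomposition of } \Dbip{X}{Y'})$ together with the observation that a full rectangle of width $1$ still counts as one rectangle --- so each singleton column chosen from a distinct full rectangle of $\Dbip{X}{Y}$ raises $|Y'|$ by $1$ while raising $\rect$ by $1$, i.e. leaves $|Y'| - \rect$ fixed, which is what pins down the parameter $j-1 = |Y'|-\rect(\Dbip{X}{Y'})$.
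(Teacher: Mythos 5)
Your inductive step contains a false claim that the whole argument leans on. After peeling the first full rectangle $R=\Dbip{X'}{Y'_0}$ you assert that every surviving row $x_i$ with $i>m'$ ``still has all of its original cells, hence $\geq k$ of them.'' But in Subcase 2a the cells of $\Dbip{X\setminus X'}{Y'_0}$ (those lying \emph{below} $R$) are declared excess, and they vanish when the columns $Y'_0$ are deleted, so a surviving row can drop below $k$ cells. Concretely, in the paper's running example $D=(12,11,7,6,4,2,1)/(11,9,6,3)$ take $X=\{4,5\}$, $Y=\{6,7,8,9,10\}$: row $4$ has cells in columns $8,9,10$ and row $5$ in columns $6,7,8,9$, so $k=3$; the first full rectangle is $\{4\}\times\{8,9,10\}$, and after removing its rows and columns the remaining diagram is row $5$ restricted to $\{6,7\}$, with only $2<k$ cells. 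Hence the hypothesis ``at least $k$ cells in every row'' does not propagate to $E^-$, and the inductive appeal to $\binom{k}{j-1}$ or $\binom{k-1}{j}$ choices inside $E^-$ has no basis. There is also an internal inconsistency in the bookkeeping: as you note yourself in the last paragraph, adjoining a singleton column from a fresh full rectangle raises $|Y'|$ and $\rect$ by one each, so it leaves $j-1=|Y'|-\rect(\Dbip{X}{Y'})$ \emph{unchanged}; thus a pair $(Y'_1,Y'')$ with $|Y'_1|=1$ and $Y''$ of parameter $j'$ yields parameter $j=j'$, not $j'+1$, and the Pascal-type count $\binom{k}{j-1}+\binom{k-1}{j}\geq\binom{k}{j}$ never receives its ingredients. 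Finally, the claim that $\Dbip{X}{Y'_1\sqcup Y''}$ decomposes as the full rectangle $\Dbip{X'}{Y'_1}$ followed by the decomposition of $E^-$ restricted to $Y''$ is not automatic: columns of $Y'_0$ may extend as excess cells below the rows $X'$, so the first rectangle of the restricted diagram can occupy more rows than $X'$ (changing what remains), and if the chosen columns have incomparable vertical extents the restricted diagram can even begin with a pedestal, destroying sphericality.

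The paper's proof is structured precisely to sidestep these issues, and it uses the $\geq k$ hypothesis only once, at the very top. It first deletes all columns meeting the top row except the \emph{longest} $k$ of them, then chooses a $j$-element subset $Y_0$ of those $k$ columns and discards the other $k-j$; this forces the first full rectangle of the pruned diagram to contain exactly the $j$ chosen columns, and the $\binom{k}{j}$ resulting sets $Y'$ are automatically distinct because they already differ in their intersection with the top row. After peeling that rectangle one only needs the remaining diagram to have \emph{no empty rows} (the situation of the first assertion), and from then on exactly one column is added per full rectangle, so the parameter $j$ is created entirely inside the first rectangle rather than incremented along an induction. If you want to salvage your argument, restructure it along these lines (take all $j$ columns inside one full rectangle through the top row, then run the $j=1$ greedy algorithm on what is left), rather than trying to propagate the $k$-cells-per-row hypothesis and combine counts via Pascal's rule.
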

\begin{proof}
For the first assertion, one can give an algorithm to find $Y'$.
Initialize $Y'$ to be empty.  Without loss of generality one can assume that $\Dbip{X}{Y}$
has only one cell in its first row-- simply remove all columns of $Y$ that
intersect the top row except for the longest such column, and one will still
have no empty rows in $\Dbip{X}{Y}$.  Add to $Y'$ the index $y_{n}$ of this
unique column intersecting the top row, which is now the rightmost column of
$\Dbip{X}{Y}$.  Note also that $\Dbip{X}{Y}$ has a rectangular decomposition
that begins with the full rectangle having this single column $y_j$.
As in the algorithm for rectangular decomposition, replace $\Dbip{X}{Y}$ with its restriction to the
rows and columns disjoint from this first full rectangle, which is again
a diagram with no empty rows by construction, and repeat the process until $X$ is
empty.

For the second assertion, one finds $\binom{k}{j}$ different sets $Y'$ by a similar
algorithm.  Initialize $Y'$ to be empty.  Without loss of generality
one can assume that $\Dbip{X}{Y}$ has exactly $k$ cells in its first row-- simply remove all columns of $Y$ that
intersect the top row except for the longest $k$ of them, and this will preserve the
property of every row having at least $k$ cells.  For each $j$-element subset $Y_0$ of
these $k$ columns that intersect the top row, add $Y_0$ to $Y'$, and we will continue the
the algorithm to produce a $Y'$ for which $\Dbip{X}{Y'}$ has
$|Y'|-j+1$ full rectangles in its rectangular decomposition.  As a first step,
remove the other $k-j$ columns that intersect the top row from $Y$, and note that
$\Dbip{X}{Y}$ has a rectangular decomposition that begins with a full rectangle containing
exactly the $j$ columns that intersect the top row.  Furthermore, the hypothesis that
each row has at least $k$ cells insures that, after replacing $\Dbip{X}{Y}$ with
its restriction to the rows and columns disjoint from this first full rectangle,
one will have a diagram with no empty rows.  Thus one can continue the algorithm from
the proof of the first assertion, adding in one more column to $Y'$ each time along with one
more full rectangle in the rectangular decomposition.  Repeating the process until $X$ is
empty, one obtains the desired $Y'$.
\end{proof}

\begin{corollary}
\label{lower-bound-valid-corollary}
For any shifted skew diagram $D$ and ordered subsets $X, Y$,
Conjecture~\ref{bipartite-graph-bound-conjecture} holds for $\Gbip{X}{Y}(D)$.
\end{corollary}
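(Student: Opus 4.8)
The plan is to notice that, for graphs of the form $\Gbip{X}{Y}(D)$, essentially everything in Conjecture~\ref{bipartite-graph-bound-conjecture} has already been handled in general, leaving only the lower bound itself. Indeed, Corollary~\ref{cor-upper-b} gives the upper bound and the characterization of equality in it for arbitrary bipartite graphs, the theorem of Section~\ref{lower-bound-section} gives the characterization of equality in the lower bound for arbitrary bipartite graphs, and the binomial-coefficient value $\binom{\mindeg(X')}{i-|X'|+2}$ of $\beta_{i,X',\bullet}(I(R_G))$ is supplied by Proposition~\ref{Ferrers-betti-numbers} since $R_G$ is a Ferrers graph. So the whole task reduces to proving
$$
\beta_{i,X',\bullet}(I(\Gbip{X}{Y}(D)))\ \geq\ \binom{\mindeg(X')}{i-|X'|+2}
$$
for all $i$ and all $X'\subseteq X$, with the right-hand side read as $0$ when $|X'|\geq i+2$ (and both sides zero when $X'=\varnothing$).

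Next I would reformulate the left-hand side combinatorially. By Corollary~\ref{betti-number-corollary}, $\beta_{i,X',\bullet}(I(\Gbip{X}{Y}(D)))$ equals the number of subsets $Y'\subseteq Y$ for which $\Dbip{X'}{Y'}$ is spherical and $\rect(\Dbip{X'}{Y'})=|X'|+|Y'|-i-1$, since each such $Y'$ contributes a single nonzero multigraded Betti number. Before invoking Lemma~\ref{lower-bound-crux} I would clear away the trivial ranges: if $|X'|\geq i+2$, or if $i-|X'|+2>\mindeg(X')$, then the target binomial coefficient is $0$ and the inequality is automatic. Hence we may assume $1\leq j\leq\mindeg(X')$ where $j:=i-|X'|+2$.

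The key step is to apply Lemma~\ref{lower-bound-crux} to the diagram $\Dbip{X'}{Y}$. Restricting $D$ to the rows in $X'$ changes no row, so for $x\in X'$ row $x$ of $\Dbip{X'}{Y}$ still carries $\deg_G(x)\geq\mindeg(X')$ cells; thus $\Dbip{X'}{Y}$ has at least $k:=\mindeg(X')$ cells in every row, and in particular no empty rows. Since $1\leq j\leq k$, the lemma produces at least $\binom{k}{j}=\binom{\mindeg(X')}{i-|X'|+2}$ distinct subsets $Y'\subseteq Y$ with $\Dbip{X'}{Y'}$ spherical and $\rect(\Dbip{X'}{Y'})=|Y'|-j+1=|X'|+|Y'|-i-1$. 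Each of these $Y'$ is exactly of the type counted by Corollary~\ref{betti-number-corollary}, so $\beta_{i,X',\bullet}(I(\Gbip{X}{Y}(D)))\geq\binom{\mindeg(X')}{i-|X'|+2}$, as needed; together with the already-established parts this verifies Conjecture~\ref{bipartite-graph-bound-conjecture} for $\Gbip{X}{Y}(D)$.

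I do not expect a genuine obstacle inside this argument: all the difficulty of the lower bound is packaged into Lemma~\ref{lower-bound-crux}, and the rest is bookkeeping among the three parameters $i$, $|X'|$, and $j=i-|X'|+2$, plus the observation that passing from $X$ to $X'$ only shrinks rows and never empties one, so the hypothesis of Lemma~\ref{lower-bound-crux} is met. The only thing worth double-checking carefully is that the binomial-coefficient conventions line up on both sides in the degenerate cases ($X'=\varnothing$, or $i$ small enough that $j\leq 0$), where both sides should vanish.
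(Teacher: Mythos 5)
Your proof is correct and takes essentially the same route as the paper's: both reduce the corollary to verifying the lower-bound inequality (the upper bound, the equality characterizations, and the binomial formula for $R_G$ having been established in general earlier), set $j:=i-|X'|+2$, and invoke Lemma~\ref{lower-bound-crux} on $\Dbip{X'}{Y}$ to produce $\binom{\mindeg(X')}{j}$ spherical subdiagrams contributing to $\beta_{i,X',\bullet}$. Your version is, if anything, slightly more careful than the paper's about the degenerate ranges $j\leq 0$ and $j>\mindeg(X')$, which the paper's ``so that $1\leq j\leq k$'' glosses over.
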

\begin{proof}
Let $X' \subseteq X$ and let $k:=\mindeg(X')$.  Then one must show that
$\beta_{i,X',\bullet}(I(\Gbip{X}{Y}(D))) \geq \binom{k}{i-|X'|+2}$ for
$|X'| < i+2$.

Without loss of generality, $X'=X$.  Let $j:=i-|X'|+2$,
so that $1 \leq j \leq k$, and Lemma~\ref{lower-bound-crux} implies
that there are at least $\binom{k}{j}$ different $Y' \subset Y$
for which $\beta_{|X|+|Y'|-(|Y'|-j)-2,X \sqcup Y'}(I(\Gbip{X}{Y}(D)))=1$.
In other words, $\beta_{|X|+j-2,X,\bullet} \geq \binom{k}{j}$.
Substituting $j:=i-|X'|+2$ gives the desired lower bound.
\end{proof}

\section{EPILOGUE: Further questions}
\label{sec-further}

We conclude with a few questions motivated
by our results. The first ones concern extensions of our results in which we explicitly describe the minimal free resolution of certain ideals.

\begin{question}
\label{skew-questions}
Let $K' \subset K$ be two nested $d$-uniform hypergraphs with both $I(K),I(K')$ squarefree
strongly stable, so that $I(K \setminus K')$ is what we called earlier a {\it skew squarefree
strongly stable} ideal.
\begin{enumerate}
\item[(i)]
Are the multigraded Betti numbers for the ideals $I(K \setminus K')$ and
$I(F(K \setminus K'))$ independent of the field $k$?
\item[(ii)]
Is there a combinatorial recipe like the rectangular
decomposition that allows one to compute them?
\item[(iii)]
Are the Betti numbers for $I(K \setminus K')$ obtained from those of $I(F(K \setminus K'))$
by specialization, as in Theorem~\ref{specialization-theorem}?
\item[(iv)]
Can all these be proven via cellular resolutions, as in Theorem~\ref{cellular-resolutions-theorem}?
\item[(v)]
Can one at least do (iv) in the case of $I(\Gbip{X}{Y}(D))$
with $D$ a shifted skew diagram?  This would mean finding a regular CW-complex whose cells
are indexed by the spherical subdiagrams of a given shifted skew diagram.
\end{enumerate}
\end{question}

Finally, we come back to lower bounds.

\begin{remark}
  Remark \ref{rem-counter-ex} still allows for the possibility that among the monomial ideals generated in one degree and with a fixed number of minimal generators there is one ideal that has the smallest total Betti numbers. Is this at least true for ideals that are generated in degree two? If so, it would be very interesting to describe the ideals that attain the smallest Betti numbers.
\end{remark}

\begin{question}
Can one formulate a reasonable extension of
Conjecture~\ref{bipartite-conjecture} that applies to squarefree
monomial ideals generated in degrees $d \geq 2$, presumably
parametrized by $d$-uniform hypergraphs with some multipartiteness
property?
\end{question}

\section{Appendix}
\label{appendix}

\subsection{On the topological  types of $\Delta(I(G))$}
\label{arbitrary-homotopy-type-subsection}

The goal is to observe that the simplical complexes $\Delta(I(G))$ associated
to edge ideals $I(G)$ of graphs $G$ can
have arbitrary homeomorphism type, and when $G$ is bipartite
they can have the homotopy type of an arbitrary suspension.

The following is simply the well-known observation that the {\it
first barycentric subdivision} of a simplicial complex (see e.g.
\cite[\S 15]{Munkres}) is always  a flag (clique) complex, that is,
of the form $\Delta(I(G))$.

\begin{proposition}
\label{clique-complex-homeomorphism-types} For any finite simplicial
complex $\Delta$ there exists some  graph $G$ with
$\|\Delta(I(G))\|$ homeomorphic to $\|\Delta\|$.
\end{proposition}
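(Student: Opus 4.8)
The plan is to exhibit, for any finite simplicial complex $\Delta$, a graph $G$ whose clique complex $\Delta(I(G))$ is combinatorially isomorphic to the first barycentric subdivision $\sd(\Delta)$, and then invoke the classical fact that $\|\sd(\Delta)\|$ is homeomorphic to $\|\Delta\|$. Recall that the vertices of $\sd(\Delta)$ are the nonempty faces of $\Delta$, and a set of such faces $\{F_0, F_1, \ldots, F_k\}$ spans a simplex of $\sd(\Delta)$ precisely when they form a chain $F_0 \subsetneq F_1 \subsetneq \cdots \subsetneq F_k$ under inclusion.

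First I would define $G$ to be the comparability graph of the poset of nonempty faces of $\Delta$ ordered by inclusion: the vertex set of $G$ is $\{F \in \Delta : F \neq \varnothing\}$, and two distinct faces $F, F'$ are joined by an edge exactly when $F \subsetneq F'$ or $F' \subsetneq F$. The key observation is then that a subset $W$ of vertices of $G$ is a clique (i.e. pairwise adjacent) if and only if $W$ is totally ordered by inclusion, i.e. forms a chain of faces of $\Delta$. Hence the flag complex $\Delta(I(G))$ — whose faces are exactly the cliques of $G$, since by definition $I(G)$ is generated by the non-edges and $\Delta(I(G))$ is its Stanley–Reisner complex — has as its faces precisely the chains in the face poset of $\Delta$. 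This is exactly the description of $\sd(\Delta)$, so $\Delta(I(G)) = \sd(\Delta)$ as abstract simplicial complexes.

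Finally I would cite the standard result (e.g. \cite[\S 15]{Munkres}) that the geometric realization of the barycentric subdivision of a simplicial complex is homeomorphic to the geometric realization of the original complex: $\|\sd(\Delta)\| \cong \|\Delta\|$. Combining this with the isomorphism $\Delta(I(G)) \cong \sd(\Delta)$ gives $\|\Delta(I(G))\| \cong \|\Delta\|$, completing the proof.

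There is essentially no obstacle here; the only point requiring a moment's care is the equivalence "clique in $G$ $\iff$ chain in the face poset," which is immediate from the definition of the comparability graph, together with keeping straight the bookkeeping that a flag complex's faces are exactly the cliques of its underlying graph. The statement is, as the text says, merely the well-known fact that barycentric subdivisions are flag complexes, so the proof is a short unwinding of definitions rather than anything substantive.
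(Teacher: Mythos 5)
Your overall strategy — realize the barycentric subdivision as $\Delta(I(G))$ for a suitable $G$ and then cite $\|\sd(\Delta)\|\cong\|\Delta\|$ — is exactly the paper's, but you have the wrong graph $G$, and the error traces to a misstatement of the definition of the edge ideal. The paper defines $I(G)$ to be generated by the monomials $uv$ with $\{u,v\}\in E(G)$, i.e., by the \emph{edges}, not by the non-edges as you write. Consequently the minimal non-faces of the Stanley–Reisner complex $\Delta(I(G))$ are precisely the edges of $G$, so the faces of $\Delta(I(G))$ are the \emph{independent} sets of $G$, not the cliques. (The terminology ``flag/clique complex'' in the paper refers to the fact that $\Delta(I(G))$ is flag — determined by its $1$-skeleton — and is the clique complex of the complement $\overline{G}$.)

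With your choice of $G$ as the \emph{comparability} graph of the face poset, the independent sets are the \emph{antichains}, and the resulting complex of antichains is not the barycentric subdivision. For a concrete failure, take $\Delta$ to be a single $1$-simplex on $\{a,b\}$: the face poset has three elements $\{a\},\{b\},\{a,b\}$, the comparability graph has edges $\{a\}\text{--}\{a,b\}$ and $\{b\}\text{--}\{a,b\}$, and its independence complex has faces $\varnothing$, the three singletons, and $\{\{a\},\{b\}\}$, whose realization is a segment together with an isolated point — not homeomorphic to an interval. The fix is to take $G$ to be the \emph{incomparability} graph (edges between faces that are not nested), which is exactly what the paper does; then the independent sets of $G$ are the chains in the face poset and $\Delta(I(G))=\sd(\Delta)$, after which the rest of your argument goes through verbatim.
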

\begin{proof}
Let $G$ have vertex set $V$ equal to the collection of all
nonempty simplices of $\Delta$, and an edge between two of them
if the corresponding simplices of $\Delta$ are not included within
one another.  Then $\Delta(I(G))$ is the first barycentric subdivision
of $\Delta$, and hence their geometric relations are homeomorphic.
\end{proof}

Among bipartite graphs $G$, one does not achieve every
homemorphism type for $\Delta(I(G))$, and not even every
homotopy type.  However, it is easy to say exactly what homotopy types one
can achieve, namely all suspensions.  In particular, one
can easily have torsion in the homology of $\Delta(I(G))$,
and hence dependence of the Betti numbers
$\beta_i(I(G))$ on the choice of the field coefficients $k$.

\begin{proposition}
\label{bipartite-complex-homotopy-types} For any finite bipartite
graph $G$, the geometric realization $\|\Delta(I(G))\|$ is homotopy
equivalent to the suspension of the geometric realization
$\|\Delta\|$ of  some finite simplicial complex $\Delta$.

Conversely, for any finite simplicial
complex $\Delta$, one can find a bipartite graph $G$
for which $\|\Delta(I(G))\|$ is homotopy equivalent to
the suspension of $\|\Delta\|$.
\end{proposition}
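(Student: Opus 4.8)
The plan is to pin down, up to homotopy, exactly which complexes $\Delta(I(G))$ arise from bipartite $G$, by showing each is a suspension whose base can be prescribed arbitrarily. Recall that $\Delta(I(G))$ is the \emph{independence complex} of $G$, whose faces are the edge-free subsets of $V$; when $G$ is bipartite on $X\sqcup Y$ the two color classes $X$ and $Y$ are themselves faces, and this is the feature I would exploit throughout. Away from degenerate situations ($G$ has an isolated vertex, or a color class is empty, in which case $\Delta(I(G))$ is contractible, hence the suspension of a point), assume $X,Y\neq\varnothing$.

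The heart of the argument is the homotopy formula
\[
\Delta(I(G)) \;\simeq\; \Sigma\,\|\mathcal{D}_G\|,
\qquad
\mathcal{D}_G \;:=\; \{\, S\subseteq Y : N_G(S)\neq X \,\}
\;=\; \bigcup_{x\in X} 2^{\,Y\setminus N_G(x)},
\]
where $N_G(S)=\bigcup_{v\in S}N_G(v)$; thus $\mathcal{D}_G$ is the complex of subsets of $Y$ that fail to dominate $X$, a union of $|X|$ simplices. Granting the formula, the first assertion of the proposition follows at once. For the converse, given a finite simplicial complex $\Delta$ with vertex set $W\neq\varnothing$, I would take $G$ to be the bipartite graph on $X\sqcup Y$ with $X$ the set of facets of $\Delta$, with $Y=W$, and with an edge $\{F,w\}$ exactly when $w\notin F$, so that $N_G(F)=Y\setminus F$; then $\mathcal{D}_G=\bigcup_{F}2^{\,Y\setminus N_G(F)}=\bigcup_{F}2^{F}=\Delta$, and the formula gives $\Delta(I(G))\simeq\Sigma\|\Delta\|$. (The cases $W=\varnothing$ and $\Delta$ a full simplex are handled separately and trivially; e.g.\ $S^0$ is realized as the suspension of the empty complex by a single edge.)

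To establish the formula I would apply the Nerve Lemma to the cover of $\Delta(I(G))$ by the simplex $2^X$ (a subcomplex because $X$ is a face) together with the closed stars $\overline{\mathrm{st}}(y)$, $y\in Y$; these cover, since a face with a vertex in $Y$ lies in the star of that vertex and a face contained in $X$ lies in $2^X$, and each member is contractible (a simplex, respectively a cone). For intersections: as $\Delta(I(G))$ is a flag complex and every $S\subseteq Y$ is a face, $\bigcap_{y\in S}\overline{\mathrm{st}}(y)=\{\,\sigma:\sigma\cup S\in\Delta(I(G))\,\}$ is a cone (apex any element of $S$, when $S\neq\varnothing$) and hence contractible, while $2^X\cap\bigcap_{y\in S}\overline{\mathrm{st}}(y)=2^{\,X\setminus N_G(S)}$ is a nonempty simplex exactly when $S\in\mathcal{D}_G$ and has empty realization otherwise. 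Consequently the nerve, on vertex set $\{\ast\}\sqcup Y$, has faces all $S\subseteq Y$ together with all $\{\ast\}\cup S$ with $S\in\mathcal{D}_G$; that is, it is $2^Y\cup(\{\ast\}\ast\mathcal{D}_G)$, with the two pieces contractible and glued along $\mathcal{D}_G$, so it is the homotopy pushout of $\mathrm{pt}\leftarrow\|\mathcal{D}_G\|\rightarrow\mathrm{pt}$, namely $\Sigma\|\mathcal{D}_G\|$.

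The only real obstacle is the verification inside the Nerve Lemma that the intersections with empty geometric realization are precisely the $2^{\,X\setminus N_G(S)}$ with $N_G(S)=X$ — which is exactly where bipartiteness enters, through $X$ and $Y$ both being faces of $\Delta(I(G))$ — together with the somewhat tedious bookkeeping of the degenerate cases so that the statement holds verbatim.
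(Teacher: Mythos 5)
Your argument is correct, and it takes a genuinely different route from the paper's. The paper proves the forward direction geometrically: it realizes $\|\Delta(I(G))\|$ in $\R^{|X|+|Y|}$, slices with the linear functional $f$ summing the $Y$-coordinates, observes that $T_X=T\cap f^{-1}[0,\tfrac12]$ and $T_Y=T\cap f^{-1}[\tfrac12,1]$ deformation retract onto the simplices on $X$ and $Y$, and invokes the wedge lemma (Lemma~\ref{wedge-lemma}) to conclude $T\simeq \Sigma\bigl(T\cap f^{-1}(\tfrac12)\bigr)$; for the converse it uses the same vertex--facet non-incidence graph you use and then needs a second step, comparing nerves of good covers, to identify the middle slice with $\|\Delta\|$. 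You instead stay entirely combinatorial: covering the independence complex by the subcomplex $2^X$ and the closed stars $\overline{\mathrm{st}}(y)$, $y\in Y$, checking the cover is good (here bipartiteness enters through $X$ and $Y$ being faces and the flag property), and reading off from the Nerve Lemma that the nerve is $2^Y\cup(\ast\ast\mathcal{D}_G)\simeq\Sigma\|\mathcal{D}_G\|$ with $\mathcal{D}_G=\bigcup_{x\in X}2^{Y\setminus N_G(x)}$ the complex of non-dominating subsets of $Y$ (your last gluing step is exactly the special case of Lemma~\ref{wedge-lemma} with both pieces contractible). What your route buys is an explicit combinatorial model for the base of the suspension in terms of $G$, which makes the converse a one-line computation ($\mathcal{D}_G=\Delta$ for the vertex--facet graph) and unifies both directions under one formula; what the paper's route buys is a very short existence proof in the forward direction, at the cost of a less explicit base (a polyhedral slice) and a separate nerve comparison for the converse. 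Your handling of the degenerate cases (empty color class, isolated vertices, $\Delta$ a simplex or the empty complex) is adequate as stated.
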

\begin{proof}
Let $G$ be a bipartite graph on vertex set $X \sqcup Y$. Let $T$ be
the geometric realization of $\|\Delta(I(G))\|$ in $\R^{|X|+|Y|}$
where the vertices corresponding to $X$ (resp. $Y$) are sent to
standard basis vectors in the first $|X|$ (resp. last $|Y|$)
coordinates, and simplices are embedded piecewise-linearly with
these vertices. Define $f:\R^{|X|+|Y|} \rightarrow \R$ to be the
linear map which sums the last $|Y|$ coordinates of the vector, so
that $f(T) \subseteq [0,1]$.

One can write $T = T_X \cup T_Y$ where
$$
\begin{aligned}
 T_X &:= T \cap f^{-1}\left[0,\frac{1}{2}\right] \\
 T_Y &:= T \cap f^{-1}\left[\frac{1}{2},1\right] \\
 T_X \cap T_Y &= T \cap f^{-1}\left(\frac{1}{2}\right)
\end{aligned}
$$
and note that there are straight-line homotopies that deformation
retract $T_X, T_Y$ onto the simplices $X ( =T \cap f^{-1}(0))$ and
$Y( =T \cap f^{-1}(1))$.  Hence $T_X, T_Y$ are contractible, and so
their union $T$ is homotopy equivalent  to the suspension of their
intersection $T \cap f^{-1}(\{\frac{1}{2}\})$ by
Lemma~\ref{wedge-lemma} below. This intersection comes equipped with
a regular CW-decomposition having cells
$$
\left\{ \sigma \cap f^{-1}\left(\frac{1}{2}\right) : \sigma \in \Delta(I(G)) \right\}.
$$
Since every regular CW-complex is homeomorphic to a finite simplicial complex (e.g. its own barycentric
subdivision), the first assertion is proven.

For the converse, start with a finite simplicial complex $\Delta$.
Create a bipartite graph $G$ on vertex $X \sqcup Y$, where $X$ is
the collection of vertices of $\Delta$ and $Y$ is the collection of
facets of $\Delta$, with an edge $\{x,y\}$ if $x$ corresponds to a
vertex of $\Delta$ that does not lie on the facet of $\Delta$
corresponding to $y$.  It is not hard to see that the maximal
simplices of $\Delta(I(G))$ (other than $X, Y$) are exactly the sets
$X' \sqcup Y'$ of the following form:  $Y'=\{F_1,\dots,F_r\}\neq
\varnothing$ indexes a collection of facets whose intersection  $F_1
\cap \cdots \cap F_r$ is $X'$, and $X' \neq \varnothing$.

We claim that $T:=\|\Delta(I(G))\|$ has $T_X \cap T_Y= T \cap
f^{-1}(\{\frac{1}{2}\})$ homotopy equivalent to $\|\Delta\|$, and
hence $T$ is homotopy equivalent to the suspension of $\|\Delta\|$
by the first part of the proof. To see the claim, one can exhibit
{\it good coverings} (that is, ones in which all intersections of
the covering sets are either empty or contractible) of $\|\Delta\|$
and of $T \cap f^{-1}(\{\frac{1}{2}\})$ that  have isomorphic
nerves.

  Cover $\|\Delta\|$ by the simplices which are achieved
as intersections $F_1 \cap \cdots \cap F_r$ of nonempty collections of facets
$F_i$.  Since each facet $F$ is itself such an intersection, this covers
$\|\Delta\|$.  Because intersections of simplices in a simplicial complex
are always empty or other simplices, this is a good covering.

  Cover $T \cap f^{-1}(\{\frac{1}{2}\})$ by the
polyhedral cells $\sigma \cap f^{-1}(\frac{1}{2})$ as $\sigma$ runs
through all the sets $X' \sqcup Y'$ (described above) that give
facets of $\Delta(I(G))$ other than $X, Y$. It is not hard to see
that these cells intersect in the same fashion as their
corresponding simplices $\sigma$ intersect, hence their
intersections are always of the form $\tau \cap
f^{-1}(\frac{1}{2})$ for some  simplex $\tau$. Since this
intersection set  $\tau \cap  f^{-1}(\frac{1}{2})$ is always empty
or contractible, this is a good covering.

  The above analyses of intersections of the covering sets
also shows that the nerves of the two covers are identical. Hence
the two spaces $\|\Delta\|, T \cap f^{-1}(\{\frac{1}{2}\})$ that
they cover are both homotopy equivalent to the nerve of the cover by
the usual Nerve Lemma  \cite[Theorem 10.6]{Bjorner}.
\end{proof}

\subsection{A wedge lemma}
\label{wedge-lemma-subsection}

The goal is here to state and prove the following commonly-used
wedge lemma, whose special cases were used in two proofs above.
It is a very special case of a variation on a lemma of Bj\"orner, Wachs
and Welker, alluded to in one of their remarks;
see \cite[Lemma 7.1, Remark 7.2]{BjornerWachsWelker}.

\begin{lemma}
\label{wedge-lemma}
Let $X,Y$ be two subspaces of a topological space,
and assume that the inclusion maps $X \cap Y \hookrightarrow X,Y$
are both cofibrations, and both homotopic to a constant map.

Then the union $X \cup Y$ is homotopy equivalent to the
wedge $X \vee Y \vee \Susp( X \cap Y )$,
where here $\Susp$ denotes suspension.
\end{lemma}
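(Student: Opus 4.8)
The plan is to identify $X \cup Y$ with a homotopy pushout and then exploit the two null-homotopies. Since the inclusions $i_X\colon X \cap Y \hookrightarrow X$ and $i_Y\colon X \cap Y \hookrightarrow Y$ are cofibrations, the pushout square exhibiting $X \cup Y$ as $X \cup_{X \cap Y} Y$ is a homotopy pushout square; concretely, the double mapping cylinder
$$
M(i_X,i_Y) := X \;\sqcup\; \big((X \cap Y)\times[0,1]\big) \;\sqcup\; Y \big/ \big((a,0)\sim i_X(a),\ (a,1)\sim i_Y(a)\big)
$$
maps onto $X \cup Y$ by collapsing the cylinder coordinate, and this map is a homotopy equivalence precisely because (at least) one of the inclusions is a cofibration.

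Next I would invoke the homotopy invariance of the double mapping cylinder: replacing $i_X$ and $i_Y$ by homotopic maps does not change $M(\cdot,\cdot)$ up to homotopy equivalence. Using the hypothesis that $i_X$ and $i_Y$ are each homotopic to a constant map, say to $x_0 \in X$ and $y_0 \in Y$, gives $X \cup Y \simeq M(c_{x_0},c_{y_0})$. In this last space the cylinder $(X\cap Y)\times[0,1]$ has become the (unreduced) suspension $\Susp(X\cap Y)$, with its two cone points attached respectively to $x_0$ and $y_0$. Fixing any point $a_0 \in X \cap Y$ (the case $X\cap Y=\varnothing$ being immediate), the arc $\gamma := \{a_0\}\times[0,1]$ is a contractible arc running through $\Susp(X\cap Y)$ from $x_0$ to $y_0$; collapsing $\gamma$ is a homotopy equivalence, and the quotient is visibly $X \vee Y \vee \Susp(X\cap Y)$ — identifying $x_0$ with $y_0$ wedges $X$ and $Y$ at that common point, while $\Susp(X\cap Y)/\gamma$ is again a suspension of $X \cap Y$ attached only at that point. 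This yields the claimed equivalence.

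The main obstacle is bookkeeping rather than ideas: one must make precise the homotopy invariance of $M(\cdot,\cdot)$ (this is exactly where the cofibration hypotheses enter), and handle the well-pointedness subtlety in the last step, where one identifies $\Susp(X\cap Y)/\gamma$ with $\Susp(X \cap Y)$ and collapses a contractible subspace. In the applications in this paper $X\cap Y$ is (the realization of) a finite CW complex, so no such issue arises, and in general the cofibration hypotheses on the inclusions suffice to push the argument through. Alternatively, and perhaps most cleanly for the writeup, the lemma can simply be deduced as the instance of the generalized wedge lemma of Bj\"orner, Wachs and Welker \cite[Lemma 7.1, Remark 7.2]{BjornerWachsWelker} in which the indexing poset is a single edge.
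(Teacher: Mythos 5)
Your proof is correct and follows essentially the same strategy as the paper's: replace $X \cup Y$ with the homotopy pushout (double mapping cylinder), swap the two inclusions for the constant maps they are homotopic to, and identify the result as the wedge. The paper packages each of these three steps as a citation to Bj\"orner, Wachs, and Welker \cite[Cor.\ 2.4, Lem.\ 2.1, Lem.\ 2.2]{BjornerWachsWelker}, which is precisely the alternative route you propose at the end; your only deviation is carrying out the final identification by hand via the arc-collapsing argument rather than quoting their wedge lemma.
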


\noindent
In the situations where we need this lemma,
$X \cap Y, X,$ and $Y$ are all subcomplexes of a $CW$-complex,
and hence the cofibration hypothesis always holds.  Furthermore, we can take
advantage of the fact that if $Z \hookrightarrow Z'$ is an
inclusion of a subcomplex $Z$ in a $CW$-complex $Z'$, it is homotopic to
a constant map if either $Z$ or $Z'$ is contractible, a hypothesis
that holds in the two special cases where we wish to apply the lemma:
\begin{enumerate}
\item[$\bullet$]
If $X$ and $Y$ are contractible,
then $X \cup Y$ homotopy equivalent to $\Susp( X \cap Y )$
(used in the proof of Lemma~\ref{unique-facet-topology-lemma} below).
\item[$\bullet$]
If $X \cap Y$ and $Y$ are contractible,
then  $X \cup Y$ is homotopy equivalent to $X$
(used in the proof of Proposition~\ref{bipartite-complex-homotopy-types}
above).
\end{enumerate}

\begin{proof}(of Lemma~\ref{wedge-lemma}; cf. proof of  \cite[Lemma 7.1]{BjornerWachsWelker})
The inclusions $X \cap Y \hookrightarrow X,Y$ give rise to a
{\it diagram of spaces} ${\mathcal D}$ over the $3$-element poset $Q$ that
has two maximal elements corresponding to $X, Y$ and one
minimum element corresponding to $X \cap Y$.  By \cite[Corollary 2.4]{BjornerWachsWelker},
the union $X \cup Y$ is homotopy equivalent to the {\it homotopy colimit}
of this diagram $\hocolim {\mathcal D}$.

On the other hand, there is another
diagram of spaces ${\mathcal E}$ over the same poset $Q$ in which the
inclusions  $X \cap Y \hookrightarrow X,Y$ are replaced by the constant maps to which they
are homotopic.  Then \cite[Lemma 2.1]{BjornerWachsWelker} implies
that $\hocolim {\mathcal D}$ and $\hocolim {\mathcal E}$ are homotopy equivalent.

Finally, \cite[Lemma 2.2]{BjornerWachsWelker} implies that $\hocolim
{\mathcal E}$ is homotopy equivalent to $X \wedge Y \wedge
\left(\sphere^0 * (X \cap Y)\right)$, where here $\sphere^0$ is a
zero-sphere (that is, two disjoint points) and $\star$ denotes the
topological join.  Since $\sphere^0 *  (X \cap Y) =\Susp (X \cap
Y)$, this completes the proof.
\end{proof}

The following topological lemma was an essential point in the proof
of Theorem~\ref{cellular-resolutions-theorem}.

\begin{lemma}
\label{unique-facet-topology-lemma} Let $\cC$ be a polytopal complex
and $v$ a vertex in $C$ that  lies in a unique facet $P$, and assume
that $P$ has strictly positive dimension.

Then the vertex-induced subcomplex $\cC \setminus \{v\}$,  obtained
by deleting $v$ and all faces that contain it, is homotopy
equivalent to $\cC$.
\end{lemma}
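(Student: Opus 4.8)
The plan is to show directly that the inclusion $\cC \setminus \{v\} \hookrightarrow \cC$ is a homotopy equivalence by exhibiting a deformation retraction. The key observation is that, since $v$ lies in a \emph{unique} facet $P$, every face of $\cC$ containing $v$ is a face of $P$; in other words, the subcomplex of faces containing $v$ is exactly the set of faces of $P$ that contain $v$. I would denote by $\cC_v$ the (closed) subcomplex of all faces of $\cC$ that are faces of $P$ but do \emph{not} contain $v$ --- this is the ``link-like'' part --- and observe the decomposition
\[
\cC = (\cC \setminus \{v\}) \cup P, \qquad (\cC \setminus \{v\}) \cap P = \cC_v.
\]
Here $P$ is a polytope, hence contractible, and $\cC_v$ is the union of those proper faces of $P$ missing $v$, which (since $\dim P > 0$ and $v$ lies in $P$) is precisely the ``antistar'' of $v$ in $\partial P$ --- a polytopal ball, hence also contractible.

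**The main step** is then to invoke the wedge/gluing machinery already set up in the appendix: by Lemma~\ref{wedge-lemma} (in the second bulleted special case, ``if $X \cap Y$ and $Y$ are contractible, then $X \cup Y$ is homotopy equivalent to $X$''), applied with $X := \cC \setminus \{v\}$ and $Y := P$, we get that $\cC = X \cup Y$ is homotopy equivalent to $X = \cC \setminus \{v\}$, provided we check $X \cap Y = \cC_v$ and $Y = P$ are contractible, and that the inclusions are cofibrations. The cofibration hypothesis is automatic because everything in sight is a subcomplex of a polytopal (hence CW) complex, as the appendix already notes. So the real content reduces to the two contractibility claims. Contractibility of $P$ is immediate. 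For $\cC_v$: the faces of a polytope $P$ not containing a fixed vertex $v$ form the antistar $\mathrm{ast}_{\partial P}(v)$; it is a standard fact that this is a shellable ball (one can shell $\partial P \cong S^{\dim P - 1}$ so that the facets containing $v$ come last, whence the facets missing $v$ form a shellable ball of dimension $\dim P - 1 \geq 0$). Hence $\cC_v$ is contractible, and since $\dim P \geq 1$ it is in fact nonempty, so there is no degenerate case to worry about.

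**The anticipated obstacle** is purely bookkeeping: making sure that $(\cC \setminus \{v\}) \cap P$ really equals $\cC_v$ and not something larger. This is where the hypothesis that $v$ lies in a \emph{unique} facet is used essentially --- without it, a face of $\cC$ could contain $v$ yet not be a face of $P$, and the clean decomposition would fail. One should verify: a face $F \in \cC$ with $F \subseteq P$ and $v \notin F$ is exactly a face of $P$ missing $v$ (clear); conversely any face of $\cC \setminus \{v\}$ that happens to lie in $P$ obviously misses $v$. And $\cC \setminus \{v\}$ together with $P$ covers $\cC$ because any face $F$ containing $v$ lies in $P$ (the unique facet through $v$). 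These are short verifications from the definition of a polytopal complex (faces of $P_i \cap P_j$ being faces of both), so I would state them briefly and move on. An alternative, avoiding Lemma~\ref{wedge-lemma} entirely, is to build an explicit strong deformation retraction of $\cC$ onto $\cC \setminus \{v\}$ by pushing the open star of $v$ radially out onto $\cC_v$ within $P$; but routing through the already-proved wedge lemma is cleaner and keeps the argument self-contained within the paper.
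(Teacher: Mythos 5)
Your proof is correct and takes essentially the same route as the paper: both write $\cC = (\cC \setminus \{v\}) \cup P$, identify the intersection as the antistar of $v$ in $\partial P$ (which the paper calls $P \setminus \{v\}$), and apply Lemma~\ref{wedge-lemma} after noting that $P$ is contractible by convexity and the antistar is contractible because it is a shellable polytopal ball. The bookkeeping you highlight --- that the uniqueness of $P$ is what makes the decomposition and the identification of the intersection work --- is exactly the implicit content the paper's terser statement relies on.
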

\begin{proof}
Let $P$ be the unique facet of $\cC$ containing $v$, and $P \setminus \{v\}$ the
polytopal complex whose maximal faces are the codimension one faces of $P$ not
containing $v$.  As topological spaces, one has
$$
\begin{array}{cccc}
\cC &= (\cC \setminus \{v\}) & \cup & P \\
P \setminus \{v\} &= (\cC \setminus \{v\}) &\cap & P.
\end{array}
$$
Since $P$ is convex and hence contractible, it then suffices by Lemma~\ref{wedge-lemma}
to show that $P \setminus \{v\}$ is contractible.  In fact,
polytopal complexes of the form $P \setminus \{v\}$ are
even known to be homeomorphic to a ball of dimension $\dim(P)-1$:
one can find a {\it shelling order} on the codimension one
faces of $P$ in which the maximal faces of $P \setminus \{v\}$ appear as an initial segment
\cite[Example 4.7.15 and Proposition 4.7.26(ii)]{OMbook} and \cite[Corollary 8.13]{Ziegler}.
\end{proof}

\subsection{A collapsing lemma}
\label{collapsing-lemma-subsection}

The goal here is Lemma~\ref{topology-lemma} below, which was used in
Section~\ref{sec-homotopy} to show that excess cells in diagrams of shifted skew shapes can be removed
without altering the homotopy type of their associated simplicial complexes.

For this we first recall a central notion from simple homotopy theory \cite{Cohen}.

\begin{definition}
Given two nested simplicial complexes
$\Delta \subset \Delta'$, say that $\Delta$ is obtained from $\Delta'$ by
an {\it elementary collapse} if $\Delta=\Delta'\setminus \{G,F\}$
where $F$ is a facet (maximal face) of $\Delta$
and $G$ is subface of $F$ that lies in no other faces of $\Delta$.  It is not hard
to see that this implies $\Delta$ is a strong deformation retract of $\Delta'$,
and hence that they are homotopy equivalent.  The notion of {\it simple homotopy equivalence}
is the equivalence relation $\sim$ on simplicial complexes generated by the
elementary relations $\Delta \sim \Delta'$ whenever the two complexes are
related by an elementary collapse.
\end{definition}

The following proposition is the straightforward observation that
the operation of canonical Alexander duality $\Delta \mapsto \Delta^\vee$
from Definition~\ref{Alexander-dual} (anti-)commutes with elementary collapses;  this was perhaps observed first by Kahn, Saks and Sturtevant \cite{KSS}.

\begin{proposition}
\label{prop-KSS}
Let $\Delta, \Delta'$ be two simplicial complexes on the same vertex set.
Then $\Delta'$ is obtained from $\Delta$ by an elementary collapse if and
only if $\Delta^\vee$ is obtained from $(\Delta')^\vee$ by an elementary collapse.
\end{proposition}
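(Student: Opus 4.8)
The statement to prove is Proposition~\ref{prop-KSS}: that $\Delta'$ is obtained from $\Delta$ by an elementary collapse if and only if $(\Delta)^\vee$ is obtained from $(\Delta')^\vee$ by an elementary collapse. Here is how I would approach it.

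\textbf{Setup and strategy.} The plan is to unwind the definition of elementary collapse on both sides and verify that the combinatorial data match up under the order-reversing bijection induced by complementation. Recall that $\Delta'$ is obtained from $\Delta$ by an elementary collapse means $\Delta = \Delta' \setminus \{F, G\}$ where $F$ is a facet of $\Delta'$, $G \subsetneq F$ is a proper subface, and $G$ is contained in no face of $\Delta'$ other than $F$ and $G$ itself. (Equivalently, the interval $[G,F]$ in the face poset of $\Delta'$ consists of exactly two elements, namely $G$ and $F$, and $F$ is maximal, so $|F \setminus G| = 1$; this is a \emph{free pair}.) The key point is purely poset-theoretic: the map $S \mapsto V \setminus S$ is an order-reversing bijection from the set of nonfaces of $\Delta'$ to the set of faces of $(\Delta')^\vee$, by Definition~\ref{Alexander-dual}. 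So I would track what happens to $F$ and $G$ (and their complements) under this correspondence.

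\textbf{Key steps.} First I would observe that removing the free pair $\{F, G\}$ from $\Delta'$ to get $\Delta$ is the same as \emph{adding} the pair of subsets $\{G, F\}$ to the collection of nonfaces — that is, $F$ and $G$ are faces of $\Delta'$ but nonfaces of $\Delta$, while all other subsets have the same face/nonface status in $\Delta$ as in $\Delta'$. Under complementation, $F^c := V \setminus F$ and $G^c := V \setminus G$ are nonfaces of $(\Delta')^\vee$ but faces of $\Delta^\vee$; all other complemented sets are unchanged. Hence $\Delta^\vee$ is obtained from $(\Delta')^\vee$ by \emph{adding} the two faces $F^c \subsetneq G^c$. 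The second step is to check this addition is an elementary collapse in the reverse direction: I need $G^c$ to be a facet of $\Delta^\vee$ and $F^c$ to be a free face of it, i.e.\ $F^c$ lies in no face of $\Delta^\vee$ other than $F^c$ and $G^c$. Since $|F \setminus G| = 1$ we get $|G^c \setminus F^c| = 1$, so $F^c$ has codimension one in $G^c$. That $G^c$ is a facet of $\Delta^\vee$ and $F^c$ is free translates, under the order-reversing bijection between faces of $\Delta^\vee$ and nonfaces of $\Delta'$, into: $G$ is minimal among nonfaces of $\Delta'$ (equivalently $G$ is a face of $\Delta'$ all of whose proper subfaces... wait, rather: the sets strictly contained in $G^c$ that are faces of $\Delta^\vee$ correspond to the sets strictly containing $G$ that are nonfaces of $\Delta'$, and we want only $F$ among these containing $G$ of any size — precisely the freeness condition on the original pair). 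Spelling this dictionary out carefully is the substance of the proof, and it shows the two conditions are literally the same condition.

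\textbf{Main obstacle.} There is no deep obstacle here; the proposition is a bookkeeping exercise. The one point requiring care is getting the quantifiers in the definition of ``free pair'' to line up correctly across the complementation bijection — in particular, the condition ``$G$ lies in no face of $\Delta'$ other than $F$ and $G$'' must be correctly dualized to ``no face of $\Delta^\vee$ other than $G^c$ and $F^c$ is contained in $F^c$'', which is exactly freeness of $F^c$. Because the complementation map $S \mapsto V\setminus S$ is an involution on $2^V$ that swaps faces of $\Delta$ with nonfaces-complemented, the entire argument is symmetric, so proving one implication gives the other for free (apply the forward implication to $(\Delta')^\vee \subset \Delta^\vee$ and use $(\Delta^\vee)^\vee = \Delta$). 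I would therefore write the proof as a single ``if and only if'' verification of the free-pair condition under duality, then invoke involutivity to close the loop.
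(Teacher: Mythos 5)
Your proposal is correct and matches the paper's (implicit) approach: the paper offers no written proof, calling the proposition a straightforward observation (attributed to Kahn--Saks--Sturtevant) that Alexander duality $\Delta \mapsto \Delta^\vee$ anti-commutes with elementary collapses, and your complementation bookkeeping---removing the free pair $\{G,F\}$ corresponds to removing the pair $\{V\setminus F, V\setminus G\}$ on the dual side, with the facet/free-face conditions matching under the order-reversing bijection, and the converse following from $(\Delta^\vee)^\vee=\Delta$---is exactly that verification. One small slip worth fixing when writing it up: in your dictionary sentence the faces of $\Delta^\vee$ correspond to nonfaces of $\Delta$ (not of $\Delta'$), though this does not affect the argument.
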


We also recall here the simplest way in which a simplicial complex can be contractible,
namely when it has a cone vertex.

\begin{definition}
A vertex $v$ in a simplicial complex $\Delta$ is called a {\it cone vertex} $v$ if
every face $F$ in $\Delta$ either contains $v$ or has $v \sqcup F$.
\end{definition}

\begin{lemma}
\label{topology-lemma}
Let $\Delta \subset \Delta'$ be a pair of nested simplicial complexes.
Assume that $\Delta'$ is obtained from $\Delta$ by adding one new
facet $F$ for which the intersection subcomplex $2^F \cap \Delta$ has
a cone vertex.

Then there is a sequence of elementary collapses
from $\Delta'$ down to $\Delta$, so that $\Delta, \Delta'$
have the same (simple) homotopy type.

Furthermore, $\Delta^\vee$ has the same
(simple) homotopy type as $(\Delta')^\vee$.
\end{lemma}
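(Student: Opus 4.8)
The plan is to construct an explicit sequence of elementary collapses from $\Delta'$ to $\Delta$, and then invoke Proposition~\ref{prop-KSS} to transfer the statement to the Alexander duals. Let $F$ be the new facet and let $v$ be a cone vertex of the intersection subcomplex $2^F \cap \Delta$; since $F$ itself is not already in $\Delta$, we have $v \in F$ but the only new faces introduced in passing from $\Delta$ to $\Delta'$ are exactly the subsets $G \subseteq F$ with $G \notin \Delta$. The key observation is that, because $v$ is a cone vertex of $2^F \cap \Delta$, a subset $G \subseteq F$ lies in $\Delta$ if and only if $G \setminus \{v\}$ lies in $\Delta$: one direction is inclusion-closure, and the other is precisely the cone property (if $G \setminus \{v\} \in 2^F \cap \Delta$ then $(G\setminus\{v\})\cup\{v\} = G \in \Delta$). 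Consequently the new faces of $\Delta'$ come in pairs $\{G \setminus \{v\},\, G\}$ with $v \in G$, a face and the facet obtained by adjoining $v$; this is exactly the shape of an elementary collapse.

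First I would make this pairing precise and order it so that each step is legal. Pair each new face not containing $v$ with the new face obtained by adding $v$. Process these pairs in order of decreasing cardinality of the larger face (equivalently, collapse the top-dimensional new face $F$ with $F \setminus \{v\}$ first, and work downward). At the stage where we remove the pair $(G\setminus\{v\}, G)$, I must check that in the current complex $G$ is a facet and $G\setminus\{v\}$ is a free face (contained in no other face). The face $G$ is a facet because any proper coface of $G$ in $\Delta'$ would be a larger new face containing $v$, which has already been removed by the decreasing-cardinality ordering. The face $G \setminus \{v\}$ is free because its only cofaces in $\Delta'$ are $G$ and possibly larger new faces containing $v$ — again already removed — together with cofaces lying in $\Delta$; but a coface of $G\setminus\{v\}$ inside $\Delta$ that is a subset of $F$ would, upon adjoining $v$, also lie in $\Delta$ by the cone property, contradicting that $G$ is a new face. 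So each step is a genuine elementary collapse, and after exhausting all pairs we have collapsed $\Delta'$ down to $\Delta$. In particular $\Delta$ and $\Delta'$ are simple-homotopy equivalent, hence homotopy equivalent.

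Finally, for the dual statement, apply Proposition~\ref{prop-KSS} to each elementary collapse in the sequence just constructed: the reverse of an elementary collapse is an elementary expansion, and Proposition~\ref{prop-KSS} says that $(-)^\vee$ carries elementary collapses (in either direction) to elementary collapses. Thus the sequence of collapses from $\Delta'$ to $\Delta$ dualizes to a sequence of elementary collapses/expansions between $(\Delta')^\vee$ and $\Delta^\vee$, exhibiting them as simple-homotopy equivalent, hence homotopy equivalent. The only mild subtlety — and the step I expect to require the most care in writing up — is the bookkeeping in the previous paragraph verifying that, at every stage, the face being removed is genuinely free; this is where the cone-vertex hypothesis is used in an essential way, and it must be argued carefully that no "old" face of $\Delta$ obstructs the freeness. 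Everything else is routine once that pairing-and-ordering argument is in place.
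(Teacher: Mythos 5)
Your proof is correct and takes essentially the same route as the paper's: one pairs each new face $F_i\subseteq F$ not containing the cone vertex $v$ with $F_i\cup\{v\}$, collapses these pairs in any order from largest to smallest, and deduces the dual statement from Proposition~\ref{prop-KSS}. The only remark is that your freeness bookkeeping is even easier than you anticipate, since $G\setminus\{v\}$ is itself a new face and hence is contained in no face of $\Delta$ at all, so the only cofaces to worry about are new faces, all removed in earlier steps.
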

\begin{proof}
Let $v$ be a cone vertex for the subcomplex $2^F \cap \Delta$.
Order the subfaces $F_1,F_2,\ldots,F_s$ of $F$ not lying in $\Delta$ that do not contain $v$, in any
order from largest to smallest that respects the partial ordering by (reverse) inclusion.
Then the following pairs of faces $(G,F)$ give
a sequence of elementary collapses starting from $\Delta'$:
$$
(F_1,F_1 \cup \{v\}),
(F_2,F_2 \cup \{v\}), \ldots, (F_s,F_s \cup \{v\}).
$$
The result at the end of these collapses is $\Delta$.

The assertion for $\Delta^\vee$ and $(\Delta')^\vee$ then
follows from Proposition~\ref{prop-KSS}.
\end{proof}

\subsection{A polarization lemma}
\label{polarization-appendix}

The goal here is a Lemma~\ref{polarization-lemma} about polarizations,
which is well-known (e.g., cf. \cite[Exercise 3.15]{MillerSturmfels}).
However, we have stated it here in the form most convenient for our use, and included a proof for
the sake of completeness.

Let $S$ be a polynomial algebra over a field $k$, and
$I \subset S$ a homogeneous ideal with respect to the standard
$\Z$-grading.  Then $I$ or $S/I$ or any finitely generated $S$-module $M$ has
a minimal free resolution $\cF$ by free $S$-modules $\cF_i$.
Minimality of such a resolution is equivalent to having all entries
in the matrices defining the maps be homogeneous of positive degree.
Recall that the graded Betti number $\beta^S_{ij}(M):=\dim_k \Tor_i^S(M,k)_j$ is
the same as the number of homogeneous basis elements of degree $j$
in $\cF_i$ for any such minimal free resolution $\cF$.

\begin{lemma}
\label{polarization-lemma}
Let $S$ be a polynomial algebra over a field, and $I$ a homogeneous ideal of $S$.
Given $\theta$ in $S$ a non-zero element of degree $1$,
let $\bar{S}:=S/(\theta)$, another polynomial algebra with standard grading.  Set $\bar{I}:= (I + (\theta))/(\theta)$, a
homogeneous ideal of $\bar{S}$.

Then the following are equivalent:
\begin{enumerate}
\item[(i)] Any minimal free resolution $\cF$ for $I$ as an
$S$-module has the property that the specialized complex
$\overline{\cF}:=\bar{S} \otimes_S \cF$ in which one ``mods out $\theta$''
gives a minimal free resolution for $\bar{I}$ as a $\bar{S}$-module.

\item[(ii)] $\beta^{\bar{S}}_{ij}(\bar{I}) = \beta^{S}_{ij}(I)$ for all $i,j$.

\item[(iii)] $\Hilb(\bar{S}/\bar{I},t)=(1-t)\Hilb(S/I,t).$

\item[(iv)]  $\theta$ acts as a non-zero divisor on $S/I$.
\end{enumerate}
\end{lemma}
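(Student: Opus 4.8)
The plan is to deduce all four equivalences from two facts that need no hypothesis on $\theta$ beyond its being a nonzero element of $S_1$. First I would reduce to the case $\theta = x_n$: since $0 \neq \theta \in S_1$, it extends to a $k$-basis of the vector space $S_1$, and the associated linear change of coordinates is a graded automorphism of $S$ carrying $\theta$ to $x_n$; then $\bar{S} = k[x_1,\dots,x_{n-1}]$ and, in particular, $\theta$ is a nonzerodivisor on $S$.

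The first fact is that for \emph{any} minimal graded free resolution $\cF$ of $I$ over $S$, the specialized complex $\overline{\cF} = \bar{S} \otimes_S \cF$ is automatically a minimal graded free resolution of $I/\theta I$ over $\bar{S}$. Minimality is immediate, because the differentials of $\cF$ have entries in $\fm_S = (x_1,\dots,x_n)$, so their reductions have entries in $\fm_{\bar S}=(x_1,\dots,x_{n-1})$. Exactness comes from $H_i(\overline{\cF}) = \Tor_i^S(I,\bar{S})$ computed via the length-one resolution $0 \to S(-1) \overset{\theta}{\longrightarrow} S \to \bar{S} \to 0$: tensoring it with $I$ and using that $\theta$ is a nonzerodivisor on $I$ (as $I$ is a submodule of the domain $S$) yields $\Tor_i^S(I,\bar{S}) = 0$ for $i \geq 1$ and $\Tor_0^S(I,\bar{S}) = I/\theta I$.

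The second fact identifies $\bar{I}$ as a quotient of $I/\theta I$: the composite $I \hookrightarrow S \twoheadrightarrow \bar{S}$ has image $\bar{I}$ and kills $\theta I$, hence factors through a surjection $I/\theta I \twoheadrightarrow \bar{I}$, and the snake lemma applied to multiplication by $\theta$ on $0 \to I \to S \to S/I \to 0$ (equivalently, the exact sequence $0 \to (0:_{S/I}\theta)(-1) \to (S/I)(-1) \overset{\theta}{\longrightarrow} S/I \to \bar{S}/\bar{I} \to 0$) shows that this surjection has kernel $(0:_{S/I}\theta)(-1)$. Thus $I/\theta I \cong \bar{I}$ precisely when (iv) holds. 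Now the implications are short. For (i)$\,\Leftrightarrow\,$(iv): by the first fact $\overline{\cF}$ always minimally resolves $I/\theta I$, and it resolves $\bar{I}$ iff $I/\theta I = \bar{I}$, i.e. iff (iv). For (iv)$\,\Rightarrow\,$(ii): then $\beta^{\bar S}_{ij}(\bar{I}) = \beta^{\bar S}_{ij}(I/\theta I)$, which equals $\beta^S_{ij}(I)$ since $\overline{\cF}$ has the same graded ranks as $\cF$. For (ii)$\,\Rightarrow\,$(iii): the numerator polynomials $\sum_{i,j}(-1)^i\beta_{ij}t^j$ of the Hilbert series of $I$ over $S$ and of $\bar{I}$ over $\bar{S}$ then coincide, so $\Hilb(\bar{I},t) = (1-t)\Hilb(I,t)$, and subtracting this from $\Hilb(\bar{S},t) = (1-t)\Hilb(S,t)$ gives (iii). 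For (iii)$\,\Rightarrow\,$(iv): taking the alternating sum of Hilbert series in the four-term exact sequence above yields $\Hilb(\bar{S}/\bar{I},t) = (1-t)\Hilb(S/I,t) + t\,\Hilb\big((0:_{S/I}\theta),t\big)$, so (iii) forces $(0:_{S/I}\theta)=0$.

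The argument is essentially formal, so I expect no serious obstacle; the points needing genuine care are the transfer of minimality from $\cF$ to $\overline{\cF}$ (which is exactly why reducing to $\theta \in \fm_S$, or to $\theta$ a variable, is convenient) and the clean identification, via the snake lemma, of the kernel of $I/\theta I \twoheadrightarrow \bar{I}$ with the torsion submodule $(0:_{S/I}\theta)$. Everything else is bookkeeping with long exact sequences and with Hilbert series of finite complexes of free modules.
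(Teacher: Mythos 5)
Your proof is correct, and its organization differs from the paper's in a way worth noting. The paper runs the straight cycle (i)$\Rightarrow$(ii)$\Rightarrow$(iii)$\Rightarrow$(iv)$\Rightarrow$(i): the first step is immediate, the two Hilbert-series steps are exactly the ones you give, and the substantive step (iv)$\Rightarrow$(i) is proved by showing $\Tor_i^S(S/I,\bar S)=0$ for $i>0$, which \emph{uses} hypothesis (iv) to make the complex $0 \to (S/I)(-1) \overset{\theta}{\to} S/I$ exact, and then identifying $H_\bullet(\overline{\cF})$ with that Tor. You instead isolate two unconditional facts: that $\overline{\cF}$ is \emph{always} a minimal free resolution of $I/\theta I$ (here you exploit $\Tor_i^S(I,\bar S)=0$ for $i\geq 1$, which holds with no hypothesis because $I$ is torsion-free as a submodule of the domain $S$ --- this is where working with $I$ rather than $S/I$ pays off), and that the canonical surjection $I/\theta I \twoheadrightarrow \bar I$ has kernel $(0:_{S/I}\theta)(-1)$ by the snake lemma. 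This buys you a sharper intermediate statement (the specialized complex always resolves $I/\theta I$, and (iv) is precisely the condition identifying $I/\theta I$ with $\bar I$), a direct proof of (i)$\Leftrightarrow$(iv), and (iv)$\Rightarrow$(ii) without passing through (i); the paper's version is marginally shorter since it never needs the snake-lemma identification. One small remark: in your (i)$\Rightarrow$(iv) step, if one reads ``$\overline{\cF}$ resolves $\bar I$'' as an abstract isomorphism $H_0(\overline{\cF})\cong\bar I$ rather than via the canonical augmentation, you should add one line --- equal Hilbert functions of $I/\theta I$ and $\bar I$ force the kernel $(0:_{S/I}\theta)$ to vanish --- but this is cosmetic, not a gap. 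The initial reduction to $\theta=x_n$ is harmless but unnecessary, since a nonzero linear form is automatically a nonzerodivisor on $S$ and the lemma already posits that $\bar S$ is a polynomial ring.
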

\begin{proof}
The implication (i) implies (ii) is clear.
For  (ii) implies (iii), recall that taking the Euler characteristic in each graded component
of the minimal resolution $\cF \rightarrow S \rightarrow S/I \rightarrow 0$ gives
$$
\Hilb(S/I,t):=\Hilb(S,t) \sum_{i} (-1)^i \beta^S_{ij}(S/I) t^j.
$$
Since $\Hilb(\bar{S},t)=(1-t)\Hilb(S,t)$, the assertion (iii) follows.

For (iii) implies (iv), recall that for any $S$-module $M$, the exact sequence
$$
0 \rightarrow \Ann_M(\theta) (-1) \rightarrow M(-1)
\overset{\cdot \theta}{\rightarrow} M \rightarrow M/\theta M \rightarrow 0
$$
shows that
$$
\Hilb(M/\theta M,t)=(1-t)\Hilb(M,t) + t \Hilb(\Ann_M(\theta),t).
$$
Hence $\theta$ is a non-zero-divisor on $M$ exactly when $\Hilb(M/\theta M,t)=(1-t)\Hilb(M,t)$.
Apply this to $M=S/I$.

For (iv) implies (i), first argue the vanishing $\Tor_i^S(S/I,S/(\theta))=0$ for $i > 0$
as follows.  Since $\theta \neq 0$ it is a non-zero-divisor on $S$, and one has the $S$-resolution
$$
0 \rightarrow S(-1) \overset{\cdot \theta} \rightarrow S \rightarrow S/(\theta) \rightarrow 0
$$
for $S/(\theta)$, which one can tensor over $S$ with $S/I$ to obtain
the complex
$$
0 \rightarrow S/I(-1) \overset{\cdot \theta} \rightarrow S/I \rightarrow S/(I+(\theta)) \rightarrow 0.
$$
Taking homology (with the $S/(I+(\theta))$ term omitted) computes
the relevant $\Tor$, and the vanishing follows because  the
assumption of (iv) implies this complex is exact.

Now note that since $\cF \rightarrow S \rightarrow S/I \rightarrow 0$
is a resolution of $S/I$, when one tensors over $S$ with $\bar{S}$ to obtain
$$
\overline{\cF} \rightarrow \bar{S} \rightarrow S/(I+(\theta)) \rightarrow 0,
$$
the homology of this complex (with the $S/(I+(\theta))$ term omitted)
will compute the same $\Tor$.  The vanishing result for $\Tor$ then
implies this complex is exact.  Hence $\overline{\cF}$ resolves $\bar{I}$.  In fact, it will
be a {\it minimal} resolution because tensoring over $S$ with $\bar{S}$ preserves the property
that all matrix entries in the maps are of positive degree.
\end{proof}


\end{document}